\numberwithin{equation}{section}
\theoremstyle{definition}
\newtheorem{theorem}{Theorem}[section]
\newtheorem{thm}[theorem]{Theorem}
\newtheorem{lemma}[theorem]{Lemma}
\newtheorem{proposition}[theorem]{Proposition}
\newtheorem{prop}[theorem]{Proposition}
\newtheorem{example}[theorem]{Example}
\newtheorem{definition}[theorem]{Definition}
\newtheorem{defn}[theorem]{Definition}
\newtheorem{corollary}[theorem]{Corollary}
\newtheorem{cor}[theorem]{Corollary}
\newtheorem{problem}[theorem]{Problem}
\newtheorem{rmk}[theorem]{Remark}
\newcommand{\iso}{\cong}
\newcommand{\arr}{\rightarrow}
\newcommand{\Lin}{\operatorname{Lin}}
\newcommand{\Graphs}{\operatorname{Graphs}}
\newcommand{\C}{\mathbb{C}}
\newcommand{\Z}{\mathbb{Z}}
\newcommand{\Id}{\mathbbm{1}}
\newcommand{\mcA}{\mathcal{A}}
\newcommand{\mcB}{\mathcal{B}}
\newcommand{\mcF}{\mathcal{F}}
\newcommand{\mcG}{\mathcal{G}}
\newcommand{\mcH}{\mathcal{H}}
\newcommand{\mcI}{\mathcal{I}}
\newcommand{\mcO}{\mathcal{O}}
\newcommand{\mcP}{\mathcal{P}}
\newcommand{\mcS}{\mathcal{S}}
\newcommand{\mcV}{\mathcal{V}}
\newcommand{\mcW}{\mathcal{W}}
\newcommand{\mbF}{\mathbb{F}}
\newcommand{\msO}{\mathscr{O}}
\newcommand{\msX}{\mathscr{X}}
\newcommand{\msY}{\mathscr{Y}}
\newcommand{\mpP}{\mathsf{P}}
\newcommand{\Dih}{\textnormal{Dih}}
\newcommand{\tr}{\text{tr}}
\title{Arkhipov's theorem, graph minors, and linear system nonlocal games}
\author[C. Paddock]{Connor Paddock$^{1,2}$}
\author[V. Russo]{Vincent Russo$^{5, 6}$}
\author[T. Silverthorne]{Turner Silverthorne$^{4}$}
\author[W. Slofstra]{William Slofstra$^{1,3}$}
\address[1]{Institute for Quantum Computing, University of Waterloo}
\address[2]{Department of Combinatorics \& Optimization, University of Waterloo}
\address[3]{Department of Pure Mathematics, University of Waterloo}
\address[4]{Department of Mathematics, University of Toronto}
\address[5]{Modellicity, Inc.}
\address[6]{Unitary Fund}
\email{cpaulpad@uwaterloo.ca}
\email{vincent.russo@modellicity.com}
\email{turner.silverthorne@utoronto.ca}
\email{weslofst@uwaterloo.ca}
\begin{document}

\begin{abstract}
The perfect quantum strategies of a linear system game correspond to certain
representations of its solution group. We study the solution groups of graph
incidence games, which are linear system games in which the underlying linear
system is the incidence system of a (non-properly) two-coloured graph. While it
is undecidable to determine whether a general linear system game has a perfect
quantum strategy, for graph incidence games this problem is solved by
Arkhipov's theorem, which states that the graph incidence game of a connected
graph has a perfect quantum strategy if and only if it either has a perfect
classical strategy, or the graph is nonplanar. Arkhipov's criterion can be
rephrased as a forbidden minor condition on connected two-coloured graphs. We
extend Arkhipov's theorem by showing that, for graph incidence games
of connected two-coloured graphs, every quotient closed property of the solution
group has a forbidden minor characterization. We rederive Arkhipov's theorem
from the group theoretic point of view, and then find the forbidden minors for
two new properties: finiteness and abelianness. Our methods are entirely
combinatorial, and finding the forbidden minors for other quotient closed
properties seems to be an interesting combinatorial problem. 
\end{abstract}

\maketitle
\section{Introduction}\label{sec:introduction}

In quantum information, a nonlocal game is a type of cooperative game used to
demonstrate the power of entanglement. Linear system nonlocal games are a class
of nonlocal games in which the players try to prove to a referee that a linear
system over a finite field (in this paper we restrict to $\Z_2$) has a solution.
Specifically, if $Ax=b$ is a linear system over $\Z_2$, then the associated
nonlocal game has a perfect deterministic strategy if and only if the system
$Ax=b$ has a solution. It was first observed by Mermin and Peres that there are
linear systems $Ax=b$ which do not have a solution, but where the associated
game can be played perfectly if the players share an entangled quantum state
\cite{Mermin90, Peres91}.
In general, whether or not the game associated to $Ax=b$ has a perfect quantum strategy
is controlled by a finitely presented group called the \emph{solution group} of
$Ax=b$ \cite{CM14, CLS17}. This group has a distinguished central generator $J$
such that $J^2=1$, and the game has a perfect quantum strategy (resp. perfect
commuting-operator strategy\footnote{There are several different models of
quantum strategies.  \emph{Quantum strategies} often refers to the most
restrictive model of finite-dimensional strategies. \emph{Commuting-operator
strategies} refers to the most permissive model of infinite-dimensional
strategies.}) if and only if $J$ is non-trivial in a finite-dimensional
representation of the solution group (resp. is non-trivial in the solution
group). It is shown in \cite{Slof16} that any finitely-presented group can be
embedded in a solution group, so in this sense solution groups can be as
complicated as arbitrary finitely-presented groups.  In particular, it is
undecidable to determine whether $J$ is non-trivial in the solution group
\cite{Slof16}, and also undecidable to determine whether $J$ is non-trivial in
finite-dimensional representations of the solution group \cite{Slof17}.

In this paper, we study the subclass of linear system games in which every
variable of the linear system $Ax=b$ appears in exactly two equations, or
equivalently, in which each column of $A$ contains exactly two non-zero
entries. An $n \times m$ matrix $A$ satisfies this condition if and only if it
is the incidence matrix $\mcI(G)$ of a graph $G$ with $n$ vertices and $m$
edges. After making this identification, the vector $b \in \Z_2^n$ can be
regarded as a function from the vertices of $G$ to $\Z_2$, or in other words as a
(not necessarily proper) $\Z_2$-colouring of $G$.  Hence we refer to this subclass of
linear system games as \emph{graph incidence games}.  Given a $\Z_2$-coloured
graph $(G,b)$, we let $\mcG(G,b)$ be the associated graph incidence game, and
$\Gamma(G,b)$ be the associated solution group, which we now call the
\emph{graph incidence group}. We let $J_{G,b}$ denote the distinguished central
element of $\Gamma(G,b)$, although we refer to this element as $J$ if the
$\Z_2$-coloured graph $(G,b)$ is clear from context.

There is a simple criterion for whether a graph incidence game $\mcG(G,b)$ has
a perfect deterministic strategy: if $G$ is a connected graph, then the linear
system $\mcI(G) x = b$ has a solution if and only if $b$ has even parity, where
the parity of a colouring is the sum $\sum_{v \in V(G)} b(v)$ in $\Z_2$. If $G$
is not connected, then $\mcI(G) x = b$ has a solution if and only if the
restriction of $b$ to each connected component has even parity. If $G$ is
connected and $b$ has odd parity, then $\mcG(G,b)$ no longer has a perfect
deterministic strategy, but there are still graphs $G$ such that $\mcG(G,b)$
has a perfect quantum strategy. In fact, Mermin and Peres' original examples of
linear systems with perfect quantum strategies and no perfect deterministic
strategies---the \emph{magic square} and \emph{magic pentagram} games---are
examples of graph incidence games. The magic square game is $\mcG(K_{3,3},b)$,
where $K_{r,s}$ is the complete bipartite graph with $r$ vertices in one
partition and $s$ vertices in another, and the magic
pentagram is $\mcG(K_{5},b)$, where $K_r$ is the complete graph on $r$
vertices. Recall that Wagner's theorem famously states that a graph is
nonplanar if and only if it contains $K_{3,3}$ or $K_5$ as a graph minor
\cite{Wag37}. The following theorem of Arkhipov shows that this connection
between planarity and quantum strategies for $\mcG(G,b)$ is not a coincidence:
\begin{theorem}[Arkhipov's theorem \cite{Ark12}]\label{thm:Arkhipov}
    If $G$ is a connected graph, then the graph incidence game $\mcG(G,b)$ has
    a perfect quantum strategy if and only if either $b$ has even parity, or
    $b$ has odd parity and $G$ is nonplanar.
\end{theorem}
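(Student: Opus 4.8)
The plan is to translate everything into the solution group $\Gamma(G,b)$ and use the dictionary that $\mcG(G,b)$ has a perfect quantum strategy if and only if $J_{G,b}$ is non-trivial in some finite-dimensional representation of $\Gamma(G,b)$. When $b$ has even parity the system $\mcI(G)x=b$ has a solution $x_0$, and $x_e\mapsto(-1)^{(x_0)_e}$, $J\mapsto -1$ is a one-dimensional representation in which $J$ is non-trivial (indeed $\mcG(G,b)$ then has a perfect deterministic strategy), so that half is immediate; the content is the odd-parity case. A useful preliminary reduction is that for connected $G$ the isomorphism type of $\Gamma(G,b)$ depends only on the parity of $b$: if $b'=b+\mathbbm{1}_{\{u,w\}}$ for an edge $\{u,w\}$ then $x_{\{u,w\}}\mapsto Jx_{\{u,w\}}$ (and all other generators fixed) is an isomorphism $\Gamma(G,b)\xrightarrow{\sim}\Gamma(G,b')$, and for connected $G$ any two equal-parity colourings differ by a sum of edge-indicators, since the column space of $\mcI(G)$ is exactly the even-parity subspace. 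So from now on $b$ is a fixed colouring of odd parity.

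Next I would set up the forbidden-minor framework around the quotient-closed property ``$J_{G,b}=1$''. The key lemma is that if $(H,b_H)$ is obtained from $(G,b_G)$ by the two-coloured versions of edge deletion, edge contraction, and deletion of an isolated colour-$0$ vertex, then there is a surjection $\Gamma(G,b_G)\twoheadrightarrow\Gamma(H,b_H)$ carrying $J$ to $J$; this is checked by Tietze transformations (deleting $e$ kills $x_e$; contracting $e=\{u,v\}$ eliminates $x_e$ using the relation at $u$ and then imposes the commutations missing at the merged vertex), and one checks these operations also preserve connectedness and the parity of $b$. Consequently the class $\mcC$ of connected graphs $G$ with $J_{G,b}=1$ (for odd $b$) is minor-closed, and a finite-dimensional representation of $\Gamma(H,b_H)$ with $J$ non-trivial pulls back along the surjection to one of $\Gamma(G,b_G)$. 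The graphs $K_{3,3}$ and $K_5$ are \emph{not} in $\mcC$: with an odd colouring these are the Mermin--Peres magic square and magic pentagram, which have perfect quantum strategies, i.e.\ representations with $J\mapsto-\mathbbm{1}$. So if one can show that every connected planar graph \emph{is} in $\mcC$, then Wagner's theorem forces $\mcC$ to be exactly the class of connected planar graphs, and the theorem follows: a nonplanar connected $G$ has a $K_{3,3}$- or $K_5$-minor, whose magic representation pulls back to a perfect quantum strategy for $\mcG(G,b)$, while a planar $G$ has $J_{G,b}=1$ and hence no such strategy.

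The remaining and principal task is therefore to prove that $J_{G,b}=1$ whenever $G$ is connected planar and $b$ has odd parity; I would do this by induction on $|V(G)|+|E(G)|$. First dispose of local simplifications — deleting loops, removing degree-$1$ vertices, suppressing degree-$2$ vertices, and merging parallel edges — each of which strictly shrinks $G$ while preserving planarity, connectedness, and odd parity, and for each of which a routine manipulation of presentations shows $J$ is trivial in the reduced group exactly when it is in the original one (for instance, a pendant edge's variable becomes a power of $J$ and can be eliminated). After these we may assume $G$ is simple with minimum degree at least $3$, and then Euler's formula supplies a vertex of degree $3$, $4$, or $5$. This is the delicate point: the naive $Y$--$\Delta$ transformation does \emph{not} induce a homomorphism of solution groups (sending a degree-$3$ star's edge variables to products $x_{e_a}x_{e_b}$ breaks the commutations that the new triangle edge must satisfy with the other edges at a former neighbour), so one must instead either (i) use that the $\Delta$--$Y$ direction \emph{does} give a well-defined homomorphism and combine it with a monotone (potential-function) form of Epifanov's theorem that any planar graph reduces to a point under series/parallel and $Y$--$\Delta$/$\Delta$--$Y$ moves, or (ii) argue directly from a planar embedding: pick a spanning tree $T$ whose cotree edges form a spanning tree of the dual, eliminate the tree-edge generators one leaf at a time so that the unique leftover relation becomes $(\text{word in cotree edges})=J^{\sum_v b(v)}=J$, and then use that in a planar embedding each cotree edge occurs exactly twice in that word, nested so that the two occurrences cancel using only the surviving commutation relations. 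Carrying out one of these two routes rigorously is, I expect, the crux of the whole argument; the rest is bookkeeping with group presentations.
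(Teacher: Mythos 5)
Your framework matches the paper's: the dictionary between perfect quantum strategies and representations with $J \neq 1$, the parity-only dependence of $\Gamma(G,b)$ for connected $G$, the surjections $\Gamma(G,b) \twoheadrightarrow \Gamma(H,c)$ induced by coloured minor operations, and pulling back the magic square/pentagram representations of $\Gamma(K_{3,3},b')$, $\Gamma(K_5,b')$ along those surjections all correspond to Lemmas \ref{lem:parity} and \ref{lem:main}, Proposition \ref{prop:minors}, and Proposition \ref{prop:K33K5}. But there is a genuine gap at exactly the point you yourself flag as the crux: you never prove that $J_{G,b}=1$ when $G$ is connected planar and $b$ has odd parity. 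Route (i) (Epifanov-style $\Delta$--$Y$ reduction) and route (ii) (spanning tree/cotree elimination with the claim that in a planar embedding each cotree generator ``occurs exactly twice, nested so that the two occurrences cancel using only the surviving commutation relations'') are both left as sketches, and the nesting/cancellation claim in (ii) is precisely the nontrivial combinatorial--topological content that needs an argument; without it the induction has no base-independent engine, and the whole odd-parity direction is unestablished.

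For comparison, the paper disposes of this direction with the van Kampen lemma for pictures (Lemma \ref{lem:vk}, imported from \cite{Slof16}): a relation $x_{e_1}\cdots x_{e_k}=J^a$ holds in $\Gamma(G,b)$ iff $e_1\cdots e_k$ bounds a picture $\mcP$ over $G$ with $\chi(\mcP)\cdot b = a$. Given that tool, planarity is used in the most direct way possible: a planar embedding of $G$ itself, with the identity labellings $h_V(v)=v$, $h_E(e)=e$, is a closed picture whose character is the all-ones vector, so $\chi(\mcP)\cdot b = \sum_v b(v) = 1$ and $J=1$ immediately --- no reduction to minimum degree, no $Y$--$\Delta$ moves, no tree/cotree bookkeeping. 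Your route (ii) is essentially an attempt to reprove a special case of the van Kampen direction by hand; it could likely be made to work, but as written it is an announcement of a strategy rather than a proof, whereas the nonplanar direction you do carry out is the easy half. Either import a van Kampen-type lemma (and then the planar case is one sentence, as in the paper) or supply a complete argument for the cancellation claim in your spanning-tree elimination; as it stands the theorem is not proved.
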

Another way to state Arkhipov's theorem is that if $G$ is connected, then
$\mcG(G,b)$ has a perfect quantum strategy and no perfect classical strategy if
and only if $b$ has odd parity and $G$ is nonplanar. The theorem also extends
easily to disconnected graphs; however, to avoid complicating theorem statements, we
focus on connected graphs in the introduction, and handle
disconnected graphs later. Also, although it is not stated in \cite{Ark12}, the
proof of Arkhipov's theorem implies that $\mcG(G,b)$ has a perfect quantum
strategy if and only if $\mcG(G,b)$ has a perfect commuting-operator strategy.

Since graph planarity can be tested in linear time~\cite{HT74}, Arkhipov's
theorem implies that it is easy to tell if $\mcG(G,b)$ has a perfect quantum
strategy (or equivalently if $J=1$ in the graph incidence group $\Gamma(G,b)$).
This suggests that while there are interesting examples of graph incidence
games, graph incidence groups are more tractable than the solution groups of
general linear systems. Arkhipov's theorem also suggests a connection between
graph incidence groups and graph minors. The purpose of this paper is to
develop these two points further. In particular we show that, for a natural
extension of the notion of graph minors to $\Z_2$-coloured graphs, there
is a strong connection between graph incidence groups and $\Z_2$-coloured graph
minors:
\begin{lemma}\label{lem:main}
    If $(H,c)$ is a $\Z_2$-coloured graph minor of a $\Z_2$-coloured
    graph $(G,b)$, then there is a surjective group homomorphism $\Gamma(G,b) \arr
    \Gamma(H,c)$ sending $J_{G,b}\mapsto J_{H,c}$.
\end{lemma}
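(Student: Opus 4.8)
The plan is to reduce to the case in which $(H,c)$ is obtained from $(G,b)$ by a single elementary operation. Recall that $(H,c)$ is a $\Z_2$-coloured graph minor of $(G,b)$ when it arises by a finite sequence of the operations: \emph{deleting an edge} (leaving the colouring unchanged); \emph{contracting a non-loop edge} $e=uv$, which merges $u$ and $v$ into a single vertex $w$ with $c(w)=b(u)+b(v)$ and discards any loops thereby produced; and \emph{deleting an isolated vertex of colour $0$}. A composite of surjective homomorphisms each sending the distinguished central element to the distinguished central element is again such a homomorphism, so it suffices to exhibit, for each single elementary operation, a surjection $\Gamma(G,b)\arr\Gamma(H,c)$ with $J_{G,b}\mapsto J_{H,c}$. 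Throughout I use the presentation of $\Gamma(G,b)$ with generators $x_e$ ($e\in E(G)$) and $J$, relations $x_e^2=J^2=1$, $J$ central, $[x_e,x_f]=1$ whenever $e,f$ share a vertex, and $\prod_{e\ni v}x_e=J^{b(v)}$ for each vertex $v$ (these products being unambiguous since the edges at a vertex pairwise commute).

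\emph{Edge deletion.} For $H=G\setminus e$ and $c=b$, I would send $x_e\mapsto 1$, $x_f\mapsto x_f$ for $f\neq e$, and $J\mapsto J$. Every relator of $\Gamma(G,b)$ mentioning $x_e$ becomes trivial, the two vertex relations at the endpoints of $e$ lose the factor $x_e$ and become exactly the corresponding relations of $\Gamma(H,c)$, and all other relators are relators of $\Gamma(H,c)$ verbatim; so the map is well defined, and it is surjective since its image contains all generators of $\Gamma(H,c)$.

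\emph{Edge contraction.} For $H=G/e$ with $e=uv$ a non-loop edge: deleting the finitely many edges parallel to $e$ is a sequence of edge deletions, so by the previous case I may assume the contraction creates no loop; then $E(H)=E(G)\setminus\{e\}$ and the merged vertex $w$ is incident to precisely the edges of $G$ incident to $u$ or to $v$. I would define the map by $J\mapsto J$, $x_f\mapsto x_f$ for $f\neq e$, and
\[
x_e\ \longmapsto\ J^{b(u)}\!\!\prod_{f\ni u,\ f\neq e}\!\! x_f .
\]
The main work --- and the step I expect to be the real obstacle --- is verifying that this respects the defining relations of $\Gamma(G,b)$. The key points: the displayed image of $x_e$ is an involution because in $\Gamma(H,c)$ the element $J$ is a central involution and the $x_f$ with $f\ni u$ are pairwise commuting involutions (all being incident to $w$); the relator $[x_e,x_f]$ for $f\ni v$ is respected because in $\Gamma(H,c)$ such an $f$ is incident to $w$ and hence commutes with every $x_{f'}$, $f'\ni u$; the vertex relation at $u$ maps to the tautology $J^{b(u)}\bigl(\prod_{f\ni u,\,f\neq e}x_f\bigr)^2=J^{b(u)}$; and, crucially, the vertex relation at $v$ maps to $J^{b(u)}\prod_{f\ni u,\,f\neq e}x_f\prod_{f\ni v,\,f\neq e}x_f=J^{b(v)}$, which holds in $\Gamma(H,c)$ precisely because its vertex relation at $w$ reads $\prod_{g\ni w}x_g=J^{c(w)}$ with $c(w)=b(u)+b(v)$ and with the edges at $u$ commuting with the edges at $v$. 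The remaining relators of $\Gamma(G,b)$ --- vertex relations away from $u,v$, the relations $x_f^2$, $J^2$, centrality, and the commutations $[x_f,x_{f'}]$ not involving $e$ --- are relators of $\Gamma(H,c)$ verbatim, using only that edges sharing $u$ or $v$ in $G$ share $w$ in $H$. The map is surjective for the obvious reason; in fact, by the reverse Tietze transformation (deleting the generator $x_e$ via the vertex relation at $u$) one sees it is an isomorphism, though only surjectivity is needed.

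\emph{Isolated vertex deletion.} If $H=G\setminus v$ with $v$ isolated and $b(v)=0$, then the only relator of $\Gamma(G,b)$ mentioning $v$ is the empty vertex product $1=J^{0}$, which is vacuous, so the identity on generators is an isomorphism $\Gamma(G,b)\iso\Gamma(H,c)$ fixing $J$. Composing the three kinds of map along a sequence of elementary operations witnessing that $(H,c)$ is a minor of $(G,b)$ gives the required surjection. The only delicate case is contraction, where the point is that the two features a contraction introduces --- new commutations between edges formerly kept apart by $e$, and the addition of the two endpoint colours --- are exactly what the group-theoretic consequences of the vertex relations at $u$ and $v$ supply.
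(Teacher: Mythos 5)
Your three cases (edge deletion, contraction after first stripping parallel edges, and deletion of an isolated $0$-coloured vertex) are verified correctly, and the maps you write down are exactly the ones the paper uses in its Proposition on minor operations; in particular your check of the vertex relation at $v$ under contraction, using the relation at the merged vertex $w$ with $c(w)=b(u)+b(v)$, is the same computation as in the paper. However, your proof has a genuine gap: you have misremembered the paper's definition of $\Z_2$-coloured minor. Definition \ref{def:graph_minors} has a fourth elementary operation, \emph{edge toggling}, which keeps the graph fixed and flips the colours of both endpoints of a chosen edge $e$ (and its vertex deletion allows removing any vertex coloured $0$, not only isolated ones, though that case reduces to your edge deletions followed by isolated-vertex deletion, as the paper itself notes). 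Toggling genuinely enlarges the minor relation: for instance, if $G=K_2$ with $b(u)=1$, $b(v)=0$, then $(K_2,b')$ with the colours swapped is a minor of $(K_2,b)$ under the paper's definition, but it is not reachable by any sequence of your three operations, so your argument produces no homomorphism in such cases and therefore does not prove the lemma as stated.

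The omission is easy to repair, and the repair is what the paper does: for the toggle at an edge $e$ there is an isomorphism $\Gamma(G,b)\arr\Gamma(G,b')$ sending $J\mapsto J$, $x_f\mapsto x_f$ for $f\neq e$, and $x_e\mapsto Jx_e$; one checks that each defining relation is preserved, the only nontrivial ones being the vertex relations at the two endpoints of $e$, where the extra factor of $J$ accounts exactly for the change $b(v)\mapsto b(v)+1$ (this is the same mechanism as the paper's Lemma \ref{lem:parity}, that the isomorphism type of $\Gamma(G,b)$ on a connected graph depends only on the parity of $b$). With that fourth case added, and with general $0$-coloured vertex deletion reduced to edge deletions plus isolated-vertex deletion, your composition argument goes through and coincides with the paper's proof.
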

A \emph{group over $\Z_2$} is a group $\Phi$ with a distinguished central
element $J_\Phi$ such that $J_\Phi^2=1$, and a \emph{morphism $\Phi \arr \Psi$ of groups over
$\Z_2$} is a group homomorphism $\Phi \arr \Psi$ sending $J_{\Phi} \arr J_{\Psi}$ (this
terminology comes from \cite{Slof17}). Although we won't use this statement,
the proof of Lemma \ref{lem:main} implies that $(G,b) \mapsto \Gamma(G,b)$ is a
functor from the category of $\Z_2$-coloured graphs with $\Z_2$-coloured minor
operations to the category of groups over $\Z_2$ with surjective homomorphisms.
It's also natural to define the graph incidence group $\Gamma(G)$ of an
ordinary graph $G$ without the $\Z_2$-colouring, and this gives a functor
from the category of graphs with the usual graph minor operations to the
category of groups with surjective homomorphisms.

A property $\mpP$ of groups is quotient closed if for every surjective
homomorphism $\Phi \arr \Psi$ between groups $\Phi$ and $\Psi$, if $\mpP$ holds
for $\Phi$ then it also holds for $\Psi$. Similarly, we say that a property
$\mpP$ of groups over $\Z_2$ is quotient closed if for every surjective
homomorphism $\Phi \arr \Psi$ of groups over $\Z_2$, if $\mpP$ holds for
$(\Phi,J_\Phi)$ then $\mpP$ holds for $(\Psi,J_\Psi)$. There are many
well-known quotient closed properties of groups, including abelianness,
finiteness, nilpotency, solvability, amenability, and Kazhdan's property (T). Any
property of groups is also a property of groups over $\Z_2$, but properties
of groups over $\Z_2$ can also refer to the distinguished element $J_{\Phi}$.
For instance, ``$J_\Phi=1$'' is a quotient closed property of groups over $\Z_2$,
but does not make sense as a group property. If $\mpP$ is a quotient closed property of groups
over $\Z_2$, then ``$\Gamma(G,b)$ has property $\mpP$'' is a $\Z_2$-coloured
minor-closed property of $\Z_2$-coloured graphs. The Robertson-Seymour theorem
states that if $\mpP'$ is a minor-closed property of graphs, then there is a
finite set $\mcF$ of graphs such that $G$ has $\mpP'$ if and only if $G$ avoids
$\mcF$, meaning that $G$ does not contain any graphs in $\mcF$ as a
minor~\cite{RS04}. Using the Robertson-Seymour theorem and Lemma
\ref{lem:main}, it is possible to prove:
\begin{cor}\label{cor:main}
    If $\mpP$ is a quotient closed property of groups over $\Z_2$, then
    there is a finite set $\mcF$ of $\Z_2$-coloured graphs such that if $G$ is
    connected, then $\Gamma(G,b)$ has $\mpP$ if and only if $(G,b)$
    avoids $\mcF$.
\end{cor}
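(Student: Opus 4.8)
The plan is to combine the observation recorded just before the statement---that for quotient-closed $\mpP$ the property ``$\Gamma(G,b)$ has $\mpP$'' is a $\Z_2$-coloured minor-closed property---with a form of the Robertson--Seymour theorem for $\Z_2$-coloured graphs. Let $\mcC$ be the class of $\Z_2$-coloured graphs $(G,b)$ for which $\Gamma(G,b)$ has $\mpP$. If $(H,c)$ is a $\Z_2$-coloured minor of $(G,b)$, then Lemma~\ref{lem:main} supplies a surjective morphism of groups over $\Z_2$ from $\Gamma(G,b)$ onto $\Gamma(H,c)$, so quotient-closedness of $\mpP$ gives $(G,b)\in\mcC\Rightarrow(H,c)\in\mcC$; hence $\mcC$ is closed under $\Z_2$-coloured minors. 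The corollary then reduces to the statement that the $\Z_2$-coloured minor relation is a well-quasi-order on $\Z_2$-coloured graphs: given this, $\mcC$ (being downward closed) has a finite set $\mcF$ of $\le$-minimal forbidden minors, and for connected $G$ one gets ``$\Gamma(G,b)$ has $\mpP$'' $\iff (G,b)\in\mcC \iff (G,b)$ avoids $\mcF$.

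To establish the well-quasi-ordering I would reduce to the ordinary graph minor theorem. The key point is that on a \emph{connected} graph the incidence group is determined by very coarse data: for an edge $e=uv$, the assignment $x_e\mapsto x_eJ$, $x_f\mapsto x_f$ ($f\neq e$), $J\mapsto J$ extends to an isomorphism of groups over $\Z_2$ from $\Gamma(G,b)$ to $\Gamma(G,b')$, where $b'$ agrees with $b$ except that its values at $u$ and $v$ are flipped; since for connected $G$ these differences $b'-b$ span all even-parity colourings, $\Gamma(G,b)$ depends, up to isomorphism over $\Z_2$, only on $G$ and on the parity $\sum_v b(v)\in\Z_2$. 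Moreover the $\Z_2$-coloured minor operations preserve total parity (contraction replaces the two endpoint colours by their sum, edge deletion leaves colours fixed, and vertex deletion---if admitted---is restricted to colour-$0$ vertices). Consequently, among connected $\Z_2$-coloured graphs of a fixed parity and modulo the isomorphisms above, $\Z_2$-coloured minors reduce to the ordinary minor relation on connected graphs, which is well-quasi-ordered as a subrelation of the graph minor order; applying this for both parities handles connected $\Z_2$-coloured graphs. For the general (disconnected) case one writes a $\Z_2$-coloured graph as a disjoint union of connected ones, notes that forming a $\Z_2$-coloured minor amounts to taking a coloured minor within each part and possibly discarding some even-parity parts---this uses that the incidence group of a disjoint union surjects, as a group over $\Z_2$, onto the incidence group obtained by deleting its even-parity components (send such a component's edges to $1$ and keep $J$)---and then runs a Higman-type argument off the connected case. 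The disjoint-union behaviour and the bookkeeping it requires are, in any event, part of the disconnected extension of Arkhipov's theorem mentioned in the introduction; note that $\mcF$ may then contain disconnected $\Z_2$-coloured graphs, which is harmless since the corollary asserts the characterisation only for connected $G$.

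I expect the well-quasi-ordering step---the ``coloured Robertson--Seymour theorem''---to be the main obstacle, and in particular the way colours add modulo $2$ under contraction. This is what blocks the naive reduction that attaches a fixed gadget at each colour-$1$ vertex: that encoding is not minor-monotone, since contracting an edge between two colour-$1$ vertices produces a colour-$0$ vertex, so the two gadgets would have to cancel. The parity-collapse observation above is what sidesteps this for connected graphs; an alternative would be to invoke the well-quasi-ordering of graphs with vertices labelled by a (trivially well-quasi-ordered) two-element set, after checking that the $\Z_2$-coloured minor operations fall within the permitted framework.
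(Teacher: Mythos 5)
Your connected-graph analysis is essentially the paper's: edge toggling shows that on a connected graph any two colourings of equal parity are related by coloured minor operations, so coloured minor containment from a connected graph reduces to uncoloured minor containment plus equality of parity (this is Lemma~\ref{lem:minor_colour}), and pigeonholing an infinite sequence by parity then imports the uncoloured Robertson--Seymour theorem (Corollary~\ref{cor:minor_colour}). That part is fine, modulo one detail you gloss over: to lift an uncoloured minor sequence to a coloured one you must arrange that every vertex deleted along the way is coloured $0$, which is done by first toggling the colours of $(G,b)$ onto vertices that survive into $H$ (this is the bulk of the proof of Lemma~\ref{lem:minor_colour}); your phrase ``modulo the isomorphisms above'' does not by itself deal with deletions of colour-$1$ vertices occurring mid-sequence.

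The genuine gap is the reduction you actually invoke: you reduce the corollary to the claim that coloured minor containment is a well-quasi-order on \emph{all} $\Z_2$-coloured graphs, and that claim is false. The edgeless graphs in which every vertex is coloured $1$ form an infinite antichain, since no coloured minor operation applies to them at all (Example~\ref{ex:nonwqo}); equivalently, as you yourself note, only even-parity components may be discarded, so the number of odd-parity components can never decrease, and a Higman-type argument cannot get off the ground (Higman's lemma needs the freedom to drop entries). The paper even shows that after adding an extra ``component identification'' operation, well-quasi-orderedness of the resulting order $\preceq$ is equivalent to an apparently open problem about the inclusion order on minor-closed classes of connected graphs (Proposition~\ref{prop:wqo}), so this step is not merely missing detail. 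The fix is simply not to leave the connected world: take $\mcF$ to be a finite set of minimal elements of the set of \emph{connected} $\Z_2$-coloured graphs whose incidence group fails $\mpP$; such a finite set exists by the well-quasi-order on connected coloured graphs that your second paragraph correctly establishes, and together with minor-closedness from Lemma~\ref{lem:main} this already yields the corollary. That restricted argument is exactly the paper's proof.
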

In particular, if $\mpP$ is quotient closed, then it is possible to check
whether $\Gamma(G,b)$ has $\mpP$ in time polynomial in the size of $G$.
The set $\mcF$ of graphs is called a set of \emph{forbidden minors} for $\mpP$.

The restriction to connected graphs in Corollary \ref{cor:main} is necessary
when dealing with properties of groups over $\Z_2$, although for many natural
properties (including $J_\Phi=1$), characterizing the connected case is enough
to characterize the disconnected case. As with Lemma \ref{lem:main}, there is
also a version of
Corollary \ref{cor:main} for ordinary graphs $G$ and groups
$\Gamma(G)$, and for this version the restriction to connected graphs is
unnecessary. After giving more background on graph incidence games and
groups in Section \ref{sec:nonlocal}, the proofs of Lemma \ref{lem:main} and
Corollary \ref{cor:main}, along with the versions of these results for
uncoloured graphs, are given in Section~\ref{sec:minors}.

In the language of Corollary \ref{cor:main}, Arkhipov's theorem is equivalent
to the statement that for a connected graph $G$, $J_{G,b}=1$ if and only if
$(G,b)$ avoids $(K_{3,3},b)$ and $(K_5,b)$ with $b$ odd parity, and $(K_1,b)$
with $b$ even parity, where $K_1$ is the single vertex graph. Corollary
\ref{cor:main} explains why having a perfect quantum strategy can be
characterized by avoiding a finite number of graphs. However, it doesn't
explain why, in the odd parity case, the minors are exactly the minors for
planarity. An intuitive explanation for the connection with planarity is
provided by the fact that relations in a group can be captured by certain
planar graphs, called \emph{pictures}. We reprove Arkhipov's theorem using
Lemma \ref{lem:main} and pictures in Section~\ref{sec:pic}, and observe that
Arkhipov's theorem can be thought of as a stronger version of a result of
Archdeacon and Richter that a graph is planar if and only if it has an odd
planar cover \cite{AR90}. Although our proof of Arkhipov's theorem is phrased
in a different language, at its core our proof is quite similar to the original
proof, with the exception that our proof uses algebraic graph minors, while
Arkhipov's original proof uses topological graph minors.

Corollary \ref{cor:main} raises the question of whether we can find the
forbidden graphs for other quotient closed properties of graph incidence
groups. One particularly interesting property is finiteness. When
$\Gamma(G,b)$ is finite, perfect strategies for $\mcG(G,b)$ are direct sums of
irreducible representations of $\Gamma(G,b)$ (see Corollary
\ref{cor:finitestrats}). Using the Gowers-Hatami stability theorem \cite{GH17},
Coladangelo and Stark~\cite{CS17a} show that if $\Gamma(G,b)$ is finite then
$\mcG(G,b)$ is robust, in the sense that, after applying local isometries,
every almost-perfect strategy is close to a perfect strategy. Robustness is
important in the study of self-testing and device-independent protocols in
quantum information, see e.g.~\cite{McK12, NV16,AC17} for a small sample of
results. Using Lemma \ref{lem:main}, we give the following characterization of
finiteness:
\begin{theorem}\label{thm:finite}
    The graph incidence group $\Gamma(G,b)$ is finite if and only if $G$ avoids
    $C_2 \sqcup C_2$ and $K_{3,6}$.
\end{theorem}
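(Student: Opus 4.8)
The plan is to reduce to a statement about a group depending only on $G$, then establish necessity of the two forbidden minors using Lemma~\ref{lem:main} and sufficiency by a structural analysis of the graphs involved. Since $J_{G,b}$ is central with $J_{G,b}^2=1$, the subgroup $\langle J_{G,b}\rangle$ has order at most $2$, so $\Gamma(G,b)$ is finite if and only if $\Gamma(G,b)/\langle J_{G,b}\rangle$ is finite; and this quotient has a presentation in which every vertex relation reads $\prod_{e\ni v}e=1$, hence does not depend on $b$. Consequently finiteness of $\Gamma(G,b)$ is independent of $b$, and it suffices to decide when this quotient (equivalently, by the uncoloured form of Lemma~\ref{lem:main}, when the uncoloured group $\Gamma(G)$) is finite. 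For necessity, the uncoloured form of Lemma~\ref{lem:main} turns a minor $H$ of $G$ into a surjection onto $\Gamma(H)$, and finiteness is quotient closed, so it is enough to show $\Gamma(C_2\sqcup C_2)$ and $\Gamma(K_{3,6})$ are infinite. The former is immediate: inside each $C_2$ the two vertex relations identify the two parallel edges, and there is no relation linking the two components, so modulo $J$ the group is $\Z_2*\Z_2$, the infinite dihedral group.

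The case $K_{3,6}$ is the heart of the proof. Writing $a_1,a_2,a_3$ for the size-$3$ side, $b_1,\dots,b_6$ for the size-$6$ side, and $x_j$ (resp.~$y_j$) for the edge joining $a_1$ (resp.~$a_2$) to $b_j$, the vertex relation at $b_j$ makes the edge from $a_3$ to $b_j$ equal to $x_jy_j$, and $\Gamma(K_{3,6})$ is generated by the involutions $x_1,\dots,x_6,y_1,\dots,y_6$ subject to: the $x_j$ pairwise commute; the $y_j$ pairwise commute; $x_j$ commutes with $y_j$; the products $x_jy_j$ pairwise commute; and $\prod_jx_j=\prod_jy_j=\prod_jx_jy_j=1$. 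I would construct an explicit infinite quotient by sending each $x_j,y_j$ to a reflection in an infinite reflection group (a group generated by reflections in Euclidean space, or a suitable Coxeter group). The three relations coming from $a_1,a_2,a_3$ can be made to hold precisely because $6$ is even, by grouping the six reflections in each row into cancelling pairs, while still leaving enough freedom for two of the reflections to generate an infinite dihedral subgroup, so that the image is infinite. The hard part — and the place where the value $6$ really enters — is to arrange all of this so that the remaining commutation relations also hold, in particular that the images of the $x_jy_j$ commute; I expect this verification to be the main obstacle. Together with the sufficiency argument below, which bounds $\Gamma(K_{3,n})$ for $n\le 5$, this pins down $6$ as the correct threshold.

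For sufficiency, observe that $C_2$ occurs as a minor of every graph with a cycle, so ``$G$ avoids $C_2\sqcup C_2$'' is exactly the statement that $G$ has no two vertex-disjoint cycles. By the Lov\'asz--Dirac structure theorem for such graphs, $G$ either has a vertex $v$ with $G-v$ a forest, or else belongs to a short list of families: subgraphs of $K_5$, wheels, graphs obtained from some $K_{3,m}$ by adding edges inside the size-$3$ part, and finitely many small exceptional multigraphs. Requiring in addition that $G$ avoid $K_{3,6}$ removes almost nothing: if $G-v$ is a forest then $G$ has no $K_{3,6}$ minor (deleting from a $K_{3,6}$-model the branch set containing $v$ would exhibit $K_{2,6}$ or $K_{3,5}$ as a minor of the forest $G-v$, which is impossible); wheels are planar and subgraphs of $K_5$ are too small to contain $K_{3,6}$; and the $K_{3,m}$-based family avoids $K_{3,6}$ exactly when $m\le5$. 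It then remains to see that $\Gamma(G)$ is finite in each surviving case. For the bounded families — subgraphs of $K_5$, the exceptional graphs, and $K_{3,m}$-plus-edges with $m\le5$ — this is a finite, explicit (possibly computer-assisted) check; for instance $\Gamma(K_5)$ and $\Gamma(K_{3,3})$ are finite, being (up to colouring) the solution groups underlying the magic pentagram and magic square games. For a wheel $W_p$, eliminating the spokes via the rim-vertex relations reduces $\Gamma(W_p)$ to a group on rim edges $t_1,\dots,t_p$ in which cyclically adjacent $t_i$ commute and $[t_{i-1}t_i,t_{j-1}t_j]=1$ for all $i,j$; processing these relations in increasing order of cyclic distance forces all the $t_i$ to commute, whence $\Gamma(W_p)\cong\Z_2^p$. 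The remaining, and main, case $G-v$ a forest I would handle by induction on $\abs{E(G)}$, successively removing leaves of $G-v$ and degree-$2$ vertices of $G$; examples strongly suggest that $\Gamma(G)$ is in fact elementary abelian whenever $G-v$ is a forest, and proving this cleanly is the main content of the converse direction.

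Finally, for disconnected $G$ one observes that $\Gamma(G)$ modulo its centre is the free product over the connected components of the corresponding groups, each of which is trivial precisely when its component is a forest; hence $\Gamma(G)$ is finite if and only if at most one component is not a forest and that component has finite group. Since $G$ avoids $K_{3,6}$ if and only if each component does, and $G$ avoids $C_2\sqcup C_2$ if and only if at most one component contains a cycle and that component avoids $C_2\sqcup C_2$, the disconnected case follows from the connected one.
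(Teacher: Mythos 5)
Your overall skeleton is the same as the paper's (reduce to the uncoloured group $\Gamma(G)=\Gamma(G,b)/\langle J\rangle$, get necessity of the two minors from Lemma~\ref{lem:main}, and get sufficiency from Lov\'asz's structure theorem for graphs without two disjoint cycles), but the two steps that carry essentially all the difficulty are left unproved. First, the infiniteness of $\Gamma(K_{3,6})$: your reflection-group quotient is only a plan, and you yourself flag the decisive verification (making the images of the $x_jy_j$ pairwise commute while retaining an infinite dihedral subgroup) as an unresolved obstacle. This is not a routine check: the abelianization of $\Gamma(K_{3,6})$ is finite (order $2^{10}$), so any witness of infiniteness must be genuinely noncommutative, and the parity observation about $6$ being even does not by itself produce one. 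The paper handles this by passing to the auxiliary group $H_{3,3}$ (the matrix presentation with the column relations dropped), which is a quotient of $\Gamma(K_{3,6})$ by doubling rows (Proposition~\ref{prop:K36infinite}), and then exhibiting a complete rewriting system for $H_{3,3}$, verified terminating via a wreath-product order and locally confluent by machine (Lemma~\ref{lem:H33confluent}); the words $(y_4y_2y_7)^n$ are then distinct normal forms. Until you supply an explicit infinite quotient with all relations verified, the forward direction for $K_{3,6}$ is missing. Second, the main surviving family in the converse, graphs with $G\setminus v$ acyclic: you only state that examples ``strongly suggest'' $\Gamma(G)$ is elementary abelian and defer the proof. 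The paper proves this (Proposition~\ref{prop:commonvertex}) by a short structural induction: the subgroup $K$ generated by $\{x_f: f\in E(v)\}$ is abelian, and working up from the leaves of each component of $G\setminus v$, every generator is shown to lie in $K$. Note that your proposed reduction by deleting leaves of $G\setminus v$ and degree-$2$ vertices of $G$ does not obviously terminate in a trivial case, because a leaf of $G\setminus v$ can have arbitrarily many parallel edges to $v$ and hence large degree in $G$; the induction has to express its tree edge as a product of edges at $v$, which is exactly the paper's argument.

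Two lesser points. The families of wheels with multispokes and of $K_{3,m}$ with edges added inside the small side are infinite families (multiedges are allowed), so ``a finite, explicit check'' is not literally available; the paper reduces them to $W_n$ and to $K_{3,n}$ via Corollary~\ref{cor:wheel} and Proposition~\ref{prop:add_edge}, and proves finiteness of $\Gamma(K_{3,4})$ and $\Gamma(K_{3,5})$ by hand (Lemmas~\ref{lem:K34} and~\ref{lem:K35}, showing all commutators of generators are central), though a machine verification would be acceptable in principle. Finally, in the disconnected case you do not need (and should not take) a quotient by the centre: $\Gamma(G)$ is already the free product of the component groups (Lemma~\ref{lem:disconnected2}), and a free product is finite iff at most one factor is nontrivial and that factor is finite; with $\Gamma(G_i)$ trivial iff $G_i$ is a forest (Proposition~\ref{prop:trivial}) this gives the reduction you want directly. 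In fact the paper needs no separate disconnected argument, since Theorem~\ref{thm:lovasz} applies to arbitrary graphs.
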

Here $C_2$ is the $2$-cycle, i.e.~a multigraph with $2$ vertices and $2$ edges
between them, and $C_2 \sqcup C_2$ is the graph with two connected components
each isomorphic to $C_2$.
A graph contains $C_2 \sqcup C_2$ as a minor if and only if it has two
vertex disjoint cycles. The characterization in Theorem \ref{thm:finite} is in
terms of the usual minors of $G$, rather than $\Z_2$-coloured minors of
$(G,b)$, because finiteness of $\Gamma(G,b)$ turns out to be independent of $b$.

We can also characterize when $\Gamma(G,b)$ is abelian:
\begin{theorem}\label{thm:abelian}
    If $(G,b)$ is a $\Z_2$-coloured graph and $G$ is connected,
    then $\Gamma(G,b)$ is abelian if and only if $(G,b)$ avoids the graphs
    $(K_{3,3},b')$ with $b'$ odd parity, $(K_5,b')$ with $b'$ odd parity,
    $(K_{3,4},b')$ with $b'$ even parity, and $(C_2 \sqcup C_2,b')$ with $b'$
    any parity.
\end{theorem}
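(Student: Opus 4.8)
The plan is to prove both implications; the ``only if'' direction will be short, and the ``if'' direction will be a structural case analysis.

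\emph{Necessity.} By Lemma~\ref{lem:main} and the fact that abelianness is a quotient-closed property of groups over $\Z_2$, it suffices to exhibit, for each of the four listed graphs, a nonabelian quotient of its incidence group. For $(K_{3,3},b')$ and $(K_5,b')$ with $b'$ odd (the magic square and magic pentagram), $\mcG(G,b')$ has a perfect quantum strategy, so $J\neq1$ in $\Gamma(G,b')$; but $\mcI(G)x=b'$ is unsolvable, and a direct computation with the presentation shows that $J$ then lies in $[\Gamma,\Gamma]$, so $\Gamma(G,b')$ is nonabelian. For $(C_2\sqcup C_2,b')$, the generators of edges in different connected components satisfy no relation forcing them to commute, so $\Gamma(C_2\sqcup C_2,b')$ surjects onto $\langle x,y\mid x^2=y^2=1\rangle$ for every $b'$. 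For $(K_{3,4},b')$ with $b'$ even the system is solvable, so this argument fails, and instead one checks directly that $\Gamma(K_{3,4},b')$ is nonabelian --- for instance by exhibiting an explicit nonabelian representation (such as one by Pauli operators placed on the $3\times4$ grid of edges), or a nonabelian finite quotient.

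\emph{Sufficiency.} Suppose $(G,b)$ is connected and avoids the four graphs. Since switching-equivalent colourings give isomorphic incidence groups, on each graph we meet we may normalize $b$ to $0$ or to $\mathbbm{1}_v$. We induct on $|V(G)|+|E(G)|$. If $G$ has a vertex $w$ of degree $\leq2$ (including the case where $w$ has a double edge to a single neighbour), the relation at $w$ eliminates $w$ together with an incident generator and yields an isomorphism $\Gamma(G,b)\cong\Gamma(G',b')$ with $(G',b')$ a strictly smaller connected $\Z_2$-coloured minor of $(G,b)$, still avoiding the four graphs, so induction applies. Hence we may assume $G$ has minimum degree $\geq3$, in particular no subdivision vertices. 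Since $(G,b)$ avoids $C_2\sqcup C_2$, $G$ has no two vertex-disjoint cycles, and the Lovász structure theorem then puts $G$ into one of the families: (a) $G-v$ is a forest for some vertex $v$; (b) $G=K_5$; (c) $G$ is a wheel $W_n$; (d) $G$ is built from $K_{3,t}$ by adding some edges inside the size-$3$ part (together with finitely many sporadic graphs). By Theorem~\ref{thm:finite} we already know $\Gamma(G,b)$ is finite in this regime, consistently with each family.

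In case (a), rooting each component tree of $G-v$ and repeatedly solving the vertex relation at a leaf shows that every generator of an edge of $G-v$ is a word in the generators of the edges at $v$ and powers of $J$; the edges at $v$ pairwise share $v$ and so commute, and $\Gamma(G,b)$ is abelian for any $b$. In cases (b)--(d) the remaining forbidden minors restrict things: avoidance of $(K_5,\text{odd})$ forces $b$ even in (b); $W_n$ is planar, so $b$ is unrestricted in (c); and contracting $K_{3,t}$ onto $K_{3,4}$ or $K_{3,3}$ forces $t\leq3$ in (d), with $b$ even when $t=3$. For each of the resulting graphs one shows that every non-incident pair of edge-generators commutes, by using a vertex relation to rewrite one of them as a product of generators of edges through a common vertex, and then invoking the relations ``incident edges commute'' together with the fact that for involutions $x,y$ one has $xy=yx$ precisely when $(xy)^2=1$, applied to the products coming from pairs of spokes at the hub of a wheel, from the size-$3$ side of $K_{3,3}$, and from the vertices of $K_5$. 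For example, in $W_4$ the relation $[s_i,s_{i+1}]=1$ between consecutive spokes already forces $[r_{i-1},r_{i+1}]=1$ between the corresponding rim edges, and iterating gives that all rim edges commute; larger wheels and $K_5$ need a longer but analogous manipulation.

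The main difficulty will be this sufficiency direction: exhausting the Lovász list (including the sporadic graphs), checking that the three nontrivial forbidden minors really do cut it down to graphs with abelian incidence groups, and carrying out the presentation manipulations for the ``dense'' families $K_5$, large wheels, and $K_{3,3}$ with extra edges and even colouring, where it is not obvious a priori that every commutator of non-incident generators dies. The most delicate point is case (d), where a forbidden minor of even parity and one of odd parity are both needed to trim the family --- which is exactly why the theorem must list forbidden $\Z_2$-coloured graphs of both parities.
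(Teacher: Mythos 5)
Your skeleton matches the paper's: necessity via Lemma~\ref{lem:main} and quotient-closedness of abelianness, sufficiency via Lov\'asz's characterization of graphs without two disjoint cycles, and your abelianization argument for the odd-parity $K_{3,3}$ and $K_5$ cases ($J\neq 1$ but $J$ dies in the abelianization since $b$ is odd) is a perfectly good substitute for the paper's explicit $\Dih_4\odot\Dih_4$ computation. But the proposal has genuine gaps exactly where the substance lies. First, the nonabelianness of $\Gamma(K_{3,4},b')$ for $b'$ even is only asserted (``one checks directly \dots for instance by Pauli operators''); no representation or quotient is actually produced. This is the step the paper carries out in Lemma~\ref{lem:K34}, by exhibiting a surjection of $\Gamma(K_{3,4})$ onto the group $H_{3,2}\cong\Dih_4\odot\Dih_4$ obtained by duplicating the rows of the $3\times 2$ matrix presentation; some such explicit witness is required, since this single fact is what separates the even-parity forbidden minor $K_{3,4}$ from $K_{3,3}$.

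Second, in the sufficiency direction the ``dense'' cases are left as ``a longer but analogous manipulation,'' but these are not routine: abelianness of $\Gamma(K_5,b)$ with $b$ even and of $K_{3,3}$ (plus edges on the $3$-side) with $b$ even rests on the identity $[x_e,x_f]=J^{\text{parity}(b)}$ for non-incident edges (the paper's Lemma~\ref{lem:commutatorpic}, proved with a one-crossing picture, or equivalently the isomorphisms of Proposition~\ref{prop:K33K5}), and on the fact that generators of edges added inside the $3$-side are central (Proposition~\ref{prop:add_edge}, which uses the degree-$3$ vertices of the other partition); neither is identified in your sketch. Your Lov\'asz list also omits wheels with multispoke edges (which survive your minimum-degree-$\geq 3$ reduction and are not the simple $W_n$), and the parenthetical ``sporadic graphs'' is not part of the classification used here. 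Finally, note the paper's shortcut that would spare you from redoing the case analysis with colourings attached: for $b$ even, $\Gamma(G,b)\cong\Gamma(G)\times\Z_2$ by Lemma~\ref{lem:parity}, and for $b$ odd the avoidance of $K_{3,3}$ and $K_5$ makes $G$ planar, so $J=1$ by Theorem~\ref{thm:Arkhipov2} and again $\Gamma(G,b)\cong\Gamma(G)$; thus everything reduces to the uncoloured statement that $\Gamma(G)$ is abelian iff $G$ avoids $C_2\sqcup C_2$ and $K_{3,4}$, and the Lov\'asz case analysis need only be done once, without $J$'s floating around.
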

In terms of ordinary minors, if $G$ is connected then $\Gamma(G,b)$ is abelian
if and only if either $b$ is even and $G$ avoids $C_2 \sqcup C_2$ and
$K_{3,4}$, or $b$ is odd and $G$ avoids $K_{3,3}$, $K_5$, and $C_2 \sqcup C_2$
(i.e.~$G$ is planar and does not have two disjoint cycles). If $b$ is odd and
$G$ is planar, then $\mcG(G,b)$ does not have any perfect strategies. However,
when $b$ is even the game $\mcG(G,b)$ always has a perfect deterministic
strategy. In this case, the group $\Gamma(G,b)$ is abelian if and only if all
perfect quantum strategies are direct sums of deterministic strategies on the
support of their state (see Corollary \ref{cor:abelian}).

The key to the proofs of both Theorems \ref{thm:finite} and \ref{thm:abelian}
is that $\Gamma(C_2 \sqcup C_2) = \Z_2 \ast \Z_2$, and hence is infinite and
nonabelian. Thus, by Lemma \ref{lem:main}, if $G$ contains two disjoint cycles
then $\Gamma(G,b)$ must be infinite and nonabelian. We can then use a theorem
of Lovasz \cite{Lov65} which characterizes graphs which do not have two
disjoint cycles. The graph incidence groups of most of the graphs in this
characterization do not have interesting structure, but an exception is
the family of graphs $K_{3,n}$, which we analyze in Subsection \ref{SS:K3n}.
The games $\mcG(K_{m,n},b)$ have also recently been studied in \cite{AW20,AW22}
under the title of magic rectangle games, although our results do not
seem to overlap. The proofs of Theorems \ref{thm:finite} and \ref{thm:abelian}
are given in Section \ref{sec:2cycles}.  The implications of these results for
graph incidence games is explained further in Subsection \ref{SS:consequences}.

As mentioned above, there are many other quotient closed properties of groups,
and finding the minors for these properties seems to be an interesting problem.
We finish the paper in Section \ref{sec:open} with some remarks
about these open problems.

\subsection{Acknowledgements}

CP thanks Jim Geelen and Rose McCarthy for helpful conversations.  We are
grateful for computing resources provided by IQC, along with IT support
provided by Steve Weiss.  WS is supported by NSERC DG 2018-03968 and an Alfred
P. Sloan Research Fellowship.

\section{Graph incidence nonlocal games and groups}\label{sec:nonlocal}

In this section we define graph incidence nonlocal games, deterministic and
quantum strategies for such games, and graph incidence groups. Most of the
concepts in this section come from the theory of linear system games in
\cite{CM14, CLS17}, although we elaborate a bit in
Corollaries \ref{cor:finitestrats} and \ref{cor:abelian} to explain the
implications of finiteness and abelianness of the graph incidence group for
quantum strategies.

Let $G$ be a graph with vertex set $V(G)$ of size $n$, and edge set $E(G)$ of
size $m$. Throughout the paper, graphs are allowed to have multiple edges
between vertices, but loops are not allowed. We say that $G$ is simple if there
is at most one edge between every pair of vertices. For a vertex $v\in V(G)$,
we let $E(v)$ denote the subset of edges $e\in E(G)$ incident to $v$. Recall
that the incidence matrix $\mcI(G)$ of a graph $G$ is the $n\times m$ matrix
whose $(v,e)$th entry is $1$ whenever $e\in E(v)$ and $0$ otherwise. Let
$b:V(G) \arr \Z_2$ be a (not necessarily proper) vertex $\Z_2$-colouring of
$G$. A solution $\hat{x}$ of the linear system $\mcI(G) x=b$ is a function
$\hat{x} : E(G) \arr \Z_2$ assigning a label $\hat{x}(e) \in \Z_2$ to every $e
\in E(G)$, such that $\sum_{e \in E(v)} \hat{x}(e) = b(v)$ for every $v
\in V(G)$. A graph where $\mcI(G) x=b$ does not have a solution is shown in
Figure~\ref{fig:det_strat}.

\begin{figure}[h!]
    \begin{center}
        \includegraphics[scale=1]{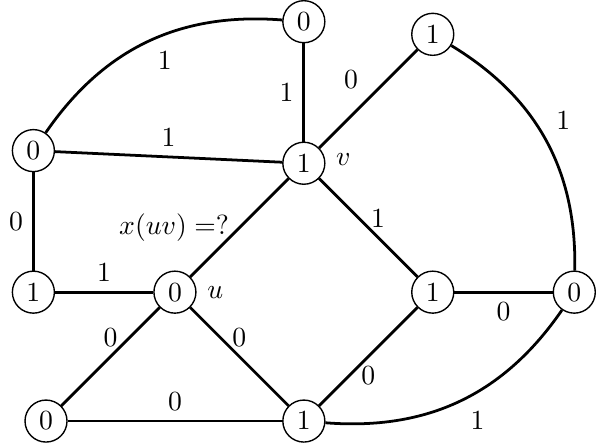}
        \caption{Assigning $0$ or $1$ to the edge $uv$ would be inconsistent
            with the colouring of $G$, so there is no way to complete the given
            edge assignment to a solution of $\mcI(G) x=b$. In fact, for this graph $\mcI(G) x=b$
            has no solution.}
        \label{fig:det_strat}
    \end{center}
\end{figure}

The graph incidence game $\mcG(G,b)$ associated to $(G,b)$ is a two-player
nonlocal game, in which the players try to demonstrate that they have a solution of
$\mcI(G) x=b$. In the game, the referee sends a vertex $v \in V(G)$ to the first player,
and another vertex $u \in V(G)$ to the second player. The players, who are unable
to communicate once the game starts, reply to the referee with functions
$f:E(v) \arr \Z_2$ and $g:E(u) \arr \Z_2$ respectively, which satisfy the
equations
\begin{equation}\label{eq:outputcondition}
    \sum_{e\in E(v)} f(e)=b(v)\quad \text{ and} \quad  \sum_{e\in E(u)}
        g(e)=b(u)\,.
\end{equation}
They win if and only if $f(e)=g(e)$ for every $e \in E(u) \cap E(v)$.  In other
words, the players assign a value from $\Z_2$ to each edge incident to their
given vertex, and they win if the assigned values agree on all edges incident
to both $u$ and $v$. If $u \neq v$, then the assigned values must agree on all
edges between $u$ and $v$, while if $u = v$ then to win the functions must
satisfy $f(e) = g(e)$ for all $e \in E(v)$.

\subsection{Perfect classical strategies for graph incidence games}\label{sec:det_space}
A strategy for a graph incidence game is said to be deterministic if each
player's answer depends only on the vertex they receive. Formally,
a deterministic strategy for a graph incidence game is specified by two
collections of functions $\{f_v:E(v)\arr \Z_2\}_{v\in V(G)}$ and $\{g_u:E(u)\arr
\Z_2\}_{u\in V(G)}$, such that $\sum_{e\in E(v)} f_v(e)= \sum_{e \in E(v)} g_v(e) = b(v)$
for all $v \in V(G)$. Such a strategy is \emph{perfect} if the players win on
every combination of inputs, i.e.~if for every $(v,u) \in V(G) \times V(G)$,
$f_v(e) = g_u(e)$ for all $e \in E(v) \cap E(u)$. In particular, if
$\left(\{f_v\}, \{g_u\}\right)$ is perfect, then $f_v = g_v$ for all $v \in V(G)$,
and furthermore, if $e$ is an edge with endpoints $v$ and $u$, then $f_v(e) =
g_u(e) = f_u(e)$. Thus, for any perfect deterministic strategy
$\left(\{f_v\},\{g_u\}\right)$, there is a function $\hat{x}:E(G)\arr \Z_2$ such
that $f_v=g_v = \hat{x}\big|_{E(v)}$ for all $v \in V(G)$.  Because $\sum_{e \in
E(v)} \hat{x}(e) = \sum_{e \in E(v)} f_v(e) = b(v)$ for every $v \in V(G)$, $\hat{x}$ is a solution to
the linear system $\mcI(G) x = b$. Conversely, if $\hat{x}$ is a solution to
this linear system, then the strategy $\left(\{f_v\},\{g_u\}\right)$ with $f_v
= g_v = \hat{x}|_{E(v)}$ is a perfect deterministic strategy, so there is a
one-to-one correspondence between perfect deterministic strategies of
$\mcG(G,b)$ and solutions of the linear system $\mcI(G) x=b$.

It is not hard to see that when $G$ is connected, $\mcI(G) x = b$ has a
solution if and only if the restriction of $b$ to each connected component of
$G$ has even parity. If $G$ is disconnected, then $\mcI(G)$ is the direct
sum of the incidence matrices for the connected components of $G$, and
hence $b$ is in the image of $\mcI(G)$ if and only if the restriction of
$b$ to every connected component of $G$ has even parity. We summarize
these facts in the following lemma:
\begin{lemma}\label{lem:graph_space}
    Let $(G,b)$ be a $\Z_2$-coloured graph, and let $G_1,\ldots,G_k$ be the
    connected components of $G$. The linear system $\mcI(G) x = b$ has a
    solution (or equivalently, $\mcG(G,b)$ has a perfect deterministic strategy) if
    and only if the restriction $b|_{V(G_i)}$ has even parity for all $1 \leq i \leq k$.
\end{lemma}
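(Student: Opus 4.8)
The plan is to reduce immediately to the connected case and then treat the two directions separately. Since every edge of $G$ has both of its endpoints in the same connected component, after permuting rows and columns $\mcI(G)$ is the block-diagonal direct sum $\mcI(G_1) \oplus \cdots \oplus \mcI(G_k)$; correspondingly $\Z_2^{V(G)} = \bigoplus_i \Z_2^{V(G_i)}$ and $b$ decomposes as $(b|_{V(G_1)},\ldots,b|_{V(G_k)})$, so $b$ lies in the image of $\mcI(G)$ if and only if each $b|_{V(G_i)}$ lies in the image of $\mcI(G_i)$. Hence it suffices to prove: for connected $G$, the system $\mcI(G)x=b$ is solvable over $\Z_2$ if and only if $b$ has even parity. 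The equivalence with $\mcG(G,b)$ having a perfect deterministic strategy is then immediate from the one-to-one correspondence between solutions of $\mcI(G)x=b$ and perfect deterministic strategies established earlier in this section.

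For the forward direction, suppose $\hat x$ solves $\mcI(G)x=b$. Summing the equations $\sum_{e \in E(v)} \hat x(e) = b(v)$ over all $v \in V(G)$ gives $\sum_{e \in E(G)} (\text{number of endpoints of } e)\,\hat x(e) = \sum_{v \in V(G)} b(v)$ in $\Z_2$; since every edge has exactly two endpoints, the left-hand side is $0$, so $b$ has even parity. This direction does not actually use connectedness.

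For the converse, I would build an explicit solution from a spanning tree $T$ of $G$ with a chosen root $r$: set $\hat x(e) = 0$ for every $e \notin T$, so the equations reduce to $\sum_{e \in E(v) \cap T} \hat x(e) = b(v)$; then, letting $e_v$ denote the edge joining a non-root vertex $v$ to its parent in $T$, process the vertices in order of decreasing depth and put $\hat x(e_v) = b(v) + \sum_{w}\hat x(e_w)$, the sum over the children $w$ of $v$ (all already assigned). By construction the equation at every non-root vertex holds, and the only remaining check is the equation at $r$. Summing the identities $\hat x(e_v) + \sum_{w \text{ child of } v}\hat x(e_w) = b(v)$ over all $v \neq r$, each term $\hat x(e_u)$ with a non-root parent occurs exactly twice and cancels over $\Z_2$, leaving $\sum_{w \text{ child of } r}\hat x(e_w) = \sum_{v \neq r} b(v)$, which equals $b(r)$ precisely because $b$ has even parity; hence $\hat x$ is a solution. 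This last step — the spanning-tree recursion together with its terminal consistency check — is the only part that takes any care, and it is also where connectedness is genuinely used. Alternatively it can be replaced by the linear-algebra observation that for connected $G$ the left kernel of $\mcI(G)$ over $\Z_2$ consists of the $y \in \Z_2^{V(G)}$ that are constant on the endpoints of every edge, hence is spanned by the all-ones vector, so the image of $\mcI(G)$ is exactly the even-parity hyperplane.
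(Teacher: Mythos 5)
Your proposal is correct, and it follows the same route the paper takes: the paper also reduces to connected components via the block-diagonal decomposition of $\mcI(G)$ and then treats the connected case, which it declares ``not hard to see'' and defers to \cite{Ark12}. Your spanning-tree construction and the parity/left-kernel argument simply supply the details the paper omits, and both checks (the cancellation at the root and the identification of the left kernel with the span of the all-ones vector for connected $G$) are sound.
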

A proof of Lemma \ref{lem:graph_space} can be found in \cite{Ark12}.

Deterministic strategies belong to a larger class of strategies called
\emph{classical strategies}, which allow the players to use local and shared
randomness. Every classical strategy is a convex combination of deterministic
strategies, and hence a nonlocal game has a perfect classical strategy if and
only if it has a perfect deterministic strategy. Since we don't need the notion
of a classical strategy, we omit the definition here.

\subsection{Perfect quantum strategies and the graph incidence group}

To define quantum strategies for graph incidence games, we require the
following facts from quantum probability. A quantum state $\nu$ is a unit
vector in a complex Hilbert space $\mcH$. A $\{\pm 1\}$-valued observable
$\msO$ is a self-adjoint unitary operator on $\mcH$, and should be thought of
as a $\{\pm 1\}$-valued random variable. We let $\mcO(\mcH)$ denote the space
of $\{\pm 1\}$-valued observables on $\mcH$. Given a quantum state $\nu\in
\mcH$ and an observable $\msO\in \mcO(\mcH)$, the expected value of the
observable is given by the inner-product $\langle \nu| \msO\nu\rangle_{\mcH}$.
Two observables $\msO_A$ and $\msO_B$ are jointly measureable if they commute,
in which case the product $\msO_A \msO_B$ is the observable corresponding to
the product of the values.  Since $\Z_2$ is isomorphic to the multiplicative
group $\{\pm 1\}$, we can think of a $\{\pm 1\}$-valued observable as a
$\Z_2$-valued observable, with value $1$ corresponding to $0 \in \Z_2$, and
value $-1$ corresponding to $1 \in \Z_2$.

As mentioned in the introduction, there is more than one way to model quantum
strategies for nonlocal games. The finite-dimensional model is the most
restrictive, while the commuting-operator model is the most permissive. We
call strategies in the former model ``quantum strategies'', and strategies in the
latter model ``commuting-operator strategies''. Quantum strategies are
a subset of commuting-operator strategies.
\begin{definition}
    A \emph{commuting-operator strategy} (resp.~\emph{quantum strategy})
    for a graph incidence game $\mcG(G,b)$ consists of a Hilbert space $\mcH$
    (resp.~finite-dimensional Hilbert space $\mcH$), a quantum state $\nu \in
    \mcH$, and two subsets
    \begin{equation*}
        \{ \msX_{ve} : v \in V(G), e \in E(v)\} \text{ and }
        \{ \msY_{ve} : v \in V(G), e \in E(v)\}
    \end{equation*}
    of $\mcO(\mcH)$ such that the pairs
    \begin{itemize}[label={}]
        \item $\msX_{ve}, \msX_{vf}$, $v \in V(G)$, $e,f \in E(v)$,
        \item $\msY_{ve}, \msY_{vf}$, $v \in V(G)$, $e,f \in E(v)$, and
        \item $\msX_{ve}, \msY_{uf}$, $v,u \in V(G)$, $e \in E(v)$, $f \in E(u)$
    \end{itemize}
    are all jointly measureable, and such that
    \begin{equation}\label{eq:qoutput}
         \prod_{e\in E(v)}\msX_{ve}
            = \prod_{e \in E(v)} \msY_{ve}
            =(-1)^{b(v)}\Id
        \text{ for all } v \in V(G).
    \end{equation}
\end{definition}
In the graph incidence game, $\msX_{ve}$ (resp. $\msY_{ve}$) is the
observable corresponding to the first (resp. second) player's assignment $a$ to
edge $e$ upon receiving vertex $v$, where we identify the values $\{\pm 1\}$ of
the observables with $\Z_2$ as above. The condition that the players' outputs
satisfy Equation \ref{eq:outputcondition} is rewritten multiplicatively (due to the
identification of $\Z_2$ with $\{\pm 1\}$) in Equation \ref{eq:qoutput}.
In particular, if $(\{f_v\},\{g_u\})$ is a deterministic strategy for
$\mcG(G,b)$, then the observables $\msX_{ve} = (-1)^{f_v(e)}$ and
$\msY_{uf} = (-1)^{g_u(f)}$ on $\mcH = \C^1$ satisfy Equation
\ref{eq:qoutput}, and conversely any one-dimensional quantum strategy must come
from a deterministic strategy in this way.

To win $\mcG(G,b)$, the players' assignments to edge $e$ should be perfectly
correlated, and hence a commuting-operator strategy is \emph{perfect} if the
expectation $\langle \nu| \msX_{ve} \msY_{ue} \nu \rangle = 1$ for all
$v, u \in V(G)$ and $e \in E(u) \cap E(v)$.  Again, perfect one-dimensional
quantum strategies correspond to perfect deterministic strategies, and hence to
solutions of $\mcI(G) x = b$. Perfect commuting-operator strategies in higher dimensions
can be understood using the following group:
\begin{definition}\label{def:group}
    Let $(G,b)$ be a $\Z_2$-coloured graph.
    The \emph{graph incidence group $\Gamma(G,b)$} is the group generated by
    $\{x_e : e\in E(G)\} \cup \{J\}$ subject to the following relations:
    \begin{enumerate}
        \item $J^2=1$ and $x_e^2=1$ for all $ e \in E(G)$ (generators are involutions),
        \item $[x_e,J]=1$ for all $e \in E(G)$ ($J$ is central),
        \item $[x_e,x_{e'}]=1$ for all $v \in V(G)$ and $e,e' \in E(v)$
            (edges incident to a vertex commute), and
        \item $\prod_{e\in E(v)} x_e=J^{b(v)}$ for all $v \in V(G)$ (product of edges around
            a vertex is $J^{b(v)}$).
    \end{enumerate}
\end{definition}
Here $[x,y] \coloneqq xyx^{-1}y^{-1}$ is the commutator of $x$ and $y$.
Note that, after identifying $\Z_2$ with $\{\pm 1\}$, the one-dimensional
representations of $\Gamma(G,b)$ with $J \mapsto -1$ are the solutions of
$\mcI(G) x = b$.  Graph incidence groups are a special case of solution groups
of linear systems (specifically, $\Gamma(G,b)$ is the solution group of the
linear system $\mcI(G) x = b$).  Solution groups were formally introduced in
\cite{CLS17}, although the notion was essentially already present in
\cite{CM14}. Since it is not necessary to refer to the linear system $\mcI(G) x
= b$ when defining $\mcG(G,b)$ and $\Gamma(G,b)$, we prefer the term graph
incidence group in this context.

As mentioned in the introduction, the existence of perfect quantum strategies
for $\mcG(G,b)$ is connected to the graph incidence group in the following way:
\begin{thm}[\cite{CM14,CLS17}]\label{thm:CLS}
    The graph incidence game $\mcG(G,b)$ has a perfect commuting-operator
    strategy if and only if $J \neq 1$ in $\Gamma(G,b)$, and a perfect
    quantum strategy if and only if $J$ is non-trivial in a finite-dimensional
    representation of $\Gamma(G,b)$.
\end{thm}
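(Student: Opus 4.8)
The plan is to prove both equivalences by reducing each to a purely group-theoretic statement about $\Gamma=\Gamma(G,b)$, in the spirit of the general theory of solution groups in \cite{CM14,CLS17}. For the commuting-operator case I would split into: (i) if $J\neq 1$ in $\Gamma$ then $\mcG(G,b)$ has a perfect commuting-operator strategy, and (ii) the converse; the quantum case is the same with ``perfect commuting-operator strategy'' replaced by ``perfect quantum strategy'' and ``$J\neq 1$ in $\Gamma$'' by ``$J$ is non-trivial in some finite-dimensional $*$-representation of $\Gamma$''. Throughout I would use the universal property of group presentations (any assignment of the generators to operators obeying the defining relations of Definition~\ref{def:group} extends to a homomorphism) and the faithfulness of the left regular representation.

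For (i), suppose $J\neq 1$, so $J$ is central of order exactly $2$. Let $L,R$ be the left and right regular representations of $\Gamma$ on $\ell^2(\Gamma)$, which commute with each other; since $L_J=R_J$ is a self-adjoint unitary with $L_J\neq \Id$, its $(-1)$-eigenspace $\mcH\subseteq\ell^2(\Gamma)$ is nonzero and is preserved by every $L_h$ and $R_h$. I would set $\msX_{ve}:=L_{x_e}|_{\mcH}$, $\msY_{ve}:=R_{x_e}|_{\mcH}$, and $\nu:=\tfrac{1}{\sqrt{2}}(\delta_1-\delta_J)$. The relations of Definition~\ref{def:group} translate directly into the defining properties of a strategy (on $\mcH$ the element $J$ acts as $-\Id$, so $\prod_{e\in E(v)}\msX_{ve}=(-1)^{b(v)}\Id$), the pairs $\msX_{ve},\msY_{uf}$ commute because left and right translations commute, and $L_{x_e}R_{x_e}\delta_g=\delta_{x_egx_e}$ gives $\msX_{ve}\msY_{ue}\nu=\nu$, so the strategy is perfect. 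For the quantum version of (i), given a finite-dimensional $*$-representation $\rho$ of $\Gamma$ with $\rho(J)\neq\Id$, I would restrict to the $(-1)$-eigenspace $\mcH'$ of $\rho(J)$ and take $\msX_{ve}:=\rho(x_e)\otimes\Id$ and $\msY_{ve}:=\Id\otimes\overline{\rho(x_e)}$ on $\mcH'\otimes\overline{\mcH'}$, with $\nu$ the maximally entangled state; the commutation and product conditions are immediate and perfectness follows from $\langle\nu|(A\otimes\overline{A})\nu\rangle=\tfrac{1}{\dim\mcH'}\tr(A^*A)$ applied to $A=\rho(x_e)$.

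For (ii), start from a perfect strategy $(\mcH,\nu,\{\msX_{ve}\},\{\msY_{ve}\})$. Since $\msX_{ve}\msY_{ue}$ is a self-adjoint unitary when $e\in E(v)\cap E(u)$, the condition $\langle\nu|\msX_{ve}\msY_{ue}\nu\rangle=1$ forces $\msX_{ve}\msY_{ue}\nu=\nu$, hence $\msX_{ve}\nu=\msY_{ue}\nu$; letting $u$ run over the two endpoints $v_0,v_1$ of $e$ gives $\msX_{v_0e}\nu=\msX_{v_1e}\nu=\msY_{v_0e}\nu=\msY_{v_1e}\nu$. Let $\mcB$ be the algebra generated by Bob's observables $\msY_{uf}$, and let $\mcH_0:=\overline{\mcB\nu}\neq 0$. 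Because each $\msX_{ve}$ commutes with $\mcB$, for $b\in\mcB$ one has $\msX_{ve}(b\nu)=b\,\msX_{ve}\nu=b\,\msY_{ve}\nu\in\mcB\nu$, so $\mcH_0$ is invariant under every $\msX_{ve}$, on which they restrict to self-adjoint unitaries; and $\msX_{v_0e}(b\nu)=b\,\msY_{v_0e}\nu=b\,\msY_{v_1e}\nu=\msX_{v_1e}(b\nu)$, so $Z_e:=\msX_{ve}|_{\mcH_0}$ is independent of the endpoint $v$. The $Z_e$ are involutions with $[Z_e,Z_{e'}]=1$ for $e,e'\in E(v)$ and $\prod_{e\in E(v)}Z_e=(-1)^{b(v)}\Id_{\mcH_0}$, so $x_e\mapsto Z_e$, $J\mapsto -\Id_{\mcH_0}$ extends to a representation of $\Gamma$ in which $J$ acts nontrivially, whence $J\neq 1$ in $\Gamma$. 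If the strategy is finite-dimensional then so is $\mcH_0$, which gives the quantum version.

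The hard part is step (ii): perfectness only says that Alice's edge observables at the two endpoints of an edge agree \emph{after being applied to the state} $\nu$, whereas globally $\msX_{v_0e}$ and $\msX_{v_1e}$ are neither equal nor commuting, and $\msX_{ve}$ and $\msX_{uf}$ for non-incident edges need not commute at all, so there is no obvious edge operator to feed into the presentation of $\Gamma$. The device that makes it work is to restrict to the subspace $\mcH_0$ cyclically generated by $\nu$ under Bob's algebra: this is simultaneously invariant under all of Alice's observables (by Alice--Bob commutation together with perfectness) and small enough that the endpoint-dependence of Alice's observables disappears, so that they descend to genuine edge operators $Z_e$ on $\mcH_0$ satisfying the defining relations of $\Gamma$ with $J\mapsto-\Id$. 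Here ``representation'' should be read throughout as ``$*$-representation'' (equivalently, unitary representation), which is the standard convention for solution groups and the setting in which \cite{CM14,CLS17} establish the statement.
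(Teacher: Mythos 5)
Your proposal is correct and follows essentially the same route as the paper and the sources it cites: the converse direction is the restriction of Alice's observables to the cyclic subspace generated by (the other player's) algebra acting on the state, exactly as in part (a) of Proposition \ref{prop:CLS2}, and the forward directions use the regular representation with state $\frac{1}{\sqrt{2}}(\delta_1-\delta_J)$ and the maximally entangled state built from a finite-dimensional representation, as in parts (c) and (d). The only cosmetic difference is that you bypass the tracial-state formulation (parts (b)--(c)) and build the commuting-operator strategy directly from the left and right regular representations on the $(-1)$-eigenspace of $J$, which is the same construction in different packaging.
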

Note that $J \neq 1$ in $\Gamma(G,b)$ if and only if $J$ is non-trivial in some
representation of $\Gamma(G,b)$. Also, since $J$ is central and $J^2=1$, if $J$
is non-trivial in a representation (resp. finite-dimensional representation) of
$\Gamma(G,b)$, then there is a representation (resp. finite-dimensional
representation) where $J \mapsto -\Id$.

\subsection{Properties of graph incidence groups}

For context with Theorem \ref{thm:CLS}, and for use in the next section, we
explain some basic properties of graph incidence groups.  First, we show that
the isomorphism type of $\Gamma(G,b)$ depends only on $G$ and the parity of
$b$.
\begin{lemma}\label{lem:parity}
    Let $b$ and $b'$ be $\Z_2$-colourings of a connected graph $G$. If $b$
    and $b'$ have the same parity, then there is an isomorphism
    $\Gamma(G,b) \iso \Gamma(G,b')$ sending $J_{G,b}\mapsto J_{G,b'}$.
\end{lemma}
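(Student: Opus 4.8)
The plan is to reduce the claim to the special case where $b'$ differs from $b$ by ``moving one unit of colour along a single edge,'' and then observe that this elementary move induces an isomorphism given by a simple substitution on generators. Concretely, suppose $e$ is an edge with endpoints $u$ and $w$, and let $b'$ be the colouring that agrees with $b$ everywhere except that $b'(u) = b(u)+1$ and $b'(w) = b(w)+1$. Then $b$ and $b'$ have the same parity (the sum over all vertices changes by $2 \equiv 0$), and conversely, since $G$ is connected, any two colourings of the same parity can be connected by a sequence of such single-edge moves: the difference $b - b'$ is a $\Z_2$-vector supported on an even number of vertices, which one can pair up and connect by paths in $G$, moving colour one edge at a time. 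So it suffices to handle one elementary move.

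For a single elementary move along the edge $e = uw$, I would define a homomorphism $\varphi : \Gamma(G,b) \arr \Gamma(G,b')$ by sending $x_e \mapsto J x_e$, sending $x_f \mapsto x_f$ for every edge $f \neq e$, and sending $J \mapsto J$. To see this is well defined, check that the images satisfy the defining relations of $\Gamma(G,b)$: relations (1)--(3) are immediate since $J$ is central of order $2$ and commutes with everything (so $(Jx_e)^2 = J^2 x_e^2 = 1$, and $Jx_e$ commutes with anything $x_e$ commutes with); for relation (4), at a vertex $v$ not incident to $e$ nothing changes, while at $v \in \{u,w\}$ the product $\prod_{f \in E(v)} x_f$ picks up exactly one factor of $J$, so it maps to $J \cdot J^{b(v)} = J^{b(v)+1} = J^{b'(v)}$, which is precisely relation (4) for $(G,b')$. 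The inverse homomorphism $\Gamma(G,b') \arr \Gamma(G,b)$ is given by the identical formula (replacing $x_e$ by $Jx_e$ is an involution up to the central element, since $J(Jx_e) = x_e$), so $\varphi$ is an isomorphism, and it sends $J_{G,b} \mapsto J_{G,b'}$ by construction.

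Finally I would assemble these: writing $b' - b$ as a sum of elementary moves, compose the corresponding isomorphisms to get the desired isomorphism $\Gamma(G,b) \iso \Gamma(G,b')$ carrying $J_{G,b}$ to $J_{G,b'}$. The only mild subtlety — and the place where connectedness of $G$ is used — is the combinatorial claim that same-parity colourings are linked by elementary moves; this is exactly the statement that the image of $\mcI(G)$ over $\Z_2$ is the even-parity subspace (Lemma \ref{lem:graph_space}), together with the observation that each standard basis vector $\chi_e$ of $\Z_2^{E(G)}$ maps under $\mcI(G)$ to the indicator of the two endpoints of $e$, so the even-parity subspace is spanned by such ``edge indicators.'' I expect the main obstacle to be purely bookkeeping: making sure the substitution respects relation (4) at both endpoints of $e$ simultaneously, and tracking that composing several elementary-move isomorphisms still sends $J$ to $J$ (which is automatic, since each one does).
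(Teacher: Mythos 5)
Your proof is correct and takes essentially the same approach as the paper: the paper invokes Lemma \ref{lem:graph_space} to get a solution $\hat{x}$ of $\mcI(G)x = b+b'$ and defines the isomorphism in one step by $x_e \mapsto J^{\hat{x}(e)}x_e$, which is exactly the composite of your single-edge toggle isomorphisms taken over the edges with $\hat{x}(e)=1$. The only difference is that you factor this substitution into elementary moves (the paper's edge-toggling operation) and verify one toggle at a time.
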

\begin{proof}
    If $b$ and $b'$ have the same parity, then $\tilde{b}=b+b'$ has even
    parity, and the linear system $\mcI(G) x = \tilde{b}$ has a solution
    $\hat{x}$ by Lemma \ref{lem:graph_space}. It follows from Definition
    \ref{def:group} that there is an isomorphism
    $\Gamma(G,b) \arr \Gamma(G,b')$ sending $x_e\mapsto J^{\hat{x}(v)}x_e$ and
    $J_{G,b}\mapsto J_{G,b'}$.
\end{proof}
As stated in Lemma \ref{lem:graph_space}, whether $\mcG(G,b)$ has a perfect
deterministic strategy can be determined by looking at the connected components of $G$.
Whether $J=1$ in $\Gamma(G,b)$ can also be determined by looking at the
connected components of $G$.  Recall that the coproduct of a collection of
group homomorphisms $\psi_i : \Psi \arr \Phi_i$, $i=1,\ldots,k$, is the
quotient of the free product $\Phi_1 * \dotsm * \Phi_k$ by the normal subgroup
generated by the relations $\psi_i(g) = \psi_j(g)$ for all $1 \leq i,j \leq k$
and $g \in \Psi$. We denote this coproduct by
$\prescript{}{\Psi}\coprod_{i=1}^k \Phi_i$. If each $\psi_i$ is injective, so
that $\Psi$ is a subgroup of each $\Phi_i$, then
$\prescript{}{\Psi}\coprod_{i=1}^k \Phi_i$ is called the amalgamated free
product of the groups $\Phi_1,\ldots,\Phi_k$ over $\Psi$, and it is well-known
that the natural homomorphisms $\Phi_i \arr \prescript{}{\Psi}\coprod_{i=1}^k
\Phi_i$ are also injective.

\begin{lemma}\label{lem:disconnected}
    If a $\Z_2$-coloured graph $(G,b)$ has connected components $G_1,\ldots, G_k$,
    and $b_i$ is the restriction of $b$ to $G_i$, then
    \begin{equation*}
        \Gamma(G,b)= \prescript{}{\langle J \rangle} \coprod_{i=1}^k \Gamma(G_i,b_i),
    \end{equation*}
    where the coproduct is over the homomorphisms $\Z_2 = \langle J
    \rangle  \arr \Gamma(G_i,b_i)$ sending $J \mapsto J_{G_i,b_i}$.  In
    particular, $J_{G,b} \neq 1$ in $\Gamma(G,b)$ if and only if $J_{G_i,b_i} \neq 1$
    in $\Gamma(G_i,b_i)$ for all $i=1,\ldots,k$, and if $J_{G,b} \neq 1$ then the
    inclusions $\Gamma(G_i,b_i) \arr \Gamma(G,b)$ are injective.
\end{lemma}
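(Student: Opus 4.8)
The plan is to show that $\Gamma(G,b)$ satisfies the universal property of the coproduct $\prescript{}{\langle J \rangle}\coprod_{i=1}^k \Gamma(G_i, b_i)$, and then to read off the statement about $J$ and about injectivity of the inclusions from standard facts about amalgamated free products (and the coproduct of maps out of a common group).

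First I would set up the universal property. The group $\prescript{}{\langle J\rangle}\coprod_i \Gamma(G_i,b_i)$ comes equipped with homomorphisms $\iota_i \colon \Gamma(G_i,b_i) \arr \prescript{}{\langle J\rangle}\coprod_i \Gamma(G_i,b_i)$ that agree on the image of $\langle J\rangle$, and it is initial among such data. So I would produce, on the one hand, a homomorphism $\Phi \colon \Gamma(G,b) \arr \prescript{}{\langle J\rangle}\coprod_i \Gamma(G_i,b_i)$ by sending each generator $x_e$ (for $e$ an edge of $G_i$) to $\iota_i(x_e)$, and $J$ to the common image of $J_{G_i,b_i}$; one checks this respects the four families of relations in Definition \ref{def:group}, which is immediate since every relation of $\Gamma(G,b)$ involves only edges incident to a single vertex, hence edges of a single component $G_i$, and is therefore a relation already holding in $\Gamma(G_i,b_i)$. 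On the other hand, since $E(G) = \bigsqcup_i E(G_i)$, the relations of $\Gamma(G_i,b_i)$ are a subset of the relations of $\Gamma(G,b)$, so there are homomorphisms $\psi_i \colon \Gamma(G_i,b_i)\arr \Gamma(G,b)$ sending $x_e \mapsto x_e$ and $J_{G_i,b_i}\mapsto J_{G,b}$; these agree on $\langle J\rangle$, so by the universal property they factor through a homomorphism $\Psi \colon \prescript{}{\langle J\rangle}\coprod_i \Gamma(G_i,b_i) \arr \Gamma(G,b)$. Checking $\Phi$ and $\Psi$ are mutually inverse on generators gives the isomorphism, and by construction it identifies $J_{G,b}$ with the distinguished central element of the coproduct.

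Next I would deduce the statement about $J$. The distinguished element of the coproduct is the common image of the $J_{G_i,b_i}$; if some $J_{G_i,b_i} = 1$ in $\Gamma(G_i,b_i)$, then its image, hence $J_{G,b}$, is trivial. Conversely, if every $J_{G_i,b_i}\neq 1$, then each $\langle J\rangle \arr \Gamma(G_i,b_i)$ is injective (since $J$ has order $2$ and is nontrivial), so the coproduct is an amalgamated free product of the $\Gamma(G_i,b_i)$ over $\langle J\rangle \iso \Z_2$; the quoted standard fact that the natural maps into an amalgamated free product are injective then gives both that $\Gamma(G_i,b_i)\arr \Gamma(G,b)$ is injective and, since $\langle J\rangle$ injects into any one factor and hence into the amalgam, that $J_{G,b}\neq 1$.

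The only real subtlety — and the step I would be most careful about — is the direction where some $J_{G_i,b_i}$ might be trivial: there the coproduct is genuinely a coproduct and not an amalgamated free product, so one cannot invoke injectivity of the natural maps, and indeed $J_{G,b}$ collapses. I expect this to require no new idea beyond observing that a surjection $\Gamma(G_i,b_i)\arr \Gamma(G,b) \arr \Gamma(G_i,b_i)$-type retraction argument is unavailable, so the cleanest route is simply: the coproduct receives a map from $\Gamma(G_i,b_i)$ sending $J_{G_i,b_i}$ to the distinguished element, so if $J_{G_i,b_i}=1$ then the distinguished element is $1$. Everything else is a routine check of relations and the universal property, so I would present it compactly.
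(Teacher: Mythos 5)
Your proof is correct and follows essentially the same route as the paper: the paper simply observes that the presentation of $\Gamma(G,b)$ decomposes by component with $J$ as the only shared generator (which is exactly what your universal-property argument makes explicit), and the statements about $J$ and injectivity are read off from the standard fact, quoted just before the lemma, that the factors embed into an amalgamated free product when the maps from $\langle J\rangle$ are injective. Your extra care in the case where some $J_{G_i,b_i}=1$ (where one only has a genuine coproduct and the homomorphism $\Gamma(G_i,b_i)\arr\Gamma(G,b)$ kills $J_{G,b}$) is exactly the right resolution and is implicit in the paper's argument.
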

\begin{proof}
    The relations for $\Gamma(G,b)$ can be grouped by component. The variable $J$
    is the only common generator between relations in different components, so the
    lemma follows directly from the presentation.
\end{proof}

The $\Z_2$-colouring $b$ is used in the definition of $\Gamma(G,b)$ to
determine when to include the generator $J$ in a relation, but if we replace
$J$ with the identity, then we get a similar group which only depends on the
uncoloured graph $G$:
\begin{defn}
    Let $G$ be a graph. The \emph{graph incidence group}
    $\Gamma(G)$ is the finitely presented group with generators $\{x_e : e \in
    E(G)\}$, and relations (1)-(4) from Definition \ref{def:group}, with $J$
    replaced by $1$.
\end{defn}
Note that $\Gamma(G) = \Gamma(G,b) / \langle J \rangle$ for any
$\Z_2$-colouring $b$ of $G$. For this reason, $\Gamma(G)$ is finite if and only
if $\Gamma(G,b)$ is finite for some (resp. any) $\Z_2$-colouring $b$. Also,
since $J$ does not appear in any of the relations for $\Gamma(G,0)$, we have
$\Gamma(G,0) = \Gamma(G) \times \Z_2$. By Lemmas \ref{lem:parity} and
\ref{lem:disconnected}, if the restriction of $b$ to every connected component
of $G$ has even parity, then we also have $\Gamma(G,b) \iso \Gamma(G) \times
\Z_2$.

\begin{lemma}\label{lem:disconnected2}
    If the graph $G$ has connected components $G_1,\ldots,G_k$, then
    \begin{equation*}
        \Gamma(G) = \Gamma(G_1) * \Gamma(G_2) * \cdots * \Gamma(G_k).
    \end{equation*}
\end{lemma}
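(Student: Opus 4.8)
The plan is to argue directly from the finite presentation of $\Gamma(G)$, exactly as in the proof of Lemma \ref{lem:disconnected}, but now with the simplification that there is no longer a central generator $J$ shared between components. Recall that $\Gamma(G)$ is presented by generators $\{x_e : e \in E(G)\}$ together with relations (1)--(4) of Definition \ref{def:group}, with $J$ replaced by $1$. Since $G$ has connected components $G_1, \ldots, G_k$, the edge set partitions as $E(G) = E(G_1) \sqcup \cdots \sqcup E(G_k)$, and every relation of type (1)--(4) involves only edges incident to a single vertex, hence only edges lying in one component $G_i$. Thus the generating set and the relation set of $\Gamma(G)$ both partition according to the components, with no generator and no relation spanning two components.

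First I would recall the standard fact that if a group $\Phi$ has a presentation $\langle S \mid R \rangle$ in which $S = S_1 \sqcup \cdots \sqcup S_k$ and $R = R_1 \sqcup \cdots \sqcup R_k$ with each relator in $R_i$ a word in $S_i$ alone, then $\Phi \cong \Phi_1 * \cdots * \Phi_k$ where $\Phi_i = \langle S_i \mid R_i \rangle$; this is immediate from the universal property of the free product and of presentations. Here $S_i = \{x_e : e \in E(G_i)\}$ and $R_i$ consists of the relations (1)--(4) restricted to vertices of $G_i$, so by definition $\langle S_i \mid R_i \rangle = \Gamma(G_i)$. Applying the fact gives $\Gamma(G) = \Gamma(G_1) * \cdots * \Gamma(G_k)$, which is the claim. (Equivalently, one can obtain this directly from Lemma \ref{lem:disconnected} by quotienting each side by the normal closure of $J$: the coproduct $\prescript{}{\langle J\rangle}\coprod_i \Gamma(G_i,b_i)$ modulo $J$ becomes the ordinary free product of the $\Gamma(G_i,b_i)/\langle J\rangle = \Gamma(G_i)$, since the amalgamating subgroup $\langle J\rangle$ is killed.)

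There is essentially no obstacle here; the only point requiring the slightest care is to confirm that no relation of $\Gamma(G)$ couples distinct components, which holds because relations (1) and (2) involve a single edge, while (3) and (4) involve edges $e, e' \in E(v)$ incident to a common vertex $v$, and any such edges lie in the same component of $G$. With that observation the presentation splits and the free-product decomposition follows formally.
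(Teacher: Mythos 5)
Your proof is correct and follows essentially the same route as the paper, which simply observes that the presentation of $\Gamma(G)$ splits into disjoint generator and relation sets per connected component (with no shared $J$), so the free product decomposition is immediate. Your added remark about quotienting Lemma \ref{lem:disconnected} by $\langle J \rangle$ is a valid alternative but not needed.
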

Similarly to Lemma \ref{lem:disconnected}, Lemma \ref{lem:disconnected2} follows
immediately from the presentation of $\Gamma(G)$.

\subsection{Consequences of main results for quantum strategies}\label{SS:consequences}
To explain the implications of Theorems \ref{thm:finite} and \ref{thm:abelian}
for perfect strategies of $\mcG(G,b)$, we summarize the following points from
the proof of Theorem \ref{thm:CLS}.  Recall that a tracial state on a group $G$
is a function $\tau : G \arr \C$ such that $\tau(1) = 1$, $\tau(ab) = \tau(ba)$
for all $a,b \in G$, and $\tau$ is positive (meaning that $\tau$ extends to a
positive linear functional on the $C^*$-algebra of $G$).
The opposite group $\Phi^{op}$ of a group $(\Phi,\cdot)$ is the set $\Phi$ with
a new group operation $\circ$ defined by $a \circ b := b \cdot a$.

\begin{proposition}[\cite{CM14,CLS17}]\label{prop:CLS2}
    Let $(G,b)$ be a $\Z_2$-coloured graph.
    \begin{enumerate}[(a)]
        \item Suppose $\{\msX_{ve}\}, \{\msY_{uf}\}, \nu$ is a
            perfect strategy for $\mcG(G,b)$ on a Hilbert space $\mcH$.
            Let $\mcA$ and $\mcB$ be the subalgebras of $B(\mcH)$ generated
            by the observables $\{\msX_{ve}\}$ and $\{\msY_{ve}\}$, respectively.
            Then:
            \begin{itemize}
                \item If $\mcH_0 := \overline{\mcA \nu} \subset \mcH$, then
                    $\mcH_0$ is also equal to $\overline{\mcB \nu}$.
                \item If $e \in E(G)$ has endpoints $u,v$, then the observable
                    $X_e := \msX_{ve}|_{\mcH_0}$ on $\mcH_0$ is also equal to
                    $\msX_{ue}|_{\mcH_0}$, and the mapping
                    \begin{equation*}
                        x_e \mapsto X_e \text{ for } e \in E(G) \text{ and } J \mapsto -\Id
                    \end{equation*}
                    defines a representation $\phi$ of $\mcG(G,b)$ on $\mcH_0$.
                \item There is also a unique representation $\phi^R$ of the opposite
                    group $\Gamma(G,b)^{op}$ on $\mcH_0$ defined by
                    $\phi^R(z) w \nu = w \phi(z) \nu$ for all $z \in \Gamma(G,b)^{op}$
                    and $w \in \mcA$. Furthermore, $\msY_{ve} = \phi^R(x_e)$ for all
                    $v \in V(G)$ and $e \in E(v)$.
                \item The function
                    \begin{equation*}
                        \tau : \Gamma(G,b) \arr \C :  z \mapsto \langle \nu | \phi(z) \nu \rangle
                    \end{equation*}
                    defines a tracial state on $\Gamma(G,b)$.
            \end{itemize}
        \item Suppose $\tau$ is a tracial state on $\Gamma(G,b)$ with $\tau(J)
            = -1$. Then there is a Hilbert space $\mcH$, a unitary representation
            $\phi$ of $\Gamma(G,b)$ on $\mcH$, a unitary representation $\phi^R$ of the opposite group $\Gamma(G,b)^{op}$
            on $\mcH$, and a quantum state $\nu \in \mcH$ such that
            if $\msX_{ve} := \phi(x_e)$ and $\msY_{ve} := \phi^R(x_e)$, then
            $\{\msX_{ve}\}$, $\{\msY_{ve}\}$, $\nu$ is a perfect commuting-operator
            strategy for $\mcG(G,b)$, and furthermore $\tau(z) = \langle \nu |
            \phi(z) \nu \rangle$ for all $z \in \Gamma(G,b)$ (so $\tau$ is the tracial state of
            the strategy as in part (a) above).
        \item Let $\psi : \Gamma(G,b) \arr U(\mcH)$ be the left multiplication action of
            $\Gamma(G,b)$ on $\mcH = \ell^2 \Gamma(G,b)$, and let $\nu =
            \frac{1 - J}{\sqrt{2}} \in \mcH$. If $J \neq 1$ in $\Gamma(G,b)$, then
            the function
            \begin{equation*}
                \tau : \Gamma(G,b) \arr \C : z \mapsto \langle \nu | \psi(z) \nu \rangle
            \end{equation*}
            is a tracial state on $\Gamma(G,b)$ with $\tau(J) = -1$.

        \item Suppose $\psi$ is a finite-dimensional representation of $\Gamma(G,b)$ on
            $\C^d$ with $\psi(J)=-\Id$. Let $\tau$ be the tracial state
            $\tau(z) = \tr(\phi(x)) / d$ on $\Gamma(G,b)$.
            Then in part (b) we can take $\mcH = \C^d \otimes \C^d$, $\phi =
            \psi \otimes \Id$, $\phi^R = \Id \otimes \psi^T$
            (where $\psi^T$ refers to the transpose with respect to the standard basis
            on $\C^d$), and $\nu = \frac{1}{\sqrt{d}} \sum_{i=1}^d e_i \otimes e_i$, where
            $e_i$ is the $i$th standard basis element of $\C^d$.
    \end{enumerate}
\end{proposition}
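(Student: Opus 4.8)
This proposition is really a package of standard GNS-type facts about linear system games, specialized to the graph incidence setting, so the plan is to assemble it from the structure theory already present in \cite{CM14, CLS17} rather than reprove everything from scratch. The organizing idea is the correspondence between perfect strategies and tracial states with $\tau(J) = -1$, together with the GNS construction that turns such a tracial state back into a strategy.

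For part (a), I would start from the observation that in a perfect strategy the correlation $\langle \nu | \msX_{ve}\msY_{ue}\nu\rangle = 1$ together with $\msX_{ve},\msY_{ue}$ being commuting $\{\pm 1\}$-observables forces $\msX_{ve}\nu = \msY_{ue}\nu$; this is the standard ``perfect correlation implies the operators act identically on the state'' lemma. From this, a telescoping argument shows $w\msX_{ve}\nu = w\msY_{ue}\nu$ for any $w$ in the algebra $\mcB$ generated by the $\msY$'s (since $\msX_{ve}$ commutes with every $\msY_{uf}$), hence $\msX_{ve}$ and $\msX_{ue}$ agree on $\overline{\mcB\nu}$, and symmetrically; this simultaneously gives $\overline{\mcA\nu} = \overline{\mcB\nu} =: \mcH_0$ and the well-definedness of $X_e$. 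One then checks that $X_e$ satisfies relations (1)--(4) of Definition \ref{def:group} on $\mcH_0$ with $J\mapsto -\Id$ --- relations (1),(3) are inherited from the observables being commuting involutions, and relation (4) is exactly Equation \ref{eq:qoutput} --- giving the representation $\phi$. The opposite representation $\phi^R$ is defined by $\phi^R(z)w\nu := w\phi(z)\nu$; well-definedness uses that $\mcA$ and $\mcB$ commute and $\overline{\mcB\nu} = \mcH_0$, and the identity $\msY_{ve} = \phi^R(x_e)$ follows by applying the definition to $w = \Id$ and using $\msX_{ve}\nu = \msY_{ve}\nu$. Finally $\tau(z) := \langle\nu|\phi(z)\nu\rangle$ is positive (it is a vector state composed with a $*$-representation) and tracial because $\langle\nu|\phi(ab)\nu\rangle = \langle\phi(a^{-1})\nu | \phi(b)\nu\rangle = \langle\nu | \phi^R(a)\phi(b)\nu\rangle = \langle\nu|\phi(b)\phi^R(a)\nu\rangle = \langle\nu|\phi(ba)\nu\rangle$, using that $\phi$ and $\phi^R$ commute.

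Part (b) is the GNS construction: given a tracial state $\tau$ on the group algebra with $\tau(J) = -1$, form the GNS Hilbert space $\mcH = \overline{\C[\Gamma(G,b)]}$ with inner product $\langle a | b\rangle = \tau(a^*b)$, let $\phi$ be the left regular action, $\phi^R$ the right regular action (which commutes with $\phi$ and is a representation of the opposite group), and $\nu$ the image of the identity; traciality is exactly what makes $\phi^R$ well-defined and unitary, and $\tau(J) = -1$ gives $\msX_{ve}\nu = \msY_{ve}\nu$ so the resulting strategy is perfect. Part (c) is the special case $\tau(z) = \langle\nu|\psi(z)\nu\rangle$ with $\nu = (1-J)/\sqrt 2 \in \ell^2\Gamma(G,b)$: one computes $\tau(1) = \tfrac12(1 + 1) = 1$ using $J\ne 1$, $\tau(J) = \tfrac12\langle 1 - J \,|\, J - 1\rangle = -1$, and traciality is automatic because the left and right regular representations commute. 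Part (d) is a direct verification that with $\mcH = \C^d\otimes\C^d$, $\phi = \psi\otimes\Id$, $\phi^R = \Id\otimes\psi^T$, and $\nu = \tfrac{1}{\sqrt d}\sum_i e_i\otimes e_i$ the maximally entangled state, one has $\langle\nu | (\psi(z)\otimes\Id)\nu\rangle = \tfrac1d\tr(\psi(z))$, and that the partial-transpose identity $(\Id\otimes A^T)\nu = (A\otimes\Id)\nu$ makes $\phi^R$ act on $\nu$ the way part (b) requires.

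I do not expect a genuine obstacle here, since the argument is a routine transcription of known linear-system-game theory; the one place demanding care is the well-definedness and unitarity of $\phi^R$ in parts (a) and (b) --- i.e.\ checking that $w\nu = 0$ implies $w\phi(z)\nu = 0$, which is precisely where traciality (equivalently, the coincidence $\overline{\mcA\nu} = \overline{\mcB\nu}$) enters --- and the bookkeeping in part (a) to confirm that $X_e = \msX_{ve}|_{\mcH_0}$ is independent of the choice of endpoint $v$ of $e$, which is what allows relation (4) to be read off cleanly. I would present parts (a)--(d) in that order, citing \cite{CM14, CLS17} for the general linear-system statements and only spelling out the graph-incidence specializations of Equations \ref{eq:outputcondition} and \ref{eq:qoutput}.
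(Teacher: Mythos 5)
Your proposal takes essentially the same route as the paper: parts (a) and (d) are the standard linear-system-game facts imported from \cite{CM14,CLS17}, part (b) is the left/right GNS construction applied to $\tau$, and part (c) is a direct computation with $\nu = (1-J)/\sqrt{2}$; this is exactly how the paper proceeds, and your sketch of (a) is a correct expansion of what the paper simply cites. Three local justifications should be repaired, though none affects the overall plan. First, in part (a), taking $w=\Id$ in the definition of $\phi^R$ only yields $\phi^R(x_e)\nu=\msY_{ve}\nu$; to conclude $\phi^R(x_e)=\msY_{ve}$ on all of $\mcH_0$ you need the computation $\phi^R(x_e)w\nu = w\,\phi(x_e)\nu = w\,\msX_{ve}\nu = w\,\msY_{ve}\nu = \msY_{ve}\,w\nu$ for every $w\in\mcA$, using that $\msY_{ve}$ commutes with $\mcA$. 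Second, in part (c), traciality is not ``automatic because the left and right regular representations commute'': the vector state of a generic vector of $\ell^2\Gamma(G,b)$ is not tracial. Here one computes that $\tau(g)$ equals $1$ if $g=1$, equals $-1$ if $g=J$, and vanishes otherwise, and then $\tau(ab)=\tau(ba)$ holds precisely because $ab=J$ if and only if $ba=J$, i.e.\ because $J$ is central --- which is the point the paper's proof singles out. Third, in part (b) the role of $\tau(J)=-1$ is not to give $\msX_{ve}\nu=\msY_{ve}\nu$ (that identity holds for the GNS cyclic vector regardless of $\tau(J)$); rather, $\tau\bigl((1+J)^*(1+J)\bigr)=2+2\tau(J)=0$ together with centrality of $J$ forces $\phi(J)=-\Id$ on the GNS space, which is what makes the operators $\msX_{ve}=\phi(x_e)$ satisfy Equation \ref{eq:qoutput} and hence constitute a valid (and then perfect) strategy.
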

\begin{proof}
    Part (a) is \cite[Lemma 8]{CLS17}. Part (d) is proved in \cite{CM14} for
    the more general class of binary constraint system games. Part (b) is slightly more
    general than what is shown in \cite{CLS17}, but if we take $\phi$ and
    $\phi^R$ to be the left and right GNS representations of $\tau$, and $\nu$
    to be the cyclic state for the GNS representation, then the rest of the proof is
    the same as for \cite[Theorem 4]{CLS17}. Part (c) is implicitly used in
    \cite{CLS17}, and follows immediately from the fact that $J \neq 1$ (so that
    $\nu$ is a unit vector) and $J$ is central.
\end{proof}
Every group is isomorphic to its opposite group via the map $\Phi \arr
\Phi^{op} : z \mapsto z^{-1}$. Thus we can also think of the representation
$\phi^R$ of $\Gamma(G,b)^{op}$ appearing in Proposition \ref{prop:CLS2} as a
representation of $\Gamma(G,b)$. We stick with $\Gamma(G,b)^{op}$ to better
distinguish the two representations.

Recall that perfect deterministic strategies correspond to solutions of
$\mcI(G) x = b$, and hence to one-dimensional representations of $\Gamma(G,b)$
with $J=-1$.  Proposition \ref{prop:CLS2} generalizes this by showing that
there is a correspondence between perfect commuting-operator representations
of $\Gamma(G,b)$ and tracial states on $\Gamma(G,b)$.  Thus, understanding the
structure of $\Gamma(G,b)$ allows us to understand the structure of perfect
strategies for $\mcG(G,b)$. For instance, part (d) of Proposition \ref{prop:CLS2}
implies that every finite-dimensional irreducible representation $\psi$ of
$\Gamma(G,b)$ with $\psi(J)=-\Id$ can be turned into a perfect quantum strategy for $\mcG(G,b)$.
If $\Gamma(G,b)$ is finite, then we can prove conversely that all perfect
strategies are direct sums of strategies of this form:
\begin{cor}\label{cor:finitestrats}
    Suppose $\Gamma(G,b)$ is a finite group, and let $\phi_i : \Gamma(G,b) \arr
    U(\mcV_i)$, $i=1,\ldots,k$ be the irreducible unitary representations of
    $\Gamma(G,b)$ with $\phi_i(J) = -\Id$. Choose an orthonormal basis
    $v_{i1},\ldots,v_{id_i}$ of $\mcV_i$, where $d_i = \dim \mcV_i$. Given $w \in
    \Gamma(G,b)$, let $\phi_i(w)^T$ denote the transpose of $\phi_i(w)$ with
    respect to the chosen basis, and let $\phi_i^T$ denote the corresponding
    representation of $\Gamma(G,b)^{op}$ on Hilbert space $\mcV_i$.

    If $\{\msX_{ve}\}$, $\{\msY_{ve}\}$, $\nu$ is a perfect commuting-operator
    strategy on a Hilbert space $\mcH$, then there is a finite-dimensional
    subspace $\mcH_0 \subseteq \mcH$ which contains $\nu$ and is invariant under
    $\msX_{ve}$ and $\msY_{ve}$ for all $v \in V(G)$, $e \in E(G)$, and an
    isometric isomorphism
    \begin{equation*}
        I : \mcH_0 \arr (\mcV_{i_1} \otimes \mcV_{i_1}) \oplus \cdots \oplus (\mcV_{i_m} \otimes \mcV_{i_m}),
    \end{equation*}
    where $1 \leq i_1 < \ldots < i_m \leq k$, such that
    \begin{equation*}
        I(\nu) = \sum_{j=1}^m \frac{\lambda_j}{\sqrt{d_{i_j}}} \sum_{\ell=1}^{d_{i_j}} v_{i_j \ell} \otimes v_{i_j \ell}
    \end{equation*}
    for some positive real numbers $\lambda_j$ with $\sum_j \lambda^2_j = 1$, and
    \begin{equation*}
        I \msX_{ve} I^{-1} = \sum_{j=1}^m \phi_{i_j}(x_e) \otimes \Id_{\mcV_i}, \quad\quad
        I \msY_{ve} I^{-1} = \sum_{j=1}^m \Id_{\mcV_i} \otimes \phi_{i_j}^T(x_e)
    \end{equation*}
    for all $v \in V(G)$ and $e \in E(v)$.
\end{cor}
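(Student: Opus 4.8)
The plan is to feed the given strategy into the GNS dictionary of Proposition~\ref{prop:CLS2}(a) and then use finiteness of $\Gamma(G,b)$ to put the resulting representation and tracial state into explicit normal form. Applying Proposition~\ref{prop:CLS2}(a) to the perfect commuting-operator strategy $\{\msX_{ve}\},\{\msY_{ve}\},\nu$ produces: the subspace $\mcH_0=\overline{\mcA\nu}=\overline{\mcB\nu}$, which contains $\nu$ and is invariant under all $\msX_{ve}$ and $\msY_{ve}$; a unitary representation $\phi$ of $\Gamma(G,b)$ on $\mcH_0$ with $\phi(J)=-\Id$ and $\phi(x_e)=\msX_{ve}|_{\mcH_0}$; a unitary representation $\phi^R$ of $\Gamma(G,b)^{op}$ on $\mcH_0$ with $\phi^R(x_e)=\msY_{ve}|_{\mcH_0}$ and $\phi^R(z)\phi(a)\nu=\phi(a)\phi(z)\nu$ for all $a\in\C\Gamma(G,b)$; and the tracial state $\tau(z)=\langle\nu|\phi(z)\nu\rangle$, with $\tau(J)=\langle\nu|\phi(J)\nu\rangle=-1$. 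Since $\Gamma(G,b)$ is finite, $\phi(\C\Gamma(G,b))$ is finite-dimensional, hence $\mcH_0=\phi(\C\Gamma(G,b))\nu$ is finite-dimensional and $\nu$ is cyclic for $\phi$, so $(\mcH_0,\phi,\nu)$ is a copy of the GNS triple of $\tau$.

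Next I would classify $\tau$. Because $\Gamma(G,b)$ is finite, $\C\Gamma(G,b)\cong\bigoplus_i\End(\mcV_i')$ over all irreducibles, so every tracial state is a convex combination, with nonnegative weights summing to $1$, of the normalized characters $\chi/\chi(1)$ of the irreducible representations. As $J$ is central with $J^2=1$, each irreducible sends $J$ to $\pm\Id$, so each such normalized character takes value $\pm1$ on $J$; the constraint $\tau(J)=-1$ therefore forces the weight of every irreducible with $J\mapsto+\Id$ to vanish. Hence, in the notation of the statement, $\tau=\sum_{i=1}^k p_i\,\widehat\chi_i$ where $\widehat\chi_i=(\tr\phi_i)/d_i$, $p_i\ge0$, and $\sum_i p_i=1$. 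Let $i_1<\cdots<i_m$ enumerate the indices with $p_{i_j}>0$, and set $\lambda_j=\sqrt{p_{i_j}}>0$, so $\sum_j\lambda_j^2=1$.

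Now build an explicit GNS model. Writing $\Omega_i=\tfrac{1}{\sqrt{d_i}}\sum_{\ell=1}^{d_i}v_{i\ell}\otimes v_{i\ell}$, set $\mcK=\bigoplus_{j=1}^m(\mcV_{i_j}\otimes\mcV_{i_j})$, $\rho=\bigoplus_j(\phi_{i_j}\otimes\Id)$, and $\Omega=\sum_j\lambda_j\Omega_{i_j}$. The identity $\langle\Omega_i|(\phi_i(z)\otimes\Id)\Omega_i\rangle=(\tr\phi_i(z))/d_i$ gives $\langle\Omega|\rho(z)\Omega\rangle=\sum_j\lambda_j^2\,\widehat\chi_{i_j}(z)=\tau(z)$, and $\Omega$ is cyclic for $\rho$ since $\C\Gamma(G,b)$ surjects onto $\bigoplus_j\End(\mcV_{i_j})$ (Artin--Wedderburn) and $\{(A\otimes\Id)\Omega_i : A\in\End(\mcV_i)\}$ spans $\mcV_i\otimes\mcV_i$. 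By uniqueness of GNS there is an isometric isomorphism $I:\mcH_0\arr\mcK$ with $I\phi(z)I^{-1}=\rho(z)$ and $I\nu=\Omega$; this is the required $I$, it yields the stated formula for $I(\nu)$, and $I\msX_{ve}I^{-1}=I\phi(x_e)I^{-1}=\sum_j\phi_{i_j}(x_e)\otimes\Id$. For the $\msY$-observables, transport the relation $\phi^R(z)\phi(a)\nu=\phi(a)\phi(z)\nu$ through $I$; using the ricochet identity $(A\otimes\Id)\Omega_i=(\Id\otimes A^T)\Omega_i$ (transpose in the chosen orthonormal basis) one computes $\rho(a)\rho(z)\Omega=\big(\bigoplus_j\Id\otimes\phi_{i_j}(z)^T\big)\rho(a)\Omega$, so cyclicity of $\Omega$ forces $I\phi^R(z)I^{-1}=\bigoplus_j\Id\otimes\phi_{i_j}(z)^T$, and hence $I\msY_{ve}I^{-1}=\sum_j\Id_{\mcV_{i_j}}\otimes\phi_{i_j}^T(x_e)$, as claimed.

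I expect the only real friction to be bookkeeping rather than anything conceptual: matching the right-multiplication representation $\phi^R$ to the second tensor factor with the correct transpose convention relative to the chosen bases (the ricochet identity is exactly what makes this go through), and tracking normalizations so that the amplitude of the maximally entangled vector in block $j$ comes out as $\lambda_j/\sqrt{d_{i_j}}$ with $\lambda_j^2$ the weight of the $j$-th character in $\tau$. Everything else --- finite-dimensionality and invariance of $\mcH_0$, and the description of the tracial states of a finite group satisfying $\tau(J)=-1$ --- is routine given Proposition~\ref{prop:CLS2}.
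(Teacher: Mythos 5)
Your proof is correct, and it reaches the paper's normal form by a genuinely different route. The paper works entirely inside the given strategy's space: it decomposes $\mcH_0$ into isotypic blocks $\mcV_i \otimes \mcW_i$, writes $I_0\nu = \sum_i \nu_i$, and uses traciality of $z \mapsto \langle \nu_i | (\phi_i(z)\otimes\Id)\nu_i\rangle$ together with a Schmidt decomposition (and cyclicity, to get $\dim\mcW_i = d_i$) to force each nonzero $\nu_i$ to be maximally entangled, after which $I$ is assembled explicitly from unitaries built out of the Schmidt bases. You instead classify the tracial state abstractly --- tracial states of the finite-dimensional algebra $\C\Gamma(G,b)$ are convex combinations of normalized irreducible characters, and $\tau(J)=-1$ kills every block with $J \mapsto \Id$ --- then construct the explicit model $\bigoplus_j (\mcV_{i_j}\otimes\mcV_{i_j})$ with maximally entangled cyclic vector and invoke uniqueness of the GNS representation to produce $I$; your identification of $\phi^R$ via the intertwining relation of Proposition \ref{prop:CLS2}(a), the ricochet identity, and cyclicity is essentially the same computation with which the paper closes its proof. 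Your version is shorter and more modular, outsourcing the entanglement structure to standard facts (trace classification on a finite group algebra, GNS uniqueness), whereas the paper's hands-on construction exhibits $I$ concretely in terms of the Schmidt data of $\nu$, which is what makes the local-isometry refinements in the remark after the corollary immediate. One point worth stating explicitly in your write-up: Proposition \ref{prop:CLS2}(a) gives $\phi^R(z)w\nu = w\phi(z)\nu$ for $w \in \mcA$, and your use of it with $\phi(a)$, $a \in \C\Gamma(G,b)$, relies on $\mcA|_{\mcH_0} = \phi(\C\Gamma(G,b))$, which holds because $\phi(J) = -\Id$ is a scalar so the restricted algebra generated by the $\phi(x_e)$ already contains $\phi(\C\Gamma(G,b))$; the paper uses the same extension implicitly when it applies the relation with $w P_{i_j} \in \C\Gamma(G,b)$.
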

\begin{proof}
    Let $\mcH_0$ be the subspace from part (a) of Proposition \ref{prop:CLS2}, and let
    $\phi$ and $\phi^R$ be the corresponding representations of $\Gamma(G,b)$ and
    $\Gamma(G,b)^{op}$ on $\mcH_0$. Since $\Gamma(G,b)$ is finite,
    \begin{equation*}
        \mcH_0 = \overline{\mcA \nu} = \overline{\phi(\C \Gamma(G,b)) \nu}
            = \phi(\C \Gamma(G,b)) \nu
    \end{equation*}
    is finite-dimensional. Since $\phi(J)=-\Id$, there is an isometric isomorphism
    \begin{equation*}
        I_0 : H_0 \arr \bigoplus_{i=1}^k \mcV_{i} \otimes \mcW_i
    \end{equation*}
    for some (possibly trivial) Hilbert spaces $\mcW_1,\ldots,\mcW_k$ with $I_0 \phi
    I_0^{-1} = \sum_{i=1}^k \phi_i \otimes \Id$. Let $I_0 \nu = \sum_{i=1}^k \nu_i$
    where $\nu_i \in \mcV_i \otimes \mcW_i$. If $P_i \in \C \Gamma(G,b)$ is the
    central projection for $\mcV_i$, then $I_0 \phi(P_i) I_0^{-1} = \phi_i(P_i) \otimes
    \Id = \Id_{\mcV_i} \otimes \Id_{\mcW_i}$ is the projection onto $\mcV_i \otimes \mcW_i$, and so $\nu_i = I_0 \phi(P_i) \nu$. Hence
    \begin{equation*}
        \langle \nu_i | (\phi_i(z) \otimes \Id) \nu_i \rangle
            = \langle \nu | \phi(z P_i) \nu \rangle
    \end{equation*}
    for all $z \in \Gamma(G,b)$,
    and since $z \mapsto \langle \nu | \phi(z) \nu \rangle$ is tracial,
    \begin{equation*}
        \tau_i : \Gamma(G,b) \arr \C : z \mapsto \langle \nu_i | (\phi_i(z) \otimes \Id) \nu_i
            \rangle
    \end{equation*}
    is a class function on $\Gamma(G,b)$, meaning that $\tau_i(zw) = \tau_i(wz)$ for all $z,w \in \Gamma(G,b)$.

    On the other hand, if we take the Schmidt decomposition
    \begin{equation*}
        \nu_i = \sum_{j=1}^{m_i} c_{ij} u_{ij} \otimes w_{ij}
    \end{equation*}
    for some integer $m_i \geq 0$, positive real numbers $c_{i1},\ldots,c_{im}$ and orthonormal subsets
    $u_{i1},\ldots,u_{im_i}$ and $w_{i1},\ldots,w_{im_i}$ of $\mcV_i$ and $\mcW_i$ respectively, then
    \begin{equation*}
        \langle \nu_i | (\phi_i(z) \otimes \Id) \nu_i \rangle
        = \sum_{j=1}^{m_i} c_{ij} \langle u_{ij} | \phi_i(z) u_{ij} \rangle.
    \end{equation*}
    Since $\mcV_i$ is irreducible, $\phi_i(\C \Gamma(G,b)) = \Lin(\mcV_i)$, the
    space of linear transformations from $\mcV_i$ to itself. Thus
    the only way for $\tau_i$ to be tracial is if $\nu_i=0$ (in which case $m_i=0$),
    or if $m_i=\dim \mcV_i=d_i$ and $c_{i1}=\ldots=c_{id_i}$. Since $\phi(\C \Gamma(G,b)) \nu
    = \mcH_0$, we must also have
    \begin{equation*}
        (\phi_i(\C \Gamma(G,b)) \otimes \Id) \nu_i =  I_0 \phi(\C \Gamma(G,b) P_i) \nu =
            I_0 \phi(P_i) \mcH_0 = \mcV_i \otimes \mcW_i.
    \end{equation*}
    and this is only possible if $m_i = \dim \mcW_i$ as well.

    Suppose that $\nu_i \neq 0$, and let
    \begin{equation*}
        \gamma_i = \frac{1}{\sqrt{d_i}} \sum_{j=1}^{d_i} v_{ij} \otimes v_{ij} \in \mcV_{i} \otimes \mcV_{i}.
    \end{equation*}
    Recall that if $A \in \Lin(\mcV_i)$, then $(A \otimes \Id) \gamma_i = (\Id
    \otimes A^T) \gamma_i$, where the transpose is taken with respect to the
    basis $v_{i1},\ldots,v_{id_i}$. Let $U_{i1} : \mcV_i \arr \mcV_i$ be the
    unitary transformation sending $v_{ij} \mapsto u_{ij}$, and let $U_{i2} : \mcV_i
    \arr \mcW_i$ be the unitary transformation sending $v_{ij} \mapsto w_{ij}$. Then
    \begin{equation*}
        \nu_i = \lambda_i (U_{i1} \otimes U_{i2}) \gamma_i = \lambda_i (\Id \otimes (U_{i2} U_{i1}^T)) \gamma_i,
    \end{equation*}
    where $\lambda_i := \sqrt{d_i} c_1$. Let
    $1 \leq i_1 < \ldots < i_m \leq k$ be the indices $i$ such that $\mcW_i \neq
    0$, and let
    \begin{equation*}
        I_1 = \sum_{j=1}^m \Id \otimes U_{i_j 2} U_{i_{j} 1}^T.
    \end{equation*}
    Since $U_{i1}^T$ is unitary for all $i$, $I_1$ is an isometric isomorphism. Hence
    \begin{equation*}
        I = I_1 I_0 : \mcH_0 \arr \bigoplus_{j=1}^m \mcV_{i_j} \otimes \mcV_{i_j}
    \end{equation*}
    is an isometric isomorphism with
    \begin{equation*}
        I \nu = \sum_{j=1}^m \lambda_{i_j} \gamma_{i_j} \text{ and } I \phi I^{-1} = \sum_{j=1}^m \phi_{i_j} \otimes \Id.
    \end{equation*}
    Since $\nu$ is a unit vector, $\sum_{j=1}^m \lambda_{i_j}^2 = 1$.
    Finally, by Proposition \ref{prop:CLS2}, part (a),
    \begin{align*}
        I \phi^R(z) I^{-1} (\phi_{i_j}(w) \otimes \Id) \lambda_{i_j} \gamma_{i_j} & = I \phi^R(z) \phi(w P_{i_j}) \nu
            = I \phi(w P_{i_j}) \phi(z) \nu \\
            & = I \phi(w z) I^{-1} \lambda_{i_j} \gamma_{i_j}
             = (\phi_{i_j}(w z)\otimes \Id) \lambda_{i_j} \gamma_{i_j} \\
            & = (\phi_{i_j}(w) \otimes \phi_{i_j}(z)^T) \lambda_{i_j} \gamma_{i_j}
    \end{align*}
    for all $z \in \Gamma(G,b)^{op}$ and $w \in \C \Gamma(G,b)$.
    Thus we have
    \begin{equation*}
        I \phi^R I^{-1} = \sum_{j=1}^m \Id \otimes \phi_{i_j}^T
    \end{equation*}
    as required.
\end{proof}
\begin{rmk}
    In Corollary \ref{cor:finitestrats}, we end up with a quantum strategy on a
    direct sum $\bigoplus_i \mcH_i \otimes \mcH_i$ of tensor products of
    Hilbert spaces, with the first players' observables $\msX_{ve}$ acting on
    the first tensor factor, and the second players' observables $\msY_{ve}$
    acting on the second tensor factor. It is well-known that every quantum
    strategy (but not every commuting-operator strategy) for any nonlocal game
    can be put in this form, and this tensor product decomposition is often
    used explicitly in the definition of quantum strategies (see, e.g.
    \cite{SW08}). To avoid stating two versions of all the results in this
    section, we've used a streamlined definition of quantum strategies that
    does not include an explicit tensor product decomposition. If we are given
    a strategy $\{\msX_{ve}\}$, $\{\msY_{ve}\}$, $\nu$ on a Hilbert space
    $\mcH$ with an explicit tensor product decomposition, so $\mcH = \mcH^A
    \otimes \mcH^B$
    and $\msX_{ve} = \tilde{\msX}_{ve} \otimes \Id$, $\msY_{ve} = \Id \otimes
    \tilde{\msY}_{ve}$ for all $v \in V(G)$, $e \in E(G)$, then it is possible
    to prove a slightly stronger version of Corollary \ref{cor:finitestrats} in
    which the subspace $\mcH_0$ and the isometry $I$ are local, meaning that
    $\mcH_0 = \bigoplus_i \mcH^A_i \otimes \mcH^B_i$ for some subspaces
    $\mcH^A_i$ and $\mcH^B_i$ of $\mcH^A$ and $\mcH^B$ respectively, and
    $I = \sum_{i} I_{i}^A \otimes I_{i}^B$ for isometries $I_i^A$ and $I_i^B$
    acting on $\mcH^A_i$ and $\mcH^B_i$ respectively. Other variants---for
    instance, in which $\mcH_0 = \mcH$ and $I$ is an isometry but not
    necessarily an isomorphism---are also possible in this setting. For brevity, we leave these
    variants for the interested reader.
\end{rmk}

\begin{example}\label{ex:K33}
    Let $b$ be an odd parity colouring of $G$, where $G = K_{3,3}$ or $K_{5}$.
    Then $\mcG(G,b)$ has no perfect deterministic strategy by Lemma
    \ref{lem:graph_space}. Since $G$ is nonplanar, Theorem \ref{thm:abelian}
    implies that $\Gamma(G,b)$ is nonabelian. In addition, $G$ does not contain two
    disjoint cycles or $K_{3,6}$ as a minor so by Theorem \ref{thm:finite}
    $\Gamma(G,b)$ is finite. Using the mathematical software system SageMath \cite{SageMath}, we
    compute the character table for $\Gamma(G,b)$ and find that both
    $\Gamma(K_{3,3},b)$ and $\Gamma(K_5,b)$ have a unique four-dimensional
    irreducible unitary representation with $J\mapsto -\Id$.

    For each corresponding nonlocal game, a perfect quantum strategy can be
    obtained from the irreducible representation by the method outlined in part
    (d) of Proposition \ref{prop:CLS2}. Since $\Gamma(G,b)$ is finite, the
    perfect quantum strategy for $\mcG(G,b)$ is unique by
    Corollary~\ref{cor:finitestrats}, and hence these are robust self-tests by
    \cite{CS17a}. This is well-known: as mentioned in the introduction,
    $\mcG(K_{3,3},b)$ and $\mcG(K_5,b)$ are the magic square and magic
    pentagram nonlocal games \cite{Wu16,Kal17,CS17a}.
\end{example}

\begin{example}\label{ex:K34odd}
    Let $b$ be an odd parity colouring of $K_{3,4}$, so that $\mcG(K_{3,4},b)$
    does not have a perfect deterministic strategy by Lemma
    \ref{lem:graph_space}. Since $K_{3,4}$ is nonplanar, Arkhipov's theorem
    implies that this game has a perfect quantum strategy. $K_{3,4}$ does not
    contain $K_{3,6}$ or two disjoint cycles as a minor, so $\Gamma(K_{3,4},b)$
    is finite by Theorem \ref{thm:finite}. Since $K_{3,4}$ contains $K_{3,3}$
    as a minor and $b$ is odd, Theorem \ref{thm:abelian} implies that
    $\Gamma(K_{3,4},b)$ is nonabelian.

    By direct computation, we
    find that $\Gamma(K_{3,4},b)$ has order $512$. By computing the character
    table, we see that $\Gamma(K_{3,4},b)$ has 16 irreducible representations
    in which $J\mapsto -\Id$, all of dimension $4$. By part (d) of Proposition \ref{prop:CLS2},
    there is a perfect quantum strategy corresponding to each of these irreducible
    representations. Corollary \ref{cor:finitestrats} shows that all perfect
    strategies are direct sums of these $16$ strategies, with some weights $\lambda_j$.
    Notably, this gives an example of a self-testing result in which there is
    no single ideal strategy. Examples of self-tests with similar behaviour are
    given in \cite{CMMN19,Kan19}.
\end{example}

If $b$ is an odd $\Z_2$-colouring of a connected nonplanar graph $G$, then
Theorem \ref{thm:abelian} implies (and it is also not hard to see from
Arkhipov's proof of Theorem \ref{thm:Arkhipov}) that $\Gamma(G,b)$ is
nonabelian. Thus when $b$ is odd, $\Gamma(G,b)$ must be nonabelian for
$\mcG(G,b)$ to have a perfect quantum strategy (for any graph $G$). On the other hand,
when $b$ is even, $\mcG(G,b)$ has a perfect deterministic strategy.
If $\Gamma(G,b)$ is abelian, then $\Gamma(G,b)$ is in fact finite, so
as a special case of Corollary \ref{cor:finitestrats}, every perfect
strategy for $\mcG(G,b)$ is a direct sum of deterministic strategies
on the support of the state. As mentioned in the introduction, this
property characterizes when $\Gamma(G,b)$ is abelian:
\begin{cor}\label{cor:abelian}
    Let $b$ be a $\Z_2$-colouring of a graph $G$, such that the restriction of
    $b$ to each connected component of $G$ has even parity. Let
    $(\{f_v^{(i)}\}, \{g_v^{(i)}\})$, $i=1,\ldots,k$ be a complete list of the
    deterministic strategies for $\mcG(G,b)$. Then $\Gamma(G,b)$ is abelian if
    and only if for every perfect commuting-operator strategy
    $\mcH$, $\nu$, $\{\msX_{ve}\}$, $\{\msY_{ve}\}$, there
    is a subspace $\mcH_0$ of $\mcH$ which contains $\nu$ and is
    invariant under $\msX_{ve}$ and $\msY_{ve}$ for all $v \in V(G)$,
    $e \in E(G)$, a sequence $1 \leq i_1 < \ldots < i_m \leq k$ and
    orthonormal basis $\nu_1,\ldots,\nu_m$ for $\mcH_0$ such that
    $\nu = \sum_{j=1}^m \lambda_j \nu_j$ for some positive real numbers
    $\lambda_j$, and
    \begin{equation*}
        \msX_{ve} |_{\mcH_0} = \sum_{j=1}^m (-1)^{f_v^{(i_j)}(e)} \nu_i \nu_i^*, \quad
        \msY_{ve} |_{\mcH_0} = \sum_{j=1}^m (-1)^{g_v^{(i_j)}(e)} \nu_i \nu_i^*, \quad
    \end{equation*}
    for all $v \in V(G)$, $e \in E(G)$.
\end{cor}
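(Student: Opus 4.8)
The plan is to derive both directions from Proposition~\ref{prop:CLS2} and Corollary~\ref{cor:finitestrats}, using throughout the identification of perfect deterministic strategies with one-dimensional representations of $\Gamma(G,b)$ sending $J\mapsto-1$: such a representation has $\phi(x_e)=(-1)^{y(e)}$ with $y$ a solution of $\mcI(G)x=b$, and taking $y$ to be the solution underlying $(\{f_v^{(i)}\},\{g_v^{(i)}\})$ gives $\phi_i(x_e)=(-1)^{f_v^{(i)}(e)}=(-1)^{g_v^{(i)}(e)}$ whenever $v$ is an endpoint of $e$. Since every component of $b$ is even, such solutions exist by Lemma~\ref{lem:graph_space}, so $k\geq 1$ and, in particular, $J\neq1$ in $\Gamma(G,b)$.

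For the forward direction, recall that an abelian $\Gamma(G,b)$ is automatically finite (by Theorems~\ref{thm:abelian} and~\ref{thm:finite}), so Corollary~\ref{cor:finitestrats} applies. A finite abelian group has only one-dimensional irreducibles, so in that corollary each $\mcV_i$ is one-dimensional, $d_i=1$, $\mcV_i\otimes\mcV_i\cong\C$, the $\gamma_{i_j}$ are unit scalars, and $\phi_{i_j}^T=\phi_{i_j}$. After relabelling the $\phi_i$ to match the deterministic strategies as above, the conclusion of Corollary~\ref{cor:finitestrats} becomes exactly the asserted statement, with $\mcH_0$ as there, $\nu_j:=I^{-1}(\gamma_{i_j})$, and the coefficients $\lambda_j$ reindexed.

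For the converse, I would test the hypothesis against one carefully chosen perfect commuting-operator strategy: the GNS strategy of the tracial state $\tau$ obtained by applying Proposition~\ref{prop:CLS2}(c) to the left-regular action of $\Gamma(G,b)$ on $\ell^2\Gamma(G,b)$ with state $\tfrac{1-J}{\sqrt2}$ (legitimate since $J\neq1$), then feeding $\tau$ into Proposition~\ref{prop:CLS2}(b). One computes that $\tau(z)=\delta_{z,1}-\delta_{z,J}$ and that the resulting GNS representation $\phi$ is left multiplication on the subspace $\{v\in\ell^2\Gamma(G,b):Jv=-v\}$, with cyclic vector $\nu=\tfrac{1-J}{\sqrt2}$. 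Now apply the hypothesis: any subspace $\mcH_0\ni\nu$ invariant under $\{\msX_{ve}\}$ contains $\mcA\nu$ (notation of Proposition~\ref{prop:CLS2}(a)), which is dense in $\mcH$, and since the hypothesis gives $\mcH_0$ a finite orthonormal basis it is finite-dimensional and closed, so $\mcH_0=\mcH$ (and $\Gamma(G,b)$ is finite). Then $\phi(x_e)=\msX_{ve}|_{\mcH_0}=\sum_j(-1)^{f_v^{(i_j)}(e)}\nu_j\nu_j^*$ is diagonal in the basis $\{\nu_j\}$ for every $e$, so the operators $\phi(x_e)$ pairwise commute and $\phi(J)=-\Id$ is central; since $\{x_e\}\cup\{J\}$ generates $\Gamma(G,b)$, the image $\phi(\Gamma(G,b))$ is abelian. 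Finally $\phi$ is injective: $\phi(z)=\Id$ forces $zg-zgJ=g-gJ$ in $\ell^2\Gamma(G,b)$ for every $g$, and comparing group elements yields $z=1$ (the competing possibility $z=J$ is ruled out by $J\neq1$). Hence $\Gamma(G,b)\cong\phi(\Gamma(G,b))$ is abelian.

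The main obstacle is the converse: one must recognise that abelianness of the a~priori large and complicated group $\Gamma(G,b)$ can be certified by a single well-chosen strategy, and the point of using the twisted regular representation is precisely that its GNS representation is injective on the group, which is what upgrades ``the observables of this one strategy commute'' to ``the group is abelian''. By contrast, the forward direction, together with the bookkeeping that identifies one-dimensional representations with deterministic strategies, is routine once Corollary~\ref{cor:finitestrats} is in hand.
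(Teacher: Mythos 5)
Your proof is correct, and the converse is argued by a genuinely different mechanism than the paper's. The forward direction is the same as the paper's: abelian implies finite, so Corollary \ref{cor:finitestrats} applies, all irreducibles with $J\mapsto -\Id$ are one-dimensional, and these are exactly the deterministic strategies. (One small quibble there: citing Theorems \ref{thm:abelian} and \ref{thm:finite} for ``abelian implies finite'' is heavier than needed and slightly off, since Theorem \ref{thm:abelian} is stated only for connected graphs while the corollary allows disconnected $G$; the clean justification is simply that $\Gamma(G,b)$ is generated by finitely many commuting involutions, hence is a quotient of $\Z_2^{|E(G)|+1}$.) For the converse, the paper argues the contrapositive: it uses the even-parity hypothesis and Lemmas \ref{lem:parity} and \ref{lem:disconnected} to write $\Gamma(G,b)\iso\Gamma(G)\times\Z_2$, picks a commutator $[x,y]\notin\{1,J\}$, and compares tracial states --- any strategy of the stated diagonal form has $\tau([x,y])=1$, while the canonical trace $\tau(z)=\delta_{z,1}-\delta_{z,J}$ from Proposition \ref{prop:CLS2}(c) gives $\tau([x,y])=0$, so the strategy built from it in part (b) violates the hypothesis. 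You instead apply the hypothesis directly to that same canonical strategy and exploit two structural facts about it: cyclicity of $\nu$ under the $\msX$-algebra (so the finite-dimensional invariant subspace $\mcH_0$ must be all of $\mcH$, making every $\phi(x_e)$ simultaneously diagonal) and faithfulness of left multiplication on the $J=-1$ eigenspace of $\ell^2\Gamma(G,b)$ (which upgrades ``the image is abelian'' to ``$\Gamma(G,b)$ is abelian''). Your route buys a direct, non-contrapositive argument that never needs the decomposition $\Gamma(G,b)\iso\Gamma(G)\times\Z_2$ or the existence of a commutator outside $\{1,J\}$, at the cost of identifying the GNS model explicitly and checking injectivity; the paper's route avoids any faithfulness claim by reducing everything to a single scalar trace comparison. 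Both hinge on the same canonical twisted-regular-representation strategy, and your verifications (the formula for $\tau$, density of $\mcA\nu$, and the injectivity computation $zg-zgJ=g-gJ\Rightarrow z=1$) are sound.
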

\begin{proof}
    When $\Gamma(G,b)$ is abelian, the irreducible representations of
    $\Gamma(G,b) = \Gamma(G,b)^{op}$ are exactly the one-dimensional
    representations. So the corollary follows immediately from Corollary
    \ref{cor:finitestrats} and the fact that one-dimensional representations of
    $\Gamma(G,b)^{op} = \Gamma(G,b)$ with $J=-1$ are the same as solutions of
    $\mcI(G) x = b$.

    Conversely, if $\mcH$, $\nu$, $\{\msX_{ve}\}$, $\{\msY_{ve}\}$ is
    a perfect commuting-operator strategy which is a direct sum of
    deterministic strategies as in the statement of the corollary,
    then the observables $\msX_{ve}|_{\mcH_0}$ and $\msX_{v'e'}|_{\mcH_0}$
    commute for all $v,v' \in V(G)$, $e,e' \in E(G)$. As a result, if
    $\tau$ is the tracial state of this strategy from part (a) of Proposition
    \ref{prop:CLS2}, then $\tau([x,y])=1$ for all $x,y \in \Gamma(G,b)$.

    Suppose $\Gamma(G,b)$ is nonabelian. Since the restriction of
    $b$ to each connected component of $G$ is even parity, Lemmas
    \ref{lem:parity} and \ref{lem:disconnected} imply that $\Gamma(G,b)
    \iso \Gamma(G) \times \Z_2$, where $J = J_{G,b}$ is mapped to the
    generator of the $\Z_2$ factor. Since $\Gamma(G,b)$ is
    nonabelian, $\Gamma(G)$ is nonabelian, and in this way we can
    find $x,y \in \Gamma(G,b)$ such that $[x,y] \not\in \{1,J\}$.
    Let $\psi$ be the left action of $\Gamma(G,b)$ on $\mcH = \ell^2
    \Gamma(G,b)$, let $\nu = \frac{1 - J}{\sqrt{2}} \in \mcH$, and let $\tau(g)
    = \langle \nu | g \nu \rangle$ be the tracial state on $\Gamma(G,b)$ from
    part (c) of Proposition \ref{prop:CLS2}. Using the definition of $\tau$,
    we see that $\tau(g) = 0$ if $g \not\in \{1, J\}$. In particular,
    $\tau([x,y]) = 0$. But by part (b) of Proposition \ref{prop:CLS2},
    there is a perfect commuting-operator strategy with tracial state
    $\tau$. By the paragraph above, it is not possible for every
    perfect commuting-operator strategy of $\mcG(G,b)$ to be a direct sum of
    deterministic strategies as in the statement of the corollary.
\end{proof}

\begin{example}
    Let $b$ be an even parity colouring of $K_{3,3}$. Since $K_{3,3}$
    does not contain $C_2 \sqcup C_2$ or $K_{3,4}$, Theorem \ref{thm:abelian}
    implies that $\Gamma(K_{3,3},b)$ is abelian. By direct computation
    with SageMath, we find that $\Gamma(K_{3,3}, b)$ has order $32$, and hence
    has $32$ distinct irreducible representations (all one-dimensional). Of these representations,
    $16$ send $J \mapsto -1$, so $\mcG(K_{3,3},b)$ has $16$ distinct deterministic
    strategies.  By Corollary \ref{cor:abelian}, every perfect strategy for
    $\mcG(K_{3,3}, b)$ is a weighted direct sum of these $16$ deterministic strategies.
\end{example}

\begin{example}
    Let $b$ be an even parity colouring of $K_{3,4}$.
    As in Example \ref{ex:K34odd}, $\Gamma(K_{3,4},b)$
    is finite, and by Theorem \ref{thm:abelian}, $\Gamma(K_{3,4},b)$ is not abelian.
    By Lemma \ref{lem:parity}, $\Gamma(K_{3,4}, b) \iso \Gamma(K_{3,4}) \times \Z_2$.
    By direct computation with SageMath, we see that $\Gamma(K_{3,4})$ has order $256$,
    $64$ irreducible representations of dimension $1$, and $12$ irreducible representations
    of dimension $4$. It follows that $\Gamma(K_{3,4},b)$ (which has order $512$) has
    $64$ irreducible representations of dimension $1$ and $12$ irreducible representations
    of dimension $4$ sending $J \mapsto -\Id$. By Corollary
    \ref{cor:finitestrats}, every perfect strategy of $\Gamma(K_{3,4},b)$ is a
    direct sum of these irreducible representations. While the
    one-dimensional representations give deterministic perfect strategies of
    $\mcG(K_{3,4},b)$, the four-dimensional representations yield perfect
    strategies which are not direct sums of deterministic strategies. Interestingly, the correlation
    matrices of these four-dimensional strategies are still classical. 
\end{example}

\section{Graph minor operations for $\Z_2$-coloured graphs}\label{sec:minors}

Although we've motivated the definition of graph incidence groups via the
connection with graph incidence games, graph incidence groups are fairly
natural from the point of view of graph theory. For instance, the linear
relations are a noncommutative generalization of the usual flow problem on the
graph.  The requirement that $x_e$ and $x_{e'}$ commute when $e$ and $e'$ are
incident to a common vertex, which comes from the fact that $x_e$ and $x_{e'}$
correspond to jointly measurable observables, is also natural from the graph
theory point of view, since there is no natural order on the vertices adjacent
to a given vertex. In the rest of the paper, we focus on graph incidence groups
from a combinatorial point of  view, starting in this section with the proof
of Lemma \ref{lem:main}.

Before proving Lemma \ref{lem:main}, we need to introduce a notion of minor
operations for $\Z_2$-coloured graphs. Recall that the standard minor
operations are
\begin{enumerate}[(1)]
    \item \emph{edge deletion}, which takes a graph $G$ and an edge $e$,
        and returns the graph $G\setminus e$ with vertex set $V(G)$ and edge set
        $E(G) \setminus \{e\}$;
    \item \emph{edge contraction}, which takes a graph $G$ and edge $e$,
        and returns the graph $G/e$ with vertex set $V(G) \setminus
        \{v_1,v_2\} \cup \{u\}$, where $v_1,v_2$ are the endpoints of $e$ and
        $u$ is a new vertex, and edge set $E(G) \setminus (E(v_1) \cap E(v_2))$,
        where any edge incident to $v_1$ or $v_2$ is now incident to $u$; and
    \item \emph{vertex deletion}, which takes a graph $G$ and vertex $v$,
        and returns the graph $G\setminus v$ with vertex set $V(G)\setminus \{v\}$ and edge set
        $E(G) \setminus E(v)$.
\end{enumerate}
A graph $H$ is said to be a graph minor of a graph $G$ if $H$ can be constructed
from $G$ by a sequence of graph minor operations.  Notice that for edge
contraction $G / e$, we remove all edges between the endpoints of $e$ to avoid
creating loops.

For $\Z_2$-coloured graphs, we start with the standard graph minor operations,
but add an additional operation, and place a restriction on vertex deletion:
\begin{defn}\label{def:graph_minors}
    The graph minor operations for $\Z_2$-coloured graphs are
    \begin{enumerate}[(1)]
        \item \emph{edge deletion}, which takes a $\Z_2$-coloured graph $(G,b)$
            and an edge $e$, and returns the graph $(G \setminus e, b)$;
        \item \emph{edge contraction}, which takes a $\Z_2$-coloured graph $(G,b)$
            and an edge $e$, and returns the graph $(G/e,b')$ with
            \begin{equation*}
                b'(v) = \begin{cases} b(v_1) + b(v_2) & v = u \\
                                      b(v) & v \neq u
                        \end{cases},
            \end{equation*}
            where as above $v_1$ and $v_2$ are the endpoints of $e$, and $u$ is the
            new vertex;
        \item \emph{vertex deletion}, which takes a $\Z_2$-coloured graph $(G,b)$
            and a vertex $v$ with $b(v)=0$, and returns the graph $\left(G\setminus v,
            b|_{V(G)\setminus\{v\}}\right)$; and
        \item \emph{edge toggling}, which takes a $\Z_2$-coloured graph $(G,b)$ and
            an edge $e$, and returns the graph $(G,b')$ with
            \begin{equation*}
                b'(v) = \begin{cases}
                            b(v) & e \not\in E(v) \\
                            b(v)+1 & e \in E(v)
                        \end{cases}.
            \end{equation*}
    \end{enumerate}
    A graph $(H,c)$ is a \emph{graph minor} of $(G,b)$ if it can be constructed
    from $(G,b)$ by a sequence of minor operations.
\end{defn}
The minor operations for coloured graphs are shown in
Figures~\ref{fig:edge_deletion}, \ref{fig:contract_edge},
\ref{fig:delete_isolated_vertex}, and \ref{fig:colour_swap}. Note that all of
the operations in Definition \ref{def:graph_minors} preserve the parity of the
colouring. In particular, vertices can only be deleted if they are labelled by
$0$, since otherwise deleting the vertex would change the parity. To delete a
non-isolated vertex $v$ coloured by $1$, we can toggle an edge in $E(v)$ to
change the colour of $v$ to $0$. However, isolated vertices labelled by $1$
cannot be deleted. Although this restriction is necessary for Lemma
\ref{lem:main}, it does cause a problem: if $(G,b)$ is a $\Z_2$-coloured graph
with two connected components $(G_0,b_0)$ and $(G_1,b_1)$, and $(H,c)$ is a
minor of $(G_0,b_0)$, then $(H,c)$ might not be a minor of $(G,b)$ if $b_1$ has
odd parity. To work around this problem, we often restrict to connected graphs
(see, for instance, Lemma \ref{lem:minor_colour}).

\begin{figure}[t]
    \begin{center}
        \includegraphics[scale=1]{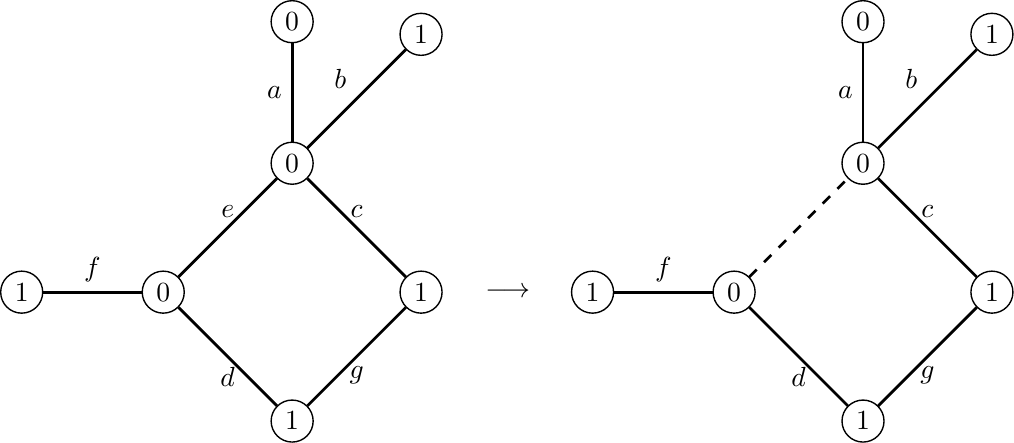}
    \end{center}
        \caption{Deleting an edge $e\in E(G)$ does not change the colour of vertices.}
\label{fig:edge_deletion}
\end{figure}

\begin{figure}[t]
    \begin{center}
        \includegraphics[scale=1]{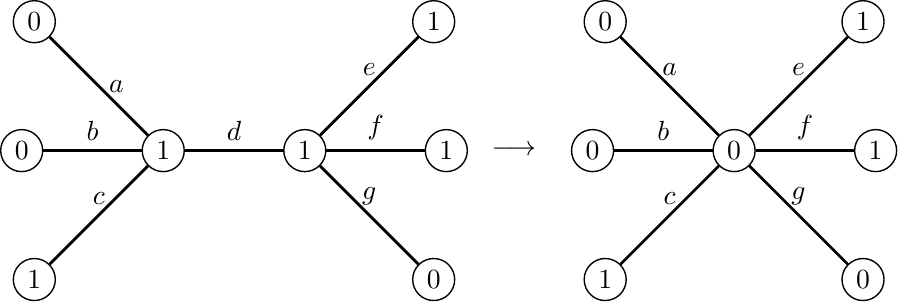}
    \end{center}
        \caption{Contraction of an edge $d \in E(G)$ with endpoints $v_1$ and $v_2$ results in a new vertex $u$ with incident edges $E(u)=\{E(v_1)\cup E(v_2)\}\setminus (E(v_1) \cap E(v_2))$, and colour $b(u)=b(v_1)+b(v_2)$.}
        \label{fig:contract_edge}
\end{figure}

\begin{figure}[t]
    \begin{center}
        \includegraphics[scale=1]{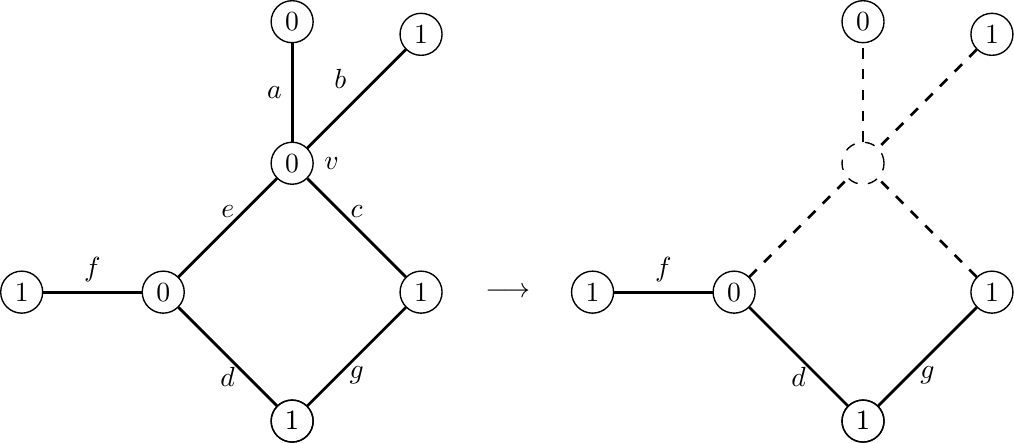}
    \end{center}
        \caption{Deleting a vertex removes the vertex and all incident edges.}
\label{fig:delete_isolated_vertex}
\end{figure}

\begin{figure}[t]
    \begin{center}
        \includegraphics[scale=1]{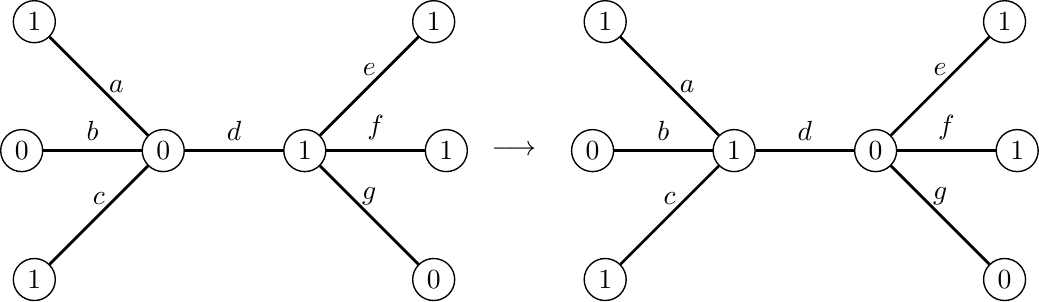}
    \end{center}
        \caption{The edge toggle minor operation on the edge $d$ toggles the $\Z_2$-colouring of the two endpoints.}
\label{fig:colour_swap}
\end{figure}

For any fixed graph $H$, it is possible to decide whether $H$ is a minor of $G$
in time polynomial in the size of $G$ \cite{RS95}.  Including edge toggling as
a minor operation allows us to efficiently determine when a
$\Z_2$-coloured graph $(H,c)$ is a minor of $(G,b)$.
\begin{lemma}\label{lem:minor_colour}
    Let $(G,b)$ be a connected $\Z_2$-coloured graph. Then $(H,c)$ is a minor
    of $(G,b)$ if and only if $H$ is a minor of $G$ and the parity of $c$ is
    equal to the parity of $b$.
\end{lemma}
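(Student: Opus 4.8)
The plan is to prove both implications directly. For the forward direction, I note that all four minor operations in Definition \ref{def:graph_minors} preserve parity of the colouring (this is observed immediately after the definition), and that edge deletion, edge contraction, and vertex deletion are also standard graph minor operations on the underlying uncoloured graph (while edge toggling leaves the underlying graph unchanged). Hence if $(H,c)$ is obtained from $(G,b)$ by a sequence of $\Z_2$-coloured minor operations, forgetting the colours shows $H$ is a minor of $G$, and tracking parity through the sequence shows the parity of $c$ equals the parity of $b$. This direction is essentially bookkeeping.

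For the converse, suppose $H$ is a minor of $G$ and $\operatorname{par}(c) = \operatorname{par}(b)$. I would first produce \emph{some} $\Z_2$-coloured minor of $(G,b)$ whose underlying graph is $H$. Fix a sequence of uncoloured minor operations $G = G_0 \to G_1 \to \cdots \to G_r = H$. I apply the corresponding $\Z_2$-coloured operations to $(G,b)$: edge deletions and contractions make sense verbatim, but a vertex deletion of $v$ is only legal when $b(v) = 0$, so whenever the sequence calls for deleting a vertex $v$ currently coloured $1$, I first perform an edge toggle on some edge $e \in E(v)$ to recolour $v$ to $0$ — and such an edge exists because $G$ is connected (so no vertex is isolated) and connectivity is preserved by contractions and by deletions of non-isolated vertices, while a vertex scheduled for deletion in a minor sequence producing a nonempty $H$ is never the last vertex. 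Wait — I should be careful here: connectivity need not be preserved by edge deletion. The cleaner route is to use the standard fact that if $H$ is a minor of a connected graph $G$ and $H$ is connected, we may choose the minor sequence so that every intermediate graph is connected; more simply, since $G$ is connected, any vertex $v$ with $b(v)=1$ that we wish to delete has nonempty $E(v)$ at the moment of deletion (a vertex with no incident edges in an intermediate graph that still has other vertices would have had to become isolated via edge deletions, but we can always reorder to delete such a vertex rather than its edges, or simply note that an isolated vertex coloured $1$ can be ignored — we only ever \emph{need} to delete vertices that the target $H$ does not contain, and we can delete them while they still carry incident edges). I would phrase this as: reorder/modify the sequence so that every vertex deletion is applied to a non-isolated vertex, toggle an incident edge first if its colour is $1$, and thereby obtain a $\Z_2$-coloured minor $(H, c')$ of $(G,b)$ for some colouring $c'$ of $H$; by the forward direction, $\operatorname{par}(c') = \operatorname{par}(b) = \operatorname{par}(c)$.

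Finally, I need to get from $(H,c')$ to $(H,c)$ using only edge toggles (which do not change the underlying graph). Since $\operatorname{par}(c) = \operatorname{par}(c')$, the difference $c + c' : V(H) \to \Z_2$ has even parity, i.e.\ it is supported on an even number of vertices, say $S = \{w : c(w) \neq c'(w)\}$ with $|S|$ even. Because $H$ is connected (as a minor of connected $G$ — or if one worries $H$ could be a single vertex, then $S = \emptyset$ and there is nothing to do), I can pair up the vertices of $S$ and, for each pair, toggle the edges along a path in $H$ joining them; toggling every edge of a path flips exactly the colours of its two endpoints, so after handling all pairs the colouring becomes $c$. Thus $(H,c)$ is a $\Z_2$-coloured minor of $(G,b)$.

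The main obstacle, which I flagged above, is the legality bookkeeping for vertex deletions: I must guarantee that whenever the uncoloured minor sequence deletes a vertex, that vertex is non-isolated in the current $\Z_2$-coloured intermediate graph (so that an edge toggle is available to fix its colour), or else that it is safe to skip. I expect this to be handled by a short argument that one may always choose the minor sequence so that vertex deletions are performed before the incident edges are deleted, together with the observation that an isolated vertex not present in $H$ must have colour $0$ once we have toggled correctly — actually the clean statement is that any vertex we are forced to delete can be assumed non-isolated, since an isolated vertex of an intermediate graph either equals $H$ (a single point, handled separately) or can have all remaining operations applied to the rest of the graph, deleting it last when $H$ is empty — but $H$ is nonempty. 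So in every case a suitable edge exists, and the argument goes through.
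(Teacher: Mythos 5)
There is a genuine gap, and it sits in the last step of your converse. You assert that $H$ is connected ``as a minor of connected $G$,'' but this is false: minors of connected graphs can be disconnected (delete a cut vertex or a bridge), and the paper needs the lemma in exactly this generality --- it is applied to the disconnected minor $C_2 \sqcup C_2$ of connected graphs in the proofs of Theorems~\ref{thm:finite} and~\ref{thm:abelian}. Your final correction step, toggling the edges of paths in $H$ to turn $c'$ into $c$, genuinely requires connectivity of $H$: an edge toggle preserves the parity of the colouring on each connected component of $H$, so when $H$ is disconnected, agreement of the total parities of $c$ and $c'$ does not let you pass from one to the other by toggles on $H$ (e.g.\ if $H$ has two components and $c'$ puts the odd component in the wrong place). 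So as written the argument fails for disconnected $H$, and the fix cannot be postponed to the end; the colours have to be routed to the correct components while the intermediate graph is still connected. In addition, your handling of vertex deletions (reordering so that every deleted vertex is non-isolated, so that a toggle is available when its colour is $1$) is only sketched; it can be repaired, but you never actually establish the reordering you invoke.

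The paper avoids both issues by doing all of the colour adjustment on $G$ at the very start, which is the one place connectivity is available. Given the uncoloured sequence $G = G_0 \arr \cdots \arr G_k = H$, for each $v \in V(H)$ it picks a preimage $f(v) \in V(G)$ and defines $b'$ on $G$ by $b'(f(v)) = c(v)$ and $b' = 0$ elsewhere; since $b'$ has the same parity as $b$ and $G$ is connected, $(G,b')$ is obtained from $(G,b)$ by edge toggles (toggling a path moves a vertex's colour onto another vertex). After this, the uncoloured sequence can be run with the coloured operations verbatim: any vertex deleted along the way has all of its $G$-preimages coloured $0$ under $b'$ (they do not survive to $H$), so even after contractions its colour is $0$ and the deletion is legal, and the sequence terminates at exactly $(H,c)$ with no mid-sequence toggles, no reordering, and no assumption on $H$. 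You would either need to adopt this ``pre-place the colours on $G$'' idea, or prove a per-component version of your final correction step together with the reordering claim, to make your route work.
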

\begin{proof}
    Clearly if $(H,c)$ is a minor of $(G,b)$ then $H$ is a minor of $G$ and $c$
    has the same parity as $b$. For the converse, we first show that if $b$
    and $b'$ are $\Z_2$-colourings of $G$ with the same parity $p$, then it
    is possible to change $b$ to $b'$ by edge toggling. Indeed, let $v$ be some
    vertex of $G$, and let $b''$ be the $\Z_2$-colouring with $b''(v)=p$
    and $b''(w)=0$ for $w \neq v$. Since $G$ is connected, for every $w \in V(G)$
    there is a path $P$ from $v$ to $w$. Toggling all the edges in $P$ adds the
    colour of $w$ to the colour of $v$, changes the colour of $w$ to $0$, and
    leaves all other colours unchanged, so it is possible to change $b$ to $b''$
    via edge toggling. Similarly, we can change $b'$ to $b''$. Since edge
    toggling is reversible, we can also change $b''$ to $b'$ via edge toggling,
    and hence we can change $b$ to $b'$.

    Note that if $H_1$ is the result of applying a minor operation to some
    graph $H_0$, then every vertex $v$ of $H_1$ comes from a vertex of $H_0$,
    in the sense that $v$ is either a vertex of $H_0$ and is unaffected by the
    minor operation, or $v$ is a new vertex added as a result of identifying
    two vertices of $H_0$ during edge contraction.

    Suppose that $H$ is a minor of $G$ and $c$ has the same parity as $b$, and
    fix a sequence
    \begin{equation*}
        G = G_0 \arr G_1 \arr \cdots \arr G_k = H
    \end{equation*}
    of minor operations sending $G$ to $H$. For each $v \in V(H)$, choose a
    vertex $f(v) \in V(G)$ such that this sequence of minor operations
    eventually sends $f(v)$ to $v$. Let $b'$ be the colouring of $G$ in which
    $b'(w)=c(v)$ if $w = f(v)$ for some $v \in V(H)$, and $b'(w)=0$ otherwise.
    Then $b'$ has the same parity as $c$ and $b$, and hence $(G,b')$ is a minor
    of $(G,b)$. If the minor operation $G_{i} \arr G_i+1$ deletes the vertex
    $w \in V(G_{i})$, and $w' \in V(G)$ maps to $w$ under the first $i$ minor
    operations, then $w'$ is not mapped to any vertex of $H$, so $w' \neq f(v)$
    for all $v \in V(H)$, and consequently $b'(w') = 0$. It follows that we
    can perform the $\Z_2$-coloured versions of each minor operation to
    turn $(G,b')$ into $(H,c)$, and hence $(H,c)$ is a minor of $(G,b)$.
\end{proof}
If $(H,c)$ is a minor of a disconnected graph $(G,b)$, then it must be possible
to decompose $(H,c)$ into a disconnected union of (not necessarily connected)
subgraphs $(H_1,c_1), \ldots, (H_k,c_k)$, such that each subgraph $(H_i,c_i)$
is a minor of a distinct connected component of $(G,b)$, and the remaining
connected components of $(G,b)$ have even parity. Thus we can determine
whether $(H,c)$ is a minor of $(G,b)$ by going through all possible functions
from connected components of $(H,c)$ to connected components of $(G,b)$. This
algorithm is polynomial in the size of $G$, although the exponent depends on
the number of connected components of $(H,c)$.

We now turn to the following proposition, which describes how $\Gamma(G,b)$
changes under minor operations.
\begin{proposition}\label{prop:minors}
    Let $(G,b)$ be a $\Z_2$-coloured graph.
    \begin{enumerate}[(i)]
        \item If $e \in E(G)$ then there is a surjective
            homomorphism $\phi:\Gamma(G,b)\arr\Gamma(G\setminus e,b)$ sending
            \begin{align*}
                J\mapsto J \text{ and } x_f\mapsto \begin{cases} x_f&\text{if}\;
                    f\neq e \\ 1 &\text{if}\; f=e \end{cases} \text{ for all $f\in E(G)$}.
            \end{align*}

        \item If $e \in E(G)$ is the only edge with endpoints $v_1, v_2$,
            then there is a surjective group homomorphism
            $\phi:\Gamma(G,b)\arr \Gamma(G/ e,b')$ sending
            \begin{align*}
                J\mapsto J\text{ and }x_f\mapsto
                        \begin{cases}x_f&\text{if}\; f\neq e \\
                                J^{b(v_1)} \prod\limits_{e'\in E(v_1)\setminus \{e\}} x_{e'} &\text{if}\; f=e
                        \end{cases} \text{ for all $f\in E(G)$},
            \end{align*}
            where $b'$ is the colouring of $G/e$ described in part (2) of
            Definition \ref{def:graph_minors}.

        \item If $v\in V(G)$ is an isolated vertex with $b(v)=0$, then there is an isomorphism
            $\phi:\Gamma(G,b)\arr \Gamma(G\setminus v,b|_{V(G\setminus \{v\})})$ sending
            \begin{align*}
                J\mapsto J\text{ and } x_f\mapsto x_f \text{ for all $f\in E(G)$}\,.
            \end{align*}

        \item If $e \in E(G)$ then there is an isomorphism
            $\phi:\Gamma(G,b)\arr \Gamma(G,b')$ sending
            \begin{align*}
                J\mapsto J\text{ and }x_f\mapsto \begin{cases} x_f&\text{if}\; f\neq e \\ J x_e&\text{if}\;f=e \end{cases} \text{for all $f\in E(G)$}\,,
            \end{align*}
            where $b'$ is the colouring described in part (4) of Definition \ref{def:graph_minors}.
\end{enumerate}
\end{proposition}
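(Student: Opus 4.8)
The plan for all four parts is to invoke the universal property of a group presentation (von Dyck's substitution test): I would \emph{define} $\phi$ by prescribing its values on the generators $\{x_e : e\in E(G)\}\cup\{J\}$ exactly as in the statement, and then check that each of the four families of relators from Definition~\ref{def:group} is sent to the identity of the target group. Surjectivity will be automatic in every case, since every generator of the target is visibly in the image, and for parts (iii) and (iv) I would additionally exhibit an explicit inverse. Relations (1) and (2) are routine throughout: $J$ remains central with $J^2=1$, and each generator is sent to an involution --- e.g.\ $(Jx_e)^2 = J^2x_e^2 = 1$ in part (iv), and $\bigl(J^{b(v_1)}\prod_{e'\in E(v_1)\setminus\{e\}}x_{e'}\bigr)^2 = 1$ in part (ii), because those factors pairwise commute in $\Gamma(G/e,b')$ (they are all edges incident to the new vertex $u$) and each squares to $1$. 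Relation (3) likewise reduces to commutations that already hold in the target. So the real content is relation (4) together with the inverse constructions.

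Parts (i) and (iii) I expect to dispatch quickly. For (i), the relation $\prod_{f\in E(v)}x_f=J^{b(v)}$ at an endpoint $v$ of $e$ becomes, after sending $x_e\mapsto 1$, precisely relation (4) at $v$ in $\Gamma(G\setminus e,b)$, and all other vertices are untouched. For (iii), when $v$ is isolated with $b(v)=0$ the edge sets of $G$ and $G\setminus v$ coincide and the relation at $v$ is the vacuous $1=J^0$, so the two presentations are literally the same and the identity map on generators is the required isomorphism. For (iv), the one substantive check is relation (4) at a vertex $v$ with $e\in E(v)$: pulling the central $J$ out of the product, $\phi\bigl(\prod_{f\in E(v)}x_f\bigr)=J\prod_{f\in E(v)}x_f=J\cdot J^{b'(v)}=J\cdot J^{b(v)+1}=J^{b(v)}$ in $\Gamma(G,b')$, while vertices with $e\notin E(v)$ are unaffected since there $b'(v)=b(v)$; the inverse is the ``toggle $e$ again'' map $x_e\mapsto Jx_e$, $x_f\mapsto x_f$ ($f\neq e$), $J\mapsto J$, which is well defined by the same computation and composes with $\phi$ to the identity via $x_e\mapsto Jx_e\mapsto J^2x_e=x_e$.

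The main obstacle will be part (ii), edge contraction. The subtle point is that in $G/e$ the edges of $E(v_1)\setminus\{e\}$ and of $E(v_2)\setminus\{e\}$ all become incident to the single new vertex $u$, and --- since $e$ is the only edge joining $v_1$ and $v_2$ --- $E(u)$ is their \emph{disjoint} union; hence in $\Gamma(G/e,b')$ these edges pairwise commute and satisfy $\prod_{h\in E(u)}x_h=J^{b(v_1)+b(v_2)}$. These are relations in the \emph{target}, hence freely available, and they turn out to be exactly what is needed: relation (3) of $\Gamma(G,b)$ at $v_1$ and at $v_2$, including the pairs involving $e$ (where $\phi(x_e)$ is $J^{b(v_1)}$ times a product of generators $x_{e'}$ with $e'\in E(u)$), follows from commutativity at $u$; relation (4) at $v_1$ collapses to $J^{b(v_1)}=\phi(J^{b(v_1)})$ after substituting $\phi(x_e)$ and using $x_{e'}^2=1$; and the decisive relation (4) at $v_2$ unwinds as
\begin{align*}
    \phi\Bigl(\prod_{f\in E(v_2)}x_f\Bigr)
    &= J^{b(v_1)}\Bigl(\prod_{e'\in E(v_1)\setminus\{e\}}x_{e'}\Bigr)\Bigl(\prod_{g\in E(v_2)\setminus\{e\}}x_g\Bigr) \\
    &= J^{b(v_1)}\prod_{h\in E(u)}x_h
     = J^{b(v_1)}J^{b(v_1)+b(v_2)}
     = J^{b(v_2)},
\end{align*}
so this relator also dies. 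Surjectivity of the contraction map is immediate since $E(G/e)=E(G)\setminus\{e\}$ and each remaining generator is its own image. I expect this bookkeeping --- recognizing that the genuinely new commutation and product relations at $u$ are precisely the ones required, and carefully tracking the splitting $E(u)=(E(v_1)\setminus\{e\})\sqcup(E(v_2)\setminus\{e\})$ --- to be the only delicate part; everything else is manipulation of central elements and involutions.
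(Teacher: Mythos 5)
Your proposal is correct and follows essentially the same route as the paper: define $\phi$ on generators and check, via the universal property of the presentation, that every relator of $\Gamma(G,b)$ dies in the target, with the decisive step in part (ii) being exactly the computation that the vertex relation at $v_2$ maps to $J^{b(v_1)}\prod_{h\in E(u)}x_h = J^{b(v_1)}J^{b(v_1)+b(v_2)} = J^{b(v_2)}$ using the relations at the new vertex $u$. The only cosmetic difference is in part (iv), where you verify the relations directly and exhibit the toggle-again inverse, while the paper simply identifies $\phi$ with the isomorphism from Lemma~\ref{lem:parity}; both are fine.
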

\begin{proof}
    In each case, we are given a homomorphism
    \begin{equation*}
        \widetilde{\phi} : \mbF(\{x_f : f \in E(G)\} \cup \{J\}) \arr \mbF(\{x_f : f \in E(G')\} \cup \{J\})
    \end{equation*}
    for some $\Z_2$-coloured graph $(G',b')$, where $\mbF(S)$ denotes the free
    group generated by $S$. For instance, in case (i), $G' = G\setminus e$, $b'
    = b$, and $\widetilde{\phi}$ sends $J \mapsto J$, $x_f \mapsto x_f$ if $f \neq
    e$, and $x_e \mapsto 1$. In each case, we want to show that $\widetilde{\phi}$
    descends to a homomorphism $\phi : \Gamma(G,b) \arr \Gamma(G',b')$. To do this,
    we need to show that if $r$ is a defining relation of $\Gamma(G,b)$ from Definition
    \ref{def:group}, then $\widetilde{\phi}(r)$ is in the normal subgroup generated
    by the defining relations of $\Gamma(G',b')$. In case (i), it is clear that
    if $r$ is a defining relation of $\Gamma(G,b)$ not containing $x_e$, then
    $\widetilde{\phi}(r)$ is also a defining relation of $\Gamma(G',b')$. The relations
    containing $x_e$ are $x_e^2$, $[x_e,x_f]$ for $f \in E(v_1) \cup E(v_2)$, and
    \begin{equation*}
        r_i = J^{b(v)} \prod_{f \in E_G(v_i)} x_f, \quad i=1,2
    \end{equation*}
    where $v_1$,$v_2$ are the endpoints of $e$. For the first two types of relations,
    $\widetilde{\phi}(x_e^2) = \widetilde{\phi}([x_e,x_f]) = 1$, while in the last case,
    $\widetilde{\phi}(r_1)$ and $\widetilde{\phi}(r_2)$ are again defining relations of
    $\Gamma(G,b)$. Hence $\phi$ is well-defined. Since the image of $\phi$
    contains $J$ and $x_f$ for all $f \in E(G')$, in this case $\phi$ is also
    surjective as required.

    For (ii), once again if $r$ is a defining relation of $\Gamma(G,b)$ not
    containing $x_e$, then $\widetilde{\phi}(r)$ is a defining relation of
    $\Gamma(G',b')$. The generators $x_{f}$, $f \in E(v_1) \cup E(v_2) \setminus \{e\}$,
    all commute in $\Gamma(G',b')$, so $\widetilde{\phi}([x_e,x_f]) = 1$ in $\Gamma(G',b')$.
    Also, since all these generators commute,
    \begin{equation*}
        \widetilde{\phi}\left( J^{b(v_1)} \prod_{f \in E(v_1)} x_f\right) =
            \left(J^{b(v_1)} \prod_{f \in E(v_1) \setminus \{e\}} x_f \right)^2 = 1
    \end{equation*}
    in $\Gamma(G',b')$, and $\widetilde{\phi}(x_e)^2=1$ in $\Gamma(G',b')$ for the same
    reason. Finally, if $u$ is the new vertex in $G'$, then
    \begin{align*}
        \widetilde{\phi}\left( J^{b(v_2)} \prod_{f \in E(v_2)} x_f\right) & =
            \widetilde{\phi}(x_e) \left(J^{b(v_2)} \prod_{f \in E(v_2) \setminus \{e\}} x_f \right)\\ & =
            J^{b(v_1)+b(v_2)} \prod_{f \in E(v_1)\cup E(v_2) \setminus \{e\}} x_f \\
            & = J^{b(u)} \prod_{f \in E(u)} x_f = 1
    \end{align*}
    in $\Gamma(G',b')$. So $\phi$ is well-defined, and again the image of $\phi$ contains
    $J$ and $x_f$ for all $f \in E(G')$, so $\phi$ is surjective.

    For (iii), an isolated vertex $v$ with $b(v)=0$ corresponds to a trivial relation in the
    presentation of $\Gamma(G,b)$. Removing the vertex just deletes this relation, so $\phi$
    is clearly an isomorphism.

    For (iv), toggling an edge does not change the parity of the
    $\Z_2$-colouring, and the map $\phi$ is exactly the isomorphism between $\Gamma(G,b)$
    and $\Gamma(G,b')$ defined in Lemma~\ref{lem:parity}.
\end{proof}
The homomorphisms given in part (i) and (ii) of Proposition \ref{prop:minors}
are not necessarily isomorphisms. In part (i), the kernel of $\phi :
\Gamma(G,b) \arr \Gamma(G\setminus e, b)$ is the normal subgroup of $\Gamma(G,b)$
generated by $x_e$. In part (ii), we have that
\begin{equation*}
    x_e = J^{b(v_1)} \prod_{e' \in E(v_1) \setminus\{e\}} x_{e'}
\end{equation*}
already in $\Gamma(G,b)$. However, if $f \in E(v_1)$ and $g \in E(v_2)$, then
$x_f$ and $x_g$ do not necessarily commute in $\Gamma(G,b)$, while $\phi(x_f)$ and
$\phi(x_g)$ commute in $\Gamma(G/e,b')$. Thus the kernel of $\phi : \Gamma(G,b) \arr \Gamma(G/e,b')$
is generated by the commutators $[x_f,x_g]$ for $f \in E(v_1) \setminus \{e\}$, $g \in E(v_2) \setminus \{e\}$. Note as well
that, although the homomorphism $\phi$ in part (ii) seems to depend on a choice of endpoint $v_1$
of $e$,
\begin{equation*}
    J^{b(v_1)} \prod_{e' \in E(v_1) \setminus\{e\}} x_{e'} =
    J^{b(v_2)} \prod_{e' \in E(v_2) \setminus\{e\}} x_{e'}
\end{equation*}
in $\Gamma(G/e,b')$, so $\phi$ is independent of the choice of endpoint.

The proof of Lemma \ref{lem:main} follows quickly from Proposition \ref{prop:minors}.
\begin{proof}[Proof of Lemma~\ref{lem:main}]
    Deleting a vertex $v$ of $(G,b)$ is equivalent to deleting all the edges in $E(v)$,
    and then deleting the now-isolated vertex $v$. Similarly, if $e$ is an edge of $(G,b)$
    with endpoints $v_1$, $v_2$, then contracting $e$ is equivalent to first deleting
    all the edges of $(E(v_1) \cap E(v_2)) \setminus \{e\}$, and then contracting $e$.
    Thus, if $(H,c)$ is a minor of $(G,b)$, then there is a sequence of minor operations
    \begin{equation*}
        (G,b) = (G_0,b_0) \arr (G_1,b_1) \arr \cdots \arr (G_k,b_k) = (H,c)
    \end{equation*}
    sending $(G,b)$ to $(H,c)$, where every operation is one of the operations listed
    in cases (i)-(iv) of Proposition \ref{prop:minors}. For these operations, there is a surjective homomorphism
    $\Gamma(G_{i-1},b_{i-1}) \arr \Gamma(G_i,b_i)$ sending $J_{G_{i-1},b_{i-1}}
    \mapsto J_{G_i,b_i}$ for all $1 \leq i \leq k$. By composing these
    homomorphisms we get a surjective homomorphism $\Gamma(G,b) \arr \Gamma(H,c)$ sending
    $J_{G,b} \mapsto J_{H,c}$.
\end{proof}
As mentioned in the introduction, we can also prove a version of Lemma \ref{lem:main} for
graph incidence groups of uncoloured graphs.
\begin{lemma}\label{lem:main_uncoloured}
    If $H$ is a minor of $G$, then there is a surjective morphism $\Gamma(G) \arr \Gamma(H)$.
\end{lemma}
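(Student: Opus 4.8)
The plan is to mimic the proof of Lemma~\ref{lem:main}, stripping out all references to the $\Z_2$-colouring and the distinguished element $J$. First I would establish an uncoloured analogue of Proposition~\ref{prop:minors}: for each of the three standard minor operations (edge deletion, edge contraction along an edge that is the unique edge between its endpoints, and deletion of an isolated vertex), exhibit a surjective homomorphism $\Gamma(G)\arr\Gamma(G')$. Concretely, this is obtained by taking the homomorphisms in parts (i)--(iii) of Proposition~\ref{prop:minors} and passing to the quotients by $\langle J\rangle$, using the identity $\Gamma(G)=\Gamma(G,b)/\langle J\rangle$ noted after the definition of $\Gamma(G)$; equivalently, one repeats the verbatim argument of Proposition~\ref{prop:minors} with every occurrence of $J$ (and every $J^{b(v)}$) replaced by $1$. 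For edge deletion, $x_f\mapsto x_f$ for $f\neq e$ and $x_e\mapsto 1$; for contraction of the unique edge $e=v_1v_2$, $x_f\mapsto x_f$ for $f\neq e$ and $x_e\mapsto \prod_{e'\in E(v_1)\setminus\{e\}}x_{e'}$; for isolated-vertex deletion, the map is the obvious isomorphism.

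Next I would reduce a general minor operation to the three operations above, exactly as in the proof of Lemma~\ref{lem:main}: deleting a vertex $v$ is the composite of deleting all edges in $E(v)$ followed by deleting the resulting isolated vertex, and contracting an arbitrary edge $e$ with endpoints $v_1,v_2$ is the composite of deleting all edges in $(E(v_1)\cap E(v_2))\setminus\{e\}$ followed by contracting the now-unique edge $e$. Hence if $H$ is a minor of $G$, there is a sequence $G=G_0\arr G_1\arr\cdots\arr G_k=H$ in which each step is one of the three basic operations, each step gives a surjective homomorphism $\Gamma(G_{i-1})\arr\Gamma(G_i)$, and composing yields the desired surjection $\Gamma(G)\arr\Gamma(H)$. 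Since these are plain groups (not groups over $\Z_2$), ``surjective morphism'' just means surjective group homomorphism, so there is nothing further to check about a distinguished element.

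There is essentially no hard part here: the only point requiring a moment's care is verifying that the quotient-by-$\langle J\rangle$ argument really does go through for contraction, i.e.\ that in $\Gamma(G')$ (with $J=1$) the image of the relation $\prod_{f\in E(v_2)}x_f=1$ becomes the merged-vertex relation $\prod_{f\in E(u)}x_f=1$ and the images of $[x_e,x_f]$ become trivial because the relevant generators all commute around the new vertex $u$ — but this is literally the computation in the proof of Part~(ii) of Proposition~\ref{prop:minors} with $J$ set to $1$. An alternative, even shorter, route is to observe that the surjection $\Gamma(G,b)\arr\Gamma(H,c)$ of Lemma~\ref{lem:main} sends $J_{G,b}\mapsto J_{H,c}$, hence descends to a surjection of the quotients $\Gamma(G)=\Gamma(G,b)/\langle J_{G,b}\rangle \arr \Gamma(H,c)/\langle J_{H,c}\rangle=\Gamma(H)$ (here one uses Lemma~\ref{lem:minor_colour} or simply picks any colouring $b$ of $G$ and lets $c$ be an induced colouring of $H$, which exists since $H$ is a minor of $G$); I would probably present this one-line deduction as the main proof and relegate the direct verification to a parenthetical remark.
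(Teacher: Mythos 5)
Your second, ``one-line'' route is exactly the paper's proof: apply Lemma \ref{lem:main} to a coloured lift of the minor relation and then pass to the quotients by $\langle J\rangle$. The one place your write-up needs tightening is the phrase ``picks any colouring $b$ of $G$ and lets $c$ be an induced colouring of $H$, which exists since $H$ is a minor of $G$'': for disconnected $G$ this existence claim is false in general, because the coloured minor operations cannot delete an isolated vertex coloured $1$ (this is precisely the phenomenon of Example \ref{ex:nonwqo}), and Lemma \ref{lem:minor_colour}, to which you also appeal, is stated only for connected graphs. The paper sidesteps this by choosing $b=0$ and $c=0$: every uncoloured minor operation from $G$ to $H$ lifts verbatim to a coloured operation on $(G,0)$, since all vertices remain coloured $0$ throughout, so $(H,0)$ is a minor of $(G,0)$ regardless of connectivity, and then Lemma \ref{lem:main} together with $\Gamma(G)\iso\Gamma(G,0)/\langle J_{G,0}\rangle$ and $\Gamma(H)\iso\Gamma(H,0)/\langle J_{H,0}\rangle$ finishes the argument. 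Your first route---rerunning Proposition \ref{prop:minors} and the proof of Lemma \ref{lem:main} with every $J$ and $J^{b(v)}$ replaced by $1$---is correct as written and avoids the issue entirely, at the cost of repeating the verification; with either fix (take $b=c=0$, or present the direct uncoloured argument) the proposal is complete.
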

\begin{proof}
    By Lemma \ref{lem:minor_colour}, $(H,0)$ is a minor of $(G,0)$ (this is true even if
    $G$ is disconnected, since every vertex is coloured $0$), so there is a surjective
    morphism $\Gamma(G,0) \arr \Gamma(H,0)$ sending $J_{G,0} \mapsto J_{H,0}$, and since
    $\Gamma(G) \iso \Gamma(G,0) / \langle J_{G,0} \rangle$ and $\Gamma(H) \iso \Gamma(H,0) / \langle
    J_{H,0}\rangle$, the lemma follows.
\end{proof}

Note that Lemmas \ref{lem:main} and \ref{lem:main_uncoloured} just require the
existence of a homomorphism $\Gamma(G,b) \arr \Gamma(H,c)$ (resp. $\Gamma(G)
\arr \Gamma(H)$), and this is all we will use in the remainder of the paper.
However, Proposition \ref{prop:minors} gives a recipe for finding this
homomorphism from a sequence of minor operations. Let $\Z_2$-$\Graphs$ be the
category of $\Z_2$-coloured graphs with morphisms freely generated by the minor
operations of edge deletion, edge contraction of edges which are the only edges
between their endpoints, vertex deletion of isolated vertices labelled by $0$,
and edge toggling. Then Proposition \ref{prop:minors} implies that there is a
functor $\Gamma$ from $\Z_2$-$\Graphs$ to the category of groups over $\Z_2$ with
surjective homomorphisms. Similarly, if $\Graphs$ is the category of graphs
with morphisms freely generated by edge deletion, edge contraction of edges which
are the only edges between their endpoints, and vertex deletion of isolated
vertices, then there is a functor $\Gamma$ from $\Graphs$ to the category of
groups with surjective homomorphisms. There is also a functor $F$ from $\Z_2$-$\Graphs$
to $\Graphs$ sending $(G,b) \mapsto G$, and another functor $F'$ from groups over
$\Z_2$ with surjective homomorphisms to groups with surjective homomorphisms which
sends $(\Phi,J_{\Phi})$ to $\Phi / \langle J_{\Phi} \rangle$, and these functors
commute with $\Gamma$, i.e.~$\Gamma \circ F = F' \circ \Gamma$.

\subsection{Forbidden minors for quotient closed properties}

A property $\mpP$ of graphs is minor-closed if $G$ has $\mpP$ and $H$ is a
minor of $G$, then $H$ also has $\mpP$. Recall that the Robertson-Seymour
theorem implies that if $\mpP$ is minor-closed, then there is a finite set of
graphs $\mcF$ which do not have $\mpP$, and such that for any graph $G$, $G$
has $\mpP$ if and only if $G$ does not contain any graph from $\mcF$ as a
minor. As an immediate corollary of this theorem and Lemma
\ref{lem:main_uncoloured}, we have:
\begin{cor}\label{cor:main_uncoloured}
    If $\mpP$ is a quotient closed property of groups, then there is a
    finite set $\mcF$ of graphs such that for any graph $G$, $\Gamma(G)$
    has $\mpP$ if and only if $G$ avoids $\mcF$.
\end{cor}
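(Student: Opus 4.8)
The plan is to push the problem into the category of ordinary graphs using Lemma~\ref{lem:main_uncoloured}, and then invoke the Robertson--Seymour theorem. First I would define a graph property $\mpP'$ by declaring that a graph $G$ has $\mpP'$ exactly when the graph incidence group $\Gamma(G)$ has $\mpP$. The essential claim is that $\mpP'$ is minor-closed. To see this, suppose $G$ has $\mpP'$ and $H$ is a minor of $G$. By Lemma~\ref{lem:main_uncoloured} there is a surjective morphism $\Gamma(G) \arr \Gamma(H)$, so $\Gamma(H)$ is a quotient of $\Gamma(G)$; since $\mpP$ is quotient closed and $\Gamma(G)$ has $\mpP$, it follows that $\Gamma(H)$ has $\mpP$, i.e.~$H$ has $\mpP'$.

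Next I would apply the Robertson--Seymour theorem to the minor-closed class of graphs with $\mpP'$: it yields a finite set $\mcF$ of graphs, none of which has $\mpP'$, such that an arbitrary graph $G$ has $\mpP'$ if and only if $G$ contains no member of $\mcF$ as a minor, that is, $G$ avoids $\mcF$. Unwinding the definition of $\mpP'$, this says precisely that $\Gamma(G)$ has $\mpP$ if and only if $G$ avoids $\mcF$, which is the assertion of the corollary. The polynomial-time testability remark then follows because, for each fixed $H \in \mcF$, checking whether $H$ is a minor of $G$ can be done in time polynomial in the size of $G$.

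I do not expect any substantive obstacle: the content is carried entirely by Lemma~\ref{lem:main_uncoloured} and the Robertson--Seymour theorem. The only points requiring a moment's attention are verifying that minor-closedness of $\mpP'$ is exactly the hypothesis needed to extract an excluded-minor characterization, and noting that the family $\mcF$ produced automatically consists of graphs failing $\mpP'$ (harmless, but worth recording so that ``$G$ has $\mpP'$'' is genuinely equivalent to avoiding $\mcF$). By contrast, the coloured analogue Corollary~\ref{cor:main} needs the extra hypothesis that $G$ be connected, since $\Z_2$-coloured minors only behave well under the connected-graph restriction (as reflected in Lemma~\ref{lem:minor_colour}); for the uncoloured statement above no such restriction is needed.
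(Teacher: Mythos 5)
Your argument is correct and is essentially the paper's own proof: the paper likewise observes that ``$\Gamma(G)$ has $\mpP$'' is a minor-closed graph property via Lemma~\ref{lem:main_uncoloured} and then cites the Robertson--Seymour theorem to obtain the finite forbidden-minor set $\mcF$. Your extra remarks on the connected-graph restriction in the coloured case and on polynomial-time minor testing are consistent with the paper's discussion and add nothing problematic.
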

\begin{proof}
    By Lemma \ref{lem:main_uncoloured}, the graph property ``$\Gamma(G)$ has
    $\mpP$'' is minor-closed.
\end{proof}
For technical reasons, it turns out to be difficult to extend Corollary
\ref{cor:main_uncoloured} to all $\Z_2$-coloured graphs.  A quasi-order $\leq$
on a set $X$ is said to be a well-quasi-order if for any infinite sequence
$x_1,x_2,\ldots$ in $X$, there is a pair of indices $1 \leq i < j$ such that
$x_i \leq x_j$. If $\leq$ is a well-quasi-order and $x_1,x_2,\ldots$ is an
infinite sequence, then there must be a sequence $1 \leq i_1 < i_2 < \ldots$ of
indices such that $x_{i_1} \leq x_{i_2} \leq \ldots$. The full version of the
Robertson-Seymour theorem states that the set of graphs is well-quasi-ordered
under the minor containment relation (in which $H \leq G$ if $G$ contains $H$
as a minor).

Say that $(H,c) \leq (G,b)$ if $(H,b)$ is a minor of $(G,b)$. The following
example shows that $\leq$ is not a well-quasi-order on the set of
$\Z_2$-coloured graphs.
\begin{example}\label{ex:nonwqo}
    Let $G_n$ be the graph with $n$ vertices and no edges, and let $b_n :
    V(G_n) \arr \{0,1\}$ be the function sending every vertex to $1$. Then
    no minor operation in Definition \ref{def:graph_minors} can be applied
    to $(G_n,b_n)$, so $(G_n,b_n)$ is not a minor of $(G_k,b_k)$ for any
    $n$ and $k$.
\end{example}
In the presentation of $\Gamma(G,b)$, an isolated vertex $v$ with $b(v)=1$
corresponds to a relation $J=1$. In the presentation of $\Gamma(G_n,b_n)$, this
relation appears $n$ times, and deleting all but one of these relations does not
affect the group. This suggests adding an additional minor operation in which
we can identify these connected components.
\begin{defn}
    The minor operation \emph{component identification} takes a $\Z_2$-coloured
    graph $(G,b)$, and two connected components $G_i$, $i=1,2$, of
    $G$ which are isomorphic as $\Z_2$-coloured graphs, meaning that there is a
    graph isomorphism $\phi : G_2 \arr G_1$ such that $b(\phi(v)) = b(v)$ for
    all $v \in V(G_2)$. It returns the graph with vertex set $V(G) \setminus
    V(G_2)$, edge set $E(G) \setminus E(G_2)$, and vertex colouring $b|_{V(G)
    \setminus V(G_2)}$.

    We say that $(H,c) \preceq (G,b)$ if $(H,c)$ can be constructed from
    $(G,b)$ using the minor operations in Definition \ref{def:graph_minors},
    along with component identification.
\end{defn}
It is not hard to see that if $(H,c)$ is the result of identifying connected
components $G_1$ and $G_2$ of $(G,b)$, and $\phi : G_2 \arr G_1$ is an
isomorphism with $b(\phi(v)) = b(v)$ for all $v \in V(G_2)$, then there is a
surjective homomorphism
\begin{equation*}
    \Gamma(G,b) \mapsto \Gamma(H,c) : J \mapsto J \text{ and }
        x_e \mapsto \begin{cases} x_{\phi(e)} & e \in E(G_2) \\
                                  x_e & e \not\in E(G_2)
                    \end{cases}.
\end{equation*}
Thus if $(H,c) \preceq (G,b)$, then there is a surjective homomorphism
$\Gamma(G,b) \arr \Gamma(H,c)$. Also, if $(G_n,b_n)$ is the graph from Example
\ref{ex:nonwqo}, then $(G_n,b_n) \preceq (G_k,b_k)$ for all $n \leq k$.  Thus,
it seems possible that $\preceq$ is a well-quasi-order.
\begin{prop}\label{prop:wqo}
    The quasi-order $\preceq$ is a well-quasi-order on the set of $\Z_2$-coloured
    graphs if and only if the set of minor-closed properties of connected graphs
    are well-quasi-ordered under inclusion.
\end{prop}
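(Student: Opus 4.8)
The plan is to prove both directions by translating between $\Z_2$-coloured graphs under $\preceq$ and pairs consisting of an uncoloured connected graph together with an extra bit recording parity. The key structural observation is that a $\Z_2$-coloured graph $(G,b)$ is, up to the minor operations in Definition \ref{def:graph_minors}, determined by the multiset of its connected components, and that each connected component $(G_i,b_i)$ is equivalent (via edge toggling, Lemma \ref{lem:minor_colour}) to the pair $(G_i, \text{parity}(b_i))$. So I would set up a correspondence: to a $\Z_2$-coloured graph associate the finite multiset of pairs $(G_i, p_i) \in \Graphs_{\text{conn}} \times \Z_2$, where $G_i$ ranges over connected components and $p_i$ is the parity of $b_i$. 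Under this correspondence, component identification collapses repeated entries in the multiset, and the ordinary minor operations act componentwise. The first step is to verify carefully that $(H,c) \preceq (G,b)$ holds if and only if the multiset for $(H,c)$ embeds into the multiset for $(G,b)$ in the appropriate ``sub-multiset after taking minors'' sense — i.e.\ there is an injection from components of $H$ to components of $G$ matching parities, with each target component having the corresponding source component as a minor, \emph{and} the unmatched components of $G$ all having even parity (this last clause is the price of the restriction on vertex deletion, exactly as discussed before Lemma \ref{lem:minor_colour}).

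Once that dictionary is in place, the proposition becomes a statement about a well-known construction in order theory: given a quasi-order $(Q,\le)$, the quasi-order on finite multisets over $Q$ (under the embedding order) is a wqo iff $(Q,\le)$ is — this is essentially Higman's lemma. The relevant $Q$ here is $\Graphs_{\text{conn}} \times \Z_2$ with the order ``$(H,p) \le (G,q)$ iff $p=q$ and $H$ is a minor of $G$,'' except that the even-parity components play a special absorbing role, so I'd need a mild variant of Higman rather than the literal statement. For the \textbf{if} direction: assume minor-closed properties of connected graphs are wqo under inclusion. A minor-closed property of connected graphs is the same data as a downward-closed subset of $(\Graphs_{\text{conn}}, \text{minor})$, so saying these are wqo under inclusion is equivalent (by the standard antichain/ideal duality) to saying $(\Graphs_{\text{conn}}, \text{minor})$ itself is a wqo. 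Feeding this into Higman's lemma gives that finite multisets over $\Graphs_{\text{conn}} \times \Z_2$ are wqo, and then the dictionary above transfers this to $\preceq$. For the \textbf{only if} direction: assume $\preceq$ is a wqo; restricting to $\Z_2$-coloured graphs that are connected and have even parity, the operations available are exactly the uncoloured minor operations (the colouring is trivial under edge toggling), so $(\Graphs_{\text{conn}}, \text{minor})$ embeds as a sub-quasi-order of $(\Z_2\text{-}\Graphs, \preceq)$ and is therefore itself a wqo; then dualize back to minor-closed properties.

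The main obstacle I expect is getting the bookkeeping around even-parity components exactly right, since they break the naive ``multiset embedding'' picture: a connected even-parity component of $(G,b)$ can simply be discarded (delete a toggled edge to make every vertex $0$, then peel off vertices, then — here's the subtlety — an isolated $0$-vertex \emph{can} be deleted), whereas an odd-parity component cannot be fully deleted. So the correct order on multisets is: $M' \preceq M$ iff after discarding some even-parity entries of $M$ one gets a multiset $M''$ and there is an order-embedding of $M'$ into $M''$ matching each entry of $M'$ to an entry of $M''$ with equal parity and the first a minor of the second. I would isolate this as a lemma and check it implies (and is implied by) $\preceq$ directly from the definitions, using the reductions in the proof of Lemma \ref{lem:main} (vertex deletion $=$ delete incident edges then delete isolated vertex; edge contraction $=$ delete parallel edges then contract) to see that every $\preceq$-derivation can be rearranged into ``componentwise ordinary minors, then discard even components, then identify isomorphic components.'' The other, smaller obstacle is making precise the duality between ``minor-closed properties of connected graphs, ordered by inclusion, are wqo'' and ``$(\Graphs_{\text{conn}}, \text{minor})$ is wqo''; this is the standard fact that a quasi-order is a wqo iff its lattice of downward-closed sets has no infinite strictly descending chains and no infinite antichains, which I'd state and cite rather than reprove.
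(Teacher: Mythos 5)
Your argument has a genuine gap, and it sits at the very step you call a ``standard antichain/ideal duality'': for a quasi-order $Q$, being a wqo is equivalent to the \emph{descending chain condition} on the downward-closed subsets of $Q$ ordered by inclusion, \emph{not} to those downward-closed subsets forming a wqo under inclusion. The latter is strictly stronger (Rado's example is a wqo whose downward-closed subsets contain an infinite antichain under inclusion). Since connected graphs \emph{are} wqo under the minor order by Robertson--Seymour, your claimed equivalence would make ``minor-closed properties of connected graphs are wqo under inclusion'' unconditionally true, and your argument would then prove outright that $\preceq$ is a wqo --- but the paper notes that whether minor-closed classes are wqo under inclusion is an open problem, and that consequently it is not known whether $\preceq$ is a wqo; a proof that resolves this as a corollary of Robertson--Seymour plus Higman should have been a red flag. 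Both of your directions route through this false duality (the ``if'' direction to feed Robertson--Seymour into Higman, the ``only if'' direction to ``dualize back''), so both collapse. The paper instead works with the finite forbidden-minor sets $\mcF$ of the properties $\mpP$ and the criterion that $\mpP \subseteq \mpP'$ iff every $G \in \mcF'$ contains some $H \in \mcF$ as a minor (Equation \eqref{eq:inclusion}); one direction encodes a sequence of forbidden sets as all-odd disjoint unions, the other refines a sequence of coloured graphs by applying Robertson--Seymour to the even and odd parts and only then invokes the assumed wqo of properties.

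Your combinatorial dictionary for $\preceq$ is also off in a way that matters. Because an odd-parity component can never be deleted --- it can only be reduced and then identified with an isomorphic odd component --- $(H,c) \preceq (G,b)$ forces \emph{every} odd component of $G$ to contain some odd component of $H$ as a minor; the paper's characterization is $H_{even} \leq G_{even}$ and $H_{odd} \leq G_{odd}$ together with this clause. Your refined multiset order imposes no condition on the unmatched odd entries of $M$, so it would assert, for instance, $(K_5,\text{odd}) \preceq (K_5,\text{odd}) \sqcup (K_1,1)$, which is false, since the isolated $1$-coloured vertex admits no minor operation and can only be identified with a component reduced to an isomorphic copy of itself. (Your first formulation errs in the opposite direction: demanding an injection on components with all unmatched components even rules out legitimate containments such as $(C_2,\text{odd}) \preceq (C_2,\text{odd}) \sqcup (C_2,\text{odd})$ via component identification, and containments where one component of $G$ splits into several components of $H$.) This ``every odd component of $G$ lies above some odd component of $H$'' clause is not bookkeeping: it is a reverse-domination condition that makes $\preceq$ on all-odd unions coincide with inclusion of minor-closed properties rather than with a Higman-type embedding order, and it is exactly why the proposition is an equivalence with an open problem rather than an unconditional wqo statement that Higman's lemma could deliver.
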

\begin{proof}
    Suppose $(G,b)$ is a $\Z_2$-coloured graph, and let $G_1,\ldots,G_k$ be the
    connected components of $G$. Let $b_i = b|_{V(G_i)}$. For the purposes of
    this proof, we let $G_{even}$ (resp. $G_{odd}$) be the subgraph of $G$
    consisting of the connected components where $b_i$ is even (resp.  $b_i$ is
    odd). Using Lemma \ref{lem:minor_colour}, it is not hard to show that
    $(H,c) \preceq (G,b)$ if and only if $H_{even} \leq G_{even}$, $H_{odd}
    \leq G_{odd}$, and for every connected component $G'$ of $G_{odd}$, there
    is a connected component $H'$ of $H_{odd}$ with $H' \leq G'$.

    Suppose $\mpP$ is a minor-closed property of connected graphs. By the
    Robertson-Seymour theorem, there is a finite set $\mcF$ of connected
    graphs such that $G$ belongs to $\mpP$ if and only if $H \not\leq G$
    for all $H \in \mcF$. Conversely, if $\mcF$ is a finite set of connected
    graphs, then ``$H \not\leq G$ for all $H \in \mcF$'' is a
    minor-closed property of connected graphs. Suppose $\mpP$ and $\mpP'$ are
    two minor-closed properties, defined by finite sets of connected graphs
    $\mcF$ and $\mcF'$ respectively. Then $\mpP$ is contained in $\mpP'$ if
    and only if
    \begin{equation}\label{eq:inclusion}
        \text{ for every } G \in \mcF', \text{ there is }
        H \in \mcF \text{ such that } H \leq G.
    \end{equation}
    Indeed, $\mpP$ is contained in $\mpP'$ if and only if every graph not
    satisfying $\mpP'$ also does not satisfy $\mpP$. So if $\mpP$ is contained
    in $\mpP'$, and $G \in \mcF'$, then $G$ does not satisfy $\mpP'$ and
    hence does not satisfy $\mpP$. But this means that there is $H \in \mcF$
    with $H \leq G$. In the other direction, if $\mcF$ and $\mcF'$ satisfy
    Equation \eqref{eq:inclusion} and $G$ does not satisfy $\mpP'$, then
    there must be $G' \in \mcF'$ such that $G' \leq G$, and hence there is
    $H \in \mcF$ such that $H \leq G' \leq G$, so $G$ does not satisfy $\mpP$.

    Suppose that $\mpP_1, \mpP_2, \ldots$ is a sequence of minor-closed
    properties of connected graphs, and let $\mcF_1,\mcF_2,\ldots$ be the
    corresponding sequence of forbidden minors. Let $G_i$ be the graph with
    connected components $\mcF_i$, and let $b_i$ be a $\Z_2$-colouring
    of $G_i$ such that every connected component has odd parity. If $\preceq$
    is well-quasi-ordered, then there are indices $1 \leq i < j$ such that
    $(G_i,b_i) \preceq (G_j,b_j)$. But then for every connected component $G$ of
    $(G_j)_{odd} = G_j$, there must be a connected component $H$ of $(G_{i})_{odd}
    = G_i$ with $H \leq G$. So $\mcF_i$ and $\mcF_j$ satisfy the condition
    in Equation \eqref{eq:inclusion}, and $\mpP_i$ is contained in $\mpP_j$. We
    conclude that minor-closed properties of connected graphs are well-quasi-ordered
    under inclusion.

    On the other hand, suppose minor-closed properties of connected graphs are
    well-quasi-ordered under inclusion, and let $(G_1,b_1),(G_2,b_2),\ldots$ be
    a sequence of $\Z_2$-coloured graphs. Applying the fact that $\leq$ is
    well-quasi-ordered to the sequence $(G_1)_{even}, (G_2)_{even}, \ldots$,
    we see that there must be a sequence $1 < i_1 < i_2 < \ldots$ of
    indices such that $(G_{i_1})_{even} \leq (G_{i_2})_{even} \leq \ldots$.
    Applying the same reasoning to the sequence $(G_{i_1})_{odd}, (G_{i_2})_{odd},
    \ldots$, we see that there must be a sequence of indices $1 \leq j_1 < j_2 <
    \ldots$ such that $(G_{j_1})_{even} \leq (G_{j_2})_{even} \leq \ldots$
    and $(G_{j_1})_{odd} \leq (G_{j_2})_{odd} \leq \ldots$. Let $\mcF_{k}$
    be the connected components of $(G_{j_k})_{odd}$, and let $\mpP_k$ be
    the corresponding minor-closed property of connected graphs. Then there
    is $k < k'$ such that $\mpP_k$ is contained in $\mpP_{k'}$, so that for every
    $G \in \mcF_{k'}$, there is $H \in \mcF_{k}$ with $H \leq G$. But this means
    that $(G_{j_k},b_{j_k}) \preceq (G_{j_{k'}}, b_{j_{k'}})$. We conclude that
    $\preceq$ is a well-quasi-order.
\end{proof}
We are not aware of the inclusion order on minor-closed properties of connected
graphs being studied in the literature. However, whether or not the inclusion
order on minor-closed properties of all graphs is well-quasi-ordered seems to
be an open problem (see, for instance, \cite{DK05,BNW10}), and we expect the same is
true when looking at properties of connected graphs. Thus we do not know if
$\preceq$ is a well-quasi-order. There are also other graph operations which
are natural with respect to the functor $\Gamma(\cdot)$ (for instance, there is
a generalization of component identification which takes $G$ to $H$ whenever
$G$ is a cover of $H$). We leave it as an open problem to find a natural
category of graph minor operations for $\Z_2$-coloured graphs, such that Lemma
\ref{lem:main} holds, and such that graph minor containment is
well-quasi-ordered.

Fortunately, the above technical problems disappear if we restrict to connected
graphs.
\begin{cor}\label{cor:minor_colour}
    The graph minor containment relation $\leq$ is a well-quasi-order on the set
    of connected $\Z_2$-coloured graphs.
\end{cor}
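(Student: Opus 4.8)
The plan is to deduce this directly from the ordinary Robertson--Seymour theorem by encoding the $\Z_2$-colouring as an extra piece of structure on the graph that is itself well-quasi-ordered. For a connected $\Z_2$-coloured graph $(G,b)$, the only colour data that matters up to minor equivalence is the parity of $b$, by Lemma \ref{lem:minor_colour}: indeed, that lemma says $(H,c)$ is a minor of $(G,b)$ if and only if $H$ is a minor of $G$ and $c$ has the same parity as $b$. So there is a clean dichotomy: connected $\Z_2$-coloured graphs split into the even-parity ones and the odd-parity ones, and no minor operation crosses between the two classes.

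First I would handle each parity class separately. Fix a parity $p \in \{0,1\}$. Given a sequence $(G_1,b_1), (G_2,b_2), \ldots$ of connected $\Z_2$-coloured graphs all of parity $p$, apply the Robertson--Seymour theorem to the underlying sequence $G_1, G_2, \ldots$ of ordinary (connected) graphs: there are indices $i < j$ with $G_i \leq G_j$ in the usual minor order. Since $b_i$ and $b_j$ both have parity $p$, Lemma \ref{lem:minor_colour} applied to the connected graph $G_j$ upgrades $G_i \leq G_j$ to $(G_i, b_i) \leq (G_j, b_j)$ in the $\Z_2$-coloured minor order. Hence $\leq$ restricted to parity-$p$ connected $\Z_2$-coloured graphs is a well-quasi-order.

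Second I would combine the two classes. A quasi-order that is a finite union of well-quasi-ordered subsets is itself a well-quasi-order: given any infinite sequence, infinitely many terms lie in one of the two parity classes, and restricting to that subsequence produces a comparable pair by the previous paragraph. (One can also phrase this as: the set of connected $\Z_2$-coloured graphs is the disjoint union, over $p \in \{0,1\}$, of well-quasi-ordered sets, so it is well-quasi-ordered.) This completes the argument.

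I do not anticipate a serious obstacle here; the content is entirely carried by Lemma \ref{lem:minor_colour} and the Robertson--Seymour theorem, and the only care needed is to invoke Lemma \ref{lem:minor_colour} with the \emph{connected} graph $G_j$ in the role of ``$(G,b)$'' (connectivity is exactly the hypothesis of that lemma, and it holds by assumption). The one subtlety worth a sentence in the writeup is that $G_i \leq G_j$ as ordinary graphs does not immediately say $(G_i,b_i) \leq (G_j,b_j)$ until one checks the parities agree and quotes Lemma \ref{lem:minor_colour}; the parity-splitting is what makes this automatic.
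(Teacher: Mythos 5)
Your proposal is correct and is essentially the paper's own argument: pass to an infinite subsequence of constant parity (pigeonhole), apply the Robertson--Seymour theorem to the underlying uncoloured graphs, and upgrade the resulting minor relation to the coloured setting via Lemma \ref{lem:minor_colour}, whose connectivity hypothesis is satisfied by assumption. No further changes are needed.
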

\begin{proof}
    Let $(G_1,b_1), (G_2,b_2),\ldots$ be an infinite sequence of
    $\Z_2$-coloured connected graphs. Then there must be an infinite sequence
    $1 \leq i_1 < i_2 < \ldots$ such that either $b_{i_j}$ is odd for
    all $j$, or $b_{i_j}$ is even for all $j$. Since $\leq$ is a well-quasi-order
    on uncoloured graphs, there must be indices $j < j'$ such that
    $G_{i_j} \leq G_{i_{j'}}$, and by Lemma \ref{lem:minor_colour},
    $(G_{i_j},b_{i_j}) \leq (G_{i_{j'}}, b_{i_{j'}})$.
\end{proof}

Corollary \ref{cor:minor_colour} allows us to prove Corollary \ref{cor:main}.
\begin{proof}[Proof of Corollary \ref{cor:main}]
    By Lemma \ref{lem:main}, ``$\Gamma(G,b)$ satisfies $\mpP$'' is a
    minor-closed property $\mpP'$ of connected $\Z_2$-coloured graphs. Let
    $\mcS$ be the set of connected $\Z_2$-coloured graphs not satisfying
    $\mpP'$.  By Corollary \ref{cor:minor_colour}, there is a finite subset
    $\mcF$ of $\mcS$ such that every element of $\mcS$ contains an element of
    $\mcF$ as a minor. Conversely, since $\mpP'$ is minor-closed, any graph
    containing an element of $\mcF$ as a minor cannot satisfy $\mpP'$.
\end{proof}
\begin{rmk}
    The proof actually shows that there is a finite set $\mcF$ of connected
    $\Z_2$-coloured graphs such that $\Gamma(G,b)$ satisfies $\mpP$ if and
    only if $(G,b)$ avoids $\mcF$. However, sometimes it is convenient to use
    disconnected graphs when writing down forbidden minors for connected graphs.
    For instance, in Theorems \ref{thm:finite} and \ref{thm:abelian}, it is
    conceptually simpler to use $C_2 \sqcup C_2$ as a forbidden minor, although
    we could use a connected graph in its place.
\end{rmk}

\section{Arkhipov's theorem and pictures}\label{sec:pic}

Theorem \ref{thm:CLS} and Lemma \ref{lem:minor_colour} imply that Arkhipov's
theorem (Theorem \ref{thm:Arkhipov}) can be restated in the following way:
\begin{thm}\label{thm:Arkhipov2}
    Let $(G,b)$ be a connected $\Z_2$-coloured graph. Then the following
    are equivalent:
    \begin{enumerate}[(a)]
        \item $J_{G,b} = 1$ in $\Gamma(G,b)$.
        \item $J_{G,b}$ is trivial in finite-dimensional representations
            of $\Gamma(G,b)$.
        \item $(G,b)$ avoids $(K_{3,3},b')$ with $b'$ odd, $(K_5,b')$ with $b'$
            odd, and $(K_1,b')$ with $b'$ even.
    \end{enumerate}
\end{thm}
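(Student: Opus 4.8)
The plan is to prove the cycle of implications (a) $\Rightarrow$ (b) $\Rightarrow$ (c) $\Rightarrow$ (a); this yields a new, combinatorial proof of Arkhipov's theorem, since by Theorem~\ref{thm:CLS} conditions (a) and (b) say exactly that $\mcG(G,b)$ has no perfect commuting-operator strategy and no perfect quantum strategy, respectively. The implication (a) $\Rightarrow$ (b) is immediate: if $J_{G,b}=1$ in $\Gamma(G,b)$ then $J_{G,b}$ is sent to the identity in every representation, in particular every finite-dimensional one.

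For (b) $\Rightarrow$ (c) I would argue by contraposition. Suppose $(G,b)$ fails to avoid one of the three forbidden graphs, so it has a minor $(H,c)$ equal to $(K_1,b')$ with $b'$ even, $(K_{3,3},b')$ with $b'$ odd, or $(K_5,b')$ with $b'$ odd. By Lemma~\ref{lem:parity}, for connected $H$ the group $\Gamma(H,c)$ depends, as a group over $\Z_2$, only on $H$ and the parity of $c$, so it suffices to produce, for each of $(K_1,0)$, $(K_{3,3},c)$ with $c$ odd, and $(K_5,c)$ with $c$ odd, a finite-dimensional representation in which the distinguished central element is nontrivial. Lemma~\ref{lem:main} then gives a surjection $\Gamma(G,b)\rightarrow\Gamma(H,c)$ sending $J_{G,b}\mapsto J_{H,c}$, and pulling such a representation back along it contradicts (b). For $(K_1,0)$ this is trivial, since $\Gamma(K_1,0)=\langle J\mid J^2\rangle\cong\Z_2$ with $J$ nontrivial. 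For the other two, $\mcG(K_{3,3},c)$ and $\mcG(K_5,c)$ are the magic square and magic pentagram games, which have perfect quantum strategies coming from the Mermin--Peres operator solutions on two and three qubits \cite{Mermin90,Peres91}; equivalently, as in Example~\ref{ex:K33}, each of $\Gamma(K_{3,3},c)$ and $\Gamma(K_5,c)$ has a four-dimensional irreducible representation sending $J\mapsto-\Id$.

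For (c) $\Rightarrow$ (a), suppose $(G,b)$ avoids all three forbidden graphs. Since $G$ is connected, $K_1$ is a minor of $G$, so by Lemma~\ref{lem:minor_colour} the coloured graph $(K_1,b')$ with $b'$ of the same parity as $b$ is a minor of $(G,b)$; avoiding $(K_1,b')$ with $b'$ even therefore forces $b$ to have odd parity. With $b$ odd, avoiding $(K_{3,3},c)$ and $(K_5,c)$ for $c$ odd means, again by Lemma~\ref{lem:minor_colour}, that $G$ contains neither $K_{3,3}$ nor $K_5$ as an uncoloured minor, so $G$ is planar by Wagner's theorem \cite{Wag37}. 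It remains to prove the central claim: \emph{if $G$ is connected and planar and $b$ has odd parity, then $J_{G,b}=1$}. The idea is that the vertex relators $r_v=J^{-b(v)}\prod_{e\in E(v)}x_e$, multiplied together in an order dictated by a planar embedding of $G$, telescope: each $x_e$ occurs in exactly the two relators belonging to the endpoints of $e$, and planarity lets us carry out the product so that these occurrences cancel, leaving $\prod_v J^{-b(v)}=J^{-p}$, where $p$ is the parity of $b$; since a product of relators is trivial in $\Gamma(G,b)$, we get $J^{p}=1$, and $p$ is odd, so $J_{G,b}=1$. The bookkeeping for this cancellation is exactly what pictures over the presentation of Definition~\ref{def:group} handle. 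Fix a planar embedding of $G$ on the sphere, and take the relator at each vertex $v$ to read $J^{-b(v)}\prod_{e\in E(v)}x_e$ in the induced cyclic order (permissible since the generators incident to $v$ commute). Placing a relator disc at each vertex $v$, with an arc labelled $x_e$ running along each edge $e$, produces a picture in which every $x_e$-strand is matched, while each vertex with $b(v)=1$ leaves a single dangling $J$-strand poking into an incident face. Using $[x_e,J]$-discs to pass $J$-strands across edges, they can be routed freely along the connected dual graph $G^{\ast}$; pairing them off with disjoint arcs (and $J^{2}$-discs as needed) and using that their number is odd leaves exactly one $J$-strand, which is sent to the boundary of a picture disc cut out of one face. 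The boundary word of the resulting picture is $J$, so $J_{G,b}=1$.

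The step I expect to be the main obstacle is making this picture argument rigorous: choosing the routing of the $J$-strands (say, toward a root face along a spanning tree of $G^{\ast}$) so that the arcs produced are simple and pairwise disjoint, verifying by a $\Z_2$-parity count over the faces that exactly one strand escapes to the boundary, and checking cyclic-order compatibility at every relator disc. This is the place where planarity of $G$ is genuinely used, and I expect it to be considerably longer than (a) $\Rightarrow$ (b), (b) $\Rightarrow$ (c), and the reductions in (c) $\Rightarrow$ (a), all of which are short.
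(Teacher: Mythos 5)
Your proposal is correct, and its overall route is the same as the paper's: both arguments deduce from (c) that $b$ is odd and $G$ is planar (via Lemma~\ref{lem:minor_colour} and Wagner's theorem), and both handle the failure of (c) by pulling a finite-dimensional representation with $J \mapsto -\Id$ back along the surjection of Lemma~\ref{lem:main} — the paper obtains such a representation from the finiteness of $\Gamma(K_{3,3},b')$ and $\Gamma(K_5,b')$ in Proposition~\ref{prop:K33K5}, while you invoke the explicit Mermin--Peres representations of Example~\ref{ex:K33}, and for the even-parity case the paper uses $\Gamma(G,b) \iso \Gamma(G) \times \Z_2$ where you use $\Gamma(K_1,0) \iso \Z_2$; all of these variants are fine, since only the existence of some finite-dimensional representation with $J$ nontrivial is needed. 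The one genuine divergence is the planar case of (c)$\Rightarrow$(a), and there the ``main obstacle'' you flag is not an obstacle in the paper's framework: the relator-telescoping and $J$-strand-routing bookkeeping you sketch is exactly what the paper's pictures over $G$, together with the van Kampen Lemma~\ref{lem:vk} (whose proof is imported from \cite{Slof16}), already package. A planar embedding of $G$ in a closed disk, with the identity labellings $h_V(v)=v$ and $h_E(e)=e$, is itself a closed picture over $G$ whose character is the all-ones vector, so $\chi(\mcP)\cdot b$ equals the parity of $b$, which is $1$, and Lemma~\ref{lem:vk} gives $J_{G,b}=1$ immediately — no routing of $J$-strands along the dual graph, no pairing-off, and no cyclic-order verification at relator discs is required. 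Your sketch is in effect a by-hand re-derivation of the relevant special case of that lemma over the full presentation of Definition~\ref{def:group}; it would work if completed, but citing Lemma~\ref{lem:vk} is the intended and much shorter path, and it is precisely what makes the paper's proof of this implication two lines long.
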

Suppose that $\phi : \Phi \arr \Psi$ is a homomorphism of groups over $\Z_2$,
so $\phi(J_{\Phi}) = J_{\Psi}$. If $J_{\Phi}=1$, then $J_{\Psi}=1$, so as
mentioned in the introduction, ``$J_{\Phi} = 1$'' is a quotient closed property
of groups over $\Z_2$. Similarly, if $\psi : \Psi \arr U(\C^n)$ is a
finite-dimensional representation of $\Psi$ for some $n \geq 1$ such that
$\psi(J_{\Psi}) \neq \Id$, then $\psi \circ \phi$ is a finite-dimensional
representation of $\Phi$ with $\psi \circ \phi(J_{\Phi}) \neq \Id$. So
``$J_{\Phi}$ is trivial in finite-dimensional representations of $\Phi$'' is
also a quotient closed property. As a result, Corollary \ref{cor:main} implies
that both properties can be characterized by forbidden minors. However,
Corollary \ref{cor:main} does not explain why these properties are equivalent,
or why they are related to planarity of $G$. In this section, we show
how to prove Theorem \ref{thm:Arkhipov2} in the group-theoretic language of
Lemma \ref{lem:main}, in a way that explains the equivalence of these two
properties, and the relation to planarity.

For this proof, we recall the notion of \emph{pictures} of groups. Pictures
provide a graphical representation of relations in a group, and are a standard
tool in combinatorial group theory~\cite{Sho07,Ol12} (although the planar duals
of pictures, called van Kampen diagrams, are more common). For
solution groups of linear systems, pictures are particularly nice, since it is
not necessary to keep track of the order of generators in the defining relations.
A detailed definition of pictures for solution groups can be found in
\cite[Definition 7.2]{Slof16}. For the convenience of the reader, we give a
streamlined version:
\begin{defn}
    A \emph{picture} over a graph $G$ is an embedded planar graph $\mcP$ with a
    distinguished vertex $v_b$, called the \emph{boundary vertex}, in the
    outside face, as well as labelling functions $h_E : E(\mcP) \arr E(G)$ and
    $h_V : V(\mcP) \setminus \{v_b\} \arr V(G)$, such that $h_E|_{E(v)}$ is a
    bijection between $E(v)$ and $E(h_V(v))$ for all $v \in V(\mcP) \setminus
    \{v_b\}$.

    A word $e_1 \cdots e_k$ over $E(G)$ is a \emph{boundary word} of a picture $\mcP$
    if the edges incident to the boundary vertex are labelled by $e_1,\ldots,e_k$
    when read in counterclockwise order from some starting edge. A picture is
    \emph{closed} if $E(v_b) = \emptyset$. The \emph{character} of a picture
    $\mcP$ is the vector $\chi(\mcP) \in \Z_2^{V(G)}$ with $\chi(\mcP)(v) =
    |h_V^{-1}(v)| \in \Z_2$.
\end{defn}
For simplicity, we use the same conventions for pictures as for graphs, in that
multiedges are allowed, but loops are not.  Note that if $e_1 \cdots e_k$ is a
boundary word of a picture $\mcP$, then every cyclic shift of this word is also
a boundary word, since we can choose any starting edge.

When drawing pictures, we usually blow up the boundary vertex to a disk, and then
think of the interior of this disk as the outside face. This gives a drawing of
the picture inside a closed disk, with the boundary of the disk corresponding
to the boundary vertex, as shown in Figures \ref{fig:k33_J_picture} and
\ref{fig:k5_J_picture}. Given such a drawing, we can shrink the boundary disk
down to a vertex to get a drawing of the picture with the boundary vertex in
the outside face, so these two ways of drawing a picture are equivalent.

Recall that a \emph{graph homomorphism} $\phi : G \arr H$ is a function $\phi_V : V(G)
\arr V(H)$ such that if $v, w \in V(G)$ are adjacent in $G$, then $\phi_V(v)$
and $\phi_V(w)$ are adjacent in $H$. In particular, $\phi_V(v) \neq \phi_V(w)$ if
$v$ and $w$ are adjacent in $G$. If $G$ and $H$ do not have multiple edges
between vertices, then given a graph homomorphism $\phi_V : G \arr H$, we can
define a function $\phi_E : E(G) \arr E(H)$ by sending $e \in E(G)$ with endpoints
$v, w \in V(G)$ to $e' \in E(H)$ with endpoints $\phi_V(v)$ and $\phi_V(w)$.
A graph homomorphism between graphs without multiple edges is a \emph{cover} if
$\phi_E|_{E(v)}$ is a bijection from $E(v)$ to $E(\phi_V(v))$ for all $v \in
V(G)$. To extend this concept to graphs with multiple edges between vertices,
we say that a \emph{cover} of a graph $H$ is a homomorphism $\phi_V : G \arr H$
along with a function $\phi_E : E(G) \arr E(H)$, such that if $e \in E(G)$ has
endpoints $v,w$ then $\phi_E(e)$ has endpoints $\phi_V(v),\phi_V(w)$, and such
that $\phi_E|_{E(v)}$ is a bijection from $E(v)$ to $E(\phi_V(v))$ for all $v
\in V(G)$.  A \emph{planar cover} of $H$ is a cover $G \arr H$ in which $G$ is
planar.  An \emph{embedded planar cover} of $H$ is a planar cover $G \arr H$
along with a choice of planar embedding of $G$. By thinking of $G$ as
embedded in a closed disk (or equivalently, by adding a boundary vertex
to the outer face), any embedded planar cover $G \arr H$ can be
regarded as a closed picture over $H$ with labelling functions $\phi_V$ and
$\phi_E$.

Conversely, if $\mcP$ is a closed picture over $H$, then all the data of $\mcP$
is contained in the embedded planar graph $\mcP \setminus v_b$.  If $e \in
E(\mcP)$ has endpoints $v,w \in V(\mcP)$, then $h_E(e)$ must be incident to
$h_V(v)$ and $h_V(w)$. However, this does not necessarily imply that $h_V(v)$
and $h_V(w)$ are adjacent, since $h_V(v)$ and $h_V(w)$ could be equal. As a
result, $h_V$ might not be a homomorphism, with the consequence that $\mcP
\setminus v_b$ is not necessarily an embedded planar cover of $H$. Thus we can
think of pictures as generalizations of planar covers, which preserve incidence
rather than adjacency. A closed picture $\mcP$ comes from an embedded planar
cover if and only if $h_V$ is a homomorphism, which happens if and only if
$h_V(v) \neq h_V(w)$ whenever vertices $v$ and $w$ are adjacent in $\mcP$.

The following lemma shows that boundary words of pictures give relations in the
graph incidence group, and that all relations in the group arise in this way.
This lemma is essentially due to van Kampen \cite{VK33}. A proof of this
version of the lemma can be found in \cite{Slof16}.
\begin{lemma}[van Kampen lemma]\label{lem:vk}
    Let $(G,b)$ be a $\Z_2$-coloured graph. Then $x_{e_1}\cdots x_{e_k}=J^a$ in
    $\Gamma(G,b)$ if and only if $e_1 \cdots e_k$ is the boundary word of a
    picture $\mcP$ over $G$ with $\chi(\mcP) \cdot b = a$. In
    particular, $J=1$ if and only if there is a closed picture $\mcP$ with
    $\chi(\mcP) \cdot b = 1$.
\end{lemma}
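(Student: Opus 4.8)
The assertion is the van Kampen lemma specialized to the presentation of $\Gamma(G,b)$ in Definition~\ref{def:group}; the feature that makes the combinatorics clean is that the only relators involving more than one generator are commutators and the vertex relators $\prod_{e\in E(v)}x_e=J^{b(v)}$, in which the order of the factors does not matter. The plan is to prove the two implications of the biconditional separately, and then obtain the final sentence as the special case $k=0$, $a=1$.

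\emph{Pictures give relations.} I would induct on the number $n$ of interior (non-boundary) vertices of $\mcP$. If $n=0$ there are no edges (the single vertex $v_b$ carries no loop), the boundary word is empty, $\chi(\mcP)=0$, and the assertion reads $1=J^0$. For $n\geq 1$ one may assume $\mcP$ is connected with no isolated interior vertex: a connected component not containing $v_b$ is a closed picture, so the inductive hypothesis applied to it (after adjoining a boundary vertex in its outer face) gives that it either contributes $0$ to $\chi(\mcP)\cdot b$ or forces $J=1$ in $\Gamma(G,b)$; and an isolated interior vertex $v$ has $h_V(v)$ isolated in $G$, so deleting it either leaves $\chi(\mcP)\cdot b$ unchanged or, via relation (4) at $h_V(v)$, forces $J=1$. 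Then I would pick an interior vertex $v$ on the boundary of a face $F_0$ of $\mcP$ incident to $v_b$, and an arc $\gamma$ running through $F_0$ from $v_b$ to $v$ that crosses no edge of $\mcP$; cutting the ambient disk open along $\gamma$ produces a picture $\mcP'$ with the same edge-labelling $h_E$ but with $v$ absorbed into the boundary vertex, hence with $n-1$ interior vertices, $\chi(\mcP')\cdot b=\chi(\mcP)\cdot b-b(h_V(v))$, and boundary word $e_1\cdots e_k$ with the edges of $E(v)$ read cyclically around $v$ spliced in where $\gamma$ emanated. Since the $h_E$-labels of the spliced edges are exactly the elements of $E(h_V(v))$, relations (3) and (4) of Definition~\ref{def:group} make their product equal to $J^{b(h_V(v))}$; applying the inductive hypothesis to $\mcP'$, substituting this product, and using that $J$ is central yields $x_{e_1}\cdots x_{e_k}=J^{\chi(\mcP')\cdot b+b(h_V(v))}=J^{\chi(\mcP)\cdot b}=J^a$.

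\emph{Relations give pictures.} This is the substantive direction, and I would invoke the classical van Kampen construction \cite{VK33}, worked out for this presentation in \cite{Slof16}. Given $x_{e_1}\cdots x_{e_k}=J^a$ in $\Gamma(G,b)$, the word $x_{e_1}\cdots x_{e_k}J^{-a}$ lies in the normal closure, in the free group on $\{x_e:e\in E(G)\}\cup\{J\}$, of the relators of Definition~\ref{def:group}; writing it as a product of conjugates of those relators produces a van Kampen diagram over this presentation with boundary word $e_1\cdots e_k$ whose $2$-cells are of the five listed types. Since $J$ is central, the $J^2$- and $[x_e,J]$-cells can be cleared, leaving $J$ recorded only through the exponent $a$; the $x_e^2$-bigons and the $[x_e,x_{e'}]$-cells are then removed by the standard reductions, which is exactly what the picture formalism of \cite{Slof16} is built to accommodate (the involution and commutator relations are absorbed into the planar structure rather than carried as cells). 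What remains is a picture $\mcP$ over $G$ with boundary word $e_1\cdots e_k$, and tracking the effect of the reductions on the cell count gives $\chi(\mcP)\cdot b=a$.

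Finally, a picture is closed precisely when $E(v_b)=\emptyset$, i.e.\ when its boundary word is empty; applying the equivalence just proved to the empty word with $a=1$ shows that ``$1=J$ in $\Gamma(G,b)$'' --- i.e.\ $J=1$ --- holds if and only if there is a closed picture $\mcP$ with $\chi(\mcP)\cdot b=1$. The main obstacle is the second implication: producing a genuine planar diagram from the bare algebraic identity needs the van Kampen machinery, and the ensuing normalization of a general diagram over this presentation into a picture in the precise sense used here --- clearing the $J$-cells and absorbing the involution and commutator cells --- though routine in spirit, is where essentially all the bookkeeping resides, and for a complete argument one would follow \cite{Slof16}.
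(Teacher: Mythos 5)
The paper does not actually prove this lemma internally: it is stated with the remark that it is essentially due to van Kampen \cite{VK33} and that a proof of this precise version is in \cite{Slof16}, so there is no in-paper argument to compare against. Your proposal is consistent with that treatment, and it adds value by sketching the ``pictures give relations'' direction: the induction on the number of interior vertices, absorbing a vertex $v$ into the boundary by cutting along an arc inside a face incident to $v_b$, splicing the edges of $E(v)$ into the boundary word, and then using relations (3) and (4) together with centrality of $J$ to convert the spliced block into $J^{b(h_V(v))}$, is the standard and correct argument, and your character bookkeeping $\chi(\mcP')\cdot b = \chi(\mcP)\cdot b - b(h_V(v))$ is right. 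For the converse you, like the paper, ultimately defer to the van Kampen machinery of \cite{Slof16}; the step where you ``track the effect of the reductions on the cell count'' to get $\chi(\mcP)\cdot b = a$ is exactly where the real bookkeeping lives, so as written this direction is a citation rather than a proof, which is acceptable here only because the paper itself does the same for the whole lemma.

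One small repair to your first direction: the preliminary reduction to connected pictures is both unnecessary and, as stated, circular in a degenerate case. If $\mcP$ is closed and all $n$ interior vertices lie in a component not containing $v_b$, then that component, viewed as a closed picture, has the same number of interior vertices as $\mcP$, so the inductive hypothesis does not apply to it. The fix is simply to skip the reduction: whenever $n\geq 1$, the face of $\mcP$ in which $v_b$ sits (or any face incident to $v_b$, if $E(v_b)\neq\emptyset$) has some interior vertex on its boundary, regardless of connectivity, so the cutting argument can be run directly on $\mcP$.
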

In this lemma, if $b,b' \in \Z_2^{V(G)}$, then $b' \cdot b := \sum_{v \in V(G)}
b'(v) b(v)$.

As an example of Lemma \ref{lem:vk}, we prove the following lemma:
\begin{lemma}\label{lem:commutatorpic}
    Let $G$ be $K_{3,3}$ or $K_5$, and let $b$ be a $\Z_2$-colouring of $G$ of
    parity $a$.  If $e$ and $f$ are two edges of $G$ which are not incident to
    a common vertex, then
    \begin{equation*}
        [x_e,x_f] = J^a
    \end{equation*}
    in $\Gamma(G,b)$.
\end{lemma}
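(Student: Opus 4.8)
The plan is to prove $[x_e, x_f] = J^a$ directly via the van Kampen lemma (Lemma~\ref{lem:vk}): I will exhibit, for each choice of $G \in \{K_{3,3}, K_5\}$ and each pair of non-incident edges $e, f$, a picture $\mcP$ over $G$ whose boundary word (after cyclic shifts and insertion of the trivial $x_e^2 = x_f^2 = 1$) spells out the commutator $x_e x_f x_e x_f$, and whose character $\chi(\mcP)$ satisfies $\chi(\mcP) \cdot b = a$. Since $b$ has parity $a$, it suffices to arrange that $\chi(\mcP)$ has odd parity, i.e.\ that $\mcP$ uses an odd total number of vertices counted with multiplicity; in fact one typically arranges $\chi(\mcP)(v) = 1$ for exactly one vertex $v$ (or for an odd number of vertices), so that $\chi(\mcP) \cdot b = \sum b(v) \pmod 2$ over that odd set equals $a$ whenever the set is all of $V(G)$, or one invokes Lemma~\ref{lem:parity} to reduce to a convenient $b$. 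The cleanest route is to produce a picture with $\chi(\mcP) = (1,1,\dots,1)$, the all-ones vector, so that $\chi(\mcP)\cdot b$ is exactly the parity of $b$, which is $a$.

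The key geometric input is that $K_{3,3}$ and $K_5$ are precisely the Kuratowski graphs, so they are "just barely" nonplanar: deleting or suitably modifying a small piece makes them planar, and this is what lets us build a planar picture realizing a commutator. Concretely, I would first reduce to a normal form: by Lemma~\ref{lem:parity} I may take $b$ to be any fixed colouring of the desired parity $a$; choosing $b$ supported on a single vertex (parity $a$) or the all-ones colouring simplifies the character bookkeeping. Then, for $G = K_{3,3}$ with bipartition $\{u_1,u_2,u_3\} \sqcup \{w_1,w_2,w_3\}$, a pair of non-incident edges is something like $e = u_1 w_1$ and $f = u_2 w_2$; I would draw the standard near-planar layout of $K_{3,3}$ (planar except for one "crossing pair" of edges), and use the two non-incident edges $e,f$ as the crossing pair, so that resolving the drawing into a legitimate planar picture forces the boundary word to pick up $x_e x_f x_e x_f$. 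The vertices of the picture are copies of the vertices of $G$ with $h_V$ the obvious labelling; one checks $h_E|_{E(v)}$ is a bijection onto $E(h_V(v))$ at every interior vertex (this is automatic from the layout since every vertex of $K_{3,3}$ has degree $3$ and each appears once), which makes $\mcP$ a valid picture with $\chi(\mcP) = (1,\dots,1)$ and hence $\chi(\mcP)\cdot b = a$. The same scheme works for $K_5$, whose standard drawing is planar except for one crossing, which we arrange to be the non-incident pair $e,f$. For general non-incident pairs one notes that $\Aut(K_{3,3})$ and $\Aut(K_5)$ act transitively on pairs of non-incident edges, so a single picture up to relabelling suffices; alternatively, Proposition~\ref{prop:minors}(iv) (edge toggling) and the symmetry of the presentation reduce the general case to one representative pair.

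I expect the main obstacle to be the bookkeeping in the picture construction: one must draw $\mcP$ carefully enough that (i) it is genuinely planar (embedded in the disk with $v_b$ on the outer boundary), (ii) the incidence condition $h_E|_{E(v)} : E(v) \xrightarrow{\sim} E(h_V(v))$ holds at every interior vertex, and (iii) the boundary word read counterclockwise is exactly (a cyclic shift of) $x_e x_f x_e x_f$, possibly after freely inserting $x_g x_g = 1$ for boundary edges $g$ that appear twice consecutively. The conceptual content is small --- it is the observation that a single crossing in a near-planar drawing of a Kuratowski graph is exactly a commutator of the two crossing edges --- but making the picture precise, including verifying that the interior vertices of $\mcP$ account for each vertex of $G$ an odd number of times so that $\chi(\mcP)$ is all-ones, is where the care is needed. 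A secondary point is confirming the reduction to a single representative pair of non-incident edges via the automorphism group; this is routine since in $K_{3,3}$ any two disjoint edges extend to a perfect matching and $\Aut(K_{3,3})$ is transitive on perfect matchings, and in $K_5$ any two disjoint edges are equivalent under $S_5$.
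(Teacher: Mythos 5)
Your proposal is correct and follows essentially the same route as the paper: draw $G$ with a single crossing between the non-incident edges $e$ and $f$, replace the crossing by the boundary vertex to obtain a picture with boundary word $e\,f\,e\,f$ in which every vertex of $G$ appears exactly once, so $\chi(\mcP)$ is the all-ones vector and $\chi(\mcP)\cdot b = a$, and conclude via Lemma~\ref{lem:vk}. The transitivity-of-automorphisms remark is a reasonable way to justify that such a one-crossing drawing exists for every non-incident pair, which the paper simply asserts.
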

\begin{proof}
    It is not hard to see that $G$ has a planar drawing with a single crossing
    between edges $e$ and $f$. If we replace the crossing point with a new
    boundary vertex $v_b$, we get a $G$-picture $\mcP$ with boundary word $e f
    e f$, as shown in Figures \ref{fig:k33_J_picture} and \ref{fig:k5_J_picture}.

    Because every vertex of $G$ occurs exactly once in the picture, the
    character $\chi(\mcP)$ is the vector of all $1$'s in $\Z_2^{V(G)}$.
    Thus $\chi(\mcP) \cdot b = \sum_{v \in V(G)} b(v) = a$, and the conclusion
    follows from Lemma \ref{lem:vk}.
\end{proof}

\begin{figure}[t]
\begin{center}
    \includegraphics[scale=1]{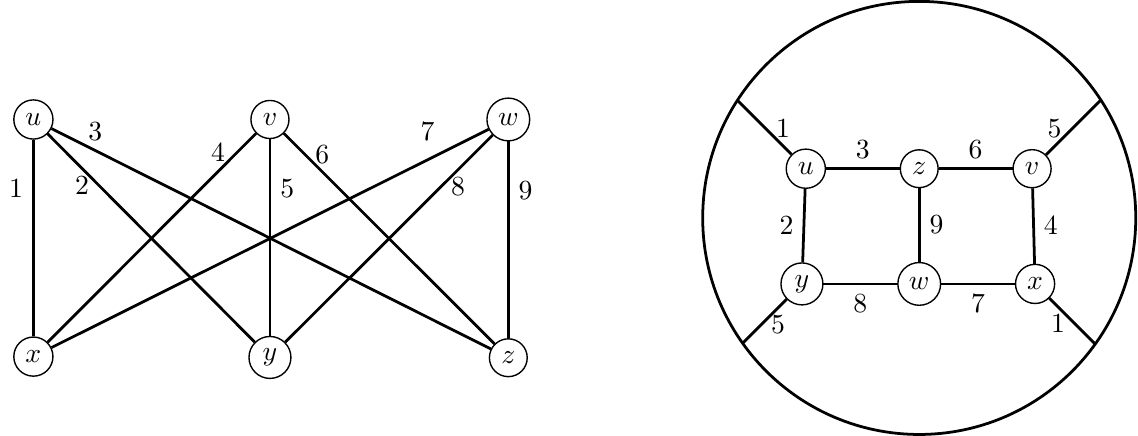}
\caption{On the left is $K_{3,3}$ with edges labelled from $1$ to $9$, and vertices
labelled from $u$ to $z$. On the right is a $K_{3,3}$-picture with boundary
word $1\ 5\ 1\ 5$.}
    \label{fig:k33_J_picture}
\end{center}
\end{figure}

\begin{figure}[t]
\begin{center}
    \includegraphics[scale=1]{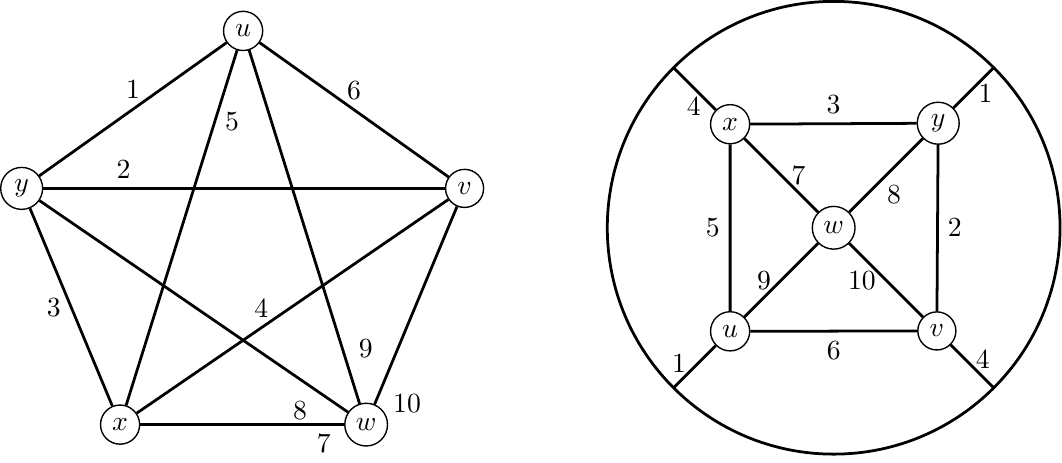}
\caption{On the left is $K_{5}$ with edges labelled from $1$ to $10$, and vertices
labelled from $u$ to $y$. On the right is a $K_{5}$-picture with boundary
word $1\ 4\ 1\ 4$.}
    \label{fig:k5_J_picture}
\end{center}
\end{figure}

To prove Theorem \ref{thm:Arkhipov2}, we need to know that if $G = K_{3,3}$ or
$K_5$, and $b$ is an odd parity colouring, then $\Gamma(G,b)$ is finite and
$J \neq 1$. This can be done directly on a computer, as in Example
\ref{ex:K33}. There is also a nice expression for these groups due to
\cite{CS17a}, which we now explain. Recall that the dihedral group $\Dih_n$ is
the group with presentation
\begin{equation}\label{eq:dihfin}
    \Dih_n=\left\langle z_1,z_2: z_1^2=z_2^2=(z_1z_2)^n=1\right\rangle\,.
\end{equation}
$\Dih_n$ is a finite group of order $2n$, and is nonabelian for $n \geq 3$. If
$n$ is even, then the center of $\Dih_n$ has a single non-trivial element $(z_1
z_2)^{n/2}$. When $n=4$, this central element has order $2$.

Let $\psi : Z(\Psi_1) \arr Z(\Psi_2)$ be an isomorphism between the centers of
two groups $\Psi_1$ and $\Psi_2$. The central product of $\Psi_1$ and $\Psi_2$
is the quotient of the product $\Psi_1 \times \Psi_2$ by the normal subgroup
generated by $(z,\psi^{-1}(z))$ for $z \in Z(\Psi_1)$. We denote the central
product by $\Psi_1 \odot_{\psi} \Psi_2$, or $\Psi_1 \odot \Psi_2$ if the
isomorphism $\psi$ is clear. The groups $\Psi_1$ and $\Psi_2$ are naturally
subgroups of $\Psi_1 \odot \Psi_2$, and the center of $\Psi_1 \odot \Psi_2$ is
$Z(\Psi_1) = Z(\Psi_2)$, considered as a subgroup of $\Psi_1 \odot \Psi_2$.
\begin{prop}[\cite{CS17a}]\label{prop:K33K5}\ 
    \begin{enumerate}[(a)]
        \item Let $b$ be an odd parity colouring of $K_{3,3}$. Then $$\Gamma(K_{3,3},b)
            \iso \Dih_4 \odot \Dih_4$$ via an isomorphism which sends $J \in \Gamma(K_{3,3},b)$
            to the unique non-trivial central element of $\Dih_n \odot \Dih_n$.
        \item Let $b$ be an odd parity colouring of $K_{5}$. Then $$\Gamma(K_5,
            b) \iso \Dih_4 \odot \Dih_4 \odot \Dih_4$$ via an isomorphism which
            sends $J \in \Gamma(K_5,b)$ to the unique non-trivial central
            element of $\Dih_4 \odot \Dih_4 \odot \Dih_4$.
    \end{enumerate}
\end{prop}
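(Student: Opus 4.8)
The plan is to recognise $\Gamma(G,b)$, for $G=K_{3,3}$ or $K_5$ and $b$ of odd parity, as an extraspecial $2$-group of plus type, which is precisely an iterated central product of copies of $\Dih_4=D_8$. First I would fix, using Lemma~\ref{lem:parity}, a convenient odd-parity colouring $b$. Since $b$ has parity $1$, Lemma~\ref{lem:commutatorpic} gives $[x_e,x_f]=J$ whenever $e,f$ are not incident to a common vertex, while relation~(3) of Definition~\ref{def:group} gives $[x_e,x_f]=1$ when $e,f$ share a vertex. Hence every commutator of generators is $1$ or $J$, so $[\Gamma(G,b),\Gamma(G,b)]\subseteq\langle J\rangle$ and $\Gamma(G,b)$ has nilpotency class $\le 2$. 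In particular $\Gamma(G):=\Gamma(G,b)/\langle J\rangle$ is abelian and generated by involutions, hence is $\Z_2^{E(G)}$ modulo the row space (cut space) of $\mcI(G)$; connectivity of $G$ gives $\dim_{\Z_2}\Gamma(G)=|E(G)|-|V(G)|+1$, namely $4$ for $K_{3,3}$ and $6$ for $K_5$.

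Next I would rule out $J=1$ by exhibiting a finite-dimensional representation by self-adjoint unitaries with $J\mapsto-\Id$: the Mermin magic square (products of commuting one-qubit Pauli observables on $\C^2\otimes\C^2$) for $K_{3,3}$, and the Mermin pentagram ($3$-qubit Paulis on $(\C^2)^{\otimes3}$) for $K_5$; one checks relations (1)--(4) by inspection, for a colouring that is odd (which by Lemma~\ref{lem:parity} is no loss), and alternatively one can cite the computation of Example~\ref{ex:K33}. Then $\langle J\rangle\cong\Z_2$ and $|\Gamma(G,b)|=2^{1+\dim\Gamma(G)}$, i.e.\ $32$ for $K_{3,3}$ and $128$ for $K_5$, matching $|\Dih_4\odot\Dih_4|=8^2/2$ and $|\Dih_4\odot\Dih_4\odot\Dih_4|=8^3/2^2$.

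For the identification itself, note that $\Gamma(G,b)$ is nonabelian (it has a non-incident pair of edges), so $[\Gamma(G,b),\Gamma(G,b)]=\langle J\rangle$; since all generators are involutions and the group has class $2$, the identity $(gh)^2=g^2h^2[h,g]$ puts every square in $\langle J\rangle$, so the Frattini subgroup is $\langle J\rangle$ and $\Gamma(G)$ is the Frattini quotient. The essential remaining point is that $Z(\Gamma(G,b))=\langle J\rangle$, equivalently that the alternating form $\omega$ on $\Gamma(G)$ with $\omega(x_e,x_f)=1$ iff $e,f$ are non-incident is nondegenerate; this descends to $\Z_2^{E(G)}/(\text{cut space})$ because the non-incidence matrix $A$ annihilates every star cut $\mathbbm{1}_{E(v)}$, and its radical is $(\ker_{\Z_2}A+\text{cut space})/(\text{cut space})$, so nondegeneracy is the finite linear-algebra fact $\ker_{\Z_2}A\subseteq(\text{cut space})$, which one verifies in the $3\times3$-grid coordinates for $K_{3,3}$ (there $Ay=0$ forces $y_{ij}$ to be a row-sum plus a column-sum plus the total sum, i.e.\ an additive matrix) and in the line-graph coordinates for $K_5$. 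Granting this, $\Gamma(G,b)$ is extraspecial of order $2^{1+2k}$ with $k=2$ (resp.\ $3$); it is of plus type because the generators incident to a single vertex $v$, together with $J$, span an elementary abelian subgroup of order exactly $2^{\deg v}=2^{k+1}$ — the $\deg v-1$ star generators surviving the vertex relation are independent in $\Gamma(G)$ since $G-v$ is connected (so the only cut supported on $E(v)$ is $E(v)$), and $J$ is independent of them since it vanishes in $\Gamma(G)$. Hence $\Gamma(G,b)$ is the iterated central product of $k$ copies of $\Dih_4$, with $J$ the common nontrivial central element, which is the assertion.

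I expect the nondegeneracy/plus-type verification of the previous paragraph to be the only genuine obstacle, since $\omega$ lives on the $4$- or $6$-dimensional cokernel of $\mcI(G)$ rather than on an obvious coordinate space; once the reformulation $\ker_{\Z_2}A\subseteq(\text{cut space})$ is set up it is a short computation. A variant that avoids quoting the classification of extraspecial $2$-groups is to produce explicit elements $a_1,\dots,a_{2k}$ of $\Gamma(G,b)$, each a product of the $x_e$, which are involutions with $[a_{2i-1},a_{2i}]=J$ and all other pairs commuting; these define a surjection $\Dih_4^{\odot k}\twoheadrightarrow\Gamma(G,b)$ that is forced to be an isomorphism by the order count above. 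Constructing the $a_i$ amounts to bringing $\omega$ to symplectic normal form, so the combinatorial content is the same either way.
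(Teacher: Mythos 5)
Your argument is correct, but it takes a genuinely different route from the paper. The paper proves Proposition \ref{prop:K33K5} by brute force: it writes down the presentation of $\Dih_4 \odot \Dih_4$ (resp.\ $\Dih_4^{\odot 3}$), maps its generators to explicit edge generators, uses Lemma \ref{lem:commutatorpic} to verify $(x_e x_f)^2 = J$ for the chosen non-incident pairs, and then exhibits an explicit inverse homomorphism on all the $x_i$ and checks the vertex relations by hand. You instead identify $\Gamma(G,b)$ abstractly as an extraspecial $2$-group of plus type: Lemma \ref{lem:commutatorpic} plus relation (3) give class $\leq 2$ with $[\Gamma,\Gamma] \subseteq \langle J\rangle$, the quotient $\Gamma(G)$ is $\Z_2^{E}$ modulo the cut space (dimension $4$, resp.\ $6$), a Mermin-type representation shows $J \neq 1$ and pins the order at $2^{1+2k}$, nondegeneracy of the non-incidence commutator form gives $Z = \langle J\rangle$, and the elementary abelian subgroup of order $2^{k+1}$ spanned by a vertex star together with $J$ (using that $G\setminus v$ is connected, so the only cut inside $E(v)$ is the full star) forces plus type, hence $\Dih_4^{\odot k}$ by the classification of extraspecial $2$-groups. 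What each approach buys: the paper's proof is self-contained and yields the explicit generator images that are reused in Corollary \ref{cor:K33K5}; yours explains structurally \emph{why} these groups are central products of $\Dih_4$, and localizes all graph-specific content in one linear-algebra fact (the radical of the non-incidence form equals the cut space) plus the star subgroup.

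Three caveats. First, you verify that linear-algebra fact only for $K_{3,3}$; for $K_5$ it is asserted. It is true (the non-incidence matrix of the edges of $K_5$ is the adjacency matrix of the Petersen graph, whose $\Z_2$-rank is $6$, so its kernel is exactly the $4$-dimensional cut space), but a complete write-up needs this check. Second, the plus-type conclusion leans on the classification of extraspecial $2$-groups and the fact that minus type has no elementary abelian subgroup of order $2^{k+1}$; this is standard but external, and your proposed workaround (an explicit symplectic basis giving a surjection $\Dih_4^{\odot k} \twoheadrightarrow \Gamma(G,b)$, an isomorphism by the order count) is not actually carried out --- note that carrying it out essentially reproduces the paper's explicit maps. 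Third, do not fall back on Example \ref{ex:K33} for $J \neq 1$: that example invokes Theorems \ref{thm:finite} and \ref{thm:abelian}, whose proofs rely on this very proposition, so only the direct Mermin-observable representation (or the picture-based Lemma \ref{lem:vk} argument) is non-circular here.
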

Although a proof of Proposition \ref{prop:K33K5} can be found in \cite{CS17a},
we provide a proof of this proposition for the convenience of the reader.
\begin{proof}
    For part (a), we use the vertex and edge labelling of $K_{3,3}$
    shown in Figure \ref{fig:k33_J_picture}. Let $b$ be the colouring with
    $b(w)=1$ and $b(t)=0$ for all other vertices $t \neq w$. Since the edges of
    $K_{3,3}$ are labelled from $1$ to $9$, the group $\Gamma(K_{3,3},b)$ is
    generated by $x_1,\ldots,x_9$.  Using the presentation of $\Dih_4$ from
    Equation~\eqref{eq:dihfin}, we see that
    \begin{align*}
        \Dih_4 \odot \Dih_4 = \langle z_{11}, z_{12}, z_{21}, z_{22} :\
            & z_{ij}^2 = 1 \text{ for all } i,j \in \{1,2\}, \\
            & (z_{i1} z_{i2})^4 = 1 \text{ for } i=1,2, \\
            & [z_{1i},z_{2j}]=1 \text{ for all } i,j \in \{1,2\}, \\
            & (z_{11} z_{12})^2 = (z_{21} z_{22})^2\ \rangle.
    \end{align*}
    Now we can define a homomorphism
    \begin{equation*}
        \phi : \Dih_4 \odot \Dih_4 \arr \Gamma(K_{3,3},b)
    \end{equation*}
    by setting $\phi(z_{11}) = x_1$, $\phi(z_{12}) = x_5$, $\phi(z_{21}) = x_2$, and
    $\phi(z_{22}) = x_4$. To see that $\phi$ is a homomorphism, observe that $x_i^2=1$
    for all $1 \leq i \leq 9$. Also, edges $2$ and $4$ both share common vertices with
    edges $1$ and $5$, so $x_2$ and $x_4$ both commute with $x_1$ and $x_5$. Since
    $1$ and $5$ are not incident with a common vertex, $(x_1 x_5)^2 = J$ by
    Lemma \ref{lem:commutatorpic}. As a result, $(x_1 x_5)^4 = 1$. Similarly, $(x_2 x_4)^2 = J$,
    so $(x_2 x_4)^4 = 1$, and $(x_1 x_5)^2 = (x_2 x_4)^2$. Thus $x_1$, $x_5$, $x_2$, and
    $x_4$ satisfy the defining relations for $\Dih_4 \odot \Dih_4$, and hence
    $\phi$ is well-defined.

    To see that $\phi$ is an isomorphism, we define
    \begin{equation*}
        \phi^{-1} : \Dih_4 \odot \Dih_4 \arr \Gamma(K_{3,3},b)
    \end{equation*}
    by setting
    \begin{equation}\label{Eq:D4oD4}
    \begin{split}
        & \phi^{-1}(x_1) = z_{11}, \quad
        \phi^{-1}(x_2) = z_{21}, \quad
        \phi^{-1}(x_3) = z_{11} z_{21}, \\
        & \phi^{-1}(x_4) = z_{22},  \quad
        \phi^{-1}(x_5) = z_{12}, \quad
        \phi^{-1}(x_6) = z_{12} z_{22}, \\
        &\phi^{-1}(x_7) = z_{11} z_{22}, \quad
        \phi^{-1}(x_8) = z_{12} z_{21},  \quad
         \phi^{-1}(x_9) = z_{11} z_{12} z_{21} z_{22}, \text{ and}\\
        & \phi^{-1}(J) = (z_{11} z_{12})^2 = (z_{21} z_{22})^2.
    \end{split}
    \end{equation}
    We can check that $\phi^{-1}(r)=1$ for all defining relations $r$ of
    $\Gamma(K_{3,3},b)$, so $\phi^{-1}$ is well-defined as a homomorphism.
    For instance,
    \begin{equation*}
        \phi^{-1}(x_7 x_8 x_9) = (z_{11} z_{12} z_{11} z_{12}) (z_{22} z_{21} z_{21} z_{22})
            = (z_{11} z_{12})^2 = \phi^{-1}(J),
    \end{equation*}
    while
    \begin{equation*}
        \phi^{-1}(x_3 x_6 x_9) = (z_{11} z_{12} z_{11} z_{12})(z_{21} z_{22} z_{21} z_{22}) = (z_{11} z_{12})^2 (z_{21} z_{22})^2 = 1,
    \end{equation*}
    matching the colouring $b(w)=1$ and $b(z)=0$. Thus $\phi$ is an isomorphism.
    By Lemma \ref{lem:parity}, part (a) is true for any other odd parity colouring
    of $K_{3,3}$.

    The proof of part (b) is similar. We use the vertex and edge labelling of $K_5$ from
    Figure \ref{fig:k5_J_picture}, and let $b$ be the $\Z_2$-colouring with $b(w)=1$
    and $b(t)=0$ for vertices $t \neq w$. The group $G = \Dih_4 \odot \Dih_4 \odot \Dih_4$
    has presentation
    \begin{align*}
        G = \langle z_{ij},\ i \in \{1,2,3\}, \ j\in \{1,2\}  : \
            & z_{ij}^2 = 1 \text{ for all } i \in \{1,2,3\}, j\in\{1,2\}, \\
            & (z_{i1} z_{i2})^4 = 1 \text{ for all } i \in \{1,2,3\}, \\
            & [z_{ij},z_{kl}]=1 \text{ for all } i \neq k \in \{1,2,3\}, j,l \in \{1,2\}, \\
            & (z_{11} z_{12})^2 = (z_{21} z_{22})^2 = (z_{31} z_{32})^2\ \rangle.
    \end{align*}
    To define an isomorphism $\psi : G \arr \Gamma(K_5,b)$, we set
    \begin{equation*}
        \psi(z_{11}) = x_1,\ \psi(z_{12}) = x_4,\ \psi(z_{21}) = x_2,\ \psi(z_{22}) = x_5,\ \psi(z_{31}) = x_3,\ \psi(z_{32})= x_6.
    \end{equation*}
    That $\phi$ is well-defined follows from the same arguments as in part (a); in particular, we once again
    use Lemma \ref{lem:commutatorpic} to see that $(x_1 x_4)^2 = (x_2 x_5)^2 = (x_3 x_6)^2 = J$.
    For the inverse, we define $\phi^{-1} : \Gamma(K_{5},b) \arr G)$ as the inverse of $\phi$ on
    $x_1,\ldots,x_6$, and setting
    \begin{align*}
        & \phi^{-1}(x_7) = \phi^{-1}(x_3 x_4 x_5) = z_{12} z_{22} z_{31},\quad
        \phi^{-1}(x_8) = \phi^{-1}(x_1 x_2 x_3) = z_{11} z_{21} z_{31}, \\
        & \phi^{-1}(x_9) = \phi^{-1}(x_1 x_5 x_6) = z_{11} z_{22} z_{32}, \quad
        \phi^{-1}(x_{10}) = \phi^{-1}(x_2 x_4 x_6) = z_{12} z_{21} z_{32}, \text{ and}\\
        & \phi^{-1}(J) = (z_{11} z_{12})^2 = (z_{21} z_{22})^2 = (z_{31} z_{32})^2.
    \end{align*}
    The defining relations of $\Gamma(K_5,b)$ for vertices $u$, $v$, $x$, and $y$ follow immediately
    from the definition, while for vertex $w$ we have
    \begin{equation*}
        \phi^{-1}(x_7 x_8 x_9 x_{10}) = (z_{12} z_{11} z_{11} z_{12}) (z_{22} z_{21} z_{22} z_{21})
            (z_{31} z_{31} z_{32} z_{32}) = (z_{21} z_{22})^2 = \phi^{-1}(J),
    \end{equation*}
    which matches the colouring $b(w)=1$. So $\phi^{-1}$ is well-defined as a homomorphism,
    and hence $\phi$ is an isomorphism.
\end{proof}

Proposition \ref{prop:K33K5} also allows us to determine $\Gamma(K_{3,3})$ and
$\Gamma(K_5)$. This will be used in the next section.
\begin{cor}\label{cor:K33K5}\ 
    \begin{enumerate}[(a)]
        \item $\Gamma(K_{3,3}) = \Z_2^4$, and if $b$ is an even parity colouring
            then $\Gamma(K_{3,3},b) = \Z_2^5$.  
        \item $\Gamma(K_5) = \Z_2^6$, and if $b$ is an even parity colouring
            then $\Gamma(K_5,b) = \Z_2^7$.
    \end{enumerate}
\end{cor}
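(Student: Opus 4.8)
The plan is to deduce both parts from Proposition~\ref{prop:K33K5} by quotienting out $J$, using the identity $\Gamma(G) = \Gamma(G,b)/\langle J\rangle$ (valid for any $\Z_2$-colouring $b$, as noted after Lemma~\ref{lem:disconnected2}), together with the fact that $\Gamma(G,b)\iso\Gamma(G)\times\Z_2$ when $G$ is connected and $b$ has even parity. So first fix an odd parity colouring $b$ of $K_{3,3}$. By Proposition~\ref{prop:K33K5}(a) there is an isomorphism $\Gamma(K_{3,3},b)\iso\Dih_4\odot\Dih_4$ carrying $J$ to the unique non-trivial central element, which in the presentation of $\Dih_4\odot\Dih_4$ written in the proof of that proposition is $(z_{11}z_{12})^2=(z_{21}z_{22})^2$. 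Hence $\Gamma(K_{3,3})\iso(\Dih_4\odot\Dih_4)/\langle(z_{11}z_{12})^2\rangle$, and since $J$ is central this quotient is obtained from the presentation of $\Dih_4\odot\Dih_4$ simply by adjoining the relation $(z_{11}z_{12})^2=1$.

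Next I would read off this quotient from the presentation. Adjoining $(z_{11}z_{12})^2=1$ forces $(z_{21}z_{22})^2=1$ via the relation $(z_{11}z_{12})^2=(z_{21}z_{22})^2$, and makes the relations $(z_{i1}z_{i2})^4=1$ redundant, leaving
\begin{equation*}
    \langle z_{11},z_{12},z_{21},z_{22} : z_{ij}^2=1,\ (z_{i1}z_{i2})^2=1 \text{ for } i=1,2,\ [z_{1i},z_{2j}]=1 \rangle.
\end{equation*}
The cross-commuting relations $[z_{1i},z_{2j}]=1$ exhibit this as the direct product of the two groups $\langle z_{i1},z_{i2}:z_{i1}^2=z_{i2}^2=(z_{i1}z_{i2})^2=1\rangle\iso\Dih_2$, $i=1,2$; and $\Dih_2$ is the Klein four-group $\Z_2^2$. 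Therefore $\Gamma(K_{3,3})\iso\Dih_2\times\Dih_2\iso\Z_2^4$. The argument for $K_5$ is the same using Proposition~\ref{prop:K33K5}(b): adjoining $(z_{11}z_{12})^2=1$ kills $(z_{i1}z_{i2})^2$ for $i=1,2,3$ through the chain $(z_{11}z_{12})^2=(z_{21}z_{22})^2=(z_{31}z_{32})^2$, and the cross-commuting relations split the presentation into three copies of $\Dih_2\iso\Z_2^2$, giving $\Gamma(K_5)\iso\Z_2^6$.

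Finally, for an even parity colouring $b$, since $K_{3,3}$ and $K_5$ are connected, $\Gamma(G,b)\iso\Gamma(G)\times\Z_2$, so $\Gamma(K_{3,3},b)\iso\Z_2^4\times\Z_2=\Z_2^5$ and $\Gamma(K_5,b)\iso\Z_2^6\times\Z_2=\Z_2^7$, completing the proof. I do not expect a genuine obstacle here: the only point requiring care is that the isomorphisms of Proposition~\ref{prop:K33K5} really do send $\langle J\rangle$ onto the central $\Z_2$ generated by $(z_{11}z_{12})^2$ (this is asserted in that proposition, and is visible from the explicit formula $\phi^{-1}(J)=(z_{11}z_{12})^2$ in its proof), after which everything reduces to the elementary observation that collapsing the central element of $\Dih_4$ yields $\Dih_2\iso\Z_2^2$ and that the central products untangle into direct products.
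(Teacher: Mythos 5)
Your proof is correct and takes essentially the same route as the paper: fix an odd parity colouring, use Proposition~\ref{prop:K33K5} to identify $\Gamma(G,b)$ with $\Dih_4\odot\Dih_4$ (resp.\ $\Dih_4\odot\Dih_4\odot\Dih_4$), quotient by the central element $(z_{11}z_{12})^2$ corresponding to $J$, and read off $\Z_2^4$ (resp.\ $\Z_2^6$) from the resulting presentation, with the even-parity case handled via Lemma~\ref{lem:parity}. The only cosmetic difference is that you package the quotient as a direct product of copies of $\Dih_2\iso\Z_2^2$, whereas the paper writes the abelianized presentation directly.
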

\begin{proof}
    $\Gamma(K_{3,3}) = \Gamma(K_{3,3},b) / \langle J \rangle$ for any colouring $b$ of
    $K_{3,3}$.  Take $b$ to be an odd parity colouring, so that
    $\Gamma(K_{3,3},b) = \Dih_4 \odot \Dih_4$ has the presentation from
    Equation \eqref{Eq:D4oD4}. Setting $J = (z_{11} z_{12})^2 = 1$ in
    this presentation, we get that
    \begin{align*}
        \Gamma(K_{3,3}) = \langle z_{ij}, i,j \in \{1,2\} :\ & (z_{ij})^2 = 1 \text{ for all } i,j \in \{1,2\} \\
                                    & [z_{ij},z_{kl}] = 1 \text{ for all } i,j,k,l \in \{1,2\} \rangle = \Z_2^4.
    \end{align*}
    If $b$ is even parity, then by Lemma \ref{lem:parity}, $\Gamma(K_{3,3},b) =
    \Gamma(K_{3,3}) \times \Z_2 = \Z_2^5$.  The proof of (b) is similar.
\end{proof}

\begin{proof}[Proof of Theorem \ref{thm:Arkhipov2}]
    Let $(G,b)$ be a $\Z_2$-coloured connected graph. Suppose that $(G,b)$ satisfies
    (c), so that $(G,b)$ avoids $(K_{5},b')$ with $b$ odd, $(K_{3,3},b')$ with
    $b$ odd, and $(K_1,b')$ with $b'$ even. $G$ contains $K_1$ as a minor, so
    by Lemma \ref{lem:parity}, $b$ must be odd, and $G$ cannot contain $K_5$ or
    $K_{3,3}$ as a minor. But this implies that $G$ is planar. Choosing an
    embedding of $G$ in a closed disk, and setting $h_V(v) = v$ and $h_E(e) =
    e$, we get a closed picture $\mcP$ of $G$ with character $\chi(\mcP)$ equal
    to the vector of all $1$'s. Hence $\chi(\mcP) \cdot b = 1$, the parity of
    $b$, and by Lemma \ref{lem:vk}, we have $J=1$ in $\Gamma(G,b)$. Then
    $J$ is also trivial in finite-dimensional representations of $\Gamma(G,b)$,
    so $(G,b)$ satisfies (a) and (b).

    Suppose that $(G,b)$ does not satisfy (c). If $(G,b)$ contains $(K_1,b')$
    with $b'$ even, then by Lemma \ref{lem:minor_colour} the colouring $b$ must
    also have even parity. By Lemma \ref{lem:parity}, there is an isomorphism
    $\Gamma(G,b) \iso \Gamma(G,0) \iso \Gamma(G) \times \Z_2$ sending $J_{G,b}$
    to the generator of the $\Z_2$ factor. Composing with the projection
    $\Gamma(G) \times \Z_2 \arr \Z_2$ and identifying $\Z_2$ with the subgroup
    ${\pm 1} \subset U(\C^1)$, we see that $J$ is non-trivial in
    finite-dimensional representations of $\Gamma(G,b)$.

    If $(G,b)$ contains $(H,b')$ where $H = K_5$ or $K_{3,3}$ and $b'$ has
    odd parity, then by Lemma \ref{lem:main} there is a homomorphism $\phi :
    \Gamma(G,b) \arr \Gamma(H,b')$ with $\phi(J_{G,b}) = J_{H,b'}$. By
    Proposition \ref{prop:K33K5}, $\Gamma(H,b')$ is a finite group and
    $J_{H,b'} \neq 1$. Since $\Gamma(H,b')$ is finite, it has a faithful
    finite-dimensional representation, and composing with this representation,
    we see that $J_{G,b}$ is non-trivial in finite-dimensional representations
    of $\Gamma(G,b)$.

    In both cases, if $(G,b)$ does not satisfy (c), then $(G,b)$ does not
    satisfy (b). Since (a) implies (b), conditions (a), (b), and (c) are
    equivalent.
\end{proof}
Another equivalent formulation of Theorem \ref{thm:Arkhipov2} is that a graph
$G$ is planar if and only if there is a picture $\mcP$ with character
$\chi(\mcP) \neq 0$. Indeed, if there is a picture $\mcP$ over $G$ with
$\chi(\mcP)(v)=1$, then let $b$ be the $\Z_2$-colouring of $G$ with $b(v)=1$, and
$b(t)=0$ for all vertices $t \neq v$. Then $\chi(G)\cdot b = 1$, so $J=1$ in
$\Gamma(G,b)$, and $G$ must be planar by Theorem \ref{thm:Arkhipov2}.
Conversely, if $G$ is planar, then $G$ itself can be turned into a closed
picture $\mcP$ with $\chi(\mcP)$ equal to the vector of all $1$'s.

More generally, any planar cover $\mcP$ of $G$ can be turned into a closed
picture over $G$ as discussed above. If $G$ is connected and $\phi : \mcP \arr G$ is a planar
cover of $G$, then the function $|\phi_V^{-1}(v)|$ is a constant function of $v
\in V(G)$ (something that is not true of pictures in general).  This constant
is called the fold number. If this planar cover is made into a picture $\mcP$,
then $\chi(\mcP)$ is the zero vector if the fold number is even, and the vector
of all $1$'s if the fold number is odd. Thus from Theorem \ref{thm:Arkhipov2}
we recover a result of Archdeacon and Richter that a graph is planar if and
only if it has a planar cover with odd fold number \cite{AR90}. Arkhipov's
theorem can be thought of as a strengthening of Archdeacon and Richter's
result that includes arbitrary pictures, not just planar covers.

To finish the section, we observe that Theorem \ref{thm:Arkhipov2} can be
easily extended to the case that $G$ is disconnected:
\begin{cor}\label{cor:Arkhipov2}
    Let $(G,b)$ be a $\Z_2$-coloured graph. Then the following are equivalent:
    \begin{enumerate}[(a)]
        \item $J_{G,b}=1$ in $\Gamma(G,b)$. 
        \item $J_{G,b}$ is trivial in finite-dimensional representations of $\Gamma(G,b)$.
        \item There is some connected component $G'$ of $G$ such that $G'$ is planar
            and the restriction of $b$ to $G'$ is odd. 
    \end{enumerate}
\end{cor}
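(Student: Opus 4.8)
The plan is to reduce to the connected case via the coproduct decomposition
\[
\Gamma(G,b) = \prescript{}{\langle J\rangle}\coprod_{i=1}^k \Gamma(G_i,b_i)
\]
from Lemma~\ref{lem:disconnected}, where $G_1,\dots,G_k$ are the connected components of $G$ and $b_i = b|_{V(G_i)}$, and then apply Theorem~\ref{thm:Arkhipov2} componentwise. First I would record the translation of condition (c): for a \emph{connected} graph $G_i$, condition (c) of Theorem~\ref{thm:Arkhipov2} for $(G_i,b_i)$ is equivalent to ``$G_i$ is planar and $b_i$ has odd parity.'' Indeed, every connected graph contains $K_1$ as a minor, so by Lemma~\ref{lem:minor_colour} avoiding $(K_1,b')$ with $b'$ even forces $b_i$ to be odd; given that, Lemma~\ref{lem:minor_colour} also says that avoiding $(K_{3,3},b')$ and $(K_5,b')$ with $b'$ odd is the same as avoiding $K_{3,3}$ and $K_5$ as ordinary minors, which by Wagner's theorem means $G_i$ is planar. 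Hence condition (c) of the corollary is exactly the statement that Theorem~\ref{thm:Arkhipov2}(c) holds for at least one component $(G_i,b_i)$.

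With this translation in hand, (a)$\Rightarrow$(b) is immediate, since a homomorphism sends the identity to the identity. For (c)$\Rightarrow$(a): if some $(G_i,b_i)$ satisfies Theorem~\ref{thm:Arkhipov2}(c), then $J_{G_i,b_i}=1$ in $\Gamma(G_i,b_i)$ by that theorem, and by Lemma~\ref{lem:disconnected} (which says $J_{G,b}\neq 1$ iff $J_{G_j,b_j}\neq 1$ for every $j$) this forces $J_{G,b}=1$ in $\Gamma(G,b)$.

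The substantive direction is (b)$\Rightarrow$(c), which I would prove by contraposition. Suppose no component satisfies Theorem~\ref{thm:Arkhipov2}(c). Then by Theorem~\ref{thm:Arkhipov2}, for each $i$ the element $J_{G_i,b_i}$ is non-trivial in some finite-dimensional representation of $\Gamma(G_i,b_i)$, so by the remark following Theorem~\ref{thm:CLS} there is a finite-dimensional representation $\psi_i : \Gamma(G_i,b_i)\arr U(\mcH_i)$ with $\psi_i(J_{G_i,b_i}) = -\Id$. Set $\mcH = \mcH_1\otimes\cdots\otimes\mcH_k$ and let $\rho_i : \Gamma(G_i,b_i)\arr U(\mcH)$ act by $\psi_i$ on the $i$-th tensor factor and by the identity on all other factors. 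Then $\rho_i(J_{G_i,b_i}) = -\Id_{\mcH}$ for every $i$, so the homomorphisms $\rho_i$ agree on the common central subgroup $\langle J\rangle$, and the universal property of the coproduct in Lemma~\ref{lem:disconnected} produces a homomorphism $\rho : \Gamma(G,b)\arr U(\mcH)$ with $\rho(J_{G,b}) = -\Id_{\mcH}\neq \Id$. Thus $J_{G,b}$ is non-trivial in a finite-dimensional representation of $\Gamma(G,b)$, i.e.\ (b) fails. Combining the three implications gives the equivalence.

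I do not expect a genuine obstacle here; the only points that require care are the bookkeeping in translating condition (c) into ``a planar component with an odd colouring,'' and the observation that one must glue the componentwise representations through the coproduct of Lemma~\ref{lem:disconnected} rather than trying to realize each $\Gamma(G_i,b_i)$ directly as a quotient of $\Gamma(G,b)$ --- the latter fails as soon as some \emph{other} component carries an odd colouring, since then the relation at any vertex of that component would force $J=1$ in the target.
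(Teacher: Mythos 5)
Your proof is correct and follows essentially the same route as the paper: reduce to connected components via the coproduct of Lemma~\ref{lem:disconnected}, apply Theorem~\ref{thm:Arkhipov2} componentwise, and glue finite-dimensional representations with $J\mapsto-\Id$ into one representation of $\Gamma(G,b)$. The only difference is cosmetic: you equalize the componentwise representations by tensoring with identities on $\mcH_1\otimes\cdots\otimes\mcH_k$, whereas the paper pads each $\psi_i$ by direct sums up to the least common multiple of the dimensions; both make the maps agree on $\langle J\rangle$ so the coproduct's universal property applies.
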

\begin{proof}
    Let $(G_1,b_1),\ldots,(G_k,b_k)$ be the connected components of $(G,b)$.
    By Lemma \ref{lem:disconnected}, $J_{G,b} = 1$ in $\Gamma(G,b)$ if and only
    if $J_{G_i,b_i} = 1$ in $\Gamma(G_i,b_i)$ for some $1 \leq i \leq k$.
    Clearly (a) implies (b). If $J_{G,b} \neq 1$, then by Theorem
    \ref{thm:Arkhipov2}, $J_{G_i,b_i}$ is non-trivial in finite-dimensional
    representations of $\Gamma(G_i,b_i)$ for all $1 \leq i \leq k$. As noted
    after Theorem \ref{thm:CLS}, this means that for each $i$, we can find
    a finite-dimensional representation $\psi_i$ of $\Gamma(G_i,b_i)$ on 
    $\C^{n_i}$ with $\psi_i(J_{G_i,b_i})=-\Id$. Let $m$ be the least common
    multiple of $n_1,\ldots,n_k$. Then $\psi_i^{\oplus m / n_i}$ is a
    representation of $\Gamma(G_i,b_i)$ on $\C^{m}$ sending $J_{G_i,b_i}\mapsto
    -\Id$. By Lemma \ref{lem:disconnected}, there is a representation $\psi$
    of $\Gamma(G,b)$ on $\C^{m}$ sending $J \mapsto -\Id$. So (a) and
    (b) are equivalent. 

    By Theorem \ref{thm:Arkhipov2}, $J_{G_i,b_i} = 1$ if and only if
    $b_i$ is odd, and $G_i$ is planar, so (a) and (c) are also equivalent. 
\end{proof}
While part (c) of Corollary \ref{cor:Arkhipov2} is a practical criterion for
testing $J_{G,b}=1$, it cannot be phrased as a pattern avoidance criterion with
a finite list of minors, for the same reason that $\leq$ is not a well-quasi-order
in Example \ref{ex:nonwqo}.

\section{Excluded $\Z_2$-graph minors for finiteness and abelianness}\label{sec:2cycles}

In this section we prove Theorems \ref{thm:finite} and \ref{thm:abelian} by
finding the the excluded $\Z_2$-graph minors for finiteness and abelianness of
graph incidence groups. As we will see, the proof reduces to the following
statements about the graph incidence groups of uncoloured graphs:
\begin{prop}\label{prop:finite}
    $\Gamma(G)$ is finite if and only if $G$ avoids $C_2 \sqcup C_2$ and $K_{3,6}$.
\end{prop}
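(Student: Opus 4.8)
The plan is to prove the two implications separately: necessity from Lemma~\ref{lem:main_uncoloured} together with the fact that $\Gamma(C_2\sqcup C_2)$ and $\Gamma(K_{3,6})$ are infinite, and sufficiency from Lov\'asz's classification of graphs with no two vertex‑disjoint cycles. For necessity, suppose $G$ has $C_2\sqcup C_2$ or $K_{3,6}$ as a minor. By Lemma~\ref{lem:main_uncoloured} there is a surjection of $\Gamma(G)$ onto $\Gamma(C_2\sqcup C_2)$, respectively $\Gamma(K_{3,6})$, so it suffices to see these targets are infinite. Directly from the presentation, $\Gamma(C_2)$ is generated by two commuting involutions whose product is $1$, so $\Gamma(C_2)\cong\Z_2$, and hence $\Gamma(C_2\sqcup C_2)\cong\Z_2\ast\Z_2$ by Lemma~\ref{lem:disconnected2}, the infinite dihedral group. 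That $\Gamma(K_{3,6})$ is infinite is part of the dedicated analysis of $\Gamma(K_{3,n})$ in Subsection~\ref{SS:K3n}, which shows $\Gamma(K_{3,n})$ is finite precisely for $n\le 5$; I would invoke that result here.

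For the converse, suppose $G$ avoids $C_2\sqcup C_2$ and $K_{3,6}$. By Lemma~\ref{lem:disconnected2}, $\Gamma(G)$ is the free product of the groups $\Gamma(G_i)$ over the connected components $G_i$, and a free product is finite only if at most one factor is nontrivial; moreover $\Gamma(G_i)$ surjects onto $\Gamma(C_2)\cong\Z_2$ whenever $G_i$ contains a cycle (any cycle contracts down to $C_2$), while $\Gamma(G_i)$ is trivial when $G_i$ is a forest (an easy induction on edges: a leaf vertex forces its incident edge to equal $1$, and that edge may then be deleted). Since $G$ avoids $C_2\sqcup C_2$, at most one component of $G$ carries a cycle, so we may assume $G$ is connected; as it still has no two disjoint cycles, we may further perform the standard reductions that do not change $\Gamma$: delete pendant trees, suppress degree‑$2$ vertices (using the description of $\ker\phi$ given after Proposition~\ref{prop:minors} to check the relevant surjection is an isomorphism), and discard $2$‑cycles (each contributing only a $\Z_2$ direct factor). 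Thus we may assume $G$ is connected with minimum degree at least $3$, or else $G$ is a single cycle and $\Gamma(G)\cong\Z_2$.

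Now I would apply Lov\'asz's theorem~\cite{Lov65}: such a $G$ is $K_5$, a wheel $W_n$, a graph having a single vertex incident to all of its cycles, or a graph obtained from $K_{3,n}$ by adding edges within the part of size three. It remains to check finiteness of $\Gamma$ in each case. For $K_5$ we have $\Gamma(K_5)\cong\Z_2^6$ by Corollary~\ref{cor:K33K5}. For a wheel, writing $r_i$ for the rim edges and $s_i=r_{i-1}r_i$ for the spokes, the hub relations $[s_i,s_j]=1$ force (by a short induction on cyclic distance between rim edges) all rim edges to commute, so $\Gamma(W_n)$ is elementary abelian. If $v$ meets every cycle of $G$, then peeling the leaves of the forest $G\setminus v$ and using the vertex relations expresses every edge of $G\setminus v$ as a product of edges incident to $v$; since those pairwise commute, $\Gamma(G)$ is abelian. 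Finally, in the $K_{3,n}$‑with‑extra‑edges family, avoiding $K_{3,6}$ forces $n\le 5$, and finiteness of $\Gamma$ there --- including the effect of the triangle‑side edges --- is exactly what Subsection~\ref{SS:K3n} establishes; any remaining graph in this family arises by deleting edges, which only passes to a quotient. Hence $\Gamma(G)$ is finite in all cases.

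The real work, and the main obstacle, is the $K_{3,n}$ family: showing $\Gamma(K_{3,n})$ is finite for $n\le 5$ and infinite for $n\ge 6$. This is the one family in Lov\'asz's list whose graph incidence group is genuinely non‑abelian (for instance $\Gamma(K_{3,4})$ has order $256$), and pinning down the finiteness threshold requires an explicit model of $\Gamma(K_{3,n})$ --- as an extension of an elementary abelian group on the edge generators, governed by the combinatorics of the size‑three side --- which is the subject of Subsection~\ref{SS:K3n}. The remaining ingredients (the component and pendant‑tree reductions, degree‑$2$ suppression, and the $K_5$, wheel, and single‑apex cases) are routine; the only point needing a little care is checking that suppressing a degree‑$2$ vertex induces an isomorphism of graph incidence groups, not merely a surjection.
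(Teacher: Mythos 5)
Your proposal is correct and follows essentially the same route as the paper: necessity via Lemma~\ref{lem:main_uncoloured} together with $\Gamma(C_2\sqcup C_2)\cong\Z_2\ast\Z_2$ and the infiniteness of $\Gamma(K_{3,6})$, and sufficiency via Lov\'asz's characterization, with the families other than $K_{3,n}$ handled by abelianness and the genuinely hard finiteness threshold for $\Gamma(K_{3,n})$ ($n\le 5$ finite, $n=6$ infinite) correctly identified and deferred to the dedicated analysis, exactly as in the paper's proof. The only cosmetic difference is that you perform the pendant-tree, degree-$2$, and $2$-cycle reductions by hand; the ``discard $2$-cycles'' step is asserted rather than proved, but it is also unnecessary, since the version of Lov\'asz's theorem used in the paper already covers wheels with multispokes and the added (possibly parallel) edges in the $K_{3,n}$ family are absorbed by the direct-factor result of that subsection.
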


\begin{prop}\label{prop:abelian}
    $\Gamma(G)$ is abelian if and only if $G$ avoids $C_2 \sqcup C_2$ and $K_{3,4}$.
\end{prop}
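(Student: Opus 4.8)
The plan is to prove the two implications of Proposition~\ref{prop:abelian} separately: necessity of the excluded minors follows from the minor-monotonicity of $\Gamma(\cdot)$ together with two small computations, while sufficiency is obtained by running through Lov\'asz's structure theorem for graphs with no two vertex-disjoint cycles. This is the same scheme I would use for the companion Proposition~\ref{prop:finite}, the only difference being which two base graphs one has to understand.

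For necessity, recall that abelianness is quotient closed, so by Lemma~\ref{lem:main_uncoloured} it is enough to check that $\Gamma(C_2\sqcup C_2)$ and $\Gamma(K_{3,4})$ are both nonabelian, since then any $G$ containing one of these as a minor has $\Gamma(G)$ surjecting onto a nonabelian group. From the presentation in Definition~\ref{def:group} one reads off immediately that $\Gamma(C_2)=\Z_2$, so by Lemma~\ref{lem:disconnected2} we get $\Gamma(C_2\sqcup C_2)=\Z_2\ast\Z_2$, the infinite dihedral group, which is nonabelian. For $\Gamma(K_{3,4})$ I would appeal to the analysis of the family $\Gamma(K_{3,n})$ in Subsection~\ref{SS:K3n}: using the vertex relations on the size-$3$ side to eliminate one generator per such vertex, $\Gamma(K_{3,n})$ is presented by two commuting families of involutions $x_{a_1b_j},x_{a_2b_j}$ together with relations forcing the products $x_{a_1b_j}x_{a_2b_j}$ to commute, and one checks that already for $n=4$ this is not abelian --- equivalently, two non-incident edges fail to commute, which can be exhibited by an explicit $4$-dimensional representation (or verified directly, as is done elsewhere in the paper for $K_{3,3}$).

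For sufficiency, assume $G$ avoids $C_2\sqcup C_2$ (equivalently, $G$ has no two vertex-disjoint cycles) and $K_{3,4}$. By Lov\'asz's theorem \cite{Lov65}, $G$ then lies in one of a short list of families, and I would treat them one by one. If $|V(G)|\le 3$, any two edges are incident to a common vertex, so all generators $x_e$ pairwise commute and $\Gamma(G)$ is abelian. If $G=K_5$, then $\Gamma(K_5)=\Z_2^6$ by Corollary~\ref{cor:K33K5}. If $G$ has a vertex $v$ with $G\setminus v$ a forest, then the edges $e_1,\dots,e_d$ at $v$ pairwise commute, and rooting each tree component of $G\setminus v$ and applying the vertex relation at each vertex in reverse-BFS order expresses the edge to its parent as a word in edges already known to lie in $\langle x_{e_1},\dots,x_{e_d}\rangle$; hence $\Gamma(G)$ equals this abelian subgroup. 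If $G$ is a wheel, label the spokes $e_1,\dots,e_n$ (which commute and satisfy $e_1\cdots e_n=1$) and the rim edges $r_1,\dots,r_n$ cyclically; the vertex relations give $r_i=r_1e_2\cdots e_i$, and substituting these into the commutation relations $[e_i,r_{i-1}]=1$ and cancelling the (commuting) spoke factors forces $[e_i,r_1]=1$ for all $i$, so $\Gamma(W_n)$ is generated by the pairwise-commuting set $\{e_1,\dots,e_n,r_1\}$ and is abelian. Finally, if $G$ is $K_{3,n}$ with possibly some extra edges on the size-$3$ side, then since $K_{3,4}\subseteq K_{3,n}\subseteq G$ for $n\ge 4$ the hypothesis forces $n\le 3$, so $G$ is one of finitely many graphs on at most six vertices; abelianness here I would get from the $K_{3,n}$ analysis of Subsection~\ref{SS:K3n} (with $\Gamma(K_{3,3})=\Z_2^4$), the few decorated cases being checked directly.

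The main obstacle is the $K_{3,n}$ family: showing that $\Gamma(K_{3,4})$ is nonabelian --- and, for Proposition~\ref{prop:finite}, that $\Gamma(K_{3,6})$ is infinite --- is the one place that requires genuine structural work on the incidence groups, since every other family in Lov\'asz's classification has a transparently abelian incidence group. A secondary point of care is pinning down the exact form of Lov\'asz's theorem (precisely which small exceptional graphs occur, and exactly which ``decorations'' of $K_{3,n}$ and of wheels are allowed) so that the case analysis above is genuinely exhaustive.
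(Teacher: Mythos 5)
Your overall scheme coincides with the paper's: necessity via Lemma \ref{lem:main_uncoloured} together with the nonabelianness of $\Gamma(C_2\sqcup C_2)=\Z_2\ast\Z_2$ and of $\Gamma(K_{3,4})$ (your proposed explicit four-dimensional representation, obtained by repeating the two top rows of magic-square observables, is exactly the paper's surjection $\Gamma(K_{3,4})\arr H_{3,2}\iso \Dih_4\odot\Dih_4$ from Lemma \ref{lem:K34} in disguise), and sufficiency by running through Lov\'asz's classification, with the same rooted-tree induction for family (i) as Proposition \ref{prop:commonvertex} and an equivalent commutation computation for wheels.

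There is, however, a genuine gap in the sufficiency direction. Theorem \ref{thm:lovasz} does not say that a graph without two vertex-disjoint cycles belongs to one of the four families; it says it is obtained from such a graph by taking a subdivision and then attaching a forest with at most one edge to each tree component. Your case analysis treats only graphs that are literally three-vertex graphs, wheels, $K_5$, or decorated $K_{3,n}$'s, and so misses, for example, a subdivided $K_5$ with pendant trees, which lies in none of the four families and does not satisfy condition (i) either (deleting any single vertex still leaves a cycle). What is needed, and what the paper supplies in Lemmas \ref{lem:subdivision} and \ref{lem:addingforest}, is that subdivision and forest-attachment do not change $\Gamma(G)$: a subdivided edge forces its two new generators to coincide, and a degree-one vertex forces its edge generator to be trivial. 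You flag "pinning down the decorations" as a point of care, but the issue is not which decorations of wheels or of $K_{3,n}$ occur; it is that every family must additionally be closed up under subdivision and pendant forests. Two smaller omissions of the same nature: condition (ii) allows multiedge spokes and condition (iv) allows arbitrarily many (possibly parallel) extra edges inside the three-vertex side, so the decorated graphs with $n\le 3$ form an infinite family rather than "finitely many graphs on at most six vertices" to be "checked directly"; the paper reduces these to the undecorated cases via Corollary \ref{cor:wheel} (a multispoke wheel is a contraction of a larger simple wheel, and contraction induces an isomorphism when the incidence group is abelian) and Proposition \ref{prop:add_edge}/Corollary \ref{cor:K3nreduction} (edges added inside the first partition are central and split off as $\Z_2$ factors). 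With these reductions inserted, your argument becomes essentially the paper's proof.
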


To explain the strategy of the proofs, we start with the following easy lemma.
Recall that a cycle is a connected graph where every vertex has degree $2$.
\begin{lemma}\label{lem:cycle}
    If $C$ is a cycle, then $\Gamma(C) \iso \Z_2$.
\end{lemma}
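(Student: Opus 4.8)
The plan is to compute the presentation of $\Gamma(C)$ directly and recognize it as $\Z_2$. Write the cycle $C$ as having vertices $v_1, \ldots, v_n$ (indices mod $n$) and edges $e_i$ joining $v_i$ to $v_{i+1}$, so that $E(v_i) = \{e_{i-1}, e_i\}$. Since each vertex has degree exactly $2$, the commuting relations (3) from Definition \ref{def:group} (with $J$ replaced by $1$) say that $x_{e_{i-1}}$ and $x_{e_i}$ commute for each $i$; going around the cycle, \emph{all} the generators $x_{e_1}, \ldots, x_{e_n}$ pairwise commute, so $\Gamma(C)$ is abelian. The vertex relations (4) become $x_{e_{i-1}} x_{e_i} = 1$, i.e.\ $x_{e_i} = x_{e_{i-1}}$ for every $i$, so all generators are equal to a single element $t := x_{e_1}$. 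Together with the involution relation $t^2 = 1$, this shows $\Gamma(C)$ is a quotient of $\Z_2$.

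The one remaining point is that $\Gamma(C)$ is not the trivial group, i.e.\ that $t \neq 1$. The cleanest way I would argue this is to exhibit a nontrivial homomorphism $\Gamma(C) \arr \Z_2$: sending every $x_{e_i} \mapsto 1 \in \Z_2$ (the nonzero element) respects relations (1)–(4) — the involution relation holds since $1 + 1 = 0$, the commuting relations are automatic in an abelian group, and each vertex relation $x_{e_{i-1}} + x_{e_i} = 0$ holds since $1 + 1 = 0$ in $\Z_2$. This homomorphism sends $t \mapsto 1 \neq 0$, so $t$ has order exactly $2$ and $\Gamma(C) \iso \Z_2$. Equivalently, one can invoke the fact recorded earlier in the excerpt that one-dimensional representations of $\Gamma(C, b)$ with $J \mapsto -1$ are solutions of $\mcI(C) x = b$: taking $b$ even parity (e.g.\ $b \equiv 0$), Lemma \ref{lem:graph_space} gives such a solution, which produces the needed nontrivial representation after quotienting by $J$.

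There is no real obstacle here; the only thing to be slightly careful about is the edge cases in the definition of a cycle. The shortest cycle allowed in this paper is $C_2$, the multigraph with two vertices $v_1, v_2$ and two parallel edges $e_1, e_2$ between them (loops being disallowed). In that case $E(v_1) = E(v_2) = \{e_1, e_2\}$, the commuting relation forces $[x_{e_1}, x_{e_2}] = 1$, and each vertex relation gives $x_{e_1} x_{e_2} = 1$, so again $\Gamma(C_2) = \langle t : t^2 = 1 \rangle = \Z_2$ by the same argument; the general computation above goes through verbatim with $n = 2$. So in all cases $\Gamma(C) \iso \Z_2$, as claimed.
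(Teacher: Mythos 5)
Your proof is correct and follows essentially the same route as the paper: both collapse the vertex relations $x_{e_{i-1}}x_{e_i}=1$ to identify all generators with a single involution, reducing the presentation to $\langle x : x^2=1\rangle \iso \Z_2$. Your extra step exhibiting a nontrivial homomorphism to $\Z_2$ (and your explicit treatment of $C_2$) just makes explicit what the paper leaves implicit in identifying the presentation with $\Z_2$.
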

\begin{proof}
    Suppose $C$ has vertices $v_i$, $i \in \Z_n$, where $v_i$ is adjacent to
    $v_{i-1}$ and $v_{i+1}$ for all $i \in \Z_n$. For every $i \in \Z_n$, let
    $e_i$ be the edge connecting $v_i$ and $v_{i+1}$. Then $\Gamma(C)$ is
    generated by $x_{e_i}$ for $i \in \Z_n$, subject to the relations
    $x_{e_i}^2=1$ for all $i \in \Z_n$, $[x_{e_{i-1}},x_{e_i}]=1$ for
    all $i \in \Z_n$, and $x_{e_{i-1}} x_{e_{i}}=1$ for all $i \in \Z_n$.
    These last relations imply that $x_{e_i} = x_{e_j}$ for all $i,j$,
    so replacing all generators with a single generator $x$, we see that
    the defining presentation of $\Gamma(C)$ is equivalent to
    $\langle x : x^2=1\rangle = \Z_2$.
\end{proof}
Suppose $G$ contains two vertex disjoint cycles, or equivalently, that $G$
contains the disconnected union $C_2 \sqcup C_2$ of two-cycles $C_2$ as a
graph minor. By Lemmas \ref{lem:main_uncoloured} and \ref{lem:disconnected2},
there is a surjective homomorphism
\begin{equation*}
    \Gamma(G) \arr \Gamma(C_2 \sqcup C_2) = \Gamma(C_2) * \Gamma(C_2) = \Z_2 * \Z_2.
\end{equation*}
The group $\Z_2 * \Z_2$ is infinite and nonabelian, so we immediately see:
\begin{corollary}\label{cor:2cyc}
    If $G$ contains $C_2 \sqcup C_2$ as a graph minor, then $\Gamma(G)$ is infinite and nonabelian.
\end{corollary}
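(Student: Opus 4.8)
The plan is to read off the result from the machinery already assembled in the excerpt, since the statement is really a one-line consequence. First I would invoke Lemma~\ref{lem:main_uncoloured}: because $G$ contains $C_2 \sqcup C_2$ as a graph minor, there is a surjective homomorphism $\Gamma(G) \arr \Gamma(C_2 \sqcup C_2)$. Next I would identify the target. By Lemma~\ref{lem:disconnected2}, $\Gamma(C_2 \sqcup C_2) = \Gamma(C_2) \ast \Gamma(C_2)$, and by Lemma~\ref{lem:cycle} (with $C = C_2$, a cycle on two vertices), $\Gamma(C_2) \iso \Z_2$. Hence $\Gamma(C_2 \sqcup C_2) \iso \Z_2 \ast \Z_2$, the infinite dihedral group, which is infinite and nonabelian.

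The concluding step is to transfer these properties back to $\Gamma(G)$. Both ``being finite'' and ``being abelian'' are quotient-closed properties of groups, so if $\Gamma(G)$ were finite (resp.\ abelian), then its quotient $\Z_2 \ast \Z_2$ would also be finite (resp.\ abelian), a contradiction. Therefore $\Gamma(G)$ is infinite and nonabelian.

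There is essentially no obstacle here: every ingredient---Lemma~\ref{lem:main_uncoloured} for the surjection induced by a minor, Lemma~\ref{lem:disconnected2} for the free-product decomposition over components, and Lemma~\ref{lem:cycle} for $\Gamma(C_2) \iso \Z_2$---is already available, and the only external fact needed is the standard observation that $\Z_2 \ast \Z_2$ (the infinite dihedral group) is infinite and nonabelian. If anything, the one point worth stating carefully is the direction of the quotient-closure argument: we are using that a surjective image of a finite (resp.\ abelian) group is finite (resp.\ abelian), applied in contrapositive form. This is immediate, so the proof is short.
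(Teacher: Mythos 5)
Your argument is correct and is exactly the paper's proof: a surjection $\Gamma(G) \arr \Gamma(C_2 \sqcup C_2)$ from Lemma~\ref{lem:main_uncoloured}, the identification $\Gamma(C_2 \sqcup C_2) = \Gamma(C_2) \ast \Gamma(C_2) \iso \Z_2 \ast \Z_2$ via Lemmas~\ref{lem:disconnected2} and~\ref{lem:cycle}, and the observation that $\Z_2 \ast \Z_2$ is infinite and nonabelian, transferred back because finiteness and abelianness pass to quotients. Nothing to add.
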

So if $\Gamma(G)$ is finite, then $G$ cannot have two vertex disjoint cycles.
Graphs without two disjoint cycles have been characterized by Lovasz
\cite{Lov65}. To state this characterization, observe that $G$ does not have
two disjoint cycles if it satisfies one of the following conditions:
\begin{enumerate}[(i)]\label{enum:lovasz}
    \item $G\setminus v$ is acyclic (and possibly empty) for some $v\in V(G)$,
    \item $G$ is a wheel whose spokes may be multiedges,
    \item $G$ is $K_5$, or
    \item $G$ is obtained from $K_{3,n}$ for some $n\geq 0$ by adding edges between vertices in the first partition.
        ($K_{3,0}$ refers to the graph with $3$ vertices and no edges.)
\end{enumerate}
Recall that \emph{edge subdivision} is a graph operation in which an edge $e$
is replaced by (or \emph{subdivided} into) two edges joined to a new vertex of degree two.
A graph $G_0$ is said to be a subdivision of $G$ if $G_0$ can be obtained from
$G$ by repeated edge subdivision. We consider $G$ to be a subdivision of
itself. If $G_0$ is a subdivision of $G$, then every cycle of $G_0$ is a
subdivision of a cycle of $G$, so if $G$ does not contain two disjoint
cycles, then $G_0$ also does not contain two disjoint cycles.

An acyclic graph, also called a forest, is a graph without cycles. We can
\emph{add a forest} to a graph $G$ by taking the disconnected union of $F$ and $G$,
and then adding edges between $F$ and $G$ such that there is at most one edge
between $G$ and every connected component of $F$. The only cycles in the
resulting graph $\widetilde{G}$ are the cycles of $G$, so if $G$ does not
contain two disjoint cycles, then neither does $\widetilde{G}$.

Starting from a graph without two disjoint cycles, edge subdivision and
adding a forest give two ways of constructing a new graph without two
disjoint cycles.  Lovasz's characterization states that all graphs without two
disjoint cycles arise in this way from one of the graphs satisfying conditions
(i)-(iv):
\begin{theorem}[\cite{Lov65}, see also \cite{Bol04}]\label{thm:lovasz}
    A graph $\widetilde{G}$ does not contain two vertex disjoint cycles if and
    only if $\widetilde{G}$ can be obtained from a graph $G$ satisfying one
    of the conditions (i)-(iv) by taking a subdivision $G_0$ of $G$, and then
    adding a (possibly empty) forest $F$ with at most one edge between $G_0$
    and each connected component of $F$.
\end{theorem}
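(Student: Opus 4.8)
The plan is to prove the two implications separately, with essentially all the work in the forward direction. For the easy ``if'' direction I would invoke the two closure facts recorded just above the statement: subdividing an edge sends cycles to cycles and acyclic graphs to acyclic graphs, and attaching a forest $F$ with at most one edge between $F$ and each of its components creates no new cycle. Granting these, it suffices to check by hand that each graph in (i)--(iv) has no two vertex disjoint cycles: in (i) every cycle runs through $v$; in a wheel the hub lies on every cycle; $K_5$ is small enough to inspect directly; and in family (iv) every cycle meets the $3$-element partition in at least two vertices, so two disjoint cycles would need four such vertices.

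For the ``only if'' direction I would first reduce to a highly connected core. Given $\widetilde G$ with no two disjoint cycles, at most one component contains a cycle, so --- peeling the acyclic components into the forest $F$ and using that a forest satisfies (i) --- I may assume $\widetilde G$ is connected and contains a cycle. Iteratively deleting vertices of degree $\le 1$ produces the $2$-core $H$ of minimum degree $\ge 2$, with the deleted vertices spanning a forest attached to $H$ by exactly one edge per component. Suppressing the interior vertices of maximal degree-$2$ paths then exhibits $H$ as a subdivision of a loopless multigraph $G$ of minimum degree $\ge 3$, unless $H$ is itself a cycle --- a case absorbed by (i). Since these operations are precisely subdivision and adding a forest, the theorem reduces to showing that a loopless multigraph $G$ with $\delta(G)\ge 3$ and no two vertex disjoint cycles is of one of the types (i)--(iv).

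To prove this core statement I would assume $G$ is not of type (i), i.e.\ no single vertex meets every cycle, and deduce that $G$ is $2$-connected (a cut vertex, together with $\delta(G)\ge 3$, would put cycles in two different blocks). Then I would fix a longest cycle $C$ and analyze its bridges --- chords of $C$ and components of $G-V(C)$ together with their attaching edges. The key points are that every cycle of $G$ must meet $C$, and that if two bridges have attachment vertices interleaving cyclically along $C$, or one bridge has attachments too spread out along $C$, then one can reroute through the bridge(s) to exhibit two disjoint cycles. Forbidding these configurations pins the bridge attachments down to at most three vertices of $C$; a finite case analysis on the number of these vertices and on the chords of $C$ then identifies $G$ as a wheel (a single common attachment serving as the hub), as obtained from some $K_{3,n}$ by adding edges inside the $3$-element side, or as $K_5$. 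The hard part will be this last step: converting ``no two disjoint cycles'' into the rigid bridge picture requires a careful rerouting argument for skew and overlapping bridges, after which the remaining small-order and low-connectivity configurations must be enumerated so that none is missed. This casework is the substance of Lovász's original argument, which is why we prefer to cite \cite{Lov65,Bol04} rather than reproduce it.
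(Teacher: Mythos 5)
The paper does not actually prove Theorem \ref{thm:lovasz}: it is quoted from Lov\'asz \cite{Lov65} (see also \cite{Bol04}), so there is no internal argument to compare against. Your outline is consistent with that treatment and with the standard proofs: the ``if'' direction via closure of the no-two-disjoint-cycles property under subdivision and forest attachment (facts the paper records just before the statement), and the ``only if'' direction by peeling to the $2$-core, suppressing degree-$2$ vertices, and reducing to the classification of loopless multigraphs of minimum degree at least $3$ without two disjoint cycles --- which you then, exactly like the paper, delegate to \cite{Lov65,Bol04} rather than carry out. So as a whole your proposal sits at the same level of completeness as the paper's citation, with the reduction spelled out in addition.

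Two corrections to the parts you do argue. First, it is false that the hub of a wheel lies on every cycle: the rim is a cycle avoiding the hub. The correct observation (the one the paper makes before Proposition \ref{prop:wheel}) is that every cycle either is the rim or contains the hub together with at least one rim vertex, so any two cycles of a wheel intersect; the conclusion you need survives, but not for the reason you give. Second, suppressing the degree-$2$ vertices of the $2$-core can create a loop when some cycle of the core has at most one vertex of degree $\geq 3$, so ``$H$ is a subdivision of a loopless multigraph of minimum degree $\geq 3$ unless $H$ is itself a cycle'' is not quite right. In that configuration every cycle of the core passes through the unique branch vertex (a cycle meeting the pendant cycle at a degree-$2$ vertex is forced to absorb all of it), so the graph already satisfies condition (i); the clean fix is to dispose of the case ``some vertex meets all cycles'' before suppressing, and only then invoke the minimum-degree-$3$ classification. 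With these repairs the reduction is sound; the remaining longest-cycle/bridge casework is precisely the content being cited.
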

We note that the families of graphs defined by conditions (i)-(iv) are not
disjoint, so Theorem \ref{thm:lovasz} does not give a unique way of
constructing every graph without two disjoint cycles. For a more precise
statement where the categories do not overlap, see \cite[Theorem
III.2.3]{Bol04}.

It's not hard to see that subdividing and adding forests to $G$
does not change $\Gamma(G)$:
\begin{lemma}\label{lem:subdivision}
    Let $G_0$ be a subdivision of $G$. Then $\Gamma(G_0) \iso \Gamma(G)$.
\end{lemma}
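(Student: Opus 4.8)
The plan is to reduce to a single edge subdivision and then induct on the number of subdivisions, since any subdivision $G_0$ of $G$ is obtained by a finite sequence of single edge subdivisions. So suppose $G_0$ is obtained from $G$ by subdividing one edge $e$, with endpoints $v_1$ and $v_2$, into two edges $e_1$ and $e_2$ meeting at a new degree-two vertex $w$, where $e_i$ is incident to $v_i$. The key observation is that the vertex relation at $w$ in $\Gamma(G_0)$ reads $x_{e_1} x_{e_2} = 1$, and since the generators are involutions this forces $x_{e_1} = x_{e_2}$ in $\Gamma(G_0)$; intuitively, the subdivided edge still carries a single generator, exactly as in $\Gamma(G)$.

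Concretely, I would define a homomorphism $\phi \colon \Gamma(G_0) \to \Gamma(G)$ by $x_{e_1}, x_{e_2} \mapsto x_e$ and $x_f \mapsto x_f$ for every other edge $f$, and a candidate inverse $\psi \colon \Gamma(G) \to \Gamma(G_0)$ by $x_e \mapsto x_{e_1}$ and $x_f \mapsto x_f$ otherwise. To see that $\phi$ is well-defined one checks that it kills every defining relation of $\Gamma(G_0)$: the involution and commutator relations are immediate, the vertex relation at $w$ maps to $x_e^2 = 1$, and the vertex relations at $v_1$ and $v_2$ map to the corresponding vertex relations of $\Gamma(G)$ (here one uses that $E_{G_0}(v_i) = (E_G(v_i) \setminus \{e\}) \cup \{e_i\}$, together with the fact that the generators at a vertex commute, so the vertex-relation products are order-independent). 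For $\psi$ the only relations needing attention are the vertex relations at $v_1$ and $v_2$: at $v_1$ the image is directly the vertex relation at $v_1$ in $\Gamma(G_0)$, while at $v_2$ one first uses the relation at $w$ to rewrite $x_{e_1}$ as $x_{e_2}$ and then reads off the vertex relation at $v_2$. Finally $\phi$ and $\psi$ are mutually inverse on generators, the only nontrivial verification being that $\psi(\phi(x_{e_2})) = x_{e_1}$, which equals $x_{e_2}$ in $\Gamma(G_0)$ by the relation at $w$; hence $\Gamma(G_0) \cong \Gamma(G)$.

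I do not expect any genuine obstacle: the argument is a direct presentation manipulation. The only step requiring a moment's care is the well-definedness of $\psi$ at the vertex $v_2$, where $x_{e_1}$ is formally incident to $v_1$ rather than $v_2$ and one must invoke the identification $x_{e_1} = x_{e_2}$ coming from the new vertex $w$; beyond that it is bookkeeping of the incidence sets $E_{G_0}(v_1)$, $E_{G_0}(v_2)$, $E_{G_0}(w)$ against $E_G(v_1)$ and $E_G(v_2)$.
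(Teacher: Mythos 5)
Your proof is correct and follows essentially the same route as the paper: the paper likewise observes that the relation at the new degree-two vertex forces $x_{e_1}=x_{e_2}$, so the presentation of $\Gamma(G_0)$ collapses to that of $\Gamma(G)$, and then iterates over the sequence of single subdivisions. Your version just makes the Tietze-style argument explicit via the mutually inverse maps $\phi$ and $\psi$ (and note that for $\psi$ the commutator relations $[x_e,x_f]=1$ with $f\in E_G(v_2)$ need the same identification $x_{e_1}=x_{e_2}$ that you already invoke for the vertex relation at $v_2$).
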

\begin{proof}
    Suppose $G_1$ is the result of subdividing an edge $e$ of $G$ into two new
    edges $e_0$ and $e_1$, joined by the new vertex $v$. In the presentation
    of $\Gamma(G_1)$, the relation for vertex $v$ implies that $x_{e_0} = x_{e_1}$.
    Replacing $x_{e_0}$ and $x_{e_1}$ with $x_e$, we see that the presentation
    of $\Gamma(G_1)$ is equivalent to the presentation of $\Gamma(G)$.
    Repeating this fact shows that $\Gamma(G_0) \iso \Gamma(G)$ for any subdivision
    $G_0$ of $G$.
\end{proof}

\begin{lemma}\label{lem:addingforest}
    Suppose $\widetilde{G}$ is obtained from a graph $G$ by adding a forest $F$
    such that there is at most one edge between $G$ and every connected component
    of $F$. Then $\Gamma(\widetilde{G}) \iso \Gamma(G)$.
\end{lemma}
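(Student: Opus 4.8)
The plan is to reduce the claim to the special case of attaching a single pendant tree to $G$ via one edge, and then to a single pendant edge, by an induction mirroring the one in Lemma~\ref{lem:subdivision}. Since $F$ is a forest with at most one edge between $G$ and each of its connected components, I can process the components of $F$ one at a time; and within a single component, which is a tree $T$ attached to $G$ by exactly one edge $f$ (with the $G$-end at some vertex $v$ and the $T$-end at some leaf-or-internal vertex $w$ of $T$), I can prune $T$ one leaf at a time, starting from leaves of $T$ other than $w$. So it suffices to prove: if $\widetilde{G}$ is obtained from $G$ by adding a single new vertex $w$ together with a single new edge $f$ joining $w$ to a vertex $v \in V(G)$, then $\Gamma(\widetilde{G}) \iso \Gamma(G)$.

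For that base case I would argue directly from the presentation in Definition~\ref{def:group}. The group $\Gamma(\widetilde{G})$ has generators $\{x_e : e \in E(G)\} \cup \{x_f\}$. The relations of $\widetilde{G}$ are: all relations of $G$, except that the vertex relation at $v$ is now $\bigl(\prod_{e \in E_G(v)} x_e\bigr) x_f = 1$ instead of $\prod_{e \in E_G(v)} x_e = 1$; the commuting relations $[x_f, x_e] = 1$ for $e \in E_G(v)$; the new vertex relation at $w$, which is simply $x_f = 1$ since $w$ has degree one in $\widetilde{G}$; and $x_f^2 = 1$, which is implied. From $x_f = 1$ the modified relation at $v$ collapses back to $\prod_{e \in E_G(v)} x_e = 1$ and the commuting relations $[x_f,x_e]=1$ become trivial, so killing the generator $x_f$ via a Tietze transformation yields exactly the presentation of $\Gamma(G)$. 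Hence the map sending $x_e \mapsto x_e$ for $e \in E(G)$ and $x_f \mapsto 1$ extends to an isomorphism $\Gamma(\widetilde{G}) \arr \Gamma(G)$.

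With the base case in hand, I would induct on $|E(F)|$. Writing $F = F_1 \sqcup \cdots \sqcup F_t$ for the connected components, and letting $f_i$ be the (unique) edge between $G$ and $F_i$ (discarding any component with no edge to $G$, which then contributes a separate connected component to $\widetilde G$ that is itself a tree, hence has trivial incidence group, and so does not change $\Gamma$ by Lemma~\ref{lem:disconnected2}), I note that each $F_i$ is a tree, so it has a leaf $w_i$ distinct from the endpoint of $f_i$ inside $F_i$ (unless $F_i$ is a single vertex, in which case the endpoint of $f_i$ is $w_i$ itself and $F_i \cup \{f_i\}$ is exactly the base-case pendant edge). Removing $w_i$ and its unique incident edge is a base-case step applied to the graph $G$ with the rest of $F$ still attached, so by the base case that operation preserves $\Gamma$; iterating prunes all of $F$ down to nothing and leaves $\Gamma(G)$.

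I do not expect any real obstacle here: the statement is a routine Tietze-transformation argument, entirely parallel to Lemma~\ref{lem:subdivision}, and the only mild bookkeeping point is to set up the induction so that at each step the leaf being deleted genuinely has degree one in the current graph — which is guaranteed because we always delete a leaf of $F$ that is not the attachment point, so it remains a leaf after any earlier deletions and is never incident to an edge of $G$. One small subtlety worth flagging explicitly in the writeup is the case of components of $F$ with \emph{no} edge to $G$: these become separate components of $\widetilde{G}$ that are forests, and a forest $T$ has $\Gamma(T) \iso \{1\}$ (every edge relation forces generators equal along paths and every leaf forces its incident generator trivial), so by Lemma~\ref{lem:disconnected2} they contribute nothing to the free product. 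Apart from this remark the proof is a direct induction on the number of added edges using the single-pendant-edge computation above.
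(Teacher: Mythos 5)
Your proof is correct and follows essentially the same route as the paper: both arguments prune the forest one degree-one vertex at a time, using the fact that the vertex relation at a degree-one vertex of $\widetilde{G}$ is simply $x_e=1$, so deleting that vertex and edge is a Tietze transformation that leaves the group unchanged. The only cosmetic difference is that the paper absorbs components of $F$ with no edge to $G$ into the same induction (pruning their leaves and deleting the resulting isolated vertices via Proposition \ref{prop:minors}(iii)), whereas you dispatch them separately through Lemma \ref{lem:disconnected2} and the (correct, non-circular) observation that forests have trivial incidence group.
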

\begin{proof}
    We can prove this by induction on the size of $F$. If $F$ is empty, then
    the lemma is clear. If $F$ has an isolated vertex $v$ which is also
    isolated in $\widetilde{G}$, then by Proposition \ref{prop:minors}, part (iii),
    $\Gamma(\widetilde{G}) \iso \Gamma(\widetilde{G} \setminus v)$.
    Suppose $F$ is non-empty, and does not have an isolated vertex which is
    also isolated in $\widetilde{G}$. If $F$ has an isolated vertex $v$,
    then since there is at most one edge from $v$ to $G$ in $\widetilde{G}$,
    $v$ must have degree one in $\widetilde{G}$. If $F$ does not have an
    isolated vertex, then every connected component of $F$ has at least two
    vertices of degree one, and since at most one of these vertices can be
    connected to $G$ in $\widetilde{G}$, at least one of these vertices has
    degree one in $\widetilde{G}$. Thus in both cases there is a vertex $v$ of
    $F$ such that $v$ has degree one in $\widetilde{G}$. Let $e \in
    E(\widetilde{G})$ be the edge incident to $v$. In the presentation of
    $\Gamma(\widetilde{G})$, the relation corresponding to $v$ is $x_e = 1$, so
    again $\Gamma(\widetilde{G}) \iso \Gamma(\widetilde{G} \setminus v)$.
    Now $\widetilde{G} \setminus v$ is the result of adding the forest $F
    \setminus v$ to $G$, and since $F \setminus v$ is smaller than $F$, the lemma
    follows by induction.
\end{proof}
Thus for the proofs of Propositions \ref{prop:finite} and \ref{prop:abelian},
we just need to analyze $\Gamma(G)$ for $G$ satisfying one of the conditions
(i)-(iv) from Theorem \ref{thm:lovasz}. The graph incidence group of $K_5$ has
already been determined in Proposition \ref{prop:K33K5}. We consider each other
family of graphs separately in the following subsections.

Before proceeding with the proofs of Propositions \ref{prop:finite} and
\ref{prop:abelian}, we note that Lemmas \ref{lem:cycle} and
\ref{lem:addingforest} give a characterization of when $\Gamma(G)$ is trivial:
\begin{prop}\label{prop:trivial}
    $\Gamma(G)$ is trivial if and only if $G$ is acyclic.
\end{prop}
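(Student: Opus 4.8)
The plan is to derive the proposition directly from Lemmas \ref{lem:cycle}, \ref{lem:addingforest}, and \ref{lem:main_uncoloured}, handling the two directions separately.

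For the "if" direction, suppose $G$ is acyclic, i.e.~$G$ is a forest. Then $G$ is obtained from the empty graph (with no vertices) by adding the forest $G$; the hypothesis of Lemma \ref{lem:addingforest} that there is at most one edge between the base graph and each connected component of the added forest is vacuously satisfied, since the empty graph has no vertices. Hence Lemma \ref{lem:addingforest} gives $\Gamma(G) \iso \Gamma(\emptyset)$, and $\Gamma(\emptyset)$ is trivial since its presentation has no generators. Alternatively, I would argue directly by induction on $\abs{V(G)}$: a nonempty forest has a vertex $v$ of degree at most one, and either $v$ is isolated, in which case $\Gamma(G) \iso \Gamma(G \setminus v)$ by Proposition \ref{prop:minors}(iii), or $v$ has a unique incident edge $e$, in which case the defining relation at $v$ is $x_e = 1$ and again $\Gamma(G) \iso \Gamma(G \setminus v)$; since $G \setminus v$ is a forest with fewer vertices, the claim follows.

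For the "only if" direction, I would prove the contrapositive. If $G$ is not acyclic, it contains a cycle $C$ as a subgraph, hence as a minor. If $C$ has length two, then $C = C_2$; if $C = C_n$ with $n \geq 3$, then contracting a single edge of $C$ (which, by the definition of edge contraction, also deletes the unique parallel edge between its endpoints) yields a cycle of length $n-1$, so by induction on $n$ the graph $C_2$ is a minor of $C$. In either case $C_2$ is a minor of $G$, so Lemma \ref{lem:main_uncoloured} provides a surjective homomorphism $\Gamma(G) \onto \Gamma(C_2)$, and Lemma \ref{lem:cycle} gives $\Gamma(C_2) \iso \Z_2 \neq \{1\}$. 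Hence $\Gamma(G)$ is nontrivial.

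There is essentially no obstacle here: the statement is an immediate corollary of the cited lemmas. The only points needing a moment's care are verifying that Lemma \ref{lem:addingforest} applies with the empty graph as the base (or, if one prefers to avoid this, using the direct leaf-peeling induction instead), and checking that a cycle of length at least three really does have $C_2$ as a minor, which is just the short induction on the cycle length described above.
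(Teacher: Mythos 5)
Your proof is correct and follows essentially the same route as the paper: the ``if'' direction via Lemma \ref{lem:addingforest} applied to a forest added to the empty graph, and the ``only if'' direction via Lemma \ref{lem:main_uncoloured} together with Lemma \ref{lem:cycle}. The only (harmless) difference is that you contract the cycle down to $C_2$ before applying Lemma \ref{lem:cycle}, whereas the paper applies that lemma directly to the cycle $C \leq G$, since it already gives $\Gamma(C) \iso \Z_2$ for a cycle of any length.
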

\begin{proof}
    If $G$ is acyclic, then $G$ is the result of adding a forest to
    the empty graph $G'$. Hence $\Gamma(G) \iso \Gamma(G') = 1$.
    On the other hand, if $G$ contains a cycle $C$, then $C$ is a
    minor of $G$, and hence by Lemma \ref{lem:main_uncoloured} there is a
    surjective homomorphism $\Gamma(G) \arr \Gamma(C) = \Z_2$, so
    $\Gamma(G)$ is nontrivial.
\end{proof}

\subsection{Graphs where every cycle contains a common vertex}\label{subsection:commonvertex}

In this section, we consider the first graph family listed in Theorem
\ref{thm:lovasz}: graphs $G$ for which there is a vertex $v$ contained in all
cycles, or in other words, for which $G \setminus v$ is ayclic. An example
of such a graph is shown in Figure~\ref{fig:tree}.

\begin{figure}[h!]
\begin{center}
    \includegraphics[scale=1.1]{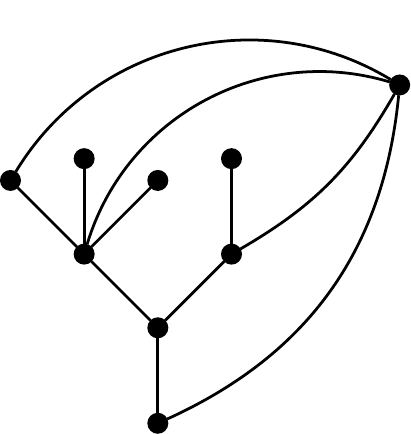}
    \caption{A graph where all the cycles share a common vertex.}
    \label{fig:tree}
\end{center}
\end{figure}

\begin{prop}\label{prop:commonvertex}
    Let $G$ be a graph for which there is a vertex $v$ such that $G \setminus v$
    is acyclic. Then $\Gamma(G)$ is abelian.
\end{prop}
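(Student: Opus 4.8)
The plan is to prove the slightly stronger statement that $\Gamma(G)$ is \emph{generated} by the subset $\{x_h : h \in E(v)\}$ of generators coming from the edges incident to $v$. Since any two such generators commute in $\Gamma(G)$ (they are incident to the common vertex $v$, so this is an instance of relation (3) of Definition~\ref{def:group}), the subgroup $A := \langle x_h : h \in E(v)\rangle$ they generate is abelian; hence $\Gamma(G) = A$ would be abelian. Now every edge of $G$ is either incident to $v$ or has both endpoints in $V(G)\setminus\{v\}$, i.e.\ lies in the forest $F := G\setminus v$. So it suffices to show $x_f \in A$ for every edge $f$ of $F$.

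To establish this, I would root each tree component of $F$ at an arbitrary vertex, and for a non-root vertex $w$ write $f_w$ for the edge joining $w$ to its parent in $F$. I claim $x_{f_w}\in A$ for every non-root $w$, proved by induction working from the leaves of $F$ inwards (formally, by induction on the number of edges in the subtree of $F$ rooted at $w$). The key observation is that the edges of $G$ incident to $w$ are exactly: $f_w$, the edges $f_{w'}$ joining $w$ to its children $w'$ in $F$, and the edges joining $w$ to $v$. Hence the defining relation of $\Gamma(G)$ at the vertex $w$ reads $x_{f_w}\cdot\prod_{w'\text{ child of }w} x_{f_{w'}}\cdot\prod_{h\in E(w)\cap E(v)} x_h = 1$, where the order of factors is immaterial because all these generators are incident to $w$ and hence commute. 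Solving for $x_{f_w}$ exhibits it as a product of the $x_{f_{w'}}$ (which lie in $A$ by the inductive hypothesis) and the $x_h$ (which lie in $A$ by definition), so $x_{f_w}\in A$. When $w$ is a leaf there are no children and the same relation gives $x_{f_w}=\prod_h x_h\in A$ outright, which is the base case. This proves $x_f\in A$ for all $f\in E(F)$, hence $\Gamma(G)=A$, which is abelian.

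I do not expect a genuine obstacle once this route is taken; the only care needed is the bookkeeping of which edges meet $w$, and resisting the tempting-but-useless reduction via $\Gamma(G/f)$ or $\Gamma(G\setminus f)$, which only produces surjections with nontrivial kernels and therefore cannot be used to propagate abelianness. Note also that no connectedness hypothesis on $G$ is required: an isolated vertex of $F$ contributes nothing, and a tree component of $F$ with no edge to $v$ simply forces each relevant $x_{f_w}$ to be an empty product, hence $1\in A$ (consistently with Proposition~\ref{prop:trivial}).
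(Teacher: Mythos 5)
Your proposal is correct and is essentially the paper's own argument: both identify the abelian subgroup generated by $\{x_h : h\in E(v)\}$ and show every other generator lies in it by rooting each component of the forest $G\setminus v$ and inducting from the leaves, using the vertex relation at each non-root vertex $w$ to express $x_{f_w}$ as a product of generators for child edges and edges to $v$. The only differences are cosmetic (explicit base case, remarks on connectedness), so there is nothing further to add.
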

\begin{proof}
    Fix some vertex $v$ such that $G \setminus v$ is acyclic. Note that $G
    \setminus v$ is simple, i.e.~there is at most one edge between every
    pair of vertices. Let $K$ be the subgroup of $\Gamma(G)$ generated by
    $x_f$ for $f \in E(v)$, so $K$ is abelian.  We claim that $x_e \in
    K$ for all $e \in E(G \setminus v)$, so that $\Gamma(G) = K$.

    To prove this claim, suppose $T$ is a connected component of $G \setminus v$. Pick some
    vertex $w_0$ of $T$ arbitrarily, and regard $T$ as a rooted tree with root
    $w_0$. Every vertex $w \neq w_0$ of $T$ has a unique path to $w_0$.
    Following the usual conventions for rooted trees, the vertex adjacent to
    $w$ in this path is called the parent of $w$, and the other vertices of $T$
    adjacent to $w$ are called the descendants of $w$. Suppose $w$ is some
    non-root vertex, and $e$ is the edge connecting $w$ to its parent. Let
    $D \subset E(w)$ be the edges connecting $w$ to its descendants. There might
    also be edges between $w$ and $v$ in $G$, so in $\Gamma(G)$ the defining
    relation corresponding to vertex $w$ states that
    \begin{equation*}
        x_e = \prod_{f \in D} x_f \cdot \prod_{f' \in E(v) \cap E(w)} x_{f'}.
    \end{equation*}
    If we assume that $x_f \in K$ for all $f \in D$, then $x_e \in K$. Thus we
    can use structural induction starting with the leaves of $T$ (the vertices
    without any descendants) to show that $x_e \in K$ for all $e \in E(T)$,
    proving the claim.
\end{proof}
Since $\Gamma(G)$ is finitely generated by elements of order $2$, if
$\Gamma(G)$ is abelian then it is a finite-dimensional $\Z_2$-vector space.
When $G \setminus v$ is acyclic for some vertex $v$, it is not hard to find
the dimension of $\Gamma(G)$. Suppose $e$ is an edge of the forest $G \setminus
v$. Since $G \setminus v$ is simple, the contraction $G / e$ is well-defined.
Let $\phi : \Gamma(G) \arr \Gamma(G / e)$ be the surjective homomorphism
defined in Proposition \ref{prop:minors}, part (ii). By the discussion after
Proposition \ref{prop:minors}, the kernel of this homomorphism is the subgroup
generated by $[x_f,x_g]$ for $f \in E(w_0)$ and $g \in E(w_1)$, where $w_0$
and $w_1$ are the endpoints of $e$. But since $\Gamma(G)$ is abelian, this
subgroup is trivial, so $\phi$ is an isomorphism.

Since $(G / e) \setminus v$ is also acyclic, we can continue contracting edges
until we get a graph $G'$ such that $G' \setminus v$ has no edges. Let
$\Z_2^{E(G')}$ denote the free abelian group generated by the set $\{x_e : e \in
E(G')\}$. Since $\Gamma(G')$ is abelian, $\Gamma(G')$ is the quotient of
$\Z_2^{E(G')}$ by the relations
\begin{equation}\label{eq:cyclescommonvertex}
    \prod_{e \in E(w)} x_e = 1
\end{equation}
for $w \in V(G')$. All edges of $G'$ are incident to $v$, so the sets
\begin{equation*}
    E(w),\ w \in V(G') \setminus \{v\}
\end{equation*}
partition $E(G')$. As a result, the relations \eqref{eq:cyclescommonvertex}
with $w \in V(G') \setminus \{v\}$ involve disjoint sets of variables,
and imply relation \eqref{eq:cyclescommonvertex} for $w = v$.
So $\Gamma(G) \iso \Gamma(G') = \Z_2^{m-k}$,
where $m = |E(G')|$, and $k$ is the number of non-isolated vertices in $V(G')
\setminus \{v\}$. The edges of $E(G')$ come from edges of $G$ incident to $v$,
and the vertices of $V(G') \setminus \{v\}$ correspond to connected components
of $G \setminus v$, so in terms of $G$, $m$ is the number $|E(v)|$ of edges incident
to $v$ in $G$, and $k$ is the number of connected components of $G \setminus v$
which are connected to $v$ in $G$.

\subsection{Wheel graphs with multispokes}

The simple wheel graph $W_n$ is the graph constructed by taking a simple cycle
on $n$ vertices, adding a central vertex (for a total of $n+1$ vertices), and
then adding an edge from each original vertex to the central vertex. The
original $n$ edges of the cycle are called the outer edges, and the $n$ added
edges connecting to the central vertex are called the spokes. The graph $W_8$
is shown on the left in Figure \ref{fig:8_wheel}. Clearly, simple wheels do not
contain two vertex disjoint cycles since each cycle either contains the central
vertex and at least two outer vertices, or is the original cycle containing
all outer vertices.

\begin{proposition}\label{prop:wheel}
    $\Gamma(W_n) = \Z_2^n$.
\end{proposition}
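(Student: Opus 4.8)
The plan is to first show $\Gamma(W_n)$ is abelian, and then to identify it with $\Z_2^n$ by a direct abelianization count. Set up notation: let $c$ be the hub vertex and $v_1,\dots,v_n$ the rim vertices, with indices read modulo $n$; let $o_i$ be the outer edge $v_iv_{i+1}$ and $s_i$ the spoke $v_ic$, so that $E(v_i)=\{o_{i-1},o_i,s_i\}$ and $E(c)=\{s_1,\dots,s_n\}$. First I would use the defining relation at $v_i$, namely $x_{o_{i-1}}x_{o_i}x_{s_i}=1$, together with the facts that $x_{o_{i-1}}$ and $x_{o_i}$ commute (they share $v_i$) and that all generators are involutions, to deduce $x_{s_i}=x_{o_{i-1}}x_{o_i}$; in particular $\Gamma(W_n)$ is generated by $x_{o_1},\dots,x_{o_n}$. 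Rewriting this identity at $v_{m+1}$ as $x_{o_m}=x_{s_{m+1}}x_{o_{m+1}}$ and iterating gives, by induction on $k$, the formula $x_{o_i}=x_{s_{i+1}}x_{s_{i+2}}\cdots x_{s_{i+k}}\,x_{o_{i+k}}$ for every $k\ge 0$.

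The key step, and the one I expect to be the main obstacle, is to deduce commutativity of non-adjacent outer edges from this formula. Given $i\neq j$, pick the unique $k$ with $1\le k\le n-1$ and $i+k\equiv j\pmod n$; the displayed formula then gives $x_{o_i}x_{o_j}=x_{s_{i+1}}x_{s_{i+2}}\cdots x_{s_{i+k}}$. Crucially, the spokes $s_{i+1},\dots,s_{i+k}$ are all incident to the hub $c$, so the corresponding generators pairwise commute, which makes their product an involution; hence $(x_{o_i}x_{o_j})^2=1$, and since $x_{o_i}$ and $x_{o_j}$ are themselves involutions this forces $x_{o_i}x_{o_j}=x_{o_j}x_{o_i}$. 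As the $x_{o_i}$ generate $\Gamma(W_n)$, this shows $\Gamma(W_n)$ is abelian (equivalently, one may invoke Corollary \ref{cor:2cyc} to see there is no a priori obstruction, but the point here is the positive argument).

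It then remains to count, which is routine. Since $\Gamma(W_n)$ is abelian it equals its own abelianization. Working additively with all generators of exponent $2$, the relation at $v_i$ lets one solve $x_{s_i}=x_{o_{i-1}}+x_{o_i}$, after which the hub relation $\sum_i x_{s_i}=0$ becomes $\sum_i\bigl(x_{o_{i-1}}+x_{o_i}\bigr)=0$, which holds automatically. Hence $\Gamma(W_n)^{\mathrm{ab}}$ is the free $\mathbb{F}_2$-vector space on $x_{o_1},\dots,x_{o_n}$, so $\Gamma(W_n)\cong\Z_2^n$, completing the proof.
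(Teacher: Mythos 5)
Your proof is correct and follows essentially the same route as the paper: both arguments express each spoke as the product of its two adjacent outer edges, use the telescoping identity writing $x_{o_i}x_{o_j}$ as a product of pairwise-commuting spoke generators to conclude abelianness, and then verify that the hub relation is redundant so that the $n$ outer-edge generators span a copy of $\Z_2^n$. The only cosmetic differences are that you deduce commutativity from the spoke product being an involution (the paper reverses the product instead) and you count via the abelianization rather than exhibiting the explicit surjection onto $\Z_2^n$; these are equivalent computations.
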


\begin{proof}
    Let $\{e_i : i \in \Z_n\}$ be the set of outer edges of $W_n$, and let
    $\{f_i : i \in \Z_n\}$ be the set of spokes, where we label the two edge
    sets so that $f_i$, $e_{i-1}$, and $e_i$ are incident to a common vertex $v_i$.
    The defining relation of $\Gamma(W_n)$ corresponding to $v_i$ states that
    \begin{equation*}
        x_{f_i} = x_{e_{i-1}} x_{e_i} = x_{e_{i}} x_{e_{i-1}}
    \end{equation*}
    for all $i \in \Z_n$. Because the spokes are all incident to the central vertex,
    \begin{equation*}
        [x_{f_i},x_{f_j}]=1
    \end{equation*}
    for all $i,j \in \Z_n$. It follows that
    \begin{equation}
        x_{e_i}x_{e_j}=x_{f_{i+1}}x_{f_{i+2}}\cdots
            x_{f_j}=x_{f_j}x_{f_{j-1}}\cdots x_{f_{i+1}}=x_{e_j}x_{e_i}
    \end{equation}
    for all $i,j \in \Z_n$. Since
    $\Gamma(W_n)$ is generated by $x_{e_i}$ for $i \in \Z_n$, $\Gamma(W_n)$ is abelian.

    Now $\Gamma(W_n)$ is abelian and generated by $n$ elements of order $2$,
    so $\Gamma(W_n)$ is a $\Z_2$-vector space of dimension at most $n$. Given $i
    \in \Z_n$, let $\overline{e}_i \in \Z_2^n$ denote the vector with $1$ in
    the $(i'+1)$th position and $0$'s in the other positions, where $i'$ is the
    representative of $i$ with $0 \leq i' < n$. Consider the surjective homomorphism
    $\Gamma(W_n) \arr \Z_2^n$ sending $x_{e_i} \mapsto \overline{e}_i$, and
    $x_{f_i} \mapsto \overline{e}_{i-1} + \overline{e}_i$. Since this homomorphism
    sends $x_{f_0} \cdots x_{f_{n-1}}$ to
    \begin{equation*}
        \sum_{i=0}^{n-1} \overline{e}_{i-1} + \overline{e}_i = 0
    \end{equation*}
    in $\Z_2$, this homomorphism is well-defined. So $\Gamma(W_n)$
    must have dimension $n$.
\end{proof}

\begin{figure}[h!]
\begin{center}
    \includegraphics[scale=1.4]{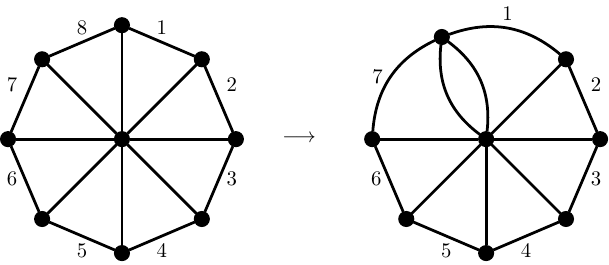}
    \caption{Contracting an outer edge of a simple wheel creates multiple spokes between the center and an outer vertex.}
    \label{fig:8_wheel}
\end{center}
\end{figure}

Condition (ii) from Theorem \ref{thm:lovasz} also allows wheel graphs where the
spokes may be multiedges. These graphs can be constructed by starting with
$W_n$ for some $n$, and adding additional edges between the outer vertices and
the central vertex. Adding these edges adds additional cycles, but all these
cycles still contain the central vertex and at least one outer vertex, so the
resulting graphs still do not contain two disjoint cycles. As shown in Figure
\ref{fig:8_wheel}, if $e$ is an outer edge of a simple wheel graph $W_{n+1}$,
then the contraction $W_{n+1} / e$ can also be obtained by adding a spoke to $W_{n}$.
In general, if $W$ is the result of adding $k$ edges between outer vertices and
the central vertex of $W_{n}$, then $W$ can be obtained by contracting $k$
outer edges of $W_{n+k}$. This can be used to determine $\Gamma(W)$:
\begin{cor}\label{cor:wheel}
    Let $W$ be a wheel graph where the spokes may be multiedges. Then $\Gamma(W) \iso \Z_2^m$,
    where $m$ is the number of spokes of $W$.
\end{cor}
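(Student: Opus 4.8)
The plan is to realize $W$ as a contraction of a simple wheel and then show that each of the contractions involved induces an \emph{isomorphism} of graph incidence groups. Write $n$ for the number of outer vertices of $W$ and $m$ for the number of spokes, and let $s_1,\ldots,s_n\geq 1$ (with $\sum_i s_i = m$) be the multiplicities of the spokes at the outer vertices, listed in cyclic order. Partition the $m$ outer vertices of the simple wheel $W_m$ into $n$ cyclically consecutive blocks of sizes $s_1,\ldots,s_n$, and contract the $s_i-1$ outer edges inside the $i$th block. Since $\sum_i (s_i-1) = m-n =: k$, this contracts $k$ of the $m$ outer edges of $W_m$ and leaves the $n$ edges between consecutive blocks intact, so the result is a cycle on $n$ super-vertices, the $i$th of which carries the $s_i$ spokes of the $i$th block; this is exactly $W$. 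Fix such a set $S=\{d_1,\ldots,d_k\}$ of outer edges of $W_m$, and set $G^{(0)}=W_m$ and $G^{(j)}=G^{(j-1)}/d_j$, so that $G^{(k)}=W$.

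Next I would observe that in $W_m$, and in any graph obtained from it by contracting outer edges, every outer edge is the unique edge between its two endpoints: parallel edges only ever arise between an outer vertex and the central vertex, never between two (clusters of) outer vertices, and the central vertex is never involved in a contraction. Hence Proposition \ref{prop:minors}, part (ii) applies at each step $G^{(j-1)}\arr G^{(j)}$, working with the uncoloured groups, or equivalently with the zero colouring and then passing to the quotient by $\langle J\rangle$, using $\Gamma(H)\iso\Gamma(H,0)/\langle J\rangle$. It produces a surjective homomorphism $\phi_j:\Gamma(G^{(j-1)})\arr\Gamma(G^{(j)})$ whose kernel, by the discussion following Proposition \ref{prop:minors}, is the normal subgroup generated by the commutators $[x_f,x_g]$ with $f,g$ ranging over the edges of $G^{(j-1)}$ incident to the two endpoints of $d_j$ (other than $d_j$ itself).

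Finally I would run an induction on $j$ to conclude that each $\phi_j$ is an isomorphism. By Proposition \ref{prop:wheel}, $\Gamma(W_m)\iso\Z_2^m$ is abelian. Since $\phi_{j-1}\circ\cdots\circ\phi_1$ is a surjection $\Gamma(W_m)\arr\Gamma(G^{(j-1)})$, the group $\Gamma(G^{(j-1)})$ is a quotient of $\Z_2^m$, hence abelian, so every commutator in it is trivial; therefore $\ker\phi_j$ is trivial and $\phi_j$ is an isomorphism. Composing, $\Gamma(W)=\Gamma(G^{(k)})\iso\Gamma(W_m)\iso\Z_2^m$, as claimed. I do not expect a serious obstacle here; the only points that need care are the combinatorial verification that the chosen contractions of $W_m$ really yield $W$ with the prescribed spoke multiplicities, and the observation that each contracted edge is the unique edge between its endpoints, which is what makes Proposition \ref{prop:minors}(ii) and its kernel description available at every step.
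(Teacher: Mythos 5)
Your argument is correct and matches the paper's proof in essence: realize $W$ as the result of contracting outer edges of the simple wheel $W_m$, invoke Proposition \ref{prop:wheel} for abelianness of $\Gamma(W_m)$, and use the kernel description after Proposition \ref{prop:minors}(ii) to see each contraction map is an isomorphism. Your version just spells out the block decomposition and the induction that the paper leaves implicit.
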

\begin{proof}
    Suppose $W$ is the result of adding $k$ edges to $W_n$, so $W$ has $m=n+k$
    spokes. As discussed above, $W$ is the result of contracting $k$ edges in
    $W_{n+k}$. By Proposition \ref{prop:wheel}, $\Gamma(W_{n+k})$ is abelian.
    As in Subsection \ref{subsection:commonvertex}, if $\Gamma(G)$ is abelian, then the homomorphism
    $\Gamma(G) \arr \Gamma(G/e)$ from Proposition \ref{prop:minors} is an isomorphism.
    So $\Gamma(W) \iso \Gamma(W_{n+k}) \iso \Z_2^{n+k}$.
\end{proof}

\subsection{Complete bipartite graphs $K_{3,n}$}\label{SS:K3n}

In this section, we consider the last family of graphs in Theorem
\ref{thm:lovasz}, the graphs $G$ which can be obtained from $K_{3,n}$ for some
$n \geq 0$ by adding some number of edges (possibly zero) to the first
partition. For this section, we refer to the two partitions of $K_{3,n}$ as the
first and second partition, with the first partition referring to the partition
with $3$ vertices, and the second partition referring to the partition with $n$
vertices.

If $n=0$, then the second partition is empty, and $G$ can be any three vertex
graph. We start by determining $\Gamma(G)$ in this case.
\begin{prop}\label{prop:3vertex}
    Let $G$ be a three vertex graph. Then
    \begin{equation*}
        \Gamma(G) = \begin{cases} \Z_2^{|E(G)|-2} & G \text{ is connected} \\
                    \Z_2^{|E(G)|-1} & G \text{ is disconnected with at least one edge} \\
                    \langle 1 \rangle & |E(G)|=0
                    \end{cases}.
    \end{equation*}
\end{prop}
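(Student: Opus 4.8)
The plan is to analyze a three-vertex graph $G$ case by case according to its connectivity, using the abelianness results already established (Propositions \ref{prop:commonvertex}, \ref{prop:3vertex} does not yet exist but \ref{prop:commonvertex} and \ref{prop:trivial} do) together with the explicit computations for $\Gamma(G)$ when $G$ has a vertex meeting all cycles. First I would dispose of the trivial case: if $|E(G)| = 0$ then $G$ is acyclic, so $\Gamma(G) = \langle 1 \rangle$ by Proposition \ref{prop:trivial}. For the remaining cases note that any three-vertex graph $G$ has the property that $G \setminus v$ is acyclic for \emph{every} vertex $v$ (since $G \setminus v$ has at most two vertices and hence no cycles), so Proposition \ref{prop:commonvertex} applies and $\Gamma(G)$ is abelian. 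Thus in all cases $\Gamma(G)$ is a finite-dimensional $\Z_2$-vector space, and it remains only to compute its dimension.

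Next I would compute the dimension using the formula derived in Subsection \ref{subsection:commonvertex}: if $v$ is a vertex with $G \setminus v$ acyclic, then $\Gamma(G) \iso \Z_2^{m-k}$, where $m = |E(v)|$ is the number of edges incident to $v$ and $k$ is the number of connected components of $G \setminus v$ that are joined to $v$ by an edge. For a disconnected three-vertex graph with at least one edge, the unique edges lie in one component: there is a vertex $v$ of degree equal to $|E(G)|$ (the "hub" of the single nontrivial component), $G \setminus v$ consists of two isolated vertices exactly one of which is joined to $v$, so $k = 1$ and $\Gamma(G) \iso \Z_2^{|E(G)| - 1}$. For a connected three-vertex graph, pick any vertex $v$; then $G \setminus v$ has two vertices and either zero or one edges between them, and both of those vertices are joined to $v$ (by connectedness, since $G \setminus v$ has no isolated vertex that is also isolated in $G$ — wait, this needs care). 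Actually the cleanest route for the connected case is to pick $v$ so that $G \setminus v$ has \emph{both} vertices adjacent to $v$; since $G$ is connected on three vertices, every vertex has degree at least $1$, and one checks directly that a suitable $v$ exists (e.g. a vertex of maximum degree), giving $k = 2$ and hence $\Gamma(G) \iso \Z_2^{m-2}$; but then I must verify $m = |E(v)|$ equals $|E(G)|$ minus the number of edges inside $G \setminus v$, and reconcile this with the claimed exponent $|E(G)| - 2$. It may be simpler to avoid choosing a special vertex and instead argue directly from the abelian presentation: $\Gamma(G)$ is the quotient of $\Z_2^{E(G)}$ by the three vertex relations $\prod_{e \in E(w)} x_e = 1$, one per vertex $w$; these three relations sum to $0$ (each edge appears in exactly two of them), so they span a subspace of dimension at most $2$, and one checks the relations are independent precisely when $G$ is connected (in the disconnected case two of the three relations may coincide or a relation may be trivial), yielding dimension $|E(G)| - 2$ when connected and $|E(G)| - 1$ when disconnected with an edge.

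The main obstacle I anticipate is the bookkeeping in the connected case: verifying that the three vertex-relations on $\Z_2^{E(G)}$ have rank exactly $2$, which requires checking that no two of them coincide and none is trivial. Since $G$ is connected on three vertices, every vertex has nonempty incident edge set, so no relation is trivial; and two relations for distinct vertices $w, w'$ coincide only if $E(w) = E(w')$ as subsets of $E(G)$, which for a connected three-vertex graph would force a degenerate configuration that does not occur (the third vertex would be isolated). A small subtlety is the presence of multiedges: $G$ may have, say, a double edge between two vertices and nothing else — but that graph is disconnected (the third vertex is isolated), so it falls under the disconnected case, where indeed $|E(G)| - 1 = 1$ matches $\Gamma(C_2) = \Z_2$ via Lemma \ref{lem:cycle}. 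I would close by cross-checking the formula against known instances: a triangle ($|E| = 3$, connected) gives $\Z_2^1 = \Z_2 = \Gamma(C_3)$, consistent with Lemma \ref{lem:cycle}; $C_2$ plus an isolated vertex ($|E| = 2$, disconnected) gives $\Z_2^1$, again consistent; and a path on three vertices ($|E| = 2$, connected) gives $\Z_2^0 = \langle 1 \rangle$, consistent with Proposition \ref{prop:trivial} since a path is acyclic.
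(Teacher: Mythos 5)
Your eventual argument---present $\Gamma(G)$ as the quotient of $\Z_2^{E(G)}$ by the three vertex relations, note that the relations sum to zero since each edge lies in exactly two of them, and show the rank is $2$ in the connected case and $1$ in the disconnected case with an edge---is exactly the paper's proof, and your rank analysis for the connected case (no relation trivial, no two relations equal) is correct. The first route you sketch, via the $\Z_2^{m-k}$ formula from Subsection \ref{subsection:commonvertex}, is the part you yourself flagged as muddled for connected graphs, and you rightly abandoned it.

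The genuine gap is in how you establish abelianness, which your linear-system presentation presupposes. You invoke Proposition \ref{prop:commonvertex} on the grounds that $G \setminus v$ is acyclic for every vertex $v$ ``since it has at most two vertices and hence no cycles.'' In this paper graphs are multigraphs, and a pair of parallel edges is a $2$-cycle $C_2$; so $G \setminus v$ can fail to be acyclic, and in the extreme case where every pair of the three vertices is joined by at least two parallel edges there is \emph{no} vertex $v$ with $G \setminus v$ acyclic, so Proposition \ref{prop:commonvertex} simply does not apply and abelianness is unproved for that case. (You notice multiedges later, but only in a disconnected instance, and never revisit this step.) The fix is the paper's one-line observation: any two edges of a three-vertex graph share an endpoint, since two $2$-element subsets of a $3$-element vertex set intersect, so all generators commute directly by relation (3) of Definition \ref{def:group}. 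With that replacement, and a one-line remark that in the disconnected case the rank is exactly $1$ (the isolated vertex gives the trivial relation, the two remaining relations coincide and are nontrivial because $E(G) \neq \emptyset$), your proof goes through and coincides with the paper's.
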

\begin{proof}
    When $G$ has three vertices, every pair of edges is incident to a common
    vertex, so $\Gamma(G)$ is abelian.  Thus we can think of the defining
    presentation of $\Gamma(G)$ as a linear system over $\Z_2$ with $|E(G)|$
    variables and three defining equations, one for each vertex. If $G$ is
    connected, then the equations have a single linear dependence, so $\Gamma(G) =
    \Z_2^{|E(G)|-2}$. If $G$ is disconnected and $E(G)$ is non-empty, then $G$ has
    a single isolated vertex, and $\Gamma(G) = \Z_2^{|E(G)|-1}$. If $E(G)$ is empty
    (so $G = K_{3,0}$) then $\Gamma(G)$ is the trivial group.  
\end{proof}

Moving on to the case $n \geq 1$, we show that edges added to the first partition
of $K_{3,n}$ end up in the centre of the solution group:
\begin{prop}\label{prop:add_edge}
    Let $G$ be a graph obtained from $K_{3,n}$ for some $n \geq 1$ by adding
    $m$ edges to the first partition. Then $\Gamma(G) \iso \Gamma(K_{3,n})
    \times \Z_2^m$. 
\end{prop}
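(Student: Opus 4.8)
The plan is to show that each added edge $e$ of the first partition becomes a central involution of $\Gamma(G)$, and that quotienting by all of them recovers $\Gamma(K_{3,n})$, so that the extension splits as a direct product. Let $p_1, p_2, p_3$ be the three vertices of the first partition, and let $q_1, \ldots, q_n$ be the vertices of the second partition. Each added edge $e$ joins two of the $p_i$, say $p_i$ and $p_j$. The first step is to find a word for $x_e$ in terms of the bipartite edges. Since $G$ contains $K_{3,n}$ with $n \geq 1$, pick any $q_\ell$; the edges $p_i q_\ell$ and $p_j q_\ell$ are both present. I will argue that $x_e$ commutes with every generator of $\Gamma(G)$: it automatically commutes with $x_f$ for $f \in E(p_i) \cup E(p_j)$ (common vertex), and for an edge $f$ incident to neither $p_i$ nor $p_j$ one uses the vertex relations to rewrite $x_e$ so that it involves only generators sharing a vertex with $f$. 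Concretely, the relation at $p_i$ expresses $x_e$ in terms of the other edges at $p_i$ (all bipartite edges $p_i q_\ell$ plus possibly other added edges at $p_i$); one then plays these off against the relations at the $q_\ell$'s, which have degree $3$ in $K_{3,n}$, to slide $x_e$ past $x_f$. The main technical content is exactly this commuting verification.

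Next I would make this precise by contracting. Since $n \geq 1$, fix $q_1$ and let $H$ be the graph obtained from $G$ by contracting the bipartite edge $d = p_1 q_1$; the endpoints of $d$ are joined by exactly one edge in $G$ (assuming the $K_{3,n}$ is simple in the bipartite direction, and any multi-edges can be handled by first deleting them as in Lemma \ref{lem:main}), so Proposition \ref{prop:minors}(ii) gives a surjection $\Gamma(G) \arr \Gamma(H)$. Iterating, I can contract a spanning structure of bipartite edges to reduce the second partition, but a cleaner route is: the added edge $e = p_i p_j$ together with the relation at the merged vertex shows $x_e$ is a product of $x_{p_i q_\ell}$-type generators. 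The key identity to extract is that for each added edge $e$ at $\{p_i, p_j\}$ and any $\ell$,
\begin{equation*}
    x_e = \Bigl(\prod_{f \in E(p_i) \setminus \{e, \text{added edges}\}} x_f\Bigr)\cdot(\text{added edges at } p_i),
\end{equation*}
and, using that all edges at the central-like vertices $q_\ell$ have degree $3$, that this word is central. I would then set up the homomorphism $\Gamma(G) \arr \Gamma(K_{3,n}) \times \Z_2^m$ by sending $x_f \mapsto (x_f, 0)$ for bipartite edges $f$ and $x_e \mapsto (w_e, \bar e_e)$ for added edges $e$, where $w_e$ is the word above (a genuine element of $\Gamma(K_{3,n})$) and $\bar e_e$ is the $e$-th standard basis vector of $\Z_2^m$; well-definedness is a relation check. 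Conversely, the inclusion $\Gamma(K_{3,n}) \incl \Gamma(G)$ (from deleting the added edges, which is a minor operation, combined with the fact that the deletion map has a section because $\Gamma(K_{3,n})$ sits inside via the bipartite generators) together with the central involutions $x_e$ gives the inverse map. Checking these two maps are mutually inverse is routine once the centrality of the $x_e$ is established.

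The hard part will be the centrality argument: verifying that $x_e$ commutes with $x_f$ when $f$ is a bipartite edge $p_k q_\ell$ with $k \notin \{i,j\}$, i.e. when $e$ and $f$ share no vertex. This is where the hypothesis $n \geq 1$ and the specific degree-$3$ structure of the $q_\ell$ vertices must be used — it is a graph-incidence analogue of Lemma \ref{lem:commutatorpic}, and I expect to prove it either by an explicit picture over $G$ (exhibiting a planar configuration realizing the commutator $[x_e, x_f]$ as a closed picture with trivial character, hence $[x_e,x_f]=1$ by the van Kampen Lemma \ref{lem:vk}), or by a direct manipulation of the defining relations at $p_i, p_j$ and at the relevant $q_\ell$. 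Once centrality is in hand, everything else — well-definedness of the two homomorphisms, and that they compose to the identity in both directions — is a mechanical check against the presentations in Definition \ref{def:group}. Finally, if the $K_{3,n}$ has multi-edges between partitions they do not affect the statement since they only introduce extra relations forcing parallel generators equal, which can be absorbed as in Lemma \ref{lem:subdivision}; I would remark on this rather than belabor it.
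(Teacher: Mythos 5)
Your overall plan---show each added-edge generator $x_e$ is a central involution and split it off as a $\Z_2$ factor, one added edge at a time---is the same as the paper's. On centrality, though, your concrete sketch points the wrong way: expanding $x_e$ via the vertex relation at $p_i$ introduces the bipartite generators $x_{p_iq_{\ell'}}$ for \emph{all} $\ell'$, most of which share no vertex with $f=p_kq_\ell$, so you cannot slide $x_e$ past $x_f$ that way. The one-line argument (the paper's) rewrites $x_f$ instead: since $q_\ell$ has degree $3$, the relation at $q_\ell$ gives $x_f=x_{f'}x_{f''}$ where $f',f''$ join $q_\ell$ to $p_i$ and $p_j$, and both commute with $x_e$. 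You do gesture at the degree-$3$ relations at the $q_\ell$ and at a picture/van Kampen alternative, either of which can be made to work, so this part is incomplete rather than wrong.

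The genuine gap is in the second half. The map you propose, $x_f\mapsto(x_f,0)$ for bipartite $f$ and $x_e\mapsto(w_e,\bar{e}_e)$ for added $e$, is not well-defined: the $\Z_2^m$-component of the image of the vertex relation at $p_i$ is the sum of $\bar{e}_e$ over the added edges at $p_i$, which is nonzero whenever $p_i$ meets an added edge, and no choice of $w_e$ can repair this (moreover your $w_e$ involves added edges, so it is not an element of $\Gamma(K_{3,n})$ as claimed). Likewise the claimed section ``$\Gamma(K_{3,n})$ sits inside via the bipartite generators'' fails: in $\Gamma(G)$ the product of the bipartite edges at $p_i$ equals the product of the added edges at $p_i$, not $1$, so $x_f\mapsto x_f$ does not define a homomorphism $\Gamma(K_{3,n})\arr\Gamma(G)$. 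For instance, for $K_{3,1}$ plus one edge $e$ between $p_1$ and $p_2$, one has $x_{p_1q_1}=x_{p_2q_1}=x_e\neq 1$ and the bipartite generators already generate all of $\Gamma(G)\iso\Z_2$, which is strictly larger than $\Gamma(K_{3,1})=1$. The repair is exactly the twist that constitutes the paper's proof: delete one added edge $e$ at a time, so $\phi:\Gamma(G)\arr\Gamma(G\setminus e)$ has central kernel $\langle x_e\rangle$, and build a section $\psi$ sending the two bipartite edges $f',f''$ from a chosen second-partition vertex to the endpoints of $e$ to $x_{f'}x_e$ and $x_{f''}x_e$, all other generators to themselves; centrality makes $\psi$ well-defined and $\phi\circ\psi=\Id$, giving $\Gamma(G)=\Gamma(G\setminus e)\times\langle x_e\rangle$ with $\langle x_e\rangle\iso\Z_2$ (seen by mapping onto the triangle minor on $e,f',f''$), and induction finishes. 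Equivalently, the $\Z_2^m$-coordinates must come from cycle-space functionals (triangles through each added edge), not from the naive indicator of the added edges.
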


\begin{proof}
    Suppose $e$ is one of the edges in $E(G) \setminus E(K_{3,n})$, so that the
    endpoints $u$ and $v$ of $e$ belong to the first partition of $K_{3,n}$. 
    Let $w$ be the other vertex in the first partition of $K_{3,n}$. We first
    show that $x_e$ is in the centre of $\Gamma(G)$. By definition, $x_e$
    commutes with $x_f$ for all edges $f$ incident to $u$ or $v$, so we just need
    to show that $x_e$ and $x_f$ commute when $f$ is not incident to $u$ or $v$.
    If $f$ isn't incident to $u$ and $v$, then $f$ must be incident to $w$ and
    another vertex $w'$ in the second partition of $K_{3,n}$. Since $w'$ is
    in the second partition, $w'$ has degree $3$ in $G$. The other two edges
    $f'$ and $f''$ incident to $w'$ are also incident to $u$ and $v$, so $x_e$
    commutes with $x_{f'}$ and $x_{f''}$. But $x_f = x_{f'} x_{f''}$, so
    $x_e$ commutes with $x_f$. Thus $x_e$ is in the centre of $\Gamma(G)$.

    By Proposition \ref{prop:minors}, there is a surjective homomorphism $\phi
    : \Gamma(G) \arr \Gamma(G \setminus e)$ with kernel $\langle x_e \rangle$.
    Since $x_e$ is central, $\Gamma(G)$ is a central extension of $\Gamma(G
    \setminus e)$ by $\langle x_e \rangle$. Pick some vertex $w'$ from the
    second partition of $K_{3,n}$, and write $E(w') = \{f,f',f''\}$, where $f$,
    $f'$, and $f''$ are incident to $w$, $u$, and $v$ respectively. 
    Define a homomorphism
    \begin{equation*}
        \psi : \Gamma(G \setminus e) \arr \Gamma(G) : x_{e'} \mapsto \begin{cases} x_{e'} x_e & e' \in \{f',f''\} \\
                                                                                x_{e'} & \text{ otherwise} \end{cases}.
    \end{equation*}
    Since $x_e$ is central,
    \begin{equation*}
        \psi\left(\prod_{e' \in E(w')} x_{e'}\right) = \psi(x_f x_{f'} x_{f''}) = x_f x_{f'} x_e x_{f''} x_e
            = x_f x_{f'} x_{f''} = 1,
    \end{equation*}
    while
    \begin{equation*}
        \psi\left(\prod_{e' \in E(u) \setminus \{e\}} x_{e'}\right) = \psi(x_{f'}) \cdot \psi\left( \prod_{e' \in E(u)\setminus\{e,f'\}}
            x_{e'}\right) = \prod_{e' \in E(u)} x_{e'} = 1,
    \end{equation*}
    and the other defining relations of $\Gamma(G\setminus e)$ can be checked
    similarly. So $\psi$ is well-defined. Using the formula for $\phi$ from
    Proposition \ref{prop:minors}, we see that $\phi \circ \psi =
    \Id_{\Gamma(G\setminus e)}$, so $\Gamma(G)$ is a split central extension of
    $\Gamma(G\setminus e)$, and hence $\Gamma(G) = \Gamma(G \setminus e) \times
    \langle x_e \rangle$.  

    Finally the subgraph $C$ of $G$ with vertices $u$, $v$, and $w'$, and edges
    $e$, $f'$, and $f''$ is a minor of $G$. The surjective homomorphism 
    $\Gamma(G) \arr \Gamma(C)$ from Proposition \ref{prop:minors} given by 
    deleting all other vertices and edges sends $x_e \mapsto x_e$. Since
    $x_e$ is non-trivial in $\Gamma(C) \iso \Z_2$, $x_e$ is non-trivial in
    $\Gamma(G)$, so $\langle x_e \rangle \iso \Z_2$. By successively deleting
    edges, we see that $\Gamma(G) \iso \Gamma(K_{3,n}) \times \Z_2^m$.
\end{proof}

Combining Propositions \ref{prop:3vertex} and \ref{prop:add_edge}, we get the
following corollary:
\begin{cor}\label{cor:K3nreduction}
    Let $G$ be a graph constructed by adding edges to the first partition of
    $K_{3,n}$, for some $n \geq 0$. Then $\Gamma(G)$ is finite (resp. abelian)
    if and only if $\Gamma(K_{3,n})$ is finite (resp. abelian).
\end{cor}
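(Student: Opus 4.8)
The plan is to read off the corollary directly from Propositions \ref{prop:3vertex} and \ref{prop:add_edge}, splitting into the cases $n = 0$ and $n \geq 1$.

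First I would dispose of the case $n = 0$. Here the second partition of $K_{3,0}$ is empty, so the phrase ``adding edges to the first partition'' means that $G$ can be an arbitrary graph on three vertices. By Proposition \ref{prop:3vertex}, $\Gamma(G)$ is always a finite abelian group (a finite-dimensional $\Z_2$-vector space, or the trivial group), and $\Gamma(K_{3,0}) = \langle 1 \rangle$ is likewise finite and abelian. Hence in this case both sides of the claimed equivalence hold unconditionally, so the equivalence is vacuously true.

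For $n \geq 1$, suppose $G$ is obtained from $K_{3,n}$ by adding $m \geq 0$ edges to the first partition (the case $m = 0$ being simply $G = K_{3,n}$). Proposition \ref{prop:add_edge} supplies an isomorphism $\Gamma(G) \iso \Gamma(K_{3,n}) \times \Z_2^m$. Now for any group $H$, the direct product $H \times \Z_2^m$ is finite if and only if $H$ is finite, and abelian if and only if $H$ is abelian. Applying this observation with $H = \Gamma(K_{3,n})$ gives exactly the statement that $\Gamma(G)$ is finite (resp. abelian) if and only if $\Gamma(K_{3,n})$ is finite (resp. abelian).

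There is essentially no obstacle to overcome here, since both ingredients have already been proved; the only point that needs a moment's care is the bookkeeping in the $n = 0$ case, where ``the first partition'' is all of $V(G)$, so that the corollary reduces to the already-established fact (Proposition \ref{prop:3vertex}) that graph incidence groups of three-vertex graphs are automatically finite and abelian.
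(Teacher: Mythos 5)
Your proof is correct and matches the paper's (implicit) argument exactly: the paper derives this corollary precisely by combining Proposition \ref{prop:3vertex} for the $n=0$ case with the isomorphism $\Gamma(G) \iso \Gamma(K_{3,n}) \times \Z_2^m$ from Proposition \ref{prop:add_edge} for $n \geq 1$. Your handling of the $n=0$ bookkeeping and the observation that a direct product with $\Z_2^m$ preserves finiteness and abelianness in both directions are exactly what is needed.
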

Thus we only need to look at the groups $K_{3,n}$ for $n \geq 0$.  By Corollary
\ref{cor:K33K5}, $\Gamma(K_{3,3})$ is abelian, and since $K_{3,n}$ is a minor
of $K_{3,3}$ for $n \leq 3$, $\Gamma(K_{3,n})$ is also abelian for $n \leq 3$.
To prove Proposition \ref{prop:abelian}, we need to show that $\Gamma(K_{3,4})$
is nonabelian, while for Proposition \ref{prop:finite}, we need to show that
$\Gamma(K_{3,n})$ is finite for $n \leq 5$ and infinite when $n=6$. It's
possible to determine the order of $\Gamma(K_{3,4})$ and $\Gamma(K_{3,5})$, as
well as the order of their abelianizations, using the GAP computer algebra
package. The orders of $\Gamma(K_{3,n})$ and $\Gamma(K_{3,n})^{ab}$ for $3 \leq
n \leq 6$ are shown in Table \ref{tab:orders}. The order of $\Gamma(G)$ for $G=
K_5$ and the wheel graphs $G = W_n$ are included for comparison. It follows
from this table that $\Gamma(K_{3,4})$ is nonabelian.

\begin{table}[t]
\centering
\begin{tabular}{ l  c  c }
$G$ &  $|\Gamma(G)|$ & $|\Gamma(G)^{Ab}|$  \\ \hline
$W_n$ & $2^n$ & $2^n$  \\ 
$K_{3,3}$ & $16$ & $16$   \\ 
$K_{5}$  & $64$ & $64$  \\ 
$K_{3,4}$ & $256$ & $64$ \\ 
$K_{3,5}$ & $8192$ & $256$  \\ 
$K_{3,6}$ & $\infty$ & $1024$  \\
\end{tabular}
\caption{Order of the incidence group and its abelianization for some small
    graphs not containing two vertex disjoint cycles.}
\label{tab:orders}
\end{table}

We still need to show that $\Gamma(K_{3,6})$ is infinite. Before doing this though,
we give a human-readable proof that $\Gamma(K_{3,n})$ is finite and nonabelian
for $n=4,5$. Although this isn't necessary to prove Propositions \ref{prop:finite}
and \ref{prop:abelian}, the proofs suggest that the structure of the groups
$\Gamma(K_{m,n})$ involve some interesting combinatorics. 

To work with the groups $\Gamma(K_{m,n})$, we order the vertices in each
partition, and label the edge from the $i$th vertex in the first partition to
the $j$th vertex in the second partition by $(j-1)m+i$, $1 \leq i \leq m$, $1
\leq j \leq n$. To visualize the presentation of $\Gamma(K_{m,n})$, we can draw
the $n \times m$ matrix with $x_{(j-1)m+i}$ in the $ji$th entry. Then
$\Gamma(K_{m,n})$ is the group generated by the entries of this matrix, such
that every entry of the matrix squares to the identity, any two entries in the
same row or column commute, and the product of entries in any row or column is
the identity. It is also possible to get $\Gamma(K_{m,n},b)$ from this picture,
by adding the central generator $J$, and setting the product of entries in any row
or column to be $J^{b(v)}$ for the corresponding vertex $v$, rather than the
identity. 

\begin{example}\label{Ex:magicsquare}
    Figure \ref{fig:k33_J_picture} shows the edge labelling above for $K_{3,3}$. 
    In this figure, the first vertex partition is $\{x,y,z\}$, and the second
    partition is $\{u,v,w\}$, both ordered as written. 
    Then $\Gamma(K_{3,3})$ is generated by the entries of
    \begin{equation*}
        \begin{tabular}{|c|c|c|}
            \hline 
            $x_1$ & $x_2$ & $x_3$ \\
            \hline 
            $x_4$ & $x_5$ & $x_6$ \\
            \hline 
            $x_7$ & $x_8$ & $x_9$ \\
            \hline 
        \end{tabular}
    \end{equation*}
    subject to the relations $x_i^2=1$ for all $1 \leq i \leq 9$,
    $[x_i,x_j]=1$ for $(i,j)$ equal to one of the pairs $(1,2)$, $(1,3)$, $(2,3)$
    , $(4,5)$, $(4,6)$, $(5,6)$, $(7,8)$, $(7,9)$, $(8,9)$, $(1,4)$, $(1,7)$,
    $(4,7)$, $(2,5)$, $(2,8)$, $(5,8)$, $(3,6)$, $(3,9)$, and $(6,9)$, and 
    \begin{equation*}
        x_1 x_2 x_3 = x_4 x_5 x_6 = x_7 x_8 x_9
            = x_1 x_4 x_7 = x_2 x_5 x_8 = x_3 x_6 x_9 = 1.
    \end{equation*} 
    If $b$ is the vertex labelling of $K_{3,3}$ with $b(x)=b(y)=b(z)=1$
    and $b(u)=b(v)=b(w)=0$, then we get $\Gamma(K_{3,3},b)$ by adding $J$ to the generators
    along with the relations $J^2 = [x_i,J]=1$ for all $1 \leq i \leq 9$, and modifying
    the last group of relations to
    \begin{equation*}
        x_1 x_2 x_3 = x_4 x_5 x_6 = x_7 x_8 x_9 = 1
    \end{equation*}
    and
    \begin{equation*}
        x_1 x_4 x_7 = x_2 x_5 x_8 = x_3 x_6 x_9 = J.
    \end{equation*}
\end{example}
Since $K_{m,n}$ has a large number of edges, this visual representation of
$\Gamma(K_{m,n})$ and $\Gamma(K_{m,n},b)$ is preferable to drawing $K_{m,n}$.
This representation is the reason that $\mcG(K_{3,3},b)$ is called the ``magic square''
game in \cite{Mermin90,Peres91}, and that the games $\mcG(K_{m,n},b)$ are called
``magic rectangle'' games in \cite{AW20,AW22}.

\begin{example}\label{ex:K30K31K32}
    Since $K_{3,0}$ has no edges, $\Gamma(K_{3,0})$ is trivial. For $\Gamma(K_{3,1})$
    and $\Gamma(K_{3,2})$, we can look at the matrices
    \begin{equation*}
        \begin{tabular}{|c|c|c|}
            \hline 
            $x_1$ & $x_2$ & $x_3$ \\
            \hline
        \end{tabular}
        \quad \text{ and } \quad
        \begin{tabular}{|c|c|c|}
            \hline 
            $x_1$ & $x_2$ & $x_3$ \\
            \hline 
            $x_4$ & $x_5$ & $x_6$ \\
            \hline
        \end{tabular}
    \end{equation*}
    respectively. $\Gamma(K_{3,1})$ is generated by $x_1$, $x_2$, and
    $x_3$, but since the product along any column is the identity, all these
    generators are trivial, and hence $\Gamma(K_{3,1})$ is trivial. 
    Similarly, $\Gamma(K_{3,2})$ is generated by $x_1, \ldots, x_6$, but
    since $x_1 = x_4$, $x_2 = x_5$, $x_3 = x_6$, and
    $x_1 x_2 x_3 = 1$, we see that $\Gamma(K_{3,2}) \iso \Z_2 \times \Z_2$.  
\end{example}
To study $\Gamma(K_{m,n})$, it's helpful to look at the
groups where we modify the matrix presentation of $\Gamma(K_{m,n})$ by leaving
out the relations stating that the product across columns is $1$.
\begin{defn}
    For any $m,n \geq 1$, let
    \begin{align*}
        H_{m,n} = \langle y_i, 1 \leq i \leq mn \ : \ 
            & y_i^2 = 1 \text{ for all } 1 \leq i \leq mn \\
            & [y_i,y_j] = 1 \text{ if } i \equiv j \mod m \\
            & [y_{(j-1)m+i}, y_{(j-1)m+k}]=1 \text{ for all } 1 \leq i,k \leq m,
                1 \leq j \leq n \\
            & y_{(j-1)m+1} y_{(j-1)m+2} \cdots y_{jm} = 1 \text{ for all } 1 \leq j \leq n \rangle. \\
    \end{align*}
\end{defn}
In other words, if we fill an $n \times m$ matrix with the indeterminates
$y_1,\ldots, y_{mn}$, reading from left to right and top to bottom, then
$H_{m,n}$ is generated by the entries of this matrix, subject to the relations
stating that all entries square to the identity, that entries in the same row
or column commute, and that the product of entries in any row of the matrix is
the identity. We draw the matrix for $H_{m,n}$ with a dashed bottom line,
to emphasize that the column products are not included. 
\begin{example}
    $H_{3,3}$ is generated by the entries of the matrix
    \begin{equation*}
        \begin{tabular}{|c|c|c|}
            \hline 
            $y_1$ & $y_2$ & $y_3$ \\
            \hline 
            $y_4$ & $y_5$ & $y_6$ \\
            \hline 
            $y_7$ & $y_8$ & $y_9$ \\
            \hdashline 
        \end{tabular}
    \end{equation*}
    subject to the relations $y_i^2=1$ for all $1 \leq i \leq 9$, $[y_i,y_j]=1$
    for $(i,j)$ equal to one of the pairs $(1,2)$, $(1,3)$, $(2,3)$ , $(4,5)$,
    $(4,6)$, $(5,6)$, $(7,8)$, $(7,9)$, $(8,9)$, $(1,4)$, $(1,7)$, $(4,7)$,
    $(2,5)$, $(2,8)$, $(5,8)$, $(3,6)$, $(3,9)$, and $(6,9)$, and 
    \begin{equation*}
        y_1 y_2 y_3 = y_4 y_5 y_6 = y_7 y_8 y_9 = 1.
    \end{equation*}
\end{example}
Something we can see very easily from the matrix presentations of
$\Gamma(K_{m,n})$ and $H_{m,n}$ is that swapping rows and columns of the
matrix gives an automorphism of the group. For instance, for $H_{3,3}$,
swapping the first two columns of the matrix gives an automorphism $H_{3,3}
\arr H_{3,3}$ sending $y_1 \mapsto y_2$, $y_2 \mapsto y_1$, $y_4 \mapsto y_5$,
$y_5 \mapsto y_4$, $y_7 \mapsto y_8$, $y_8 \mapsto y_7$, and $y_i \mapsto y_i$
for $i=3,6,9$. These automorphisms are very useful in analyzing these groups,
as we can see in the following key example:
\begin{example}\label{Ex:Hkey}
    $H_{3,2}$ arises from the matrix
    \begin{equation*}
        \begin{tabular}{|c|c|c|}
            \hline
            $y_1$ & $y_2$ & $y_3$ \\
            \hline
            $y_4$ & $y_5$ & $y_6$ \\
            \hdashline
        \end{tabular}
        \ .
    \end{equation*}
    Unlike $\Gamma(K_{3,2})$, the elements $y_1 y_4$, $y_2 y_5$, and $y_3 y_6$
    are not necessarily equal to the identity, but since elements in the same
    column commute, all three elements have order $2$. The commutator
    \begin{equation*}
        [y_1 y_4, y_2 y_5] = (y_1 y_4 y_5 y_2)^2 = (y_1 y_6 y_3 y_1)^2 = 1,
    \end{equation*}
    since $(y_3 y_6)^2 = 1$. Applying column swap automorphisms, we see that
    \begin{equation*}
        [y_1 y_4, y_3 y_6] = [y_2 y_5, y_3 y_6] = 1
    \end{equation*}
    as well. Consider the element
    \begin{equation*}
        w = (y_1 y_4) (y_2 y_5) (y_3 y_6).
    \end{equation*}
    Clearly $w$ is invariant under the automorphism which swaps the rows of the
    matrix, and since the elements $y_1 y_4$, $y_2 y_5$, and $y_3 y_6$ pairwise
    commute, $w$ is also invariant under column swaps. Now 
    \begin{equation*}
        w = (y_1 y_4) (y_5 y_2) (y_3 y_6) = y_1 (y_4 y_5) (y_2 y_3) y_6
            = y_1 y_6 y_1 y_6,
    \end{equation*}
    and since $y_1$ and $y_6$ commute with $y_3$ and $y_4$, 
    \begin{equation*}
        [w,y_3] = [w, y_4] = 1.
    \end{equation*}
    By applying row and column swaps, we see that $w = y_i y_j y_i y_j$ for
    any pair of generators $y_i$ and $y_j$ not in the same column or row. In particular,
    $[w,y_i] = 1$ for all $i$, so $w$ is central. 

    Consider the presentation of $\Gamma(K_{3,3},b)$ in Example
    \ref{Ex:magicsquare}, where $b$ is the vertex labelling of $K_{3,3}$ where
    all vertices are labelled by $0$ except the vertex corresponding to the bottom row. 
    Since the generators in the first two rows of the $3 \times 3$ matrix for
    $\Gamma(K_{3,3},b)$ satisfy the defining relations of $H_{3,2}$, there is a
    homomorphism $H_{3,2} \arr \Gamma(K_{3,3},b)$ sending $y_i \mapsto x_i$
    for all $1 \leq i \leq 6$. Going in the opposite direction, if we set $J=w$,
    then the entries of the $3 \times 3$ matrix
    \begin{equation*}
        \begin{tabular}{|c|c|c|}
            \hline
            $y_1$ & $y_2$ & $y_3$ \\
            \hline
            $y_4$ & $y_5$ & $y_6$ \\
            \hline
            $y_1 y_4$ & $y_2 y_5$ & $y_3 y_6$ \\
            \hline
        \end{tabular}
    \end{equation*}
    satisfy the defining relations of $\Gamma(K_{3,3},b)$. So we get a homomorphism
    $\Gamma(K_{3,3},b) \arr H_{3,2}$ sending $x_i \mapsto y_i$ for all
    $1 \leq i \leq 6$, $x_7 \mapsto y_1 y_4$, $x_8 \mapsto y_{2} y_5$,
    $x_9 \mapsto y_3 y_6$, and $J \mapsto w$. Clearly this homomorphism is
    an inverse to the homomorphism $H_{3,2} \arr \Gamma(K_{3,3},b)$, so 
    $H_{3,2} \iso \Gamma(K_{3,3},b) \iso D_4 \odot D_4$, where the central
    element of $D_4 \odot D_4$ corresponds to $w$. 
\end{example}
The reason Example \ref{Ex:Hkey} is so important is that in the matrix
presentation of $\Gamma(K_{3,n})$, the entries in any pair of rows satisfy
the defining relations of $H_{3,2}$, and thus there is a surjective homomorphism
from $H_{3,2}$ onto the subgroup generated by the entries of these rows. As a
result, the entries of these rows will satisfy the identities derived in
Example \ref{Ex:Hkey}. For example, if we take the $4 \times 3$ matrix
\begin{equation*}
    \begin{tabular}{|c|c|c|}
        \hline 
        $x_1$ & $x_2$ & $x_3$ \\
        \hline 
        $x_4$ & $x_5$ & $x_6$ \\
        \hline 
        $x_7$ & $x_8$ & $x_9$ \\
        \hline 
        $x_{10}$ & $x_{11}$ & $x_{12}$ \\
        \hline 
    \end{tabular}
\end{equation*}
for $\Gamma(K_{3,4})$, then we see that $x_1 x_6 x_1 x_6$ will
commute with $x_i$ for $1 \leq i \leq 6$. 
\begin{lemma}\label{lem:K34}
    $\Gamma(K_{3,4})$ is finite and nonabelian.
\end{lemma}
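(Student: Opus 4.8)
The plan is to work with the $4 \times 3$ matrix presentation of $\Gamma(K_{3,4})$ with generators $x_1,\dots,x_{12}$, and to pin down its center and commutator subgroup. The starting point is the observation already flagged above: any two rows of the matrix satisfy the defining relations of $H_{3,2}\iso \Dih_4\odot \Dih_4$, so for each unordered pair of distinct rows $\{j,k\}$ there is a well-defined element $w_{jk}\in\Gamma(K_{3,4})$ equal to the commutator $[x_a,x_b]$ for any entry $x_a$ in row $j$ and any entry $x_b$ in row $k$ lying in a different column; independence of the choices and $w_{jk}^2=1$ follow from Example \ref{Ex:Hkey} applied to the surjection $H_{3,2}\arr\langle\text{row }j,\text{row }k\rangle$. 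Since the commutator of two generators is trivial whenever they share a row or a column and is one of the $w_{jk}$ otherwise, $[\Gamma(K_{3,4}),\Gamma(K_{3,4})]$ is generated by the six elements $w_{jk}$.

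First I would prove the key structural claim: each $w_{jk}$ is central in $\Gamma(K_{3,4})$, and in fact $w_{jk}=w_{lm}$ whenever $\{j,k,l,m\}=\{1,2,3,4\}$. Each $w_{jk}$ already commutes with every entry of rows $j$ and $k$ (it is central in the two-row subgroup), so the content is commutation with the other two rows. The plan is to rewrite $w_{jk}$ by substituting one of its two generators using a column relation (for instance, using $x_1x_4x_7x_{10}=1$ to replace a row-$1$ factor $x_1$ by $x_4x_7x_{10}$) and then to expand the resulting commutators using the identities of Example \ref{Ex:Hkey} for pairs of rows, together with the fact that inside each two-row subgroup the relevant element is central of order $2$. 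Carrying this out over all columns yields the relations $w_{14}=w_{12}w_{13}$, $w_{24}=w_{12}w_{23}$, $w_{34}=w_{13}w_{23}$ among the $w_{jk}$, plus one further relation forcing $w_{12}=w_{34}$, $w_{13}=w_{24}$, $w_{14}=w_{23}$; once $w_{12}=w_{34}$ is known, $w_{12}$ commutes with all four rows and hence is central, and likewise for the others.

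With the claim in hand, finiteness is immediate: $[\Gamma(K_{3,4}),\Gamma(K_{3,4})]$ is generated by the central involutions $w_{jk}$, hence is a finite elementary abelian $2$-group, and $\Gamma(K_{3,4})^{ab}$ is a finitely generated abelian group of exponent $2$, hence also finite; so $\Gamma(K_{3,4})$ is an extension of a finite group by a finite group, and is therefore finite.

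For nonabelianness it suffices to produce one nontrivial commutator, i.e.\ to show $w_{12}\neq 1$. The quickest route is the order computation recorded in Table \ref{tab:orders}, where $|\Gamma(K_{3,4})|=256$ exceeds $|\Gamma(K_{3,4})^{ab}|=64$; one can also argue directly by exhibiting a surjection of $\Gamma(K_{3,4})$ onto a nonabelian group in which $w_{12}$ survives — equivalently, a faithful copy of $H_{3,2}\iso\Dih_4\odot\Dih_4$ realized by rows $1$ and $2$. The main obstacle is the structural claim, and in particular the last relation $w_{12}=w_{34}$: the naive expansions of $w_{jk}$ through column relations only reproduce consequences of centrality, so one must genuinely use the two-row $H_{3,2}$-identities (without assuming centrality) to close the argument.
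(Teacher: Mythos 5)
Your overall architecture is the same as the paper's: show that every commutator of generators is central, deduce that the subgroup they generate is a central elementary abelian $2$-group with abelian exponent-$2$ quotient, hence $\Gamma(K_{3,4})$ is finite; and get nonabelianness from a surjection onto $H_{3,2}\iso \Dih_4\odot\Dih_4$ (the paper exhibits this by repeating the two rows of $H_{3,2}$ twice in the $4\times 3$ matrix, which also shows that rows $1,2$ generate a faithful copy of $H_{3,2}$, as you suggest). Those parts are fine. The genuine gap is exactly where you flag it: the centrality claim, i.e.\ $w_{12}=w_{34}$, is never actually proved. Your proposed derivation --- replace one generator of $w_{jk}$ by the product of the other three entries of its column and expand the commutator --- stalls: after the substitution you must move a two-row element such as $w_{12}$ or $w_{13}$ past entries of a third row, which is precisely the centrality you are trying to establish. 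Saying that one must ``genuinely use the two-row identities without assuming centrality'' names the difficulty but does not close it; and the auxiliary relations $w_{14}=w_{12}w_{13}$, etc., while true, are only derivable once centrality is known and are not needed at all.

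The missing ingredient is one specific identity from Example \ref{Ex:Hkey} that you never invoke: in $H_{3,2}$ the central element has the product form $w=(y_1y_4)(y_2y_5)(y_3y_6)$, not only the form $y_iy_jy_iy_j$. Applied to rows $1,2$ of $K_{3,4}$ this gives $w_{12}=x_1x_6x_1x_6=(x_1x_4)(x_2x_5)(x_3x_6)$, and now the column relations do all the work with no commutator shuffling: the four entries of each column commute and multiply to $1$, so $x_ix_{i+3}=x_{i+6}x_{i+9}$ for $i=1,2,3$, whence $w_{12}=(x_7x_{10})(x_8x_{11})(x_9x_{12})=w_{34}$. Thus $w_{12}$ commutes with rows $1,2$ (as $w_{12}$) and with rows $3,4$ (as $w_{34}$), so it is central, and row and column swap automorphisms then make every commutator of generators central. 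With this step inserted your finiteness argument goes through verbatim and coincides with the paper's proof; for nonabelianness, prefer the explicit surjection $\Gamma(K_{3,4})\arr H_{3,2}$ given by doubling the rows over the appeal to Table \ref{tab:orders}, which is a machine computation.
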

\begin{proof}
    As mentioned above $x_1 x_6 x_1 x_6$ commutes with
    all entries in the first two rows. In addition,
    \begin{equation*}
        x_1 x_6 x_1 x_6 = (x_1 x_4) (x_2 x_5)
            (x_3 x_6) = (x_7 x_{10}) (x_8 x_{11}) (x_9 x_{12}),
    \end{equation*}
    where the last identity comes from the fact that the product along any
    column is the identity. So $x_1 x_6 x_1 x_6$ also commutes
    with all entries in the last two rows, and hence is central. Applying row and
    column swap automorphisms, we see that
    all commutators $[x_i,x_j]$ are central. Let $N$ be the 
    central (hence normal) subgroup generated by these commutators. The quotient 
    $\Gamma(K_{3,4})/N$ is abelian, and since $N$ and $\Gamma(K_{3,4}) / N$ are
    both abelian groups finitely generated by elements of order $2$, both
    $N$ and $\Gamma(K_{3,4})/N$ are finite. It follows that $\Gamma(K_{3,4})$
    is finite.

    To see that $\Gamma(K_{3,4})$ is nonabelian, note that if $y_1,\ldots,y_6$ are
    the generators of $H_{3,2}$, then the entries of the matrix
    \begin{equation*}
        \begin{tabular}{|c|c|c|}
            \hline
            $y_1$ & $y_2$ & $y_3$ \\
            \hline
            $y_4$ & $y_5$ & $y_6$ \\
            \hline
            $y_1$ & $y_2$ & $y_3$ \\
            \hline
            $y_4$ & $y_5$ & $y_6$ \\
            \hline
        \end{tabular}
    \end{equation*}
    satisfy the defining relations for $\Gamma(K_{3,4})$. Thus there is a surjective
    homomorphism $\Gamma(K_{3,4}) \arr H_{3,2}$ sending
    \begin{equation*}
        x_i \mapsto \begin{cases} y_i & 1 \leq i \leq 6 \\
                                        y_{i-6} & 7 \leq i \leq 12
                        \end{cases}.
    \end{equation*}
    Since $H_{3,2}$ is nonabelian, so is $\Gamma(K_{3,4})$. 
\end{proof}

Since $K_{3,4}$ is a minor of $K_{3,n}$ for $n \geq 4$, Lemmas
\ref{lem:main_uncoloured} and \ref{lem:K34} imply that $\Gamma(K_{3,n})$ is
nonabelian for $n \geq 4$. We still need to prove:
\begin{lemma}\label{lem:K35}
    $\Gamma(K_{3,5})$ is finite.
\end{lemma}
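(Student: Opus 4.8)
The plan is to show that $\Gamma(K_{3,5})$ is nilpotent of class at most $2$, with every commutator of generators central and of order dividing $2$; since $\Gamma(K_{3,5})$ is finitely generated by involutions, this forces it to be finite. I will work with the $5\times 3$ matrix presentation of $\Gamma(K_{3,5})$ described above, using freely that permuting the rows or the columns of this matrix induces automorphisms of the group, and that the six entries in any two fixed rows satisfy the defining relations of $H_{3,2}$, so that $H_{3,2}$ surjects onto the subgroup they generate.

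\textbf{Step 1 (the elements $w_{kk'}$).} As in Example \ref{Ex:Hkey}, for each pair of rows $\{k,k'\}$ let $w_{kk'}$ be the image in $\Gamma(K_{3,5})$ of the unique non-trivial central element of $H_{3,2}\cong\Dih_4\odot\Dih_4$ under the surjection coming from those two rows. Then $w_{kk'}^2=1$; moreover, by Example \ref{Ex:Hkey}, $w_{kk'}=(x_ax_b)^2$ for \emph{any} entries $x_a,x_b$ lying in rows $k$ and $k'$ and in different columns, so $w_{kk'}$ is well defined, commutes with every entry of rows $k$ and $k'$, and is fixed by the column-swap automorphisms and by the transposition of rows $k$ and $k'$. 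In the same way, the commutator of two generators of $\Gamma(K_{3,5})$ is trivial when they share a row or a column, and equals the relevant $w_{kk'}$ when they lie in two different rows and two different columns.

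\textbf{Step 2 (every $w_{kk'}$ is central).} Using the row-permutation automorphisms it is enough to show that $w_{12}$ is central. It already commutes with all of rows $1$ and $2$. Since $w_{12}$ is fixed by the column-swap automorphisms and by every permutation of rows $3,4,5$, and the group these generate acts transitively on the nine entries of rows $3,4,5$, it suffices to show that $w_{12}$ commutes with a single entry $x$ of row $3$. As in the proof of Lemma \ref{lem:K34}, write $w_{12}=(x_1x_4)(x_2x_5)(x_3x_6)$ as a product of three pairwise-commuting involutions, one for each column, and use the length-$5$ column-product relations of $K_{3,5}$ to rewrite each of these factors through rows $3,4,5$; then combine with the row relations of rows $3,4,5$ and the $H_{3,2}$ identities of Example \ref{Ex:Hkey} applied to the pairs of rows inside $\{3,4,5\}$ to move $x$ through. \textbf{This computation is the crux and the main obstacle.} It is genuinely harder than the corresponding step of Lemma \ref{lem:K34}: for $K_{3,4}$ the complement of a pair of rows is again a single pair, so $w_{12}$ is literally equal to $w_{34}$ and hence visibly commutes with the remaining rows, whereas for $K_{3,5}$ the complement of $\{1,2\}$ is the three rows $\{3,4,5\}$, and instead one must identify $w_{12}$ with a suitable element built from $w_{34}$, $w_{35}$, $w_{45}$ (equivalently, carry out the corresponding rearrangement of generators), each piece of which commutes with $x$.

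\textbf{Step 3 (conclusion).} Once all $w_{kk'}$ are central, every commutator of generators lies in the central --- hence normal --- subgroup $N$ generated by the finitely many elements $w_{kk'}$, so the commutator subgroup of $\Gamma(K_{3,5})$ equals $N$ and is contained in the centre. As $N$ is abelian and generated by finitely many elements of order $2$, it is a finite elementary abelian $2$-group, and $\Gamma(K_{3,5})/N$ is an abelian group finitely generated by involutions, hence also finite. An extension of a finite group by a finite group is finite, so $\Gamma(K_{3,5})$ is finite.
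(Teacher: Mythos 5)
Your overall strategy is the same as the paper's: show that the commutators of generators are central, and then conclude finiteness exactly as in Lemma \ref{lem:K34}, since both the central subgroup $N$ generated by the commutators and the abelian quotient $\Gamma(K_{3,5})/N$ are finitely generated by involutions. Steps 1 and 3, and the symmetry reduction at the start of Step 2 (invariance of $w_{12}$ under column swaps and under permutations of rows $3,4,5$, so that it suffices to check commutation with a single entry of row $3$), are all sound and consistent with what the paper does.

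However, there is a genuine gap: the one step you explicitly flag as ``the crux and the main obstacle'' is precisely the content of the lemma, and you do not carry it out. Saying that one should ``identify $w_{12}$ with a suitable element built from $w_{34}$, $w_{35}$, $w_{45}$'' is a plan, not a proof; it is not clear a priori that such an identification exists, and nothing in Steps 1--3 supplies it. The paper closes exactly this gap by an explicit computation: writing $w=(x_{10}x_{13})(x_{11}x_{14})(x_{12}x_{15})=(x_1x_4x_7)(x_2x_5x_8)(x_3x_6x_9)$ via the length-$5$ column relations, then using the row relations together with the $H_{3,2}$ identity from Example \ref{Ex:Hkey} (in the form $x_4x_7x_5x_8x_6x_9=x_5x_9x_5x_9$) to reduce this to $w=x_1x_5x_9x_1x_5x_9$, and finally exploiting the invariance of $w$ under simultaneously swapping rows $1,2$ and columns $1,2$ to obtain $x_1x_5x_9x_1x_5x_9=x_5x_1x_9x_5x_1x_9$, from which cancellation yields that $x_1x_5x_1x_5$ (which is your $w_{12}$) commutes with $x_9$. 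Without this (or an equivalent) chain of identities, centrality of the $w_{kk'}$ is unestablished and the finiteness conclusion does not follow; your proposal correctly identifies where the difficulty lies but leaves the lemma unproved at exactly that point.
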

\begin{proof}
    The proof is similar to the proof that $\Gamma(K_{3,4})$ is finite, but
    more involved. Consider the presentation of $\Gamma(K_{3,5})$ in terms
    of the matrix
    \begin{equation*}
        \begin{tabular}{|c|c|c|}
            \hline 
            $x_1$ & $x_2$ & $x_3$ \\
            \hline 
            $x_4$ & $x_5$ & $x_6$ \\
            \hline 
            $x_7$ & $x_8$ & $x_9$ \\
            \hline 
            $x_{10}$ & $x_{11}$ & $x_{12}$ \\
            \hline 
            $x_{13}$ & $x_{14}$ & $x_{15}$ \\
            \hline 
        \end{tabular}
        \ ,
    \end{equation*}
    and let
    \begin{equation*}
        w = (x_{10} x_{13}) (x_{11} x_{14}) (x_{12} x_{15})
          = (x_{1} x_{4} x_7) (x_{2} x_{5} x_8) (x_{3} x_{6} x_9).
    \end{equation*}
    From the first expression, we see that $w$ is invariant under swapping
    columns, while from the second expression, we see that $w$ is invariant
    under swapping any of the first three rows. Now from the second expression, we get
    \begin{align*}
        w & = x_1 (x_4 x_7 x_5 x_8) (x_2 x_3) x_6 x_9 \\
            & = x_1 (x_4 x_7 x_5 x_8 x_6 x_9) (x_6 x_9) x_1 (x_4 x_5) x_9 \\
            & = x_1 (x_5 x_9 x_5 x_9) (x_6 x_9 x_4 (x_1 x_5 x_9) \\
            & = (x_1 x_5 x_9) (x_5 x_6 x_4) x_1 x_5 x_9 \\
            & = x_1 x_5 x_9 x_1 x_5 x_9, \\
    \end{align*}
    where for the third identity, we use Example \ref{Ex:Hkey} to conclude $x_4 x_7 x_5 x_8 x_6 x_9
    = x_5 x_9 x_5 x_9$.  But since $w$
    is invariant under swapping the first and second row, and also swapping the
    first and second column, we get
    \begin{equation*}
        x_1 x_5 x_9 x_1 x_5 x_9 = w = x_5 x_1 x_9 x_5 x_1 x_9.
    \end{equation*}
    Cancelling $x_9$ on the right and rearranging the remaining terms, we see that 
    \begin{equation*}
        x_1 x_5 x_1 x_5 x_9 = x_9 x_5 x_1 x_5 x_1 = x_9 x_1 x_5 x_1 x_5,
    \end{equation*}
    where we use Example \ref{Ex:Hkey} again for the identity $x_5 x_1 x_5 x_1 = x_1 x_5 x_1 x_5$. Thus $u =
    x_1 x_5 x_1 x_5$ commutes with $x_9$. By permuting rows
    and columns, we see that $u$ commutes with $x_i$ for $7 \leq i \leq 15$.
    We also know that $u$ commutes with $x_i$ for $1 \leq i \leq 6$, so 
    $u$ is central. Ultimately we conclude from symmetry that commutators
    of the form $[x_i,x_j]$ are central for all $1 \leq i,j \leq 15$. 
    The rest of the proof is as in Lemma \ref{lem:K34}. 
\end{proof}

We finish by showing that $\Gamma(K_{3,6})$ is infinite. Recall (from, e.g.
\cite{BN98}) that a \emph{rewriting system over a finite set $S$} is a finite
subset $W \subseteq S^* \times S^*$, where $S^*$ is the set of words over $S$.
We write $a \to_W b$ for words $a,b \in S^*$ if there are words $c,d, \ell, r
\in S^*$ such that $a=c\ell d$, $b = c r d$, and $(\ell,r) \in W$, and $a
\to_W^* b$ if there is a sequence $a = a_0, a_1, \ldots, a_k = b \in S^*$ with
$a_{i-1} \to_W a_i$ for all $1 \leq i \leq k$. We say that $\ell \in S^*$ is a
subword of $a \in S^*$ if $a = b \ell c$ for some $b,c \in S^*$. A word $a \in
S^*$ is a \emph{normal form with respect to $W$} if $\ell$ is not a subword of
$a$ for all pairs $(\ell,r) \in W$. If $a \to_W^* b$ and $b$ is a normal form,
then $b$ is said to be a \emph{normal form of $a$}. A rewriting system $W$ is
\emph{terminating} if there is no infinite sequence $a_1 \to_W a_2 \to_W a_3
\to_W \cdots$, \emph{confluent} if for every $a,b,c \in S^*$ with $a \to_W^* b$
and $a \to_W^* c$, there is $d \in S^*$ such that $b \to_W^* d$ and $c \to_W^*
d$, and \emph{locally confluent} if for every $a,b,c \in S^*$ with $a \to_W b$
and $a \to_W c$, there is $d \in S^*$ such that $b \to_W^* d$ and $c \to_W^*
d$. If $W$ is terminating, then every word $a$ has a normal form with respect
to $W$.  If $W$ is terminating and confluent, then every word has a unique
normal form with respect to $W$.  Newman's lemma states that if $W$ is
terminating and locally confluent, then $W$ is confluent. 

For a finite set $S$, an order $\leq$ on $S^*$ is a \emph{reduction order} if
it is a well-order (i.e.~every subset of $S^*$ has a least element) and $a \leq
b$ implies $cad \leq cbd$ for all $a,b,c,d \in S^*$. If $\leq$ is a reduction
order, and $W$ is a rewriting system such that $\ell > r$ for all $(\ell,r) \in
W$, then $W$ is terminating. Suppose $S = \{s_1,\ldots,s_n\}$, and let $S_k =
\{s_1,\ldots,s_k\}$. Define an order $\leq_k$ on $S_k$ inductively as follows:
Let $a <_1 b$ for $a,b \in S_1^*$ if and only if $b$ is longer than $a$.  For
$k > 1$, let $a <_k b$ if either
\begin{itemize}
    \item $s_k$ appears more often in $b$ than in $a$, or
    \item $s_k$ appears $m$ times in both $a$ and $b$, and when we write
        $a = a_0 s_1 a_1 s_1 \cdots s_1 a_m$, $b = b_0 s_1 b_1 s_1
        \cdots s_1 b_m$ for words $a_0,\ldots,a_m$, $b_0,\ldots,b_m \in S_{k-1}^*$,
        there is an index $0 \leq i \leq m$ such that $a_{j} = b_j$ for $j < i$
        and $a_i <_{k-1} b_i$.
\end{itemize}
The resulting order $\leq\ :=\ \leq_k$ on $S$ is called the \emph{wreath product
order for the sequence $s_1,\ldots,s_n$}. Wreath product orders are reduction orders.

Returning to groups, a \emph{complete rewriting system} for a finitely
presented group $\langle S : R \rangle$ is a terminating and confluent
rewriting system over $S \cup S^{-1}$, such that the empty word $1$ is the
normal form of all $r \in R$, and is also the normal form of $s s^{-1}$ and
$s^{-1} s$ for all $s \in S$.  It is well-known that if $W$ is a complete
rewriting system for $\langle S : R \rangle$, then the elements of $\langle S :
R \rangle$ are in bijection with the set of normal forms with respect to
$W$.\footnote{For the reader interested in proving this themselves, note that
if $r_1 r_2^{-1} \in R$, then $r_1 r_2^{-1} r_2 \to_W^* r_2$ and $r_1 r_2^{-1}
r_2 \to_W^* r_1$, so $r_1$ and $r_2$ must have the same normal forms.} If every
element of $S$ has order two, then it is more convenient to work with rewriting
systems over $S$ rather than $S \cup S^{-1}$. Specifically, if $R$ is a set of
words over a set $S$, then we say that a rewriting system $W$ over $S$ is a
complete rewriting system for the group
\begin{equation*}
    \Gamma = \langle S : R \cup \{s^2 : s \in S\} \rangle
\end{equation*}
if $W$ is terminating and confluent, $r$ has normal form $1$ for all relations
$r \in R$, and $s^2$ has normal form $1$ for all $s \in S$. If $W$ is such a
rewriting system, then once again the elements of $\Gamma$ are in bijection
with the normal forms with respect to $W$. 

The Knuth-Bendix algorithm is a procedure for constructing a complete rewriting
system for a finitely presented group, given the group presentation and a
reduction order as input. It is not guaranteed to halt, and the success and
running time of the procedure is often highly dependent on the specified order.
Using the implementation of Knuth-Bendix in the KBMAG package \cite{KBMAG1995},
we were able to find a complete rewriting system for the group $H_{3,3}$. This
rewriting system is shown in Figure \ref{fig:h33_rws}. Finding this rewriting
system involved a lengthy automated search through reduction orders until we 
found one for which the Knuth-Bendix procedure would halt. However, it is much
easier to verify that this rewriting system is complete once we've found it:

\begin{figure}[tp]
    \begin{center}
\begin{tabular}{ l }
$y_4y_4 \longrightarrow 1$\\
$y_7y_7 \longrightarrow 1$\\
$y_2y_2 \longrightarrow 1$\\
$y_8y_8 \longrightarrow 1$\\
$y_3y_3 \longrightarrow 1$\\
$y_6y_6 \longrightarrow 1$\\
$y_4y_7 \longrightarrow y_7y_4$\\
$y_2y_8 \longrightarrow y_8y_2$\\
$y_6y_3 \longrightarrow y_3y_6$\\
$y_2y_3 \longrightarrow y_3y_2$\\
$y_4y_6 \longrightarrow y_6y_4$\\
$y_7y_8 \longrightarrow y_8y_7$\\
$y_9 \longrightarrow y_8y_7$\\
$y_1 \longrightarrow y_3y_2$\\
$y_5 \longrightarrow y_6y_4$\\
$y_8y_3 \longrightarrow y_3y_8y_7y_2y_7y_2$\\
$y_6y_8 \longrightarrow y_8y_7y_6y_7$\\
$y_4y_3 \longrightarrow y_3y_2y_4y_2$\\
$y_7y_3 \longrightarrow y_3y_2y_7y_2$\\
$y_6y_4y_2 \longrightarrow y_2y_6y_4$\\
$y_6y_2 \longrightarrow y_4y_2y_6y_4$\\
$y_4y_8 \longrightarrow y_8y_7y_6y_7y_6y_4$\\
$y_6y_7y_6y_7 \longrightarrow y_7y_6y_7y_6$\\
$y_2y_7y_2y_7 \longrightarrow y_7y_2y_7y_2$\\
$y_4y_2y_4y_2 \longrightarrow y_2y_4y_2y_4$\\
$y_2y_7y_4y_2y_6y_7 \longrightarrow y_6y_7y_2y_6y_7y_2y_6y_4$\\
$y_2y_6y_7y_2y_6y_7 \longrightarrow y_7y_4y_2y_6y_7y_2y_6y_4$\\
$y_2y_7y_6y_7y_2y_6y_7 \longrightarrow y_7y_2y_7y_2y_4y_2y_6y_7y_2y_6y_4$\\
$y_2y_4y_2y_7y_2y_6y_7 \longrightarrow y_4y_2y_6y_7y_4y_2y_6y_7y_2y_6y_4$\\
$y_2y_4y_2y_7y_6y_7y_4y_2y_6y_7 \longrightarrow y_4y_2y_7y_2y_6y_7y_2y_4y_2y_6y_7y_4y_2y_4$\\
$y_2y_4y_2y_6y_7y_4y_2y_6y_7 \longrightarrow y_4y_2y_7y_2y_6y_7y_2y_6y_4$\\
$y_4y_2y_6y_7y_2y_7y_2y_6y_7 \longrightarrow y_2y_7y_2y_6y_7y_2y_7y_6y_4$\\
$y_2y_4y_2y_7y_4y_2y_7 \longrightarrow y_4y_2y_7y_4y_2y_7y_2$\\
$y_2y_7y_4y_2y_7y_6y_7 \longrightarrow y_6y_7y_2y_6y_7y_2y_7y_6y_4$\\
$y_2y_6y_7y_2y_7y_6y_7 \longrightarrow y_7y_4y_2y_6y_7y_2y_7y_6y_4$\\
$y_2y_4y_2y_6y_7y_2y_7 \longrightarrow y_4y_2y_6y_7y_2y_7y_2$\\
$y_2y_6y_7y_4y_2y_7 \longrightarrow y_6y_7y_4y_2y_7y_2$\\
$y_2y_7y_6y_7y_2y_7 \longrightarrow y_6y_7y_2y_6y_7y_2y_4y_2y_6y_7y_4y_2$\\
$y_4y_2y_6y_7y_2y_4y_2y_6y_7 \longrightarrow y_2y_7y_2y_4y_2y_6y_7y_2y_4y_2y_6$\\
$y_2y_4y_2y_7y_6y_7y_4y_2y_7 \longrightarrow y_4y_2y_6y_7y_2y_7y_2y_4y_2y_6y_7y_2y_6$\\
$y_4y_2y_7y_2y_6y_7y_2y_7 \longrightarrow y_2y_6y_7y_2y_7y_2y_6y_7y_6y_4$\\
$y_4y_2y_7y_4y_2y_7y_2y_6y_7 \longrightarrow y_2y_6y_7y_4y_2y_6y_7y_2y_7y_6y_4$\\
$y_4y_2y_6y_7y_4y_2y_6y_7y_2y_7 \longrightarrow y_2y_7y_4y_2y_7y_2y_6y_7y_6y_4$\\
$y_4y_2y_7y_2y_4y_2y_6y_7 \longrightarrow y_2y_6y_7y_2y_4y_2y_6y_7y_2y_4y_2y_6$\\
$y_4y_2y_6y_7y_2y_4y_2y_7y_6y_7 \longrightarrow y_2y_7y_2y_4y_2y_6y_7y_2y_4y_2y_7y_6$\\
$y_4y_2y_7y_2y_4y_2y_7y_6y_7 \longrightarrow y_2y_6y_7y_2y_4y_2y_6y_7y_2y_4y_2y_7y_6$
\end{tabular}
\end{center}

    \caption{A complete rewriting system for the group $H_{3,3}$.}
    \label{fig:h33_rws}
\end{figure}

\begin{lemma}\label{lem:H33confluent}
    The rewriting system $W$ in Figure \ref{fig:h33_rws} is a complete rewriting
    system for $H_{3,3}$.
\end{lemma}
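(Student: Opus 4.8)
The plan is to verify the defining properties of a complete rewriting system for $H_{3,3} = \langle y_1,\dots,y_9 : R \cup \{y_i^2 : 1\le i\le 9\}\rangle$, where $R$ consists of the commutation relations $[y_i,y_j]$ with $y_i,y_j$ lying in a common row or column of the $3\times 3$ array, together with the three row products $y_1y_2y_3$, $y_4y_5y_6$, $y_7y_8y_9$. Concretely this means checking that $W$ is terminating, that $W$ is confluent, and that each relation in $R$ and each $y_i^2$ reduces to the empty word under $W$; in addition I would note that every rule of $W$ is a valid identity in $H_{3,3}$, so that the group presented by $W$ really is $H_{3,3}$ and the standard correspondence applies.

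\emph{Termination.} First I would exhibit a reduction order $\prec$ with $r \prec \ell$ for every rule $(\ell,r)$ of $W$; since wreath product orders are reduction orders (recalled in Section~\ref{sec:2cycles}), it suffices to use the wreath product order for a suitable ordering of $y_1,\dots,y_9$ — precisely the order KBMAG produced — and to check $\ell \succ r$ for each of the $46$ rules. This is a finite, mechanical check; the only non-routine point is that the rules are not length-decreasing (e.g.\ $y_8y_3 \to y_3y_8y_7y_2y_7y_2$), so the right order must be specified rather than read off.

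\emph{Confluence.} Since $W$ is terminating, by Newman's lemma it is enough to prove local confluence, and by the standard critical-pair criterion for terminating string rewriting systems (\cite{BN98}) this reduces to resolving every critical pair: for each overlap between two left-hand sides of $W$ (including a left-hand side overlapping a copy of itself), the two words obtained by rewriting the overlap in both ways must reduce to a common word. There are finitely many such overlaps among the left-hand sides in Figure~\ref{fig:h33_rws}; for each I would reduce the two outcomes and confirm they meet. This is exactly the computation KBMAG performs when it certifies $W$ confluent, but it is a finite verification that can be carried out independently of the search that found $W$.

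\emph{Reduction of the defining relations, and conclusion.} For $i\in\{2,3,4,6,7,8\}$, $y_i^2\to 1$ is a rule of $W$; for $i\in\{1,5,9\}$ one uses the substitution rules, e.g.\ $y_1y_1 \to y_3y_2y_3y_2 \to y_3y_3y_2y_2 \to 1$ using $y_2y_3\to y_3y_2$ and $y_2^2,y_3^2\to 1$. The commutators among $y_2,y_3,y_4,y_6,y_7,y_8$ in a common row or column are precisely the six commutation rules of $W$; the remaining commutators in $R$ involve one of $y_1,y_5,y_9$ and reduce to $1$ after applying $y_1\to y_3y_2$, $y_5\to y_6y_4$, $y_9\to y_8y_7$ together with the available commutations. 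Finally $y_1y_2y_3 \to y_3y_2y_2y_3 \to y_3y_3 \to 1$, $y_4y_5y_6 \to y_4y_6y_4y_6 \to y_6y_4y_4y_6 \to 1$, and $y_7y_8y_9 \to y_7y_8y_8y_7 \to 1$. Conversely each rule of $W$ holds in $H_{3,3}$ — the short ones are defining relations, and the long ones follow by manipulations of the kind in Example~\ref{Ex:Hkey} — so the group presented by $W$ is $H_{3,3}$, and by the standard correspondence the $W$-normal forms biject with $H_{3,3}$; hence $W$ is a complete rewriting system for $H_{3,3}$. I expect the confluence check to be the main obstacle: the long rules near the bottom of Figure~\ref{fig:h33_rws} overlap one another and the short rules in many ways, generating a large number of critical pairs whose resolution takes several rewriting steps each, so in practice this step is a computer verification rather than a hand computation; finding/specifying the correct wreath product order for termination is a distant second.
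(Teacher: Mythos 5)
Your proposal is correct and takes essentially the same approach as the paper: termination via a wreath product reduction order verified rule by rule, local confluence via a finite critical-pair check carried out by computer together with Newman's lemma, and a check that every $y_i^2$ and every defining relation of $H_{3,3}$ reduces to the empty word. The paper additionally pins down the order explicitly (the wreath product order for the sequence $(y_4,y_6,y_2,y_7,y_5,y_8,y_3,y_9,y_1)$), which your termination check would need to specify, while your extra verification that each rule of $W$ holds in $H_{3,3}$ goes slightly beyond the paper's stated definition of a complete rewriting system and is harmless.
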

\begin{proof}
    Let $S = \{y_1,\ldots,y_9\}$. 
    To show that $W$ is terminating, we can check either by hand or on a
    computer that if $(\ell, r) \in W$, then $\ell > r$ in the wreath product
    ordering for the sequence $O = (y_4, y_6, y_2, y_7, y_5, y_8, y_3, y_9, y_1)$.
    For example, $y_1 > y_3 y_2$ because $y_1$ occurs after $y_2$ and $y_3$ in
    the sequence $O$, and $y_1$ appears fewer times in $y_3 y_2$ than in $y_1$. In another
    example, consider the pair 
    \begin{equation*} 
        (\ell, r) = (y_4y_2y_7y_4y_2y_7y_2y_6y_7, y_2y_6y_7y_4y_2y_6y_7y_2y_7y_6y_4).
    \end{equation*}
    The generator $y_7$ occurs three times in both $\ell$ and $r$, so to
    compare these two we look at the words $y_4 y_2$ and $y_2 y_6$. Since $y_4 >
    1$, we see that $y_4 y_2 > y_2 y_6$, 
    and hence $\ell > r$. The other pairs can be checked similarly. Since wreath
    product orders are reduction orders, $W$ is terminating.

    For local confluence, it suffices to check two conditions:
    \begin{enumerate}[(i)]
        \item If $(ab,r_1), (bc,r_2) \in W$ for $a,b,c,r_1,r_2 \in S^*$, so that $abc \to_W a r_2$ and
            $abc \to_W r_1 c$, then there is a word $d$ such that $a r_2 \to_W^*
            d$ and $r_1 c \to_W^* d$.
        \item If $(abc,r_1), (b,r_2) \in W$, so that $abc \to_W r_1$ and $abc
            \to_W a r_2 c$, then there is a word $d$ such that $r_1 \to_W^* d$
            and $a r_2 c \to_W^* d$.
    \end{enumerate}
    This is time-consuming to check by hand for $W$, but can easily be checked
    on a computer. 

    Since $W$ is both terminating and locally confluent, it is confluent. Clearly
    $y_i^2 \to_W 1$ for all $1 \leq i \leq 9$. We can check either by hand or on
    a computer that $r \to_W^* 1$ for all defining relations $r$ of $H_{3,3}$, so
    $W$ is a complete rewriting system.
\end{proof}
Short computer programs for performing the calculations in Lemma
\ref{lem:H33confluent} can be found at \cite{PRSS22}.
The rewriting system from Figure \ref{fig:h33_rws} can be used to show:
\begin{lemma}\label{lem:k36}
    $H_{3,3}$ is infinite. 
\end{lemma}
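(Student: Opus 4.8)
The plan is to use the bijection between the elements of a finitely presented group and the set of normal forms of a complete rewriting system for it. Since Lemma~\ref{lem:H33confluent} says that the system $W$ of Figure~\ref{fig:h33_rws} is a complete rewriting system for $H_{3,3}$, it suffices to exhibit infinitely many words over $S=\{y_1,\ldots,y_9\}$ that are normal forms with respect to $W$. A word fails to be a normal form precisely when it contains some left-hand side $\ell$ of a rule $(\ell,r)\in W$ as a contiguous subword, so the task is to produce an infinite family of words that simultaneously avoids all of these left-hand sides.

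First I would note that the rules $y_1\to y_3y_2$, $y_5\to y_6y_4$, $y_9\to y_8y_7$ mean no normal form uses $y_1,y_5,y_9$, and then search for a periodic word over the remaining generators. The candidate I would use is
\[
    w_n \;:=\; (y_2\,y_7\,y_4)^n, \qquad n\ge 0 .
\]
Each $w_n$ uses only the letters $y_2,y_4,y_7$; no letter is ever immediately repeated; and the only length-two subwords occurring in any $w_n$ are $y_2y_7$, $y_7y_4$, $y_4y_2$, so in particular $y_2y_4$, $y_7y_2$ and $y_4y_7$ never occur as subwords.

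The verification then splits the left-hand sides of $W$ into two groups. By inspection of Figure~\ref{fig:h33_rws}, every left-hand side except $y_2^2$, $y_4^2$, $y_7^2$, $y_4y_7$, $y_2y_7y_2y_7$, $y_4y_2y_4y_2$ and $y_2y_4y_2y_7y_4y_2y_7$ contains a letter outside $\{y_2,y_4,y_7\}$ (almost all of the long rules contain $y_6$), hence cannot be a subword of $w_n\in\{y_2,y_4,y_7\}^*$. Each of the seven exceptional left-hand sides contains one of $y_2^2$, $y_4^2$, $y_7^2$, $y_4y_7$, $y_7y_2$, $y_2y_4$ (e.g.\ $y_2y_7y_2y_7$ contains $y_7y_2$, and $y_4y_2y_4y_2$ and $y_2y_4y_2y_7y_4y_2y_7$ both contain $y_2y_4$), and none of those six two-letter words is a subword of $w_n$. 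Therefore no left-hand side of $W$ is a subword of $w_n$, so $w_n$ is a normal form for every $n$. The $w_n$ have pairwise distinct lengths, hence are pairwise distinct normal forms, so $H_{3,3}$ has infinitely many elements.

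The only genuine obstacle here is choosing the periodic word, and it is a mild one: short alternating patterns $(y_iy_j)^k$ are all eliminated by the length-two or length-four rules of $W$ (consistent with $H_{3,2}$ being finite), so one is forced up to a period-three word, and the three letters must be chosen to dodge both the $y_6$-heavy long relations and the two short rules $y_2y_7y_2y_7$, $y_4y_2y_4y_2$; the triple $y_2,y_7,y_4$ does this. Everything after that choice is a finite, mechanical check against the rule list in Figure~\ref{fig:h33_rws}.
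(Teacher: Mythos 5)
Your proof is correct and is essentially the paper's own argument: the paper likewise appeals to Lemma~\ref{lem:H33confluent} and observes by inspection of Figure~\ref{fig:h33_rws} that the periodic words $(y_4 y_2 y_7)^n$ (a cyclic shift of your $(y_2 y_7 y_4)^n$) are normal forms, hence represent infinitely many distinct elements of $H_{3,3}$. Your explicit check of the seven left-hand sides supported on $\{y_2,y_4,y_7\}$ is accurate and simply spells out the "inspection" the paper leaves implicit.
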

\begin{proof}
    By inspection of Figure \ref{fig:h33_rws}, we see that $(y_4 y_2 y_7)^n$ is
    a normal form for all $n \geq 1$. By Lemma \ref{lem:H33confluent}, these
    elements are all distinct, so $H_{3,3}$ is infinite.
\end{proof}

\begin{prop}\label{prop:K36infinite}
    $\Gamma(K_{3,6})$ is infinite.
\end{prop}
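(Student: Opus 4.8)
The plan is to reduce the statement to Lemma~\ref{lem:k36} by producing a surjective homomorphism $\Gamma(K_{3,6}) \arr H_{3,3}$, in direct analogy with the surjection $\Gamma(K_{3,4}) \arr H_{3,2}$ constructed in the proof of Lemma~\ref{lem:K34}. First I would present $\Gamma(K_{3,6})$ via its $6 \times 3$ matrix presentation: generators $x_1,\ldots,x_{18}$ arranged in a $6 \times 3$ array, subject to the relations that every entry is an involution, that any two entries sharing a row or column commute, and that the product of the entries in each row, and in each column, is the identity.

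Next I would fill the $6 \times 3$ array with two stacked copies of the $3 \times 3$ defining matrix of $H_{3,3}$; that is, put $y_1, y_2, y_3$ in rows $1$ and $4$, put $y_4, y_5, y_6$ in rows $2$ and $5$, and put $y_7, y_8, y_9$ in rows $3$ and $6$. Then I would check that these entries satisfy the defining relations of $\Gamma(K_{3,6})$: the involution relations and the ``common row or column'' commutation relations hold because the corresponding relations already hold in $H_{3,3}$ (entries in a common column of the filled array lie, up to repetition, in a common column of the $H_{3,3}$ matrix, and likewise for rows); the product along each row is $y_1 y_2 y_3 = y_4 y_5 y_6 = y_7 y_8 y_9 = 1$ by the defining relations of $H_{3,3}$; and the product along, say, the first column is $y_1 y_4 y_7 y_1 y_4 y_7$, which equals $y_1^2 y_4^2 y_7^2 = 1$ because $y_1, y_4, y_7$ pairwise commute and each squares to $1$. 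Hence $x_k \mapsto (\text{the corresponding entry of the filled array})$ extends to a homomorphism $\phi : \Gamma(K_{3,6}) \arr H_{3,3}$, and $\phi$ is surjective since its image contains $y_1, \ldots, y_9$, which generate $H_{3,3}$.

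Finally, since $H_{3,3}$ is infinite by Lemma~\ref{lem:k36}, the group $\Gamma(K_{3,6})$ surjects onto an infinite group and is therefore infinite, completing the argument. I do not expect any genuine obstacle in this step: the only substantive difficulty in establishing the proposition lies in proving that $H_{3,3}$ itself is infinite, which is already handled separately through the complete rewriting system of Figure~\ref{fig:h33_rws} and Lemma~\ref{lem:H33confluent}, and the reduction carried out here is a routine verification of relations. The one point to state carefully is that the filled array really does respect the convention under which row vertices of $K_{3,6}$ have degree $3$ and column vertices have degree $6$, so that the column relations in $\Gamma(K_{3,6})$ are the products of the six repeated entries rather than of three distinct ones.
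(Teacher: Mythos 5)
Your proposal is correct and is essentially the paper's own argument: the paper also fills the $6\times 3$ matrix for $\Gamma(K_{3,6})$ with two stacked copies of the $H_{3,3}$ matrix to obtain a surjection $\Gamma(K_{3,6}) \arr H_{3,3}$ and then invokes Lemma~\ref{lem:k36}. Your explicit verification of the column relations (six pairwise-commuting involutions multiplying to $1$) is just a spelled-out version of the check the paper leaves implicit.
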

\begin{proof}
    If $y_1,\ldots,y_9$ are
    the generators of $H_{3,3}$, then the entries of the matrix
    \begin{equation*}
        \begin{tabular}{|c|c|c|}
            \hline
            $y_1$ & $y_2$ & $y_3$ \\
            \hline
            $y_4$ & $y_5$ & $y_6$ \\
            \hline
            $y_7$ & $y_8$ & $y_9$ \\
            \hline
            $y_1$ & $y_2$ & $y_3$ \\
            \hline
            $y_4$ & $y_5$ & $y_6$ \\
            \hline
            $y_7$ & $y_8$ & $y_9$ \\
            \hline
        \end{tabular}
    \end{equation*}
    satisfy the defining relations of $\Gamma(K_{3,6})$. Thus there is a surjective
    homomorphism $\Gamma(K_{3,6}) \arr H_{3,3}$ sending
    \begin{equation*}
        x_i \mapsto \begin{cases} y_i & 1 \leq i \leq 9 \\
                                        y_{i-9} & 10 \leq i \leq 18 
                        \end{cases}.
    \end{equation*}
    Since $H_{3,3}$ is infinite, so is $\Gamma(K_{3,6})$. 
\end{proof}

\subsection{Proofs of characterizations for abelianness and finiteness}

We can now finish the proofs of Propositions \ref{prop:finite} and \ref{prop:abelian}.
\begin{proof}[Proof of Proposition~\ref{prop:finite}]
    If $G$ contains $C_2 \sqcup C_2$ or $K_{3,6}$ as a minor, then $\Gamma(G)$
    is infinite by Lemma \ref{lem:main_uncoloured}, Corollary \ref{cor:2cyc},
    and Proposition \ref{prop:K36infinite}. 

    Suppose $G$ avoids $C_2 \sqcup C_2$ and $K_{3,6}$. Then
    $G$ can be obtained from a graph $G'$ satisfying conditions (i)-(iv) from
    Theorem \ref{thm:lovasz}, by taking a subdivision of $G'$ and adding a
    forest.  Since $G'$ is a minor of $G$, $G'$ also avoids $K_{3,6}$. 
    By Proposition \ref{prop:commonvertex}, Corollary \ref{cor:wheel} and Corollary \ref{cor:K33K5},
    if $G'$ satisfies one of conditions (i)-(iii) from Theorem \ref{thm:lovasz},
    then $\Gamma(G')$ is abelian (and hence finite). Suppose $G'$ satisfies condition (iv) from
    Theorem \ref{thm:lovasz}, so there is $n \geq 0$ such that $G'$ can be obtained from $K_{3,n}$
    by adding edges to the first partition. Since $G'$ does not contain $K_{3,6}$,
    we must have $n < 6$, so $\Gamma(K_{3,n})$ is finite by 
    Example \ref{ex:K30K31K32}, Corollary \ref{cor:K33K5}, and Lemmas
    \ref{lem:K34} and \ref{lem:K35}.  By Corollary \ref{cor:K3nreduction},
    $\Gamma(G')$ is also finite. Since $\Gamma(G) \iso \Gamma(G')$ by Lemmas
    \ref{lem:subdivision} and \ref{lem:addingforest}, we conclude in all four
    cases that $\Gamma(G)$ is finite.
\end{proof}

The proof of Proposition \ref{prop:abelian} is very similar:
\begin{proof}[Proof of Proposition \ref{prop:abelian}]
    If $G$ contains $C_2 \sqcup C_2$ or $K_{3,4}$ as a minor, then $\Gamma(G)$
    is nonabelian by Lemma \ref{lem:main_uncoloured}, Corollary \ref{cor:2cyc},
    and Proposition \ref{lem:K34}. 

    Suppose $G$ avoids $C_2 \sqcup C_2$ and $K_{3,4}$. Then
    $G$ can be obtained from a graph $G'$ satisfying conditions (i)-(iv) from
    Theorem \ref{thm:lovasz}, by taking a subdivision of $G'$ and adding a
    forest.  Since $G'$ is a minor of $G$, $G'$ also avoids $K_{3,4}$. 
    By Proposition \ref{prop:commonvertex}, Corollary \ref{cor:wheel} and Corollary \ref{cor:K33K5},
    if $G'$ satisfies one of conditions (i)-(iii) from Theorem \ref{thm:lovasz},
    then $\Gamma(G')$ is abelian. Suppose $G'$ satisfies condition (iv) from
    Theorem \ref{thm:lovasz}, so there is $n \geq 0$ such that $G'$ can be obtained from $K_{3,n}$
    by adding edges to the first partition. Since $G'$ does not contain $K_{3,4}$,
    we must have $n < 4$, so $\Gamma(K_{3,n})$ is abelian by Example \ref{ex:K30K31K32} and
    Corollary \ref{cor:K33K5}. By Corollary \ref{cor:K3nreduction}, $\Gamma(G')$ is also abelian.
    Since $\Gamma(G) \iso \Gamma(G')$ by Lemmas \ref{lem:subdivision} and
    \ref{lem:addingforest}, we conclude in all four cases that $\Gamma(G)$
    is abelian.
\end{proof}

With these propositions, we can prove Theorems \ref{thm:finite} and \ref{thm:abelian}.
\begin{proof}[Proof of Theorem~\ref{thm:finite}]
    If $b$ is a $\Z_2$-colouring of $G$, then $\Gamma(G,b)$ is finite if and only if $\Gamma(G)$
    is finite. So the theorem follows immediately from Proposition \ref{prop:finite}.
\end{proof}

\begin{proof}[Proof of Theorem~\ref{thm:abelian}]
    Let $(G,b)$ be a connected $\Z_2$-coloured graph, and let
    \begin{align*}
        \mcF = 
            & \{ (K_{3,3},b') : b' \text{ odd parity}\} \cup 
            \{ (K_{5},b') : b' \text{ odd parity}\} \\ \cup 
            & \{ (K_{3,4},b') : b' \text{ even parity}\} \cup 
            \{ (C_2 \sqcup C_2,b') : b' \text{ any parity}\}.
    \end{align*}
    By Lemma \ref{lem:minor_colour}, $(G,b)$ avoids $\mcF$ if and only if either
    $b$ has even parity and $G$ avoids $K_{3,4}$ and $C_2 \sqcup C_2$, or $b$
    has odd parity and $G$ avoids $K_{3,3}$, $K_5$, and $C_2 \sqcup C_2$.

    If $b$ has even parity, then $\Gamma(G,b) \iso
    \Gamma(G,0) \iso \Gamma(G) \times \Z_2$ by Lemma \ref{lem:parity}. So
    $\Gamma(G,b)$ is abelian if and only if $\Gamma(G)$ is abelian. By
    Proposition \ref{prop:abelian}, this occurs if and only if $G$ avoids
    $K_{3,4}$ and $C_2 \sqcup C_2$.

    Suppose $b$ has odd parity. The groups $\Gamma(K_{3,3},b')$ and $\Gamma(K_5,b')$
    are nonabelian when $b'$ is odd by Proposition \ref{prop:K33K5}. Since 
    $\Gamma( C_2 \sqcup C_2, b') / \langle J \rangle = \Gamma( C_2 \sqcup C_2)$ is
    nonabelian, $\Gamma(C_2 \sqcup C_2,b')$ is nonabelian for any $b'$. So if
    $G$ contains $K_{3,3}$, $K_5$, or $C_2 \sqcup C_2$, then $\Gamma(G,b)$ is
    nonabelian by Lemma \ref{lem:main}. Suppose $G$ avoids $K_{3,3}$, $K_5$, and
    $C_2 \sqcup C_2$. Then $G$ is planar, so $J=1$ in $\Gamma(G,b)$ by Theorem 
    \ref{thm:Arkhipov2}. Hence $\Gamma(G,b) \iso \Gamma(G,b) / \langle J \rangle
    = \Gamma(G)$. Since $G$ avoids $K_{3,3}$, it also avoids $K_{3,4}$, and hence
    $\Gamma(G,b)$ is abelian by Proposition \ref{prop:abelian}.
    
    We conclude that $\Gamma(G,b)$ is abelian if and only if $(G,b)$ avoids $\mcF$.
\end{proof}

\section{Open problems}\label{sec:open}

In Theorems \ref{thm:finite} and \ref{thm:abelian}, we characterize when
$\Gamma(G,b)$ is finite or abelian.  As mentioned in the introduction, it is
also interesting to ask for the forbidden minors for other quotient closed
properties. Since it's connected with group stability and finite-dimensional
approximations of groups (and hence with near-perfect strategies for games),
amenability is a particularly interesting property to ask about:
\begin{problem}
    Find the forbidden minors for amenability of $\Gamma(G,b)$ and
    $\Gamma(G)$.
\end{problem}
Since amenability is closed under extensions, and $\Z_2$ and $\Z$ are both
amenable, the group $\Z_2 \ast \Z_2 \iso \Z_2 \ltimes \Z$ is amenable. So
Lovasz's characterization of graphs that avoid two disjoint cycles does not
help with this problem.  However, $\Z_2 \ast \Z_2 \ast \Z_2 = \Gamma(C_2 \sqcup
C_2 \sqcup C_2)$ is not amenable, since it contains $\Z \ast \Z$ as a subgroup.
Thus a starting point for this problem might be to look at graphs that avoid
three disjoint cycles.  We note that, like planarity testing, deciding whether
a graph contains $k$-disjoint cycles can be done in linear time in the size of
the graph \cite{Bod94}. 

Another property that comes up in the study of group stability is property (T). 
\begin{problem}
    Find the forbidden graph minors characterizing property (T) for
    $\Gamma(G,b)$ and $\Gamma(G)$.
\end{problem}

The only groups which are both amenable and have property (T) are the finite
groups, so $\Gamma(C_2 \sqcup C_2) = \Z_2 \ast \Z_2$ does not have property
(T).  Hence if $\Gamma(G)$ has property (T), then $G$ does not contain two
disjoint cycles. The groups (i)-(iii) in Theorem \ref{thm:lovasz} are all finite
and hence have property (T). However, we do not know whether $\Gamma(K_{3,6})$
has property (T). If it does not, then $\Gamma(G,b)$ and $\Gamma(G)$ would have
property (T) if and only if they are finite. 

As mentioned in the introduction, while testing whether $J=1$ is easy for graph
incidence groups, it is undecidable for solution groups. It would be
interesting to know whether the word problem for graph incidence groups is
decidable in general. If a group has a complete rewriting system, then its word
problem is decidable, so it would be also interesting to know:
\begin{problem}
    Is there a graph incidence group which does not have a complete rewriting system.
\end{problem}
Doing some initial computer exploration with the KBMAG package for the GAP
computer algebra system, we were able to find complete rewriting systems for
the graph incidence groups $\Gamma(G)$ of all 30 cubic graphs on at most 10
vertices, with one exception: the Petersen graph, shown in Figure
\ref{fig:pet_graph}.
\begin{figure}[!htpb]
	\begin{center}
		\includegraphics[scale=0.8]{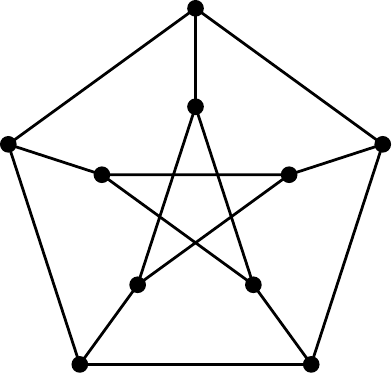}
	\end{center}
		\caption{We were not able to find a complete rewriting system for the Petersen graph.}
\label{fig:pet_graph}
\end{figure}
We were also able to find complete rewriting systems for the Petersen graph
with an edge contracted or deleted. In all these cases, the Knuth-Bendix
algorithm in KBMAG finished within a few seconds, and returned rewriting
systems with less than 50 rules. Thus the Petersen graph might be a good
candidate for a graph that does not have a complete rewriting system.  Having a
decidable word problem or a complete rewriting system is not a quotient
property, so we do not expect these properties to be characterizable by
forbidden minors.

\bibliographystyle{alpha}
\bibliography{bibfile}

\end{document}